\newcommand{\comm}[1]{}
\DeclareRobustCommand{\SkipTocEntry}[5]{}
\newtheorem{satz}{Satz}[section]
\newtheorem{thm}[satz]{Theorem}
\newtheorem{lemma}[satz]{Lemma}
\newcounter{cl}[satz]
\newtheorem{claim}[cl]{Claim}
\newcounter{subcl}[cl]
\newtheorem{subclaim}[subcl]{Subclaim}
\newtheorem{definition}[satz]{Definition}
\newtheorem{cor}[satz]{Corollary}
\numberwithin{subsection}{section}
\newcounter{anhang}
\newcommand{\thistheoremname}{}
\newtheorem*{genericthm}{\thistheoremname}
\newenvironment{namedthm}[1]
  {\renewcommand{\thistheoremname}{#1}%
   \begin{genericthm}}
  {\end{genericthm}}
\newcommand{\st}{\mid}
\newcommand{\card}[1]{|#1|}
\newcommand{\ZFC}{\ensuremath{\operatorname{ZFC}} }
\newcommand{\CH}{\ensuremath{\operatorname{CH}} }
\newcommand{\OR}{\ensuremath{\operatorname{Ord}} }
\newcommand{\Ord}{\ensuremath{\operatorname{Ord}} }
\newcommand{\HOD}{\ensuremath{\operatorname{HOD}} }
\newcommand{\OD}{\ensuremath{\operatorname{OD}} }
\newcommand{\TC}{\ensuremath{\operatorname{TC}} }
\newcommand{\AC}{\ensuremath{\operatorname{AC}} }
\newcommand{\AD}{\ensuremath{\operatorname{AD}} }
\newcommand{\crit}{\ensuremath{\operatorname{crit}} }
\newcommand{\Det}{\ensuremath{\operatorname{Det}} }
\newcommand{\Col}{\ensuremath{\operatorname{Col}} }
\newcommand{\lh}{\ensuremath{\operatorname{lh}} }
\newcommand{\ran}{\ensuremath{\operatorname{ran}} }
\newcommand{\pred}{\ensuremath{\operatorname{pred}} }
\newcommand{\WO}{\ensuremath{\operatorname{WO}} }
\newcommand{\Pot}{\mathcal{P}}
\newcommand{\defeq}{\overset{def}{=}}
\newcommand{\Ult}{\ensuremath{\operatorname{Ult}}}
\newcommand{\forces}[2]{\Vdash^{#1}_{#2}}
\newcommand{\pwimg}{\, "}
\newcommand{\cf}{\ensuremath{\operatorname{cf}}}
\newcommand{\dom}{\ensuremath{\operatorname{dom}}}
\newcommand{\Lpn}{
  Lp^n}
\newcommand{\Lpoddn}{
  Lp^{2n-1}}
\newcommand{\BS}{{}^\omega\omega}
\newcommand{\PI}{\boldsymbol\Pi}
\newcommand{\SIGMA}{\boldsymbol\Sigma}
\newcommand{\Q}{\mathcal{Q}}
\newcommand{\M}{\mathcal{M}}
\newcommand{\N}{\mathcal{N}}
\newcommand{\T}{\mathcal{T}}
\newcommand{\U}{\mathcal{U}}
\newcommand{\R}{\mathbb{R}}
\newcommand{\ODdet}{\mathrm{OD}\text{-determinacy}}
\newcommand{\game}{\Game}
\theoremstyle{definition}
\theoremstyle{remark}
\newtheorem*{remark}{Remark}
\title[Mice from optimal determinacy hypotheses]{Mice with finitely many Woodin cardinals from optimal
  determinacy hypotheses}
\author{Sandra Müller}
\thanks{First author formerly known as Sandra Uhlenbrock. The first
    author gratefully acknowledges her non-material and financial
    support from the ``Deutsche Telekom Stiftung''. The material in
    this paper is part of the first author's Ph.D. thesis written
    under the supervision of the second author.} 
\author{Ralf Schindler}
\author{W. Hugh Woodin}
\begin{document}

\pagenumbering{arabic}

\maketitle

%\onehalfspacing 

\section*{Abstract}

We prove the following result which is due to the third author.

Let $n \geq 1$. If $\PI^1_n$ determinacy and $\Pi^1_{n+1}$ determinacy both 
hold true and there is no $\SIGMA^1_{n+2}$-definable $\omega_1$-sequence of 
pairwise distinct reals, then $M_n^\#$ exists and is $\omega_1$-iterable.
The proof yields that
$\PI^1_{n+1}$ determinacy implies that $M_n^\#(x)$ exists and is
$\omega_1$-iterable for all reals $x$.

A consequence is the Determinacy Transfer
  Theorem for arbitrary 
$n \geq 1$, namely the statement that $\PI^1_{n+1}$ determinacy implies 
  \emph{$\game^{(n)}(<\omega^2 - \PI^1_1)$} determinacy. 

%%% Local Variables:
%%% mode: latex
%%% TeX-master: "Main"
%%% End:

\tableofcontents 

\section*{Preface}

In 1953 David Gale and Frank M. Stewart developed a basic theory of
infinite games in \cite{GS53}. For every set of reals, that means set
of sequences of natural numbers, $A$ they considered a two-player game
$G(A)$ of length $\omega$, where player $\mathrm{I}$ and player
$\mathrm{II}$ alternately play natural numbers. They defined that
player $\mathrm{I}$ wins the game $G(A)$ if and only if the sequence
of natural numbers produced during a run of the game $G(A)$ is
contained in $A$ and otherwise player $\mathrm{II}$ wins. Moreover the
game $G(A)$ or the set $A$ itself is called determined if and only if
one of the two players has a winning strategy, that means a method by
which they can win, no matter what their opponent does, in the game
described above.

Already in \cite{GS53} the authors were able to prove that every open
and every closed set of reals is determined using $\ZFC$. But
furthermore they proved that determinacy for all sets of reals
contradicts the Axiom of Choice. This leads to the natural question of
what the picture looks like for definable sets of reals which are more
complicated than open and closed sets. After some partial results by
Philip Wolfe in \cite{Wo55} and Morton Davis in \cite{Da64}, Donald
A. Martin was finally able to prove in \cite{Ma75} from $\ZFC$ that
every Borel set of reals is determined.

In the meantime the development of so called \emph{Large Cardinal
  Axioms} was proceeding in Set Theory. In 1930 Stanisław Ulam first
defined measurable cardinals and at the beginning of the 1960's
H. Jerome Keisler \cite{Kei62} and Dana S. Scott \cite{Sc61} found a
way of making them more useful in Set Theory by reformulating the
statements using elementary embeddings.

About the same time, other set theorists were able to prove that
\emph{Determinacy Axioms} imply \emph{regularity properties} for sets
of reals. More precisely Banach and Mazur showed that under the Axiom
of Determinacy (that means every set of reals is determined), every
set of reals has the Baire property. Mycielski and Swierczkowski
proved in \cite{MySw64} that under the same hypothesis every set of
reals is Lebesgue measurable. Furthermore Davis showed in \cite{Da64}
that under this hypothesis every set of reals has the perfect set
property. Moreover all three results also hold if the Determinacy
Axioms and regularity properties are only considered for sets of reals
in certain pointclasses. This shows that Determinacy Axioms have a
large influence on the structure of sets of reals and therefore have a
lot of applications in Set Theory.

In 1965 Robert M. Solovay was able to prove these regularity
properties for a specific pointclass, namely $\SIGMA^1_2$, assuming
the existence of a measurable cardinal instead of a Determinacy Axiom
(see for example \cite{So69} for the perfect set property). Then
finally Donald A. Martin was able to prove a direct connection between
Large Cardinals and Determinacy Axioms: he showed in 1970 that the
existence of a measurable cardinal implies determinacy for every
analytic set of reals (see \cite{Ma70}).

Eight years later Leo A. Harrington established that this result is in
some sense optimal. In \cite{Ha78} he proved that determinacy for all
analytic sets implies that $0^\#$, a countable active mouse which can
be obtained from a measurable cardinal, exists. Here a mouse is a fine
structural, iterable model. Together with Martin's result mentioned
above this yields that the two statements are in fact equivalent.

This of course motivates the question of whether a similar result can
be obtained for larger sets of reals, so especially for determinacy in
the projective hierarchy. The right large cardinal notion to consider
for these sets of reals was introduced by the third author in 1984 and
is nowadays called a Woodin cardinal. Building on this, Donald
A. Martin and John R. Steel were able to prove in \cite{MaSt89} almost
twenty years after Martin's result about analytic determinacy that,
assuming the existence of $n$ Woodin cardinals and a measurable
cardinal above them all, every $\SIGMA^1_{n+1}$-definable set of reals
is determined.

In the meantime the theory of mice was further developed. At the level
of strong cardinals it goes back to Ronald B. Jensen, Robert
M. Solovay, Tony Dodd and Ronald B. Jensen, and William J.
Mitchell. Then it was further extended to the level of Woodin
cardinals by Donald A. Martin and John R. Steel in \cite{MaSt94} and
William J. Mitchell and John R. Steel in \cite{MS94}, where some
errors were later repaired by John R. Steel, Martin
Zeman, and the second author in \cite{SchStZe02}. Moreover Ronald B. Jensen developed another
approach to the theory of mice at the level of Woodin cardinals in
\cite{Je97}.

In 1995 Itay Neeman was able to improve the result from \cite{MaSt89}
in \cite{Ne95}. He showed that the existence and
$\omega_1$-iterability of $M_n^\#$, the minimal countable active mouse
at the level of $n$ Woodin cardinals, is enough to obtain that every
$\game^n(< \omega^2 - \Pi^1_1)$-definable set of reals is
determined. Here ``$\game$'' is a quantifier which is defined by a
game. Neeman's result implies that if for all reals $x$ the premouse
$M_n^\#(x)$ exists and is $\omega_1$-iterable, then in particular
every $\SIGMA^1_{n+1}$-definable set of reals is determined. For odd
$n$ this latter result was previously known by the third author. The
converse of this latter result was announced by the third author in
the 1980's but a proof has never been published. The goal of this
paper is to finally provide a complete proof of his result.

%%% Local Variables:
%%% mode: latex
%%% TeX-master: "Main"
%%% End:

\section*{Overview}

The purpose of this paper is to give a proof of the following theorem,
which connects inner models with Woodin cardinals and descriptive set
theory at projective levels in a direct level-wise way. This theorem
is due to the third author and announced for example in the addendum
(§5) of \cite{KW08} and in Theorem $5.3$ of \cite{Sch10}, but so far a
proof of this result has never been published.

\begin{namedthm}{Theorem \ref{cor:bfDetSharps}}
  Let $n \geq 1$ and assume $\PI^1_{n+1}$ determinacy holds. Then
  $M_n^\#(x)$ exists and is $\omega_1$-iterable for all $x \in \BS$.
\end{namedthm}

The converse of Theorem \ref{cor:bfDetSharps} also holds true and is
for odd $n$ due to the third author (unpublished) and for even $n$ due
to Itay Neeman (see \cite{Ne95}). From this we can obtain the
following corollary, which is Theorem $1.10$ in \cite{KW08} for odd
$n$. We will present a proof of the Determinacy Transfer Theorem for
the even levels $n$ as a corollary of Theorem \ref{cor:bfDetSharps} in
Section \ref{sec:Applications}, using Theorem $2.5$ in \cite{Ne95} due
to Itay Neeman.

\begin{namedthm}{Corollary \ref{cor:transferThm}}[Determinacy Transfer
  Theorem]
  Let $n \geq 1$. Then $\PI^1_{n+1}$ determinacy is equivalent to
  \emph{$\game^{(n)}(<\omega^2 - \PI^1_1)$} determinacy.
\end{namedthm}

In fact we are going to prove the following theorem which
will imply Theorem \ref{cor:bfDetSharps}.

\begin{namedthm}{Theorem \ref{thm:newWoodin1}}
  Let $n \geq 1$ and assume there is no $\SIGMA^1_{n+2}$-definable
  $\omega_1$-sequence of pairwise distinct reals. Moreover assume that
  $\PI^1_n$ determinacy and $\Pi^1_{n+1}$ determinacy hold. Then
  $M_n^\#$ exists and is $\omega_1$-iterable.
\end{namedthm}

This is a part of the following theorem.

\begin{namedthm}{Theorem \ref{cor:newWoodin1}}
  Let $n \geq 1$ and assume there is no $\SIGMA^1_{n+2}$-definable
  $\omega_1$-sequence of pairwise distinct reals. Then the following
  are equivalent.
\begin{enumerate}[(1)]
\item $\PI^1_n$ determinacy and $\Pi^1_{n+1}$ determinacy,
\item for all $x \in \BS$, $M_{n-1}^\#(x)$ exists and is
  $\omega_1$-iterable, and $M_n^\#$ exists and is $\omega_1$-iterable,
\item $M_n^\#$ exists and is $\omega_1$-iterable.
\end{enumerate}
\end{namedthm}

Here the direction $(3)$ implies $(1)$ follows from Theorem $2.14$ in
\cite{Ne02} and is due to the third author for odd $n$ (unpublished)
and due to Itay Neeman for even $n$. Moreover the direction $(2)$
implies $(1)$ and the equivalence of $(2)$ and $(3)$ as proven in
\cite{Ne02} do not need the background hypothesis that there is no
$\SIGMA^1_{n+2}$-definable $\omega_1$-sequence of pairwise distinct
reals.

Furthermore we get the following relativized version of Theorem
\ref{cor:newWoodin1}.

\begin{namedthm}{Corollary \ref{cor:newWoodin1rel}}
  Let $n \geq 1$. Then the following are equivalent.
\begin{enumerate}[(1)]
\item $\PI^1_{n+1}$ determinacy, and
\item for all $x \in \BS$, $M_n^\#(x)$ exists and is $\omega_1$-iterable.
\end{enumerate}
\end{namedthm}

This gives that $\PI^1_{n+1}$ determinacy is an optimal hypothesis for 
proving the existence and $\omega_1$-iterability of $M_n^\#(x)$ for all
$x \in \BS$. 

\begin{remark}
  In contrast to the statement of Theorem $1.4$ in \cite{Ne02} it is
  open whether $\PI^1_n$ determinacy and $\Pi^1_{n+1}$ determinacy
  alone imply the existence of an $\omega_1$-iterable $M_n^\#$ for
  $n > 1$ (see also Section \ref{sec:openproblemsproj}). Whenever we
  are citing \cite{Ne02} in this paper we make no use of any
  consequence of this result stated there.
\end{remark}

\addtocontents{toc}{\SkipTocEntry}
\subsection*{Outline}

This paper is organized as follows. In Section \ref{ch:introproj} we
give a short introduction to determinacy and inner model theory. In
particular we state some known results concerning the connection of
determinacy for certain sets of reals and the existence of mice with
large cardinals.

In Section \ref{ch:WCFromDet} we will construct a proper class inner
model with $n$ Woodin cardinals from $\PI^1_n$ determinacy and
$\Pi^1_{n+1}$ determinacy. For that purpose we will prove in Lemma
\ref{lem:KS} from the same determinacy hypothesis that for a cone of
reals $x$ the premouse $M_{n-1}(x) | \delta_x$ is a model of $\ODdet$,
where $\delta_x$ denotes the least Woodin cardinal in
$M_{n-1}(x)$. This generalizes a theorem of Kechris and Solovay to the
context of mice (see Theorem $3.1$ in \cite{KS85}).

Afterwards we will prove in Section \ref{ch:it} that, assuming
$\PI^1_{n+1}$ determinacy, there is in fact an $\omega_1$-iterable
model which has $n$ Woodin cardinals. More precisely, we will prove
under this hypothesis that $M_n^\#(x)$ exists and is
$\omega_1$-iterable for all reals $x$. The proof of this result
divides into different steps. In Sections \ref{sec:prep} and
\ref{sec:corrnsuit} we will introduce the concept of $n$-suitable
premice and show that if $n$ is odd, using the results in Section
\ref{ch:WCFromDet}, such $n$-suitable premice exist assuming $\PI^1_n$
determinacy and $\Pi^1_{n+1}$ determinacy.  The rest of Section
\ref{ch:it} will also be divided into different cases depending if $n$
is even or odd.

In Section \ref{sec:assumption} we will show that $\PI^1_{n+1}$
determinacy already implies that every $\SIGMA^1_{n+2}$-definable
sequence of pairwise distinct reals is countable. The proof of Theorem
\ref{cor:bfDetSharps} is organized as a simultaneous induction for odd
and even levels $n$. We will show in Section \ref{sec:odd} that the
hypothesis that there is no $\SIGMA^1_{2n+1}$-definable
$\omega_1$-sequence of pairwise distinct reals in addition to
$\PI^1_{2n-1}$ determinacy and $\Pi^1_{2n}$ determinacy suffices to
prove that $M_{2n-1}^\#$ exists and is $\omega_1$-iterable. This
finishes the proof of Theorem \ref{cor:bfDetSharps} for the odd levels
$n$ of the inductive argument. In Section \ref{sec:even} we will
finally prove the analogous result for even $n$, that means we will
show that if every $\SIGMA^1_{2n+2}$-definable sequence of pairwise
distinct reals is countable and $\PI^1_{2n}$ determinacy and
$\Pi^1_{2n+1}$ determinacy hold, then $M_{2n}^\#$ exists and is
$\omega_1$-iterable. The proof is different for the odd and even
levels of the projective hierarchy because of the periodicity in terms
of uniformization and correctness.

We close this paper with proving the Determinacy Transfer Theorem for
even $n$ as an application of Theorem \ref{cor:bfDetSharps} and
mentioning related open questions in Section \ref{ch:conclusion}.

Finally we would like to thank the referee for carefully reading this
paper and making several helpful comments and suggestions.

\section{Introduction}
\label{ch:introproj}

In this section we will introduce some relevant notions such as games
and mice and their basic properties. In particular we will summarize
some known results about the connection between large cardinals and
the determinacy of certain games. Then we will have a closer look at
mice with finitely many Woodin cardinals and introduce the premouse
$M_n^\#$.

\subsection{Games and Determinacy}
\label{sec:GamesDet}

 Throughout this paper we will consider games in the sense of
 \cite{GS53} if not specified otherwise. We will always identifiy
 $^\omega 2$, $\BS$, and $\R$ with each other, so that we can define
 Gale-Stewart games as follows. 

 \begin{definition}[Gale, Stewart] \label{def:GSgame} Let
   $A \subseteq \mathbb{R}$. By $G(A)$\index{Game@$G(A)$} we denote
   the following game.

   \begin{figure}[h]
     \centering
     \begin{tabular}{c|cccccc}
       $\mathrm{I}$ & $i_0$ & & $i_2$ & & $\dots$ & \\ \hline
       $\mathrm{I}\mathrm{I}$ & & $i_1$ & & $i_3$ & & $\dots$
     \end{tabular}
     \; \; for $i_n \in \{0,1\}$ and $n \in \omega$.
   \end{figure}

   We say player $\mathrm{I}$ wins the game $G(A)$ iff
   $(i_n)_{n < \omega} \in A$. Otherwise we say player
   $\mathrm{I}\mathrm{I}$ wins.
 \end{definition}

 \begin{definition}
   Let $A \subseteq \mathbb{R}$. We say $G(A)$ (or the set $A$ itself)
   is \emph{determined}\index{determined} iff one of the players has a
   winning strategy in the game $G(A)$ (in the obvious sense).
 \end{definition}

 Some famous results concerning the question which sets of reals are
 determined are the following. The first three theorems can be proven
 in $\ZFC$. 

 \begin{thm}[Gale, Stewart in \cite{GS53}]
   Let $A \subset \mathbb{R}$ be open or closed and assume the
   Axiom of Choice. Then $G(A)$ is determined.
 \end{thm}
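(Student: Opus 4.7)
The plan is to run the classical non-loss argument of Gale and Stewart. It suffices to treat the closed case, since if $A$ is open then the complementary game $G(\mathbb{R} \setminus A)$ with the roles of the two players swapped is closed, and a winning strategy there for one player translates at once to a winning strategy for the other in $G(A)$.

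So let $A \subseteq \mathbb{R}$ be closed. Call a position $p \in 2^{<\omega}$ \emph{bad for II} if player II has no winning strategy in the residual game beginning at $p$. I will prove the dichotomy by distinguishing two cases according to whether the empty position $\emptyset$ is bad for II. If $\emptyset$ is not bad for II, then by definition II has a winning strategy in $G(A)$ and we are done. If $\emptyset$ is bad for II, the aim is to define a winning strategy $\sigma$ for player I by recursion. At a bad position $p$ where it is I's turn to move, I will verify that at least one of the two extensions $p \,\widehat{}\, 0$, $p \,\widehat{}\, 1$ is again bad for II; otherwise one could paste together winning strategies for II from $p \,\widehat{}\, 0$ and from $p \,\widehat{}\, 1$ to obtain a winning strategy for II starting from $p$, contradicting badness. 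Dually, at a bad position $p$ where it is II's turn to move, \emph{every} extension $p \,\widehat{}\, i$ remains bad for II, for otherwise II could simply play into a position from which II has a winning strategy. Let $\sigma$ always pick the least bad successor at I-positions; since the move set is $\{0,1\}$ no choice principle is needed, and the Axiom of Choice enters only if one wishes to generalize the argument to games whose move set is an arbitrary well-orderable (or choice-guaranteed) set.

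It remains to verify that every play $x \in 2^\omega$ consistent with $\sigma$ lies in $A$, which is the single point at which closedness is used. By construction every initial segment $x \restriction k$ is bad for II. Suppose towards a contradiction that $x \notin A$. Since $A$ is closed, $2^\omega \setminus A$ is open, so some basic open neighborhood around $x$ is disjoint from $A$; equivalently, there is a $k$ such that every extension of $x \restriction k$ lies outside $A$. But then the trivial strategy for II starting at $x \restriction k$ is winning, contradicting the badness of $x \restriction k$. Hence $x \in A$ and $\sigma$ is winning for I, completing the closed case and thereby the theorem. The only conceptually nontrivial step is the recursive construction of $\sigma$ together with the closedness-based verification that it actually steers the play into $A$; all other points are bookkeeping.
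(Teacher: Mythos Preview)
Your argument is the standard Gale--Stewart non-losing-position construction and is correct. Note, however, that the paper does not actually prove this theorem: it is stated in the introductory Section~\ref{sec:GamesDet} as a background result with a citation to \cite{GS53}, alongside several other classical determinacy results (Borel determinacy, analytic determinacy from a measurable, etc.) that are likewise only cited. So there is no proof in the paper to compare against; your write-up would serve perfectly well as the omitted classical argument.
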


 \begin{thm}[Gale, Stewart in \cite{GS53}]
   Assuming the Axiom of Choice there is a set of reals which is not
   determined.
 \end{thm}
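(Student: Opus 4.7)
The plan is to carry out the standard diagonalisation against all strategies. Working under \AC, I would first well-order the collection of strategies for each player. A strategy for either player is a function from finite sequences of moves to the next move, so there are exactly $2^{\aleph_0}$ strategies. Using \AC, I fix a well-ordering of length $2^{\aleph_0}$ of the set $\Sigma$ of all strategies (for either player), say $\Sigma = \{\sigma_\alpha : \alpha < 2^{\aleph_0}\}$, and I will construct $A \subseteq \R$ of cardinality $2^{\aleph_0}$ together with a disjoint set $B \subseteq \R$, also of cardinality $2^{\aleph_0}$, by transfinite recursion so that $A$ defeats every strategy for player $\mathrm{II}$ and $B$ defeats every strategy for player $\mathrm{I}$.

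The key quantitative input is the following observation: for any strategy $\sigma$ (for either player), the set of plays $z \in \BS$ that are consistent with $\sigma$ has cardinality exactly $2^{\aleph_0}$, since the opponent is free to play any element of $\BS$ and distinct opponent plays yield distinct outcomes. At stage $\alpha < 2^{\aleph_0}$ of the recursion, suppose I have already chosen sets $A_\alpha, B_\alpha$ with $|A_\alpha|, |B_\alpha| \leq |\alpha| + \aleph_0 < 2^{\aleph_0}$ and $A_\alpha \cap B_\alpha = \emptyset$. If $\sigma_\alpha$ is a strategy for player $\mathrm{II}$, then since there are $2^{\aleph_0}$ plays consistent with $\sigma_\alpha$ and only fewer than $2^{\aleph_0}$ of them lie in $B_\alpha$, I can pick some such play $x_\alpha \notin B_\alpha$ and put it into $A_{\alpha+1}$; symmetrically, if $\sigma_\alpha$ is for player $\mathrm{I}$, I pick $y_\alpha \notin A_\alpha$ consistent with $\sigma_\alpha$ and put it into $B_{\alpha+1}$. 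Taking unions at limits and setting $A = \bigcup_{\alpha < 2^{\aleph_0}} A_\alpha$ completes the construction.

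Finally I would verify that $A$ is not determined: if player $\mathrm{I}$ had a winning strategy $\sigma$, then $\sigma = \sigma_\alpha$ for some $\alpha$, and at stage $\alpha$ we placed a $\sigma_\alpha$-consistent play into $B \subseteq \R \setminus A$, contradicting the assumption that $\sigma$ wins for $\mathrm{I}$; similarly for player $\mathrm{II}$.

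The only real obstacle is the cardinal-arithmetic bookkeeping, namely checking that at each stage $\alpha < 2^{\aleph_0}$ the pool of available $\sigma_\alpha$-consistent reals has not been exhausted, and this is immediate from $|\alpha| < 2^{\aleph_0}$ together with the fact that each strategy admits $2^{\aleph_0}$ consistent plays. Everything else is routine, and the whole argument uses \AC only via the well-ordering of $\Sigma$ and the choices made at each stage.
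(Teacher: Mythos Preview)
Your argument is correct and is the standard diagonalisation proof of this result. The paper does not supply its own proof of this theorem; it merely states the result with a citation to \cite{GS53}, so there is nothing further to compare.
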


 \begin{thm}[Martin in \cite{Ma75}]
   Let $A \subset \mathbb{R}$ be a Borel set and assume the
   Axiom of Choice. Then $G(A)$ is determined.
 \end{thm}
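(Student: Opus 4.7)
The plan is to proceed by transfinite induction on the Borel hierarchy, showing that every set in $\SIGMA^0_\alpha$ and $\PI^0_\alpha$ is determined. The base case $\alpha = 1$, namely open and closed sets, is already the content of the Gale–Stewart theorem cited immediately above. The obstacle to climbing the hierarchy is that determinacy is not obviously preserved under countable unions: one cannot just stitch together winning strategies for player II across countably many sets $A_k$ to produce a strategy for $\bigcup_k A_k$. The device I would use, due to Martin, is to replace $G(A)$ by an \emph{auxiliary} (or covering) game $\tilde G(\tilde A)$ played on a much larger alphabet, designed so that the new payoff set $\tilde A$ is clopen, and so that a winning strategy in the auxiliary game projects back to one in $G(A)$.

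Concretely, I would formalize this through the notion of an \emph{unraveling} of $A$: a tree $\tilde T$ on some set $X$, together with a continuous map $\pi\colon [\tilde T]\to \BS$, such that $\pi^{-1}[A]$ is clopen in $[\tilde T]$ and such that strategies transfer coherently between $G(A)$ and the auxiliary game on $\tilde T$ with payoff $\pi^{-1}[A]$. The argument then reduces to two lemmas: first, every clopen set is trivially unraveled (take $\pi = \mathrm{id}$); second, the class of unraveled sets is closed under complementation and countable unions. Once these are in place, a transfinite induction on Borel rank produces an unraveling of any Borel $A$; Gale–Stewart applied to the clopen auxiliary payoff yields a winning strategy there, which projects via $\pi$ to a winning strategy in $G(A)$.

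The heart of the proof, and where I expect the main difficulty to lie, is the closure of the unraveled sets under countable unions. Given unravelings $\pi_k\colon [\tilde T_k]\to \BS$ of sets $A_k$, one must amalgamate them into a single tree $\tilde T_\infty$ whose branches carry enough auxiliary information to simultaneously decide membership in every $A_k$: at stage $k$ of a run, the players commit to moves in the $k$-th auxiliary alphabet, with coherence conditions tying these commitments to those at earlier stages. Executing this so that the resulting projection still transfers strategies forces the auxiliary alphabets to grow rapidly, in fact to the level of iterated power sets of $\omega$ along the induction; organizing these sizes, verifying coherence, and checking the strategy-transfer properties across the tower of covering maps is the main technical obstacle. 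Once this amalgamation is carried out, the transfinite induction assembles unravelings of ever more complex Borel sets, and the theorem follows by invoking the clopen case of Gale–Stewart at the top and pulling the resulting strategy back through the tower of projections.
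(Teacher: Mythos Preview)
The paper does not prove this theorem at all: it is stated in the introductory background section with attribution to Martin \cite{Ma75} and no proof is given. So there is no ``paper's own proof'' to compare against; the result is simply quoted as a known classical theorem.

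That said, your sketch is an accurate outline of Martin's actual proof via unraveling (covering games), and the points you flag as the main difficulties---closure of the class of unraveled sets under countable unions, the tower of coverings, and the growth of the auxiliary alphabets to iterated power sets---are exactly the technical core of that argument. One small correction of emphasis: the induction is most cleanly phrased not as ``every $\SIGMA^0_\alpha$ and $\PI^0_\alpha$ set is determined'' but rather as ``every closed set can be unraveled (simultaneously with respect to a given countable family of coverings)''; determinacy of arbitrary Borel sets then follows because a Borel set pulls back to a closed set in some covering, and closed determinacy (Gale--Stewart) applies there. Your plan is essentially this, but stating the inductive hypothesis directly in terms of determinacy rather than unraveling would run into precisely the obstacle you name in your second sentence.
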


 To prove stronger forms of determinacy we need to assume for example
 large cardinal axioms.

 \begin{thm}[Martin in \cite{Ma70}]
   Assume $\ZFC$ and that there is a measurable cardinal. Let
   $A \subseteq \mathbb{R}$ be an analytic, i.e.
   $\SIGMA^1_1$-definable, set. Then $G(A)$ is determined.
 \end{thm}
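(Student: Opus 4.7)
The plan is to follow Martin's original argument from \cite{Ma70}, which uses the measurable cardinal to upgrade the trivial Suslin representation of the analytic set $A$ to a homogeneous one and then deduces determinacy via a covering game. Concretely, fix a measurable cardinal $\kappa$ with normal measure $\mu$. Since $A$ is $\SIGMA^1_1$, $A = p[T]$ for some tree $T$ on $\omega \times \omega$; using $\mu$ one manufactures a tree $\widetilde{T}$ on $\omega \times \kappa$ together with a coherent system of $\kappa$-complete ultrafilters $\mu_s$ on $\kappa^{|s|}$ (for $s \in \omega^{<\omega}$) such that $A = p[\widetilde{T}]$ and $\widetilde{T}$ is \emph{homogeneous}: for every $x \in \BS$, $x \in A$ iff the tower $(\mu_{x\restriction n})_{n<\omega}$ is countably complete. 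This is where the measurable cardinal enters essentially, through normality and $\kappa$-completeness of $\mu$.

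Next I would consider the auxiliary covering game $G^\ast$ in which player $\mathrm{I}$ plays $i_{2n} \in \{0,1\}$ together with an ordinal $\alpha_n < \kappa$, while player $\mathrm{I}\mathrm{I}$ plays $i_{2n+1} \in \{0,1\}$; the winning condition for player $\mathrm{I}$ is that $(x\restriction n{+}1, \alpha\restriction n{+}1) \in \widetilde{T}$ for every $n$, i.e., $(\alpha_n)_{n<\omega}$ witnesses $x \in p[\widetilde{T}] = A$. This condition is closed in the product topology, so by the Gale-Stewart theorem $G^\ast$ is determined in $\ZFC$.

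The transfer of strategies is the heart of the proof. If player $\mathrm{I}$ wins $G^\ast$, then dropping the ordinal coordinates of the strategy immediately yields a winning strategy in $G(A)$, because every play $x$ produced comes equipped with a branch witnessing $x \in p[\widetilde{T}] = A$. If player $\mathrm{I}\mathrm{I}$ wins $G^\ast$ with strategy $\tau$, I would integrate out the ordinal moves to define $\tau^\ast$: for each partial position $p$ of natural numbers played so far, $\tau(p, \vec\alpha)$ takes only countably many values as $\vec\alpha$ ranges over the ordinal coordinates of $\mathrm{I}$'s moves so far, and by $\kappa$-completeness exactly one value $v$ is taken on a set of full measure; set $\tau^\ast(p) = v$. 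To verify that $\tau^\ast$ wins $G(A)$, suppose toward contradiction that some play $x$ consistent with $\tau^\ast$ has $x \in A$; then the tower $(\mu_{x\restriction n})_{n<\omega}$ is countably complete, and intersecting the measure-one set of branch witnesses $\vec\alpha$ for $x$ in $\widetilde{T}$ with the measure-one sets on which $\tau$ agrees with $\tau^\ast$ produces an $\vec\alpha$ giving a run of $G^\ast$ that follows $\tau$ yet has player $\mathrm{I}$ winning, contradicting the choice of $\tau$.

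The main obstacle is the construction of the homogeneous tree $\widetilde{T}$ together with its measures $\mu_s$, and then the simultaneous diagonalization in the last step; both rest squarely on the $\kappa$-completeness and normality of $\mu$, which is ultimately what distinguishes this argument from the elementary Gale-Stewart and Borel-level results cited immediately above.
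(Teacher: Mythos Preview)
The paper does not prove this theorem; it is stated as a background result attributed to Martin \cite{Ma70}, so there is no in-paper argument to compare your sketch against. Your outline does capture the essential structure of Martin's proof --- an auxiliary closed game with ordinal side-moves below the measurable $\kappa$, Gale--Stewart determinacy of that game, and a transfer of strategies by integrating out the ordinal moves against $\kappa$-complete measures.

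There is, however, one point where your setup is inverted relative to Martin's actual construction. From a single measurable $\kappa$, the homogeneous tree is built for the $\Pi^1_1$ \emph{complement} $\neg A$, not for the $\Sigma^1_1$ set $A$ itself: writing $A=p[T]$, one has $x\notin A$ iff $T_x$ is wellfounded, and the tree of partial rankings of $T_x$ into $\kappa$ projects to $\neg A$ and is homogeneous via the product measures induced by $\mu$. Consequently, in the auxiliary game it is player $\mathrm{II}$ (who is trying to force $x\notin A$) who plays the ordinals, and the integration step produces a winning strategy for player $\mathrm{I}$ in $G(A)$ from a winning strategy for $\mathrm{I}$ in the auxiliary game. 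Your version has player $\mathrm{I}$ playing ordinals and presupposes a homogeneous tree projecting to $A$ itself; that is not what Martin's construction yields from a single measurable (one measurable gives $\Pi^1_1$ homogeneously Suslin, and $\Sigma^1_1$ determinacy then follows by taking complements). The fix is simply to swap the roles of $\mathrm{I}$ and $\mathrm{II}$ and run your argument for $\neg A$; the rest of your outline then goes through verbatim.
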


 Determinacy in the projective hierarchy can be obtained from finitely
 many Woodin cardinals which were introduced by the third author in
 1984 and are defined as follows.

 \begin{definition}
   Let $\kappa<\delta$ be cardinals and $A \subseteq \delta$. Then
   $\kappa$ is called \emph{$A$-reflecting in $\delta$} iff for every
   $\lambda < \delta$ there exists a transitive model of set theory
   $M$ and an elementary embedding $\pi: V \rightarrow M$ with
   critical point $\kappa$, such that $\pi(\kappa) > \lambda$ and
   \[ \pi(A) \cap \lambda = A \cap \lambda. \]
  \end{definition}

  \begin{definition}
    A cardinal $\delta$ is called a \emph{Woodin
      cardinal}\index{Woodin cardinal} iff for all
    $A \subseteq \delta$ there is a cardinal $\kappa < \delta$ which
    is $A$-reflecting in $\delta$.
  \end{definition}

 \begin{thm}[Martin, Steel in \cite{MaSt89}]\label{thm:MSPD}
   Let $n \geq 1$. Assume $\ZFC$ and that there are $n$ Woodin
   cardinals with a measurable cardinal above them all. Then every
   $\SIGMA^1_{n+1}$-definable set of reals is determined.
 \end{thm}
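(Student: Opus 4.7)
The goal is to show that every $\SIGMA^1_{n+1}$ set $A \subseteq \R$ is determined. Since the class of determined sets is closed under complementation, it suffices to show that every $\PI^1_{n+1}$ set is determined, and the route I would take is through the notion of \emph{homogeneously Suslin} representations: a set is $\kappa$-homogeneously Suslin if it is the projection of a tree $T$ on $\omega \times \kappa$ equipped with a coherent system of countably complete measures on the level sets of $T$, and such sets are determined by a direct generalization of Martin's proof of analytic determinacy (using the measures to reflect an auxiliary closed game down to a determined one).

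The plan is to argue by induction on $n$. Fix Woodin cardinals $\delta_1 < \cdots < \delta_n$ with a measurable $\kappa > \delta_n$. I would prove, by induction on $k$ with $0 \leq k \leq n$, that every $\PI^1_{k+1}$ set of reals is $\delta_{n-k+1}$-homogeneously Suslin, with the convention $\delta_{n+1} = \kappa$. The base case $k = 0$ is the Martin-Solovay construction: from a normal measure on $\kappa$ one builds the Martin tree for a $\PI^1_1$ set $A$, and the associated tower of measures on $\kappa^{<\omega}$ witnesses homogeneity. This recovers Martin's analytic determinacy as the $n = 0$ instance.

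The inductive step from $k$ to $k+1$ splits into two parts. First, if every $\PI^1_k$ set is homogeneously Suslin, then every $\SIGMA^1_{k+1}$ set, being a projection of such a set, is \emph{weakly homogeneously Suslin}: one replaces the coherent measures of $T$ by the induced projective measures on finite sequences from $\omega \times \kappa$. Second, and crucially, one invokes the key theorem of Martin and Steel: if $B$ is ${<}\delta$-weakly homogeneously Suslin and $\delta$ is a Woodin cardinal, then $\R \setminus B$ is ${<}\delta$-homogeneously Suslin. Applying this with $\delta = \delta_{n-k}$ converts the weakly homogeneous representation of the $\SIGMA^1_{k+1}$ complement into a homogeneous representation of the $\PI^1_{k+1}$ set, completing the step. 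Iterating $n$ times exhausts the Woodin cardinals and produces a homogeneous representation of every $\PI^1_{n+1}$ set, hence determinacy.

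The main obstacle, and the whole technical heart of \cite{MaSt89}, is the complements lemma used in the inductive step. Its proof builds, from a weakly homogeneous representation of $B$, a countably-closed iteration tree on $V$ exploiting the Woodin cardinal $\delta$ through the reflection property of some $A$-reflecting $\kappa < \delta$, arranged so that genericity of plays makes the direct limit absorb potential witnesses to $x \in B$; the combinatorics of branches of the iteration tree against the homogeneity measures then exhibits a tree projecting to $\R \setminus B$ together with a coherent measure system. Setting up this iteration tree argument—and in particular verifying countable completeness of the new measures via an alternating-chains/genericity-iteration analysis—is the hard part. Everything else in the proof is bookkeeping, and the induction terminates cleanly after $n$ steps.
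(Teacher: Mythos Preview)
Your proposal is correct and outlines precisely the standard Martin--Steel argument via homogeneously Suslin representations; the paper does not give its own proof of this theorem but simply cites \cite{MaSt89} and refers to modern write-ups in \cite{Sch14} and \cite{Ne10}, which follow exactly the route you describe. A minor quibble: the homogeneity degrees should be tracked as ${<}\delta$ rather than $\delta$ (e.g., $\PI^1_1$ sets are ${<}\kappa$-homogeneously Suslin, and the Martin--Steel complements lemma takes $\delta^+$-weakly homogeneous to ${<}\delta$-homogeneous), but your indexing conveys the right picture and the induction terminates as you say.
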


 See for example Chapter $13$ in \cite{Sch14} or Section $5$ in
 \cite{Ne10} for modern write-ups of the proof of Theorem
 \ref{thm:MSPD}.

The existence of infinitely many Woodin cardinals with a measurable
cardinal above them all yields a much stronger form of determinacy as
shown in the following theorem due to the third author. 

 \begin{thm}[\cite{KW10}]
   Assume $\ZFC$ and that there are $\omega$ Woodin cardinals with a
   measurable cardinal above them all. Then every set of reals in
   $L(\mathbb{R})$ is determined.
 \end{thm}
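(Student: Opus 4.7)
The plan is to establish $\AD^{L(\R)}$ via Woodin's derived model theorem, using the measurable cardinal above the $\omega$ Woodins to transfer determinacy from a symmetric generic extension back to $V$. Let $\delta = \sup_{n<\omega} \delta_n$, where $\delta_0 < \delta_1 < \cdots$ are the Woodin cardinals, and let $\kappa > \delta$ be measurable with normal measure $U$. First I would fix a $V$-generic filter $G$ for $\Col(\omega, {<}\delta)$, form the symmetric reals $\R^* = \bigcup_{\alpha < \delta} \R^{V[G \restriction \alpha]} \in V[G]$, and consider the derived model $L(\R^*)^{V[G]}$.

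The first main step is the derived model theorem: $L(\R^*)^{V[G]} \models \AD^+$. Any $A \subseteq \R^*$ with $A \in L(\R^*)$ is, by reflection inside $L(\R^*)$, definable from a real parameter $r$ and an ordinal; the parameter $r$ lies in $V[G \restriction \alpha]$ for some $\alpha < \delta$, above which infinitely many Woodin cardinals $\delta_n$ remain. Applying the Martin--Steel construction with these Woodins inside $V[G \restriction \alpha]$ produces auxiliary games on iteration trees whose winning strategies in $V[G]$ project to winning strategies for $G(A)$, and a homogeneity argument for the Levy collapse $\Col(\omega, {<}\delta)$ then locates such a strategy inside $L(\R^*)$. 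The second main step is to descend from $L(\R^*)^{V[G]} \models \AD$ to $L(\R)^V \models \AD$. Let $j \colon V \to M$ be the ultrapower by $U$, so $\crit(j) = \kappa$, $V_\kappa \subseteq M$, and in particular $\R^V = \R^M$. For a fixed $A \in L(\R)^V \cap \Pot(\R^V)$ defined by $A = \{x \in \R : L(\R) \models \varphi(x,r)\}$ with $r \in \R$, the formula $\varphi$ determines a companion set $A^* \in L(\R^*)^{V[G]}$ with $A^* \cap \R^V = A$. Determinacy of $A^*$ yields a strategy $\sigma^* \in L(\R^*)$ which is itself a real; a homogeneity argument combined with the elementarity of $j$ shows that a winning strategy for $G(A)$ already exists inside $V$.

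The principal obstacle is the derived model theorem itself, and within it the passage from projective-style auxiliary games to games on sets $A$ of arbitrarily high $L(\R^*)$-complexity. Handling transfinite iterates through the Woodin cardinals, and showing that strategies produced in $V[G]$ are symmetric enough to lie in $L(\R^*)$, requires Woodin's analysis of generic absoluteness for universally Baire sets together with a careful exploitation of the homogeneity of $\Col(\omega, {<}\delta)$. The measurable cardinal $\kappa > \delta$ plays a role only in the transfer step from $V[G]$ back to $V$, but it is essential there: without it the derived model theorem would only deliver $\AD$ in the symmetric extension, not $\AD^{L(\R)}$ in $V$ itself.
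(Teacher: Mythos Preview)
The paper does not prove this theorem; it appears in the introductory background section and is simply cited from \cite{KW10} without argument, so there is no proof in the paper to compare your proposal against.

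On the proposal itself: the derived-model route is a legitimate modern approach, but your transfer step from $L(\R^*)^{V[G]}$ back to $L(\R)^V$ has a real gap. A winning strategy $\sigma^* \in L(\R^*)$ for the companion set $A^*$ is a real of the symmetric extension, not of $V$; restricting it to plays against opponents in $\R^V$ does not place the resulting plays in $\R^V$ (since $\sigma^*$ itself need not lie in $\R^V$), so you cannot directly read off a strategy for $A$ in $V$. Invoking ``homogeneity combined with the elementarity of $j$'' does not close this without further work: homogeneity of $\Col(\omega,{<}\delta)$ tells you that \emph{which} player wins $A^*$ is decided in $V$, but it does not by itself produce a $V$-strategy, and a single ultrapower embedding $j$ does not move $\sigma^*$ into $V$. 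The standard ways to close this gap either (a) bypass the derived model entirely and show directly that every $A \in L(\R)$ is $\kappa$-homogeneously Suslin in $V$ --- the Woodin cardinals give weakly homogeneous trees via Martin--Steel, and the measurable above $\delta$ upgrades these to homogeneous trees via the Martin--Solovay construction, after which Martin's theorem yields determinacy in $V$ --- or (b) run a genericity/absoluteness argument linking $L(\R)^V$ to derived models along an iteration of the measure on $\kappa$. Either way, the role of the measurable is more substantive than your sketch indicates, and the derived model theorem you invoke as a black box is itself at least as hard as the target statement.
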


 The goal of this paper is to prove results in the converse
 direction. That means we want to obtain large cardinal strength from
 determinacy axioms. This is done using inner model theoretic concepts
 which we start to introduce in the next section.

%%% Local Variables:
%%% mode: latex
%%% TeX-master: "Main"
%%% End:

\subsection{Inner Model Theory}

 The most important concept in inner model theory is a
 mouse. Therefore we briefly review the definition of mice in this
 section and mention some relevant properties without proving
 them. The reader who is interested in a more detailed introduction to
 mice is referred to Section $2$ of \cite{St10}. 

 We assume that the reader is familiar with some fine structure theory
 as expounded for example in \cite{MS94} or \cite{SchZe10}.

 In general the models we are interested in are of the form
 $L[\vec{E}]$ for some coherent sequence of extenders $\vec{E}$. This
 notion goes back to Ronald B. Jensen and William J. Mitchell and is
 made more precise in the following definition.

 \begin{definition}
   We say $M$ is a \emph{potential premouse}\index{potential premouse}
   iff
   \[ M = (J_\eta^{\vec{E}}, \in, \vec{E} \upharpoonright \eta,
   E_\eta) \]
   for some fine extender sequence $\vec{E}$ and some ordinal
   $\eta$. We say that such a potential premouse $M$ is
   \emph{active}\index{active premouse} iff $E_\eta \neq \emptyset$.

   Moreover if $\kappa \leq \eta$, we write
   \[ M | \kappa = (J_\kappa^{\vec{E}}, \in, \vec{E} \upharpoonright
   \kappa, E_\kappa). \]
 \end{definition}

 Here \emph{fine extender sequence} is in the sense of Definition
 $2.4$ in \cite{St10}. This definition of a fine extender sequence
 goes back to Section $1$ in \cite{MS94} and \cite{SchStZe02}.

 \begin{definition}
   Let $M$ be a potential premouse. Then we say $M$ is a
   \emph{premouse}\index{premouse} iff every proper initial segment of
   $M$ is $\omega$-sound.
 \end{definition}

 We informally say that a \emph{mouse} is an iterable premouse, but
 since there are several different notions of iterability we try to
 avoid to use the word ``mouse'' in formal context, especially if it
 is not obvious what sort of iterability is meant. Nevertheless
 whenever it is not specified otherwise ``iterable'' in this paper
 always means ``$\omega_1$-iterable'' as defined below and therefore a
 ``mouse'' will be an $\omega_1$-iterable premouse.

 \begin{definition}
   We say a premouse $M$ is
   \emph{$\omega_1$-iterable}\index{iterable@$\omega_1$-iterable} iff
   player $\mathrm{II}$ has a winning strategy in the iteration game
   $\mathcal{G}_\omega(M, \omega_1)$ as described in Section $3.1$ of
   \cite{St10}.
 \end{definition}

 The iteration trees which are considered in Section $3$ in
 \cite{St10} are called
 \emph{normal}\index{normaliterationtrees@normal iteration trees}
 iteration trees.

 Whenever not specified otherwise we will assume throughout this
 paper that all iteration trees are normal to simplify the
 notation. Since normal iteration trees do not suffice to prove for
 example the Dodd-Jensen Lemma (see Section $4.2$ in \cite{St10}) it
 is necessary to consider stacks of normal trees. See Definition $4.4$
 in \cite{St10} for a formal definition of iterability for stacks of
 normal trees.

 All arguments to follow easily generalize to countable stacks of
 normal trees of length $<\omega_1$ instead of just normal trees of
 length $<\omega_1$. The reason for this is that the iterability we
 will prove in this paper for different kinds of premice will in fact
 always be obtained from the sort of iterability for the model $K^c$
 which is proven in Chapter $9$ in \cite{St96}.

 Throughout this paper we will use the notation from \cite{St10} for
 iteration trees.

%%% Local Variables:
%%% mode: latex
%%% TeX-master: "Main"
%%% End:

\subsection{Mice with Finitely Many Woodin Cardinals}

We first fix some notation and give a short background on the mouse
$M_n^\#$. In this paper we always assume $M_n^\#$ to be
$\omega_1$-iterable if not specified otherwise.

The premice we are going to consider in this paper will mostly have
the following form.

\begin{definition}
  Let $n \geq 1$. A premouse $M$ is called
  \emph{$n$-small}\index{small@$n$-small} iff for every critical point
  $\kappa$ of an extender on the $M$-sequence
  \[ M | \kappa \nvDash \text{``there are } n \text{ Woodin
    cardinals''}. \]
  Moreover we say that a premouse $M$ is \emph{$0$-small} iff $M$ is
  an initial segment of Gödel's constructible universe $L$.
\end{definition}

Moreover $\omega$-small premice are defined analogously.

\begin{definition}
  Let $n \geq 1$ and $x \in \BS$. Then
  \emph{$M_n^\#(x)$}\index{Mnsharp@$M_n^\#(x)$} denotes the unique
  countable, sound, $\omega_1$-iterable $x$-premouse which is not
  $n$-small, but all of whose proper initial segments are $n$-small,
  if it exists and is unique.
\end{definition}

\begin{definition}
  Let $n \geq 1$, $x \in \BS$ and assume that $M_n^\#(x)$ exists. Then
  $M_n(x)$\index{Mn@$M_n(x)$} is the unique $x$-premouse which is
  obtained from $M_n^\#(x)$ by iterating its top measure out of the
  universe.
\end{definition}

\begin{remark}
  We denote $M_n^\#(0)$ and $M_n(0)$ by $M_n^\#$ and $M_n$ for $n \geq
  0$. 
\end{remark}

\begin{remark}
  We say that $M_0^\#(x) = x^\#$ for all $x \in \BS$, where $x^\#$
  denotes the least active $\omega_1$-iterable premouse if it
  exists. Moreover we say that $M_{0}(x) = L[x]$ is Gödel's
  constructible universe above $x$.
\end{remark}

The two correctness facts due to the third author about the premouse
$M_n^\#(x)$ which are stated in the following lemma are going to be
useful later, because they help transferring projective statements
from $M_n^\#(x)$ to $V$ or the other way around.

\begin{lemma}
\label{lem:corr}
Let $n \geq 0$ and assume that $M_n^\#(x)$ exists and is
$\omega_1$-iterable for all $x \in \BS$. Let $\varphi$ be a
$\Sigma^1_{n+2}$-formula.
\begin{enumerate}[(1)]
\item Assume $n$ is even and let $x \in \BS$. Then we have for every
  real $a$ in $M_n^\#(x)$,
  \[ \varphi(a) \; \leftrightarrow \; M_n^\#(x) \vDash \varphi(a). \]
  That means $M_n^\#(x)$ is $\SIGMA^1_{n+2}$-correct in $V$.
\item Assume $n$ is odd, so in particular $n \geq 1$, and let
  $x \in \BS$. Then we have for every real $a$ in $M_n^\#(x)$,
  \[ \varphi(a) \; \leftrightarrow \; \;
  \forces{M_n^\#(x)}{\Col(\omega,\delta_0)} \varphi(a), \]
  where $\delta_0$ denotes the least Woodin cardinal in
  $M_n^\#(x)$. Furthermore we have that $M_n^\#(x)$ is
  $\SIGMA^1_{n+1}$-correct in $V$.
\end{enumerate}
\end{lemma}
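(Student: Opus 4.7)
The plan is to proceed by simultaneous induction on $n$, handling the even and odd cases together, since the $\SIGMA^1_{n+1}$-correctness obtained at the even (resp.\ odd) level $n$ feeds into the forced $\SIGMA^1_{n+2}$-correctness at the odd (resp.\ even) level $n+1$. The main technical ingredients are Shoenfield absoluteness (for the base case), Woodin's extender-algebra genericity iterations (which iterate $M_n^\#(x)$ to a model $\mathcal{N}$ making a chosen $V$-real $y$ generic over $\mathcal{N}|i(\delta_0)$), homogeneity of the Levy collapse $\Col(\omega,\delta_0)$ (so that forced projective statements are independent of the generic chosen), and the elementarity of iteration maps guaranteed by $\omega_1$-iterability.

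For the base case $n=0$ we have $M_0^\#(x) = x^\#$, and $\SIGMA^1_2$-correctness is essentially Shoenfield: any $\Sigma^1_2$ formula $\exists y\, \psi(a,y)$ with $\psi \in \Pi^1_1$ has its $\Pi^1_1$ part absolute between transitive $\omega$-models sharing the parameters, and the Shoenfield tree provides a witness inside $L[x] \supseteq x^\#$ whenever one exists in $V$. The first genuinely new case is $n=1$, which is the prototype for the induction. For the forced $\SIGMA^1_3$-correctness, given $V \vDash \exists y\,\psi(a,y)$ with $\psi \in \Pi^1_2$, one picks a $V$-witness $y$, applies the genericity iteration to obtain an iterate $\mathcal{N}$ of $M_1^\#(x)$ in which $y$ is generic over $\mathcal{N}|i(\delta_0)$ for the extender algebra, observes that $\mathcal{N}[y] \vDash \psi(a,y)$ by downward $\Pi^1_2$-absoluteness from $V$, absorbs $\mathcal{N}[y]$ into a $\Col(\omega, i(\delta_0))$-extension by universality of the collapse, and pulls the resulting forcing relation back along the elementary iteration map. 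The downward direction picks a $\Col(\omega,\delta_0)$-generic $g$ over $M_1^\#(x)$ (which exists in $V$ by countability), extracts a witness from $M_1^\#(x)[g]$, and applies $\SIGMA^1_2$-correctness of $M_1^\#(x)[g] \subseteq V$ to lift the $\Pi^1_2$ kernel back up.

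Inductive step. Assume the lemma for all $m < n$. If $n$ is odd, the argument proceeds as in the $n=1$ case, with the inductive hypothesis at level $n-1$ (the even case) taking the role that Shoenfield played: the plain $\SIGMA^1_{n+1}$-correctness of $M_{n-1}^\#(z)$, applicable inside the iterate $\mathcal{N}[y]$ after the genericity iteration, lets the $\Pi^1_{n+1}$ part of $\psi(a,y)$ pass between $\mathcal{N}[y]$ and $V$. If $n$ is even, then $M_n^\#(x)$ has one additional Woodin cardinal beyond what is used at level $n-1$, so after applying the genericity iteration at the least Woodin one can locate a witness directly inside $M_n^\#(x)$ below its second Woodin, without needing to pass through the $\Col(\omega,\delta_0)$-collapse; this upgrades the forced correctness of the odd case $n-1$ to plain $\SIGMA^1_{n+2}$-correctness at level $n$. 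In both cases, the claimed $\SIGMA^1_{n+1}$-correctness in the odd clause is a byproduct, since a true $\Sigma^1_{n+1}$ witness lies below the least Woodin of $M_n^\#(x)$ and requires no forcing to exhibit.

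The main obstacle is the bookkeeping in the pullback along the genericity iteration: one must verify that the assertion ``$\forces{}{\Col(\omega,\delta_0)} \varphi(a)$'' really is a statement about $M_n^\#(x)|\delta_0^+$ preserved under the elementary iteration embedding, and that the inductive hypothesis can legitimately be invoked inside the iterate $\mathcal{N}[y]$ over a \emph{new} real $y$ not lying in $M_n^\#(x)$ --- which is precisely why the hypothesis of the lemma requires $M_m^\#(z)$ to exist and be $\omega_1$-iterable for \emph{all} reals $z$, since this universality must survive arbitrary genericity iterations and the introduction of new generic reals.
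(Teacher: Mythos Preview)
Your treatment of the odd case is correct and matches the paper: make a $V$-witness $\bar x$ generic over an iterate $M^*$ at the image of $\delta_0$, apply the inductive (even) hypothesis to $M^*[g]$ construed as $M_{n-1}^\#$ of a new real to transfer the $\Pi^1_{n+1}$ kernel, then use homogeneity of $\Col(\omega,i(\delta_0))$ and elementarity of the iteration map to pull back the forcing relation to $M_n^\#(x)$.

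The even case, however, has a genuine gap. After absorbing a $V$-witness $\bar x$ into $M^*[g]$ you get $M^*[g]\vDash\forall y\,\psi(\bar x,y,a)$, hence $M^*\Vdash_{\Col(\omega,i(\delta_0))}\exists x\,\forall y\,\psi(x,y,a)$ and, by elementarity, $M_n^\#(x)$ forces the same---but that is merely \emph{forced} correctness, exactly as in the odd clause. To obtain $M_n^\#(x)\vDash\exists x\,\forall y\,\psi(x,y,a)$ you must place a witness in $M_n^\#(x)$ itself, and your phrase ``locate a witness directly inside $M_n^\#(x)$ below its second Woodin'' does not name a mechanism for this; an extra Woodin alone will not push $\bar x$ from $M^*[g]$ down into $M^*$.

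The paper fills this gap with $\Pi^1_{n+1}$-uniformization (Moschovakis' Second Periodicity, available since $\PI^1_n$-determinacy follows from the hypothesis): choose $\bar x$ to be a $\Pi^1_{n+1}(a)$-\emph{singleton}, say via $\rho(x,a)$. The inductive odd hypothesis applied to $M^*[g]$ gives, for every $x^*\in M^*[g]$, that $\Vdash^{M^*[g]}_{\Col(\omega,\bar\delta)}\bigl(\rho(x^*,a)\wedge\forall y\,\psi(x^*,y,a)\bigr)$ holds iff the same $\Pi^1_{n+1}$ statement holds in $V$; hence $\bar x$ is the unique real in $M^*[g]$ satisfying this forced condition. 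Since this unique witness is definable in $M^*[g]$ from the ground-model parameter $a$, homogeneity of $\Col(\omega,i(\delta_0))$ forces $\bar x\in M^*$, and then downward absoluteness plus elementarity finish. The even/odd asymmetry in the lemma's conclusion is thus driven by the periodicity of uniformization, not by a Woodin-counting trick.
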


For notational simplicity we sometimes just write $a$ for the standard
name $\check{a}$ for a real $a$ in $M_n^\#(x)$.

\begin{proof}[Proof of Lemma \ref{lem:corr}]
  For $n=0$ this lemma holds by Shoenfield's Absoluteness Theorem (see
  for example Theorem $13.15$ in \cite{Ka03}) applied to the model we
  obtain by iterating the top measure of the active premouse
  $M_0^\#(x)$ and its images until we obtain a model of height
  $\geq \omega_1^V$, because this model has the same reals as
  $M_0^\#(x)$.

  We simultaneously prove that $(1)$ and $(2)$ hold for all $n \geq 1$
  inductively. In fact we are proving a more general statement: We
  will show inductively that both $(1)$ and $(2)$ hold for all
  $n$-iterable $x$-premice for all reals $x$ in the sense of
  Definition $1.1$ in \cite{Ne95} which have $n$ Woodin cardinals
  which are countable in $V$ instead of the concrete $x$-premouse
  $M_n^\#(x)$ as in the statement of Lemma \ref{lem:corr}. Therefore
  notice that we could replace the $z$-premouse $M_n^\#(z)$ in the
  following argument by any $z$-premouse $N$ which is $n$-iterable and
  has $n$ Woodin cardinals which are countable in $V$.

  \textbf{Proof of $(2)$:} We start with proving $(2)$ in the
  statement of Lemma \ref{lem:corr} for $n$, assuming inductively that
  $(1)$ and $(2)$ hold for all $m < n$. For the downward implication
  assume that $n$ is odd and let $\varphi$ be a
  $\Sigma^1_{n+2}$-formula such that $\varphi(a)$ holds in $V$ for a
  parameter $a \in M_n^\#(z) \cap \BS$ for a $z \in \BS$. That means
  \[ \varphi(a) \equiv \exists x \forall y \, \psi(x,y,a) \]
  for a $\Sigma^1_n$-formula $\psi(x,y,a)$. Now fix a real $\bar{x}$
  in $V$ such that
  \[ V \vDash \forall y \, \psi(\bar{x},y,a). \]
  We aim to show that
  \[ \forces{M_n^\#(z)}{\Col(\omega, \delta_0)} \; \varphi(a), \]
  where $\delta_0$ denotes the least Woodin cardinal in $M_n^\#(z)$.

  We first use Corollary $1.8$ in \cite{Ne95} to make $\bar{x}$
  generic over an iterate $M^*$ of $M_n^\#(z)$ for the collapse of the
  image of the bottom Woodin cardinal $\delta_0$ in $M_n^\#(z)$. That
  means there is an iteration tree $\T$ on the $n$-iterable
  $z$-premouse $M_n^\#(z)$ of limit length and a non-dropping branch
  $b$ through $\T$ such that if \[ i: M_n^\#(z) \rightarrow M^* \]
  denotes the corresponding iteration embedding we have that $M^*$ is
  $(n-1)$-iterable and if $g$ is $\Col(\omega, i(\delta_0))$-generic
  over $M^*$, then $\bar{x} \in M^*[g]$. 

  We have that $M^*[g]$ can be construed as an
  $(z \oplus \bar{x})$-premouse satisfying the inductive hypothesis
  and if we construe $M^*[g]$ as an $(z \oplus \bar{x})$-premouse we
  have that in fact $M^*[g] = M_{n-1}^\#(z \oplus \bar{x})$ (see for
  example \cite{SchSt09} for the fine structural details). Therefore
  we have inductively that the premouse $M^*[g]$ is
  $\SIGMA^1_n$-correct in $V$ (in fact it is even
  $\SIGMA^1_{n+1}$-correct in $V$, but this is not necessary here) and
  using downwards absoluteness for the $\Pi^1_{n+1}$-formula
  ``$\forall y \psi(\bar{x},y,a)$'' it follows that
  \[ M^*[g] \vDash \forall y \, \psi(\bar{x},y,a), \]
  because $\bar{x},a \in M^*[g]$. Since the forcing
  $\Col(\omega, i(\delta_0))$ is homogeneous, we have that
  \[ \forces{M^*}{\Col(\omega, i(\delta_0))} \; \exists x \forall y \,
  \psi(x,y,a), \]
  and by elementarity of the iteration embedding
  $i: M_n^\#(z) \rightarrow M^*$ it follows that
  \[ \forces{M_n^\#(z)}{\Col(\omega, \delta_0)} \; \exists x \forall y
  \, \psi(x,y,a), \] as desired.

  For the upward implication of $(2)$ in the statement of Lemma
  \ref{lem:corr} let $n$ again be odd, let $z$ be a real and assume
  that
  \[ \forces{M_n^\#(z)}{\Col(\omega, \delta_0)} \; \varphi(a), \]
  where as above
  $\varphi(a) \equiv \exists x \forall y \, \psi(x,y,a)$ is a
  $\Sigma^1_{n+2}$-formula, $\psi(x,y,a)$ is a $\Sigma^1_n$-formula
  and $a$ is a real such that $a \in M_n^\#(z)$. Let $g$ be
  $\Col(\omega, \delta_0)$-generic over $M_n^\#(z)$ and pick a real
  $\bar{x}$ such that
  \[ M_n^\#(z)[g] \vDash \forall y \, \psi(\bar{x},y,a). \]
  Since $M_n^\#(z)|(\delta_0^+)^{M_n^\#(z)}$ is countable in $V$, we
  can in fact pick the generic $g$ such that we have $g \in V$. Then
  we have that $\bar{x} \in V$. Similar as above $M_n^\#(z)[g]$ can be
  construed as a $z^*$-premouse for some real $z^*$ which satifies the
  inductive hypothesis for $n-1$, in fact we again have that
  $M_n^\#(z)[g] = M_{n-1}^\#(z^*)$ if $M_n^\#(z)[g]$ is construed as a
  $z^*$-premouse. Since $n-1$ is even and we inductively assume that
  $(1)$ in the statement of Lemma \ref{lem:corr} holds for all
  $m < n$, it follows from $(1)$ applied to the $\Pi^1_{n+1}$-formula
  ``$\forall y \psi(\bar{x},y,a)$'' and the premouse $M_n^\#(z)[g]$
  that
  \[ V \vDash \forall y \, \psi(\bar{x},y,a), \]
  and therefore \[ V \vDash \exists x \forall y \, \psi(x,y,a), \] as
  desired. 

  The fact that in this situation $M_n^\#(z)$ is
  $\SIGMA^1_{n+1}$-correct in $V$ also follows from the inductive
  hypothesis, because $n-1$ is even and the inductive hypothesis for
  $n-1$ can be applied to the premouse $M_n^\#(z)$.

  \textbf{Proof of $(1)$:} Now we turn to the proof of $(1)$ in the
  statement of Lemma \ref{lem:corr}. Let $n$ be even and assume
  inductively that $(1)$ and $(2)$ hold for all $m < n$. We again
  start with the proof of the downward implication, that means we want
  to prove that
  \[ M_n^\#(z) \vDash \varphi(a), \]
  where as above $\varphi$ is a $\Sigma^1_{n+2}$-formula which holds
  in $V$ for $a \in M_n^\#(z) \cap \BS$ and $z \in \BS$. That means we
  again have
  \[ \varphi(a) \equiv \exists x \forall y \, \psi(x,y,a) \]
  for a $\Sigma^1_n$-formula $\psi(x,y,a)$. Since $n$ is even, it
  follows from Moschovakis' Second Periodicity Theorem that the
  pointclass $\Pi^1_{n+1}(a)$ has the uniformization property (see
  Theorem $6C.5$ in \cite{Mo09}), because Theorem $2.14$ in
  \cite{Ne02} (see also Corollary \ref{cor:newWoodin1rel}) yields that
  $\PI^1_{n}$ determinacy holds from the hypothesis that
  $M_{n-1}^\#(z)$ exists for all reals $z$. Consider the
  $\Pi^1_{n+1}(a)$-definable set
  \[ \{ x \st \forall y \, \psi(x,y,a) \}. \]
  The uniformization property yields the existence of a real $\bar{x}$
  such that we have $\{ \bar{x} \} \in \Pi^1_{n+1}(a)$ and
  \[ V \vDash \forall y \, \psi(\bar{x},y,a). \]
  So let $\rho$ be a $\Pi^1_{n+1}$-formula such that
  \[ x = \bar{x} \leftrightarrow \rho(x,a) \]
  for all $x \in \BS$. That means we have
  \[ V \vDash \rho(\bar{x},a) \wedge \forall y \,
  \psi(\bar{x},y,a). \]
  Now we use, as in the proof of $(2)$ above, the $n$-iterability of
  $M_n^\#(z)$ and Corollary $1.8$ in \cite{Ne95} to make $\bar{x}$
  generic over an iterate $M^*$ of $M_n^\#(z)$ for the collapse of the
  image of the bottom Woodin cardinal $\delta_0$ in $M_n^\#(z)$. As in
  the proof of $(2)$ this means that in fact there is an iteration
  embedding
  \[ i: M_n^\#(z) \rightarrow M^* \]
  such that $M^*$ is $(n-1)$-iterable and if $g \in V$ is
  $\Col(\omega, i(\delta_0))$-generic over $M^*$ then
  $\bar{x} \in M^*[g]$. Since $a$ is a real in $M_n^\#(z)$ we have
  that
  \[ \forces{M^*[g]}{\Col(\omega, \bar{\delta})} \; \rho(\bar{x},a)
  \wedge \forall y \, \psi(\bar{x},y,a), \]
  where $\bar{\delta}$ denotes the least Woodin cardinal inside
  $M^*[g]$, by the inductive hypothesis applied to the premouse
  $M^*[g]$, construed as an $(z \oplus \bar{x})$-premouse, and the
  $\Pi^1_{n+1}$-formula
  ``$\rho(\bar{x},a) \wedge \forall y \, \psi(\bar{x},y,a)$'', because
  $n-1$ is odd. As above we have that $M^*[g]$, construed as a
  $(z \oplus \bar{x})$-premouse, satisfies the inductive
  hypothesis. Moreover we have as above that the inductive hypothesis
  applied to the model $M^*[g]$ and the $\Pi^1_{n+1}$-formula
  ``$\rho(x,a) \wedge \forall y \, \psi(x,y,a)$'' yields
  that in fact for all $x^* \in M^*[g]$
  \[ \forces{M^*[g]}{\Col(\omega, \bar{\delta})} \; \rho(x^*,a) \wedge
  \forall y \, \psi(x^*,y,a) \; \text{ iff } \; V \vDash \rho(x^*,a)
  \wedge \forall y \, \psi(x^*,y,a). \]

  By the homogeneity of the forcing $\Col(\omega, i(\delta_0))$ this
  implies that the witness $\bar{x}$ for $x$ with $\rho(x,a)$ already
  exists in the ground model $M^*$, since $a \in M^*$ and $\bar{x}$ is
  still the unique witness to the fact that the statement
  ``$\forces{M^*[g]}{\Col(\omega, \bar{\delta})} \; \rho(\bar{x},a)
  \wedge \forall y \, \psi(\bar{x},y,a)$''
  holds true. Therefore it follows by downward absoluteness that
  \[ M^* \vDash \rho(\bar{x},a) \wedge \forall y \,
  \psi(\bar{x},y,a). \] This implies that in particular
  \[ M^* \vDash \exists x \forall y \, \psi(x,y,a). \]
  Using the elementarity of the iteration embedding we finally get
  that
  \[ M_n^\#(z) \vDash \exists x \forall y \, \psi(x,y,a). \]

  For the proof of the upward implication in $(1)$ let $n$ again be
  even and assume that
  $M_n^\#(z) \vDash \exists x \forall y \, \psi(x,y,a)$ for
  $z \in \BS$ and a fixed real $a \in M_n^\#(z)$. Furthermore fix a
  real $\bar{x} \in M_n^\#(z)$ such that
  \[ M_n^\#(z) \vDash \forall y \, \psi(\bar{x},y,a). \]
  Then we obviously have that $\bar{x} \in V$. We want to show that
  $V \vDash \forall y \, \psi(\bar{x},y,a).$ Assume not. That means
  \[ V \vDash \exists y \, \neg \psi(\bar{x},y,a), \]
  where ``$\exists y \, \neg \psi(\bar{x},y,a)$'' is a
  $\Sigma^1_{n+1}$-formula. Therefore the downward implication we
  already proved applied to the formula
  ``$\exists y \, \neg \psi(\bar{x},y,a)$'' and the parameters
  $\bar{x},a \in M_n^\#(z)$ yields that
  \[ M_n^\#(z) \vDash \exists y \, \neg \psi(\bar{x},y,a), \] which is a
  contradiction.
\end{proof}

The proof of Lemma \ref{lem:corr} with Shoenfield absoluteness
replaced by $\SIGMA^1_1$ absoluteness immediately gives the following
lemma.

\begin{lemma}
\label{lem:ctblcorr}
Let $n \geq 0$ and let $M$ be a countable $x$-premouse with $n$ Woodin
cardinals for some $x \in \BS$ such that $M \vDash \ZFC^-$ and $M$ is
$\omega_1$-iterable. Let $\varphi$ be a $\Sigma^1_{n+1}$-formula.
\begin{enumerate}[(1)]
\item Assume $n$ is even. Then we have for every real $a$ in $M$,
  \[ \varphi(a) \; \leftrightarrow \; M \vDash \varphi(a). \]
  That means $M$ is $\SIGMA^1_{n+1}$-correct in $V$.
\item Assume $n$ is odd, so in particular $n \geq 1$. Then we have for
  every real $a$ in $M$,
  \[ \varphi(a) \; \leftrightarrow \; \;
  \forces{M}{\Col(\omega,\delta_0)} \varphi(a), \]
  where $\delta_0$ denotes the least Woodin cardinal in
  $M$. Furthermore we have that $M$ is $\SIGMA^1_{n}$-correct in $V$.
\end{enumerate}
\end{lemma}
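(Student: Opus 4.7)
The plan is to run the simultaneous induction on $n$ from the proof of Lemma~\ref{lem:corr} essentially verbatim, with two simple modifications: the projective complexity is shifted down by one quantifier level (from $\Sigma^1_{n+2}$ to $\Sigma^1_{n+1}$) throughout, and the base case at $n=0$ invokes Mostowski's $\Sigma^1_1$ absoluteness for well-founded transitive models of $\ZFC^-$ in place of Shoenfield's $\Sigma^1_2$ absoluteness. Concretely, for the base case, any $\Sigma^1_1$ formula with real parameters in $M$ is absolute between the countable transitive model $M \models \ZFC^-$ and $V$, since the question reduces to well-foundedness of a definable tree, which is absolute between transitive well-founded models containing the tree. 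Unlike in Lemma~\ref{lem:corr}, no iteration of a top measure to produce a model of height $\omega_1^V$ is needed, because $M$ is already transitive.

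For the inductive step, fix a $\Sigma^1_{n+1}$-formula $\varphi(a) \equiv \exists x \, \forall y \, \psi(x,y,a)$ with $\psi \in \Sigma^1_{n-1}$ and $a \in M \cap \BS$, and follow the proof of Lemma~\ref{lem:corr} line by line with the complexity shift. The key technical tool remains Neeman's genericity iteration (Corollary~1.8 of \cite{Ne95}): given any witness $\bar x \in V$, construct an iteration $i \colon M \to M^*$ and a $\Col(\omega, i(\delta_0))$-generic $g$ (which lies in $V$ because $M$ is countable) such that $\bar x \in M^*[g]$. Reconstrue $M^*[g]$ as a countable $\omega_1$-iterable premouse over a suitable real with $n-1$ Woodin cardinals and satisfying $\ZFC^-$, via the fine-structural analysis of \cite{SchSt09}, so that the inductive hypothesis applies to $M^*[g]$ at index $n-1$.

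The two clauses then follow by the same parity-splitting argument as in the proof of Lemma~\ref{lem:corr}. When $n$ is odd, clause~(2) is obtained by applying the inductive clause~(1) at level $n-1$ to $M^*[g]$ and the $\Pi^1_n$-subformula $\forall y \, \psi$, then transferring back via homogeneity of the collapse and elementarity of $i$; the companion assertion of $\Sigma^1_n$-correctness of $M$ falls out of applying the inductive clause~(1) at level $n-1$ to $M$ itself, viewed as a premouse with one more Woodin than needed. When $n$ is even, clause~(1) is obtained by the dual argument using the inductive clause~(2) at level $n-1$, together with the canonical-witness (uniformization) step from the proof of Lemma~\ref{lem:corr}(1), which forces the chosen witness $\bar x$ to already lie in $M^*$ and thereby gives $M^* \models \varphi(a)$ and hence $M \models \varphi(a)$ by elementarity of $i$.

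The main obstacle is routine bookkeeping: at each step one must verify that the iterate $M^*$ of $M$ and the reconstrued generic extension $M^*[g]$ meet the hypotheses of the lemma at the reduced level $n-1$, namely countability, $\omega_1$-iterability, having $n-1$ Woodin cardinals, and modeling $\ZFC^-$. Countability is preserved since $M$, the iteration trees, and the generics are all countable in $V$; $\omega_1$-iterability of $M^*$ follows from that of $M$ by standard copying, and for $M^*[g]$ construed as a premouse from the analysis in \cite{SchSt09}; and $\ZFC^-$ persists through the collapse. With these routine verifications in place, the argument of Lemma~\ref{lem:corr} goes through unchanged at the shifted complexity, which is precisely the content of the remark preceding the statement of the lemma.
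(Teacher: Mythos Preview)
Your proposal is correct and takes essentially the same approach as the paper: the paper's entire proof is the single sentence that the argument for Lemma~\ref{lem:corr}, with Shoenfield absoluteness replaced by $\Sigma^1_1$ absoluteness at the base case, yields the result, and you have faithfully unpacked exactly that sentence, including the genericity iteration, the parity split, the appeal to the inductive hypothesis on $M^*[g]$, and the routine bookkeeping needed to verify that $M^*[g]$ meets the hypotheses at level $n-1$.
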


The following lemma shows that Lemma \ref{lem:corr} $(1)$ does not
hold if $n$ is odd. Therefore the periodicity in the statement of
Lemma \ref{lem:corr} is necessary indeed.

\begin{lemma}\label{lem:oddnotcorr}
  Let $n \geq 1$ be \emph{odd} and assume that $M_n^\#(x)$ exists and
  is $\omega_1$-iterable for all $x \in \BS$. Then $M_n^\#(x)$ is
  \emph{not} $\SIGMA^1_{n+2}$-correct in $V$.
\end{lemma}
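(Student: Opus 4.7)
The plan is to exhibit a $\SIGMA^1_{n+2}$-formula $\varphi$ with a parameter in $M_n^\#(x) \cap \BS$ such that $\varphi$ holds in $V$ but fails in $M_n^\#(x)$. Take the parameter to be $x$ itself, and let $\varphi(x)$ assert ``there exists a real $y$ coding a countable, sound, $n$-iterable $x$-premouse with $n$ Woodin cardinals,'' where ``$n$-iterable'' is used in the $\PI^1_{n+1}$-sense (in the style of Definition~$1.1$ of \cite{Ne95}), so that $\varphi$ is genuinely $\SIGMA^1_{n+2}$. The underlying intuition is that $V$ can see the countable object $M_n^\#(x)$ coded as a real, while $M_n^\#(x)$ cannot witness an internal copy of itself.

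First I would verify $V \vDash \varphi(x)$: by hypothesis $M_n^\#(x)$ is countable in $V$, sound, $\omega_1$-iterable (hence in particular $n$-iterable), and has $n$ Woodin cardinals, so any real coding $M_n^\#(x)$ witnesses $\varphi(x)$ in $V$.

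Next I would argue $M_n^\#(x) \nvDash \varphi(x)$ by contradiction. Suppose some $y \in M_n^\#(x)$ coded a witness $N$ inside $M_n^\#(x)$. Since $n$-iterability is a $\PI^1_{n+1}$-property and $M_n^\#(x)$ is $\SIGMA^1_{n+1}$-correct in $V$ by Lemma~\ref{lem:corr}(2) (exactly because $n$ is odd), the statement that $N$ is $n$-iterable would remain true in $V$. Standard uniqueness of $M_n^\#(x)$ via comparison then forces the least non-$n$-small initial segment $N_0$ of $N$ to be isomorphic, hence by Mostowski equal, to $M_n^\#(x)$. But this is incompatible with $N_0$ being an initial segment of $N \in M_n^\#(x)$: either $N = N_0 = M_n^\#(x)$, yielding $M_n^\#(x) \in M_n^\#(x)$, or else $N_0 \in N \in M_n^\#(x) = N_0$ produces an $\in$-cycle; both violate Foundation.

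The main obstacle will be pinning down the complexity of iterability at this level --- namely, justifying that one can work with a $\PI^1_{n+1}$-expressible form of iterability (Neeman's $n$-iterability, provided by the $Q$-structure analysis at level $n+1$) that is nonetheless strong enough to run the comparison and uniqueness argument for $M_n^\#(x)$. The odd parity of $n$ is essential here: Lemma~\ref{lem:corr}(2) provides precisely $\SIGMA^1_{n+1}$-correctness for odd $n$, which matches the complexity of $n$-iterability and makes the transfer of ``$N$ is $n$-iterable'' between $M_n^\#(x)$ and $V$ go through; for even $n$ the analogous setup would require $\PI^1_{n+2}$-iterability, giving a $\SIGMA^1_{n+3}$-formula, which is only consistent with --- and indeed reaffirms --- the $\SIGMA^1_{n+2}$-correctness of Lemma~\ref{lem:corr}(1).
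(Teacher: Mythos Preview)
Your approach matches the paper's in outline, but there is a gap in your choice of $\varphi$. You ask that the witness $N$ have ``$n$ Woodin cardinals,'' and then reach for ``the least non-$n$-small initial segment $N_0$ of $N$.'' But having $n$ Woodin cardinals does not imply having a non-$n$-small initial segment: for instance, $M_n^\#(x)|\kappa$, where $\kappa$ is the critical point of the top extender, has $n$ Woodin cardinals yet is $n$-small (every proper initial segment of $M_n^\#(x)$ is $n$-small by minimality). So your $N_0$ may simply fail to exist and the argument stalls. The fix is exactly what the paper does: replace ``has $n$ Woodin cardinals'' by ``is not $n$-small'' in $\varphi$. This is still first-order in a code for $N$, and $M_n^\#(x)$ still witnesses $\varphi(x)$ in $V$.

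With that correction the two proofs differ only in how the contradiction is extracted. You want to compare $N_0$ with $M_n^\#(x)$ and conclude equality; the paper instead cites Lemma~3.1 of \cite{St95}, which for odd $n$ gives $\mathbb{R}\cap M_n(x)\subseteq\mathbb{R}\cap N$ directly from $\Pi^1_{n+1}$-iterability and non-$n$-smallness of $N$, immediately contradicting $N\in M_n^\#(x)$. Your comparison route is viable, but the ``standard uniqueness'' you invoke is not free: that a merely $\Pi^1_{n+1}$-iterable candidate must coincide with the genuine $M_n^\#(x)$ is precisely the content of Steel's results in \cite{St95}, and odd $n$ is essential there as well---for even $n$ there are $\Pi^1_{n+1}$-iterable non-$n$-small premice that are not $M_n^\#$. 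The paper also uses Steel's $\Pi^1_{n+1}$-iterability rather than Neeman's $n$-iterability, since the comparison lemmas in \cite{St95} are stated for the former.
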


\begin{proof}[Proof sketch]
  Consider for example the following $\Sigma^1_{n+2}$-formula
  $\varphi$, where $\Pi^1_{n+1}$-iterability is defined in Definition
  $1.6$ in \cite{St95} (see also Section \ref{sec:prepKS} in this
  paper for some results related to $\Pi^1_{n+1}$-iterability).
  \begin{align*} \varphi(x) \equiv \exists & N \text{ such that } N
                                             \text{ is a countable }x\text{-premouse } \\
                                           & \text{ which is }
                                             \Pi^1_{n+1}\text{-iterable
                                             and not } n\text{-small}.
    \end{align*}
    The statement ``$N$ is $\Pi^1_{n+1}$-iterable'' is
    $\Pi^1_{n+1}$-definable uniformly in any code for $N$ (see Lemma
    $1.7$ in \cite{St95}). Therefore $\varphi$ is a
    $\Sigma^1_{n+2}$-formula.

    We have that $\varphi(x)$ holds in $V$ for all reals $x$ as
    witnessed by the $x$-premouse $M_n^\#(x)$, because
    $\omega_1$-iterability implies $\Pi^1_{n+1}$-iterability, since we
    assumed that $M_n^\#(x)$ exists for all $x \in \BS$ (see Lemma
    \ref{lem:steel2.2} $(2)$ which uses Lemma $2.2$ in
    \cite{St95}). 

    Assume toward a contradiction that $\varphi(x)$ holds in
    $M_n^\#(x)$ as witnessed by some $x$-premouse $N$ in $M_n^\#(x)$
    which is $\Pi^1_{n+1}$-iterable and not $n$-small. Since $n$ is
    odd, Lemma $3.1$ in \cite{St95} implies that
    $\mathbb{R} \cap M_n(x) \subseteq \mathbb{R} \cap N$, which is a
    contradiction as $N \in M_n^\#(x)$.

    Therefore $\varphi(x)$ cannot hold in $M_n^\#(x)$ and thus
    $M_n^\#(x)$ is not $\SIGMA^1_{n+2}$-correct in $V$.
  \end{proof}

  See for example \cite{St95} for further information on the premouse
  $M_n^\#$.

%%% Local Variables:
%%% mode: latex
%%% TeX-master: "Main"
%%% End:

\subsection{Mice and Determinacy}

Some of the results mentioned in Section \ref{sec:GamesDet} can be
improved using the existence of certain mice instead of large
cardinals in $V$ as an hypothesis. In this section we will list
results of that form and mention some things known about the converse
direction.

In the context of analytic determinacy Harrington was able to prove
the converse of Martin's result from \cite{Ma70} and therefore
obtained the following theorem.

\begin{thm}[Harrington in \cite{Ha78}, Martin in \cite{Ma70}]
  The following are equivalent over $\ZFC$.
  \begin{enumerate}[$(i)$]
  \item The mouse $0^\#$ exists, and
  \item every $\Pi^1_1$-definable set of reals is determined.
  \end{enumerate}
\end{thm}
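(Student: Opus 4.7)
The proof has two directions, of which the forward implication is Martin's theorem and the reverse is Harrington's.

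For $(i) \Rightarrow (ii)$, the plan is to adapt Martin's proof of analytic determinacy \cite{Ma70}, using the class of Silver indiscernibles supplied by $0^\#$ in place of a measurable cardinal. Given a lightface $\Pi^1_1$ set $A$, first write $A = \{x \in \BS : T_x \text{ is wellfounded}\}$ for a recursive tree $T$ on $\omega \times \omega$. Since $0^\#$ exists, there is a closed unbounded proper class $I$ of Silver indiscernibles for $L$. Using $I$, I would define an auxiliary game $G^*$ in which, in addition to the natural-number moves producing $x$, player $\mathrm{II}$ also plays an order-preserving map from the levels of $T_x$ into $I$, witnessing wellfoundedness. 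The auxiliary game is closed on the ordinal side, hence determined in $\ZFC$ by Gale-Stewart. The key step is to show that a winning strategy in $G^*$ projects to a winning strategy in $G(A)$: for player $\mathrm{II}$ the projection works uniformly because indiscernibility of $I$ makes the ordinal component absolute across different plays by player $\mathrm{I}$; for player $\mathrm{I}$ the projection is trivial, since one simply ignores the ordinal side moves.

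For $(ii) \Rightarrow (i)$, I would follow Harrington \cite{Ha78} and argue by contradiction. Suppose $\Pi^1_1$ determinacy holds but $0^\#$ does not exist. By Jensen's covering lemma, $L$ then computes cofinalities reasonably correctly and admits a $\Sigma^1_2$-definable wellorder of the reals. The plan is to design a specific $\Pi^1_1$ game $G(A)$, Harrington's game, in which a hypothetical winning strategy for either player yields a contradiction: a strategy for player $\mathrm{I}$ would, via Shoenfield absoluteness and the $L$-wellorder, reduce to an $L$-definable object whose existence contradicts a side computation built into the game's winning condition; while a strategy for player $\mathrm{II}$ would permit an $L$-internal construction forcing the existence of a proper class of $L$-indiscernibles, hence of $0^\#$.

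The main obstacle is the backward direction, specifically the precise engineering of Harrington's $\Pi^1_1$ game so that both candidate winning strategies yield \emph{separate} contradictions under $\neg(0^\# \text{ exists})$. This step requires genuine finestructural input about $L$ via the covering lemma and a delicate choice of the $\Pi^1_1$ winning condition. By contrast, the forward direction is a comparatively routine adaptation of Martin's measurable-cardinal argument once the class $I$ of Silver indiscernibles is on the table, with $I$ supplying the combinatorial role that a normal measure plays in Martin's original proof.
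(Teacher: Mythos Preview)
The paper does not give its own proof of this theorem; it is stated as background and attributed to Martin \cite{Ma70} and Harrington \cite{Ha78}. So there is no in-paper argument to compare against, and your proposal has to be assessed against the known proofs.

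In the forward direction your outline is in the right spirit but the roles of the players are reversed. In Martin's auxiliary game it is player~$\mathrm{I}$, not player~$\mathrm{II}$, who supplies the additional ordinal moves: player~$\mathrm{I}$ wants $x \in A$, i.e., $T_x$ wellfounded, and the ordinal moves build a branch through the Shoenfield tree for $A$ (equivalently, a rank function on $T_x$). The auxiliary game is then closed for~$\mathrm{I}$. The trivial projection case is when $\mathrm{I}$ wins $G^*$ (just forget the ordinals); the case requiring the measure or the class of indiscernibles is when $\mathrm{II}$ wins $G^*$, where one must integrate out $\mathrm{I}$'s ordinal moves to obtain a strategy for~$\mathrm{II}$ in $G(A)$. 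As you have written it, player~$\mathrm{II}$ would be producing a witness to $\mathrm{I}$'s winning condition, which is incoherent.

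The backward direction is where the real gap lies. Harrington's argument does not proceed via the covering lemma or the $\Sigma^1_2$ wellorder of $L$; those facts do not by themselves yield a non-determined $\Pi^1_1$ game, and the $\Sigma^1_2$ wellorder of the constructible reals is present regardless of whether $0^\#$ exists. The actual proof uses Steel's forcing with tagged trees to produce reals with prescribed admissibility spectra, together with a game whose $\Pi^1_1$ winning condition compares $x$-admissible ordinals of the two players' reals; the analysis of a winning strategy in that game is what forces the existence of~$0^\#$. Your sketch names none of this machinery, and the plan as stated (a strategy for~$\mathrm{I}$ contradicts ``a side computation'', a strategy for~$\mathrm{II}$ ``forces a proper class of indiscernibles'') is not concrete enough to count as a proof outline. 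This direction needs to be reworked from Harrington's paper.
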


In the projective hierarchy Neeman improved the result of
\cite{MaSt89} as follows. Here ``$\Game$'' denotes the game quantifier
as also used in Section \ref{sec:odd} later (see Section $6D$ in
\cite{Mo09} for a definition and some basic facts about the game
quantifier ``$\Game$'').

\begin{thm}[Neeman in \cite{Ne02}]
  \label{thm:Neeman}
  Let $n \geq 1$ and assume that $M_n^\#$ exists and is
  $\omega_1$-iterable. Then every
  $\Game^{(n)} (< \omega^2 - \Pi^1_1)$-definable set of reals is
  determined and thus in particular every $\Pi^1_{n+1}$-definable set
  of reals is determined.
\end{thm}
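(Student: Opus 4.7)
The plan is to proceed by induction on $n$, mirroring Neeman's approach and recycling the Martin--Steel architecture in which one Woodin cardinal inside an iterable mouse is used to upgrade $\Gamma$-determinacy to $\Game(\Gamma)$-determinacy. The base case $n=0$ is Martin's theorem that a measurable cardinal suffices for $<\omega^2\text{-}\Pi^1_1$ determinacy, applied inside $L[U]$ where $U$ is the top measure of $M_0^\# = 0^\#$; equivalently, one runs Martin's Borel-style rank argument using the critical point of the top measure as the supply of indiscernibles.

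For the inductive step, suppose we have already shown that the iterability of $M_{n-1}^\#(x)$ for all reals $x$ yields $\Game^{(n-1)}(<\omega^2\text{-}\Pi^1_1)$ determinacy relativized to every real. Given a set $A\subseteq{}^\omega\omega$ in $\Game^{(n)}(<\omega^2\text{-}\Pi^1_1)$, write $A$ as the projection by one game quantifier of a set $B$ in $\Game^{(n-1)}(<\omega^2\text{-}\Pi^1_1)$. To produce a winning strategy for one of the two players in $G(A)$, I would build a Martin--Steel-style auxiliary game $G^{*}$ associated to the least Woodin cardinal $\delta_0$ of $M_n^\#$. In $G^{*}$ the two players cooperatively produce, along with the integers $(i_k)_k$ of the original run, side information consisting of an iteration tree on $M_n^\#$ and a branch choice that makes the real produced so far generic over the iterate for $\mathrm{Col}(\omega,i(\delta_0))$. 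The payoff in $G^{*}$ is then evaluated in the generic extension of the iterate, where $M_n^\#$ becomes an $M_{n-1}^\#$-like object over the newly generic real and so, by the inductive hypothesis applied there, satisfies the required determinacy for $B$. This converts the $\Game$-layer into an internal winning strategy, and the outer game quantifier is absorbed by the Woodin cardinal exactly as in the Martin--Steel proof of projective determinacy from Woodins plus a measurable above.

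The iterability of $M_n^\#$ enters twice: first, to guarantee that genericity iterations (in the style of Neeman, Corollary 1.8 of \cite{Ne95}) can actually be carried out against any play of the auxiliary game; second, to translate a winning strategy in $G^{*}$ back into a strategy for $G(A)$ in $V$, by using the $\omega_1$-iteration strategy to consistently choose the side moves no matter what reals the opponent produces. The top measure of $M_n^\#$ handles the base pointclass $<\omega^2\text{-}\Pi^1_1$ at the innermost level via the relativized Martin argument applied inside an iterate; this is where the hypothesis that $M_n^\#$ is not merely $n$-Woodin but carries an active top extender is essential.

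The main obstacle will be the bookkeeping in the auxiliary game: one must arrange that the side moves of the players furnish a \emph{single} iteration tree on $M_n^\#$ (not two incompatible ones), that the generic real produced at each round is absorbed at the correct Woodin cardinal image, and that the payoff condition remains in the inductive pointclass so that the inductive hypothesis can be applied inside the iterate. Ensuring that genericity can be achieved while simultaneously responding to the opponent's integer moves, and that the iteration strategy on $M_n^\#$ interacts coherently with the game quantifier layering, is the technical heart of the argument. Once this is set up correctly, the deduction of $\Pi^1_{n+1}$ determinacy is immediate because $\Pi^1_{n+1}\subseteq\Game^{(n)}(<\omega^2\text{-}\Pi^1_1)$ by the standard unfolding of $n$ alternating real quantifiers into $n$ game quantifiers over a closed (hence $<\omega^2\text{-}\Pi^1_1$) base.
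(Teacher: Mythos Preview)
The paper does not contain a proof of this theorem. It is stated as a result due to Neeman and cited from \cite{Ne02} (with the underlying machinery in \cite{Ne95}); the paper uses it as a black box, in particular via Theorem~2.14 of \cite{Ne02} for the implication $(3)\Rightarrow(1)$ in Theorem~\ref{cor:newWoodin1} and via Theorem~2.5 of \cite{Ne95} in the proof of the Determinacy Transfer Theorem. So there is nothing in the paper to compare your proposal against.

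As a separate remark on the content of your sketch: your outline captures the broad shape of Neeman's argument---feed the real being played into an iterate via a genericity iteration at the bottom Woodin, and appeal inductively to determinacy inside the generic extension---but a couple of details are off. The base case you describe is not quite right: $M_0^\#=0^\#$ has no Woodin cardinals and its top measure gives analytic (i.e.\ $\Pi^1_1$) determinacy via Martin's indiscernibles argument; the $<\omega^2\text{-}\Pi^1_1$ hierarchy is handled by Martin's sharps argument rather than by ``a measurable inside $L[U]$''. More substantively, Neeman's actual proof (see \cite{Ne95}) does not literally run a Martin--Steel auxiliary game with side moves building a single tree; instead he uses his notion of $n$-iterability (Definition~1.1 there) and a continuous reduction of the payoff set to membership in the iterate, so that the determinacy transfer is organized somewhat differently from what you describe. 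Your high-level picture is a reasonable heuristic, but if you want to write out a proof you should follow the structure in \cite{Ne95} rather than trying to retrofit the Martin--Steel template directly.
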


In this paper we will present a proof of the boldface version of a
converse direction of this theorem due to the third author which is
only assuming that every $\PI^1_{n+1}$-definable set of reals is
determined, see Corollary \ref{cor:newWoodin1rel}. The lightface
version of the analogous converse direction of Theorem
\ref{thm:Neeman} is still open for $n > 1$ (see also Section
\ref{sec:openproblemsproj}). For $n = 1$ it is due to the third
author, who proved the following theorem (see Corollary $4.17$ in
\cite{StW16}).

\begin{thm}
  The following are equivalent over $\ZFC$.
  \begin{enumerate}[$(i)$]
  \item $M_1^\#$ exists and is $\omega_1$-iterable, and
  \item every $\PI^1_1$-definable and every $\Delta^1_2$-definable set
    of reals is determined. 
  \end{enumerate}
\end{thm}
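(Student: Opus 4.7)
The equivalence splits into an easy direction and a hard direction. For (i) $\Rightarrow$ (ii), Neeman's Theorem \ref{thm:Neeman} applied with $n=1$ yields lightface $\Pi^1_2$-determinacy from $\omega_1$-iterable $M_1^\#$, which already subsumes lightface $\Delta^1_2$-determinacy. For the boldface $\PI^1_1$-piece, one uses that $\omega_1$-iterable $M_1^\#$ gives $x^\#$ for every real $x$: given $x$, iterate $M_1^\#$ to blow up its Woodin cardinal $\delta$ above $\omega_1^V$, then force $x$ generic over the iterate via the extender algebra at $\delta$, and extract $x^\#$ inside the generic extension using the measurable cardinal above $\delta$. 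Sharp-closure for reals then yields $\PI^1_1$-determinacy by Harrington-Martin.

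For the substantive direction (ii) $\Rightarrow$ (i), the plan is to follow the template of this paper's main construction specialized to $n=1$, but with a lightface sharpening. Since $\PI^1_1$-determinacy gives $x^\#$ for every real $x$, one can run Steel's $K^c$-construction below a Woodin cardinal as in Chapter $9$ of \cite{St96}. The goal is to show that this construction must reach a Woodin; from the initial segment witnessing that first Woodin one reads off $M_1^\#$ as the minimal countable sound non-$1$-small premouse, with $\omega_1$-iterability inherited from the background certificates of the $K^c$-construction together with the iterability transfer of \cite{St96}.

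The hard case is when the construction instead stalls at a fully iterable core model $K$ satisfying ``there is no Woodin cardinal.'' Here the strategy is to derive a contradiction from lightface $\Delta^1_2$-determinacy via a mouse-theoretic Kechris-Solovay argument in the spirit of the paper's Lemma \ref{lem:KS}: such a $K$ is canonical enough that one can extract either a $\Delta^1_2$-wellordering of some nontrivial set of reals or a $\Delta^1_2$-definable set failing the Baire property, either of which refutes $\Delta^1_2$-determinacy by the standard regularity arguments of Mycielski-Swierczkowski and Banach-Mazur applied at the appropriate definability level. The main obstacle is carrying this contradiction out at the \emph{lightface} level: the bad set must be $\Delta^1_2$ rather than $\DELTA^1_2$, which forces one to keep the witness extracted from $K$ strictly parameter-free, using only the lightface $\Sigma_1$-theory of $K$ and its comparison with $L$. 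This is exactly what distinguishes the theorem from a specialization of Corollary \ref{cor:newWoodin1rel} at $n=1$, where the full boldface $\PI^1_2$-determinacy hypothesis gives one much more room to manoeuvre, and it is the reason the result appears as the more delicate Corollary $4.17$ of \cite{StW16}.
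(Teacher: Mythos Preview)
The paper does not prove this theorem; it states it with a citation to Corollary~4.17 of \cite{StW16}, noting only that (i)$\Rightarrow$(ii) also follows from Theorem~2.14 in \cite{Ne02}. So there is no in-paper argument to compare against, and your proposal must be assessed on its own.

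For (i)$\Rightarrow$(ii) there is a minor but genuine slip: with only $\omega_1$-iterability you cannot iterate $M_1^\#$ so that its Woodin cardinal exceeds $\omega_1^V$, since every iteration has countable length and the iterate stays countable. The correct move is a countable genericity iteration (as in \cite{Ne95}) making $x$ generic over a countable iterate $M^*$; then $M^*[x]$ construed as an $x$-premouse is active and $\omega_1$-iterable above the collapsed Woodin, which yields $x^\#$.

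The real gap is in (ii)$\Rightarrow$(i). You propose that if $K^c$ fails to reach a Woodin one can refute lightface $\Delta^1_2$-determinacy by exhibiting, from $K$, a $\Delta^1_2$-wellorder of an uncountable set of reals or a $\Delta^1_2$ set without the Baire property. But the canonical definability of $K$ below one Woodin lives at the $\Sigma^1_3$/$\Delta^1_3$ level---its set of reals is $\Sigma^1_3$ and its wellorder is $\Delta^1_3$---not at $\Delta^1_2$, so no such pathology is available and the contradiction does not fire. You have correctly located the obstruction: specializing the paper's template to $n=1$, Section~\ref{ch:WCFromDet} and Lemmas~\ref{lem:prensuitable}--\ref{lem:nsuitnew} go through under (ii), but Theorem~\ref{thm:newWoodinodd} requires in addition ``no $\SIGMA^1_3$-definable $\omega_1$-sequence of pairwise distinct reals,'' which the paper derives from boldface $\PI^1_2$-determinacy via Lemma~\ref{lem:bfDetSeq} and which is \emph{not} a consequence of (ii). The argument in \cite{StW16} circumvents this not by extracting a bad set from a global $K$ but by analysing $\HOD^{L[x]}$, for $x$ on a cone, as a direct limit of iterates of suitable premice, using the determinacy hypothesis to control that limit system well enough to recover an $\omega_1$-iterable $M_1^\#$; this is genuinely different machinery from what you sketch.
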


Here the implication $(i) \Rightarrow (ii)$, which also follows from
Theorem $2.14$ in \cite{Ne02} (see Theorem \ref{thm:Neeman} above),
was first shown by the third author in unpublished work.

At the level of infinitely many Woodin cardinals we have that the
following equivalence, which is also due to the third author, holds
true (see Theorem $8.4$ in \cite{KW10}).

  \begin{thm}[\cite{KW10}]
    The following are equivalent over $\ZFC$.
    \begin{enumerate}[$(i)$]
    \item $M_\omega^\#$ exists and is countably iterable, and
    \item $\AD^{L(\R)}$ holds and $\R^\#$ exists.
    \end{enumerate}
  \end{thm}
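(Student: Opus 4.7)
The plan is to prove each implication separately, with $\omega$ Woodin cardinals providing the bridge between inner model theory and $\AD$ in both directions.

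For $(i)\Rightarrow(ii)$, I would apply the Derived Model Theorem. Iterating the top extender of $M_\omega^\#$ out of the universe produces a proper class inner model $M_\omega$ with $\omega$ Woodin cardinals $\langle \delta_n : n<\omega\rangle$ and supremum $\delta$. Working in a countable iterate $\M^*$ of $M_\omega^\#$ and performing Neeman's genericity iterations successively at the images of the $\delta_n$, one arranges that every real of $V$ lies in some initial segment $\M^*[G\upharpoonright\alpha]$ of a $\Col(\omega,{<}\delta^*)$-generic extension. The Derived Model Theorem then gives $L(\R^*)\vDash\AD$ for the symmetric reals $\R^*=\bigcup_\alpha\R^{\M^*[G\upharpoonright\alpha]}$, and an absoluteness argument yields $\AD^{L(\R)^V}$ since $\R^V\subseteq\R^*$. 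The existence of $\R^\#$ is read off from the top extender of $M_\omega^\#$: its critical point sits above all the $\delta_n$, and iterating produces a proper class of indiscernibles for $L(\R)^V$, which is exactly the content of $\R^\#$.

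For $(ii)\Rightarrow(i)$, I would run a core model induction. Since $\AD^{L(\R)}$ implies full projective determinacy in $V$ (winning strategies for projective games already lie in $L(\R)$ and so in $V$), Corollary~\ref{cor:newWoodin1rel} delivers $M_n^\#(x)$ for every $n<\omega$ and every real $x$, fully $\omega_1$-iterable in $V$. I would then assemble the $M_n^\#$'s into a single $\omega$-Woodin premouse $M_\omega$ via a direct limit along the natural embeddings $M_n\to M_{n+1}$ coming from the minimality of $M_{n+1}$, and use the indiscernibles supplied by $\R^\#$ to extract a normal measure whose critical point sits above $\sup_n \delta_n^{M_\omega}$, furnishing the required active top extender. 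Countable iterability of the resulting $M_\omega^\#$ in $V$ would be established by a Q-structure style argument, with $\R^\#$ pinning down unique cofinal branches through countable iteration trees that visit cofinally many of the Woodin cardinals.

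The hard part will be this converse direction, and specifically the leap from ``$M_n^\#$ is iterable for every $n$'' (which $\AD^{L(\R)}$ provides finite-level-by-finite-level) to ``$M_\omega^\#$ is countably iterable in $V$''. Finite-level iterability does not automatically yield an $\omega$-Woodin mouse iterable in $V$, nor does it supply the top measure; stacking the $M_n^\#$'s naively produces only a premouse without an active top extender, and branches at the limit level of the extender sequence are not a priori unique. The role of $\R^\#$ is precisely to close these gaps: its indiscernibles provide the critical point for the top extender and certify branch uniqueness at limits that stack cofinally many Woodin cardinals. Verifying coherence of the extender sequence against the $\R^\#$-indiscernibles, and carrying soundness through the direct limit construction, is the most technically delicate step.
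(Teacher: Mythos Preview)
The paper does not prove this theorem; it is quoted as Theorem~8.4 of \cite{KW10} and stated without argument. So there is no in-paper proof to compare against, and I can only assess your proposal on its own terms.

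Your outline of $(i)\Rightarrow(ii)$ is essentially the standard argument and is fine.

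Your proposal for $(ii)\Rightarrow(i)$ has a genuine gap, and you have correctly located where it is but not how to close it. Two specific problems. First, there are no ``natural embeddings $M_n\to M_{n+1}$ coming from the minimality of $M_{n+1}$'': $M_n$ is not an initial segment or elementary substructure of $M_{n+1}$ in any sense that supports a direct limit construction, so the assembly step as written does not produce a premouse. Second, and more seriously, the iterability of $M_\omega^\#$ cannot be certified by a $\Q$-structure argument. For a tree $\T$ on $M_\omega^\#$ that lives on cofinally many of the Woodin cardinals, the putative $\Q$-structure $\Q(\T)$ would itself have to fail to be $\omega$-small above $\delta(\T)$, which is exactly the kind of object whose existence you are trying to establish. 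The indiscernibles coming from $\R^\#$ are indiscernibles for $L(\R)$, not for any candidate premouse you have built, and there is no direct mechanism by which they pin down branches through such trees.

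The actual argument in \cite{KW10} uses the full strength of $\AD^{L(\R)}$, not merely its consequence of projective determinacy: one works inside $L(\R)$ and analyzes $\HOD^{L(\R)}$ (or runs a directed system of suitable premice with term-relation iterability), and it is the $\AD$-theoretic structure---scales, Wadge hierarchy, the analysis of $\HOD$ below $\Theta$---that produces both the premouse and its iteration strategy. Invoking Corollary~\ref{cor:newWoodin1rel} level by level recovers only the projective part of this and leaves the limit step untouched.
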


  Here we mean by ``$\AD^{L(\R)}$ holds'' that every set of reals in
  $L(\R)$ is determined.

%%% Local Variables:
%%% mode: latex
%%% TeX-master: "Main"
%%% End:

\section[Woodin Cardinals
  from Determinacy Hypotheses]{A Model with Woodin Cardinals
  from Determinacy Hypotheses}
\label{ch:WCFromDet}

In this section we are going to construct a proper class model with
$n$ Woodin cardinals from $\PI^1_n$ determinacy together with
$\Pi^1_{n+1}$ determinacy, but the model constructed in this section
need not be iterable. We will treat iterability issues for models like
the one constructed in this section later in Section \ref{ch:it}.

\subsection{Introduction}
\label{sec:introKS}

The main goal of Sections \ref{ch:WCFromDet} and \ref{ch:it} is to
give a proof of the following theorem due to the third author.

\begin{thm}
\label{cor:bfDetSharps}
  Let $n \geq 1$ and assume $\PI^1_{n+1}$ determinacy holds. Then $M_n^\#(x)$
  exists and is $\omega_1$-iterable for all $x \in \BS$.
\end{thm}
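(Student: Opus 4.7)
The plan is to deduce Theorem \ref{cor:bfDetSharps} from the boldface version of Theorem \ref{thm:newWoodin1}. Fix a real $x$; it suffices to produce $M_n^\#(x)$ and prove it $\omega_1$-iterable. Boldface $\PI^1_{n+1}$ determinacy entails lightface $\Pi^1_{n+1}(x)$ and $\Pi^1_n(x)$ determinacy for every real $x$. I would then invoke the result promised in Section \ref{sec:assumption}, namely that $\PI^1_{n+1}$ determinacy rules out any $\SIGMA^1_{n+2}$-definable $\omega_1$-sequence of pairwise distinct reals; this uses standard regularity-property consequences of projective determinacy (Kechris-type arguments). Relativizing both this hypothesis and the determinacy hypotheses to $x$, the version of Theorem \ref{thm:newWoodin1} relativized to $x$ then yields $M_n^\#(x)$ as required.

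\textbf{Proving Theorem \ref{thm:newWoodin1}.} The substantive work lies here, and I would proceed by a simultaneous induction on $n$, handling both odd and even levels in parallel, because Lemma \ref{lem:corr} shows that correctness behaves differently at the two parities. The induction is seeded by the existence of $x^\#$ for every real $x$, a consequence of $\PI^1_1$ determinacy by Harrington-Martin. The inductive step, producing $M_n^\#$ from the inductively available $M_{n-1}^\#(y)$ for all reals $y$, splits into Phase A (Section \ref{ch:WCFromDet}) and Phase B (Section \ref{ch:it}).

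\textbf{Phase A.} Build a proper-class inner model with $n$ Woodin cardinals. The central tool is Lemma \ref{lem:KS}, a mouse-theoretic upgrade of the Kechris-Solovay theorem: for a cone of reals $x$, the initial segment $M_{n-1}(x) | \delta_x$, where $\delta_x$ is the least Woodin of $M_{n-1}(x)$, satisfies $\ODdet$. This uses the inductively available $M_{n-1}^\#(x)$ together with a cone-in-$x$ argument. From $\ODdet$ in $M_{n-1}(x) | \delta_x$, the Kechris-Solovay recipe produces Woodin cardinals in the HOD of the cut, and assembling these pieces across the cone yields a proper-class model with $n$ Woodin cardinals, though not yet shown to be $\omega_1$-iterable.

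\textbf{Phase B and the main obstacle.} The hard step is to extract an $\omega_1$-iterable $M_n^\#$ from the proper-class model of Phase A. I would introduce $n$-suitable premice (Sections \ref{sec:prep}, \ref{sec:corrnsuit}) and verify their existence under the odd-$n$ hypotheses using Phase A. For even $n$, the correctness of the inductively handled odd-level mouse $M_{n-1}^\#$ (Lemma \ref{lem:corr}) can be exploited to transfer iterability: a hypothetical ill-behaved iteration tree on a candidate $M_n^\#$ reflects down to an inductively iterable smaller mouse. The odd case for $n$ is the principal obstacle: Lemma \ref{lem:oddnotcorr} shows that odd-level mice fail $\SIGMA^1_{n+2}$-correctness, so iterability cannot be transferred by correctness alone. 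Instead, one must use the $\Pi^1_{n+1}$-iterability machinery of \cite{St95}, argue that a failure of $\omega_1$-iterability would produce a $\SIGMA^1_{n+2}$-definable $\omega_1$-sequence of pairwise distinct reals (by enumerating countable wrong branches across an attempted iteration of length $\omega_1$), and invoke the Section \ref{sec:assumption} hypothesis to obtain the contradiction. This is where the precise definability bookkeeping and the projective periodicity play the decisive role.
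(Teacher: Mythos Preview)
Your high-level reduction is correct: Theorem \ref{cor:bfDetSharps} follows from the relativized Theorem \ref{thm:newWoodin1} together with Lemma \ref{lem:bfDetSeq}, exactly as the paper does it. The problems lie in your sketch of how Theorem \ref{thm:newWoodin1} is proved.

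You have the parity backwards. The case that admits the suitability/correctness approach is \emph{odd} $n$, not even $n$. When $n$ is odd, $n-1$ is even, and Lemma \ref{lem:corr}(1) gives that $M_{n-1}^\#(x)$ is $\SIGMA^1_{n+1}$-correct; this is precisely what makes short-tree iterability of pre-$n$-suitable premice $\Pi^1_{n+1}$-definable (Lemma \ref{lem:defstiodd}, stated only for $(2k+1)$-suitability) and what drives Lemma \ref{lem:nsuitnew}. The paper then uses the Determinacy Transfer Theorem and a stabilization-on-a-cone argument (Claims \ref{cl:QmphiDet} and \ref{cl:Hconst}) to pin down the theory of $M_n^\#$. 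When $n$ is even, $n-1$ is odd and Lemma \ref{lem:oddnotcorr} shows $M_{n-1}^\#$ is \emph{not} $\SIGMA^1_{n+1}$-correct, so the suitability route is blocked. Section \ref{sec:even} therefore runs an entirely different argument: lower-part models $\Lpoddn$, a $K^c$ construction inside a hand-built $\SIGMA^1_{n+2}$-correct model $M_x$, weak covering, a five-step reshaping/almost-disjoint-coding forcing to reflect $K^{M_x}|(\gamma^+)$ into some $\Lpoddn(z)$, and a contradiction with the inaccessibility of $\omega_2$ in $\HOD^{\Lpoddn(z)}$ (Theorem \ref{thm:varw2inacc}). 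None of this appears in your proposal.

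Your account of the no-$\SIGMA^1_{n+2}$-sequence hypothesis is also off. It is not used to rule out an $\omega_1$-sequence of bad branches. In both parities the paper constructs, in $\omega_1^V$ stages, a transitive model ($M_z$ in the odd case, $M_x$ in the even case) closed under $a \mapsto M_{n-1}^\#(a)$ and with the required projective correctness; the hypothesis is invoked solely to show this model satisfies the power-set axiom (Claims \ref{cl:MzZFCnew} and \ref{cl:MxZFCEven}), so that a $K^c$ construction can be run inside it. Iterability of the resulting $M_n^\#$ in $V$ then comes from absoluteness of the $\Q$-structure strategy between $V$ and the various $M_y$, not from a bad-branch contradiction. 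Finally, your Phase A is inexact: Theorem \ref{Steel} does not read off Woodin cardinals from $\HOD$ of the cut (the paper explicitly declines to use Theorem \ref{thm:genthm}); it runs a $K^c$ construction in $M_{n-1}(x)|\delta_x$ and, when $K^c$ has no Woodin, derives a contradiction from weak covering together with Corollary \ref{cor:w1}.
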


The converse of Theorem \ref{cor:bfDetSharps} also holds true. For odd
$n$ it is due to the third author in never-published work and for even
$n$ it is due to Itay Neeman in \cite{Ne95}. This yields the following
corollary, where the case $n = 0$ is due to D. A. Martin (see
\cite{Ma70}) and L. Harrington (see \cite{Ha78}).

\begin{cor}\label{cor:newWoodin1rel}
  Let $n \geq 0$. Then the following are equivalent.
\begin{enumerate}[(1)]
\item $\PI^1_{n+1}$ determinacy, and
\item for all $x \in \BS$, $M_n^\#(x)$ exists and is $\omega_1$-iterable.
\end{enumerate}
\end{cor}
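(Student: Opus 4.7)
The plan is to observe that both directions of the equivalence are already in hand once Theorem \ref{cor:bfDetSharps} is established; the corollary is essentially a packaging statement, so my proof proposal amounts to spelling out how the previously cited results combine, with relativization to arbitrary real parameters.

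For the direction $(1) \Rightarrow (2)$ with $n \geq 1$, I would simply cite Theorem \ref{cor:bfDetSharps}, which gives exactly this implication and is the main content of the paper. The base case $n = 0$ is the classical theorem of Harrington \cite{Ha78}: $\boldsymbol\Pi^1_1$ determinacy implies that $x^\#$ exists (and is $\omega_1$-iterable, in the sense that all its countable iterates are wellfounded) for every real $x$, and by convention $M_0^\#(x) = x^\#$.

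For the direction $(2) \Rightarrow (1)$, the plan is to relativize Theorem \ref{thm:Neeman} (i.e.\ Theorem $2.14$ of \cite{Ne02}). For $n = 0$ this is Martin's theorem \cite{Ma70} from sharps to analytic determinacy, applied with an arbitrary real parameter. For $n \geq 1$, fix a real parameter $x \in \BS$. Hypothesis (2) asserts in particular that $M_n^\#(x)$ exists and is $\omega_1$-iterable. Construing $M_n^\#(x)$ as an $x$-premouse and carrying through Neeman's argument in the relativized setting yields $\Pi^1_{n+1}(x)$ determinacy (indeed $\game^{(n)}(<\omega^2 - \Pi^1_1(x))$ determinacy). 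Since every $\boldsymbol\Pi^1_{n+1}$ set of reals is $\Pi^1_{n+1}(x)$ in some real parameter $x$, varying $x$ over $\BS$ delivers full boldface $\boldsymbol\Pi^1_{n+1}$ determinacy.

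There is no real obstacle here beyond Theorem \ref{cor:bfDetSharps} itself, where all the mouse-theoretic work lives; the only thing to check is that Neeman's derivation of $\Pi^1_{n+1}$ determinacy from $M_n^\#$ relativizes uniformly to an arbitrary real $x$, but this is immediate from the form of his argument since it uses nothing about $M_n^\#$ beyond its iterability and the presence of $n$ Woodin cardinals with a top measure, all of which pass verbatim to $M_n^\#(x)$. Thus the corollary follows by combining Theorem \ref{cor:bfDetSharps}, the relativized Theorem \ref{thm:Neeman}, and Martin–Harrington in the base case.
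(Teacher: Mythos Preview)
Your proposal is correct and matches the paper's own treatment: the paper does not give a formal proof of this corollary but simply observes that $(1)\Rightarrow(2)$ is Theorem~\ref{cor:bfDetSharps} (Harrington for $n=0$), while $(2)\Rightarrow(1)$ is the relativized form of Theorem~\ref{thm:Neeman} (i.e.\ Theorem~2.14 of \cite{Ne02}), with Martin for $n=0$. The only cosmetic difference is that the paper separates the attribution for $(2)\Rightarrow(1)$ between Woodin (odd $n$, unpublished) and Neeman (even $n$, \cite{Ne95}), whereas you cite Neeman uniformly via \cite{Ne02}; substantively the arguments are identical.
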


The proof of Theorem \ref{cor:bfDetSharps} is organized
inductively. Thereby Harrington's result that analytic determinacy
yields the existence of $0^\#$ (see \cite{Ha78}) is the base step of
our induction. So we will assume throughout the proof of Theorem
\ref{cor:bfDetSharps} at the $n$'th level, that Theorem
\ref{cor:bfDetSharps} holds true at the level $n-1$. In fact by
Theorem $2.14$ in \cite{Ne02} we can assume during the proof at the
level $n$ that the existence and $\omega_1$-iterability of
$M_{n-1}^\#(x)$ for all $x \in \BS$ is equivalent to $\PI^1_n$
determinacy (see Corollary \ref{cor:newWoodin1rel}, for odd $n$ this
result is due to the third author). We will use this in what follows
without further notice.

We first fix some notation we are going to use for the rest of this
paper.

If $M$ is a premouse let $\delta_M$\index{deltaM@$\delta_M$} denote
the least Woodin cardinal in $M$, if it exists. For $n \geq 2$ and
$x \in {}^\omega\omega$ let $\delta_x = \delta_{M_{n-1}(x)}$ denote
the least Woodin cardinal in $M_{n-1}(x)$, whenever this does not lead
to confusion. Moreover in case we are considering $L[x] = M_0(x)$ and
a confusion is not possible let $\delta_x$ denote the least
$x$-indiscernible in $L[x]$.

\begin{remark}
  Recall that a real $x \in \BS$ is \emph{Turing reducible} to a real
  $y \in \BS$ (write ``$x \leq_T y$'') iff $x$ is recursive in $y$ or
  equivalently iff there exists an oracle Turing machine that computes
  $x$ using $y$ as an oracle. Moreover we write $x \equiv_T y$ iff
  $x \leq_T y$ and $y \leq_T x$ and say in this case that $x$ and $y$
  are \emph{Turing equivalent}. \index{Turingreducible@$x \leq_T y$
    and $x \equiv_T y$}
\end{remark}

The following lemma generalizes a theorem of Kechris and Solovay (see
Theorem $3.1$ in \cite{KS85}) to the context of mice with finitely
many Woodin cardinals. It is one key ingredient for building inner
models with finitely many Woodin cardinals from determinacy hypotheses
and therefore in particular for proving Theorem
\ref{cor:bfDetSharps}. The following two sections will be devoted to
the proof of this lemma.

\begin{lemma}
\label{lem:KS}
Let $n \geq 1$. Assume that $M_{n-1}^\#(x)$ exists and is
$\omega_1$-iterable for all $x \in {}^\omega\omega$ and that all
$\Sigma^1_{n+1}$-definable sets of reals are determined. Then there
exists a real $y_0$ such that for all reals $x \geq_T y_0$,
\[ M_{n-1}(x)|\delta_x \vDash \ODdet. \]
\end{lemma}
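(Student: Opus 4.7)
My plan is to adapt the original argument of Kechris and Solovay (Theorem~3.1 of \cite{KS85}), which treats the case $n=1$, that is $M_0(x) = L[x]$, by replacing Shoenfield absoluteness with the correctness of $M_{n-1}^\#(x)$ recorded in Lemma~\ref{lem:corr}. Assume toward a contradiction that the conclusion fails, so that the Turing-invariant set
\[ B = \{ x \in \BS : M_{n-1}(x)|\delta_x \not\vDash \ODdet \} \]
is Turing cofinal. The first step is to bound the complexity of $B$ by $\Sigma^1_{n+1}$: membership of $x$ in $B$ is witnessed by an OD formula $\varphi$ and an ordinal $\alpha < \delta_x$ such that neither player has a winning strategy inside $M_{n-1}(x)|\delta_x$ for the game on the set which $(\varphi,\alpha)$ defines there. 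Using that $M_{n-1}^\#(x)$ exists and is $\omega_1$-iterable, bounded truth in $M_{n-1}(x)|\delta_x$ about a real parameter can be evaluated via iterates of $M_{n-1}^\#(x)$, and Lemma~\ref{lem:corr} — in its generic-absoluteness clause when $n-1$ is odd — keeps the overall complexity at $\Sigma^1_{n+1}$. By the hypothesized $\Sigma^1_{n+1}$ determinacy together with Martin's cone theorem, $B$ must then contain a Turing cone with some base $y_1$.

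For each $x \geq_T y_1$, choose in $M_{n-1}(x)|\delta_x$ the $<_{\mathrm{OD}}$-least undetermined OD set $A_x$, whose defining pair $(\varphi_x, \alpha_x)$ is then uniform in $x$. Consider the diagonal set
\[ A^* = \{ z \in \BS : z \geq_T y_1 \text{ and } z \in A_z \}, \]
extended arbitrarily on reals below $y_1$ in the Turing order. Again by Lemma~\ref{lem:corr} and the uniform definability of the family $\{A_z\}$, $A^*$ is $\Sigma^1_{n+1}$ and so is determined; without loss of generality let $\sigma$ be a winning strategy for player~I. Pick $x$ in the cone over $\sigma \oplus y_1$, so that $\sigma \in M_{n-1}(x)|\delta_x$ (every real recursive in $x$ appears at some level below $\delta_x$ of $M_{n-1}(x)$). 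For any $\tau \in M_{n-1}(x)|\delta_x$ seen as a putative counterstrategy for II, the resulting play $z = \sigma * \tau$ satisfies $y_1 \leq_T z \leq_T x$, so $z \in A^*$, that is, $z \in A_z$. A Turing-cone absoluteness argument, combined with the uniform definability of $A_x$ and the correctness of $M_{n-1}(x)$, then transfers this into $z \in A_x$ as evaluated inside $M_{n-1}(x)|\delta_x$. Hence $\sigma$, viewed inside $M_{n-1}(x)|\delta_x$, wins the game on $A_x$ there, contradicting the choice of $A_x$.

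The main obstacle is the complexity bookkeeping needed to apply Lemma~\ref{lem:corr}. When $n-1$ is odd, the direct correctness of $M_{n-1}^\#(x)$ is only $\Sigma^1_n$, and the additional quantifier level required to talk about OD determinacy inside $M_{n-1}(x)|\delta_x$, and to pull the witnessing strategy back into the model, has to be recovered through the generic-absoluteness clause: one must work in an iterate of $M_{n-1}^\#(x)$ over which the relevant real is $\Col(\omega,\delta_0)$-generic. A second delicate point is reconciling $A_z$ with $A_x$ on the plays produced against $\sigma$: since the family $\{A_x\}$ is uniformly definable and Turing-invariant, a further appeal to Martin's cone theorem is needed to conclude that on a cone the truth value of ``$z \in A_x$'' depends only on the Turing degree of the pair $(x,z)$, so that the diagonal set $A_z$ really does agree with $A_x$ on the plays that matter. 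Getting this transfer to go through at the correct projective level is where the argument is tightest.
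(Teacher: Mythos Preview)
Your approach is the direct Kechris--Solovay outline, and the paper explicitly warns just before the proof that this does \emph{not} generalize: ``for $n>1$ the premouse $M_{n-1}(x)|\delta_x$ has lots of total extenders on its sequence. This is the main reason why we cannot generalize the proof of Theorem~3.1 in \cite{KS85} straightforwardly.'' You have also misidentified the main obstacle: it is not the complexity bookkeeping for Lemma~\ref{lem:corr}, but the reconciliation step you list second.

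Concretely: from $z = \sigma * \tau \in A^*$ you obtain $z \in A_z$, where $A_z$ is the $<_{\mathrm{OD}}$-least undetermined set in $M_{n-1}(z)|\delta_z$. You need $z \in A_x$ as computed in $M_{n-1}(x)|\delta_x$. But these are different models with different extender sequences below their respective Woodin cardinals, and for $n>1$ there is no reason their $\OD$-theories --- hence their least undetermined $\OD$ sets --- agree. Your proposed fix via Martin's cone theorem does not work: Turing invariance of $x \mapsto A_x$ only gives $A_x = A_{x'}$ when $x \equiv_T x'$, not when $z <_T x$, and there is no single Turing-invariant set whose determinacy forces the family $\{A_x\}$ to cohere across degrees. (For $n=1$ this step succeeds because the $\OD$ sets of reals in $L[x]$ are governed by $x^\#$ and stabilize on a cone; that mechanism is unavailable once nontrivial total extenders live below $\delta_x$.)

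The paper's proof abandons the diagonal set and instead works with an abstract class of $x$-premice $M$ satisfying four properties, the key one being that iterated fully backgrounded $L[E]$-constructions inside $M$ preserve the relation $\sim$ (same sets of reals, same $\OD$ sets of reals in the same order); Lemma~\ref{lemma1und2} establishes that $M_{n-1}(x)|\delta_x$ has this property. In the game both players contribute a real, and the winning condition refers to an $(x \oplus y)$-premouse in this class. The reconciliation $A_N = A_{N'}$ is then obtained not by a cone argument but by an actual comparison of mice: property~(1) gives $L[E](x \oplus (z \oplus b))^N \sim N$, and minimality conditions (3) together with an auxiliary condition $(3^*)$ force this model to equal $N'$ in the mouse order, whence $N \sim N'$ and $A_N = A_{N'}$. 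This comparison step is the substantive new content beyond $n=1$, and it is exactly what Lemmas~\ref{lemma1und2} and~\ref{lemma3} in the preliminaries are set up to support.
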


The main difficulty in proving Lemma \ref{lem:KS} in our context is
the fact that for $n > 1$ the premouse $M_{n-1}(x) | \delta_x$ has
lots of total extenders on its sequence. This is the main reason why
we cannot generalize the proof of Theorem $3.1$ in \cite{KS85}
straightforwardly. Therefore we need to prove some preliminary lemmas
concerning comparisons and $L[E]$-constructions in our context in the
following section. Some of this could have been avoided, if we would
only want to prove Lemma \ref{lem:KS} for models like for example
lower part models (see Definition \ref{def:Lp}), which do not contain
total extenders on their sequence.

%%% Local Variables:
%%% mode: latex
%%% TeX-master: "Main"
%%% End:

\subsection{Preliminaries}
\label{sec:prepKS}

In this section we prove a few general lemmas about $(n-1)$-small
premice which we are going to need for the proof of Lemma \ref{lem:KS}
and which are also going to be helpful later on.

The following models, called $\Q$-structures, can serve as witnesses
for iterability by guiding an iteration strategy as in Definition
\ref{def:Qstritstr}. See also for example the proof of Lemma
\ref{lem:iterability} for an application of this iteration
strategy.

Informally a $\Q$-structure for a cofinal well-founded branch $b$
through $\T$ is the longest initial segment of $\M_b^\T$ at which
$\delta(\T)$ is still seen to be a Woodin cardinal. Such
$\Q$-structures are also introduced for example in Definition $6.11$
in \cite{St10}.

\begin{definition}\label{def:Qstructure}
  Let $N$ be an arbitrary premouse and let $\T$ be an iteration tree
  on $N$ of limit length.
  \begin{enumerate}[(1)]
  \item We say a premouse $\Q = \Q(\T)$ is a \emph{$\Q$-structure for
      $\T$} \index{Qstructure@$\Q$-structure for $\T$} iff
    $\M(\T) \unlhd \Q$, $\delta(\T)$ is a cutpoint of $\Q$, $\Q$ is
    $\omega_1$-iterable above $\delta(\T)$,
    \[ \Q \vDash \text{``} \delta(\T) \text{ is a Woodin
      cardinal''}, \] if $\Q \neq \M(\T)$ and either
    \begin{enumerate}[$(i)$]
    \item over $\Q$ there exists an $r\Sigma_n$-definable set
      $A \subset \delta(\T)$ such that there is no
      $\kappa < \delta(\T)$ such that $\kappa$ is strong up to
      $\delta(\T)$ with respect to $A$ as being witnessed by extenders
      on the sequence of $\Q$ for some $n < \omega$, or
  \item  $\rho_n(\Q) < \delta(\T)$ for some $n<\omega$.
    \end{enumerate}
  \item Let $b$ be a cofinal well-founded branch through $\T$. Then we
    say a premouse $\Q = \Q(b, \T)$ is a \emph{$\Q$-structure for $b$
      in $\T$}\index{Qstructureb@$\Q$-structure for $b$ in $\T$} iff
    $\Q = \M_b^\T | \gamma$, where $\gamma \leq \M_b^\T \cap \Ord$ is
    the least ordinal such that either
    \[ \gamma < \M_b^\T \cap \Ord \text{ and } \M_b^\T | (\gamma+1)
    \vDash \text{``}\delta(\T) \text{ is not Woodin'',} \] or
    \[ \gamma = \M_b^\T \cap \Ord \text{ and }
    \rho_n(\M_b^\T) < \delta(\T) \]
    for some $n<\omega$ or over $\M_b^\T$ there exists an
    $r\Sigma_n$-definable set $A \subset \delta(\T)$ such that there
    is no $\kappa < \delta(\T)$ such that $\kappa$ is strong up to
    $\delta(\T)$ with respect to $A$ as being witnessed by extenders
    on the sequence of $\M_b^\T$ for some $n < \omega$.\\
    If no such ordinal $\gamma \leq \M_b^\T \cap \Ord$ exists, we let
    $\Q(b,\T)$ be undefined.
  \end{enumerate}
\end{definition}

For the notion of an $r\Sigma_n$-definable set see for example §$2$
in \cite{MS94}.

\begin{remark}
  We are also going to use the notion of a \emph{$\Pi^1_n$-iterable
    $\Q$-structure $\Q(\T)$}, meaning that $\Q(\T)$ is
  $\Pi^1_n$-iterable above $\delta(\T)$, $\delta(\T)$-solid and that
  $\Q(\T)$ satisfies all properties in $(1)$ except for the
  $\omega_1$-iterability above $\delta(\T)$. It will be clear from the
  context if we include $\omega_1$-iterability in the definition of
  $\Q$-structure or not. Here $\Pi^1_n$-iterability is defined as in
  Definitions $1.4$ and $1.6$ in \cite{St95} (see also the
  explanations before Lemma \ref{lem:steel2.2}).
\end{remark}

\begin{remark}
  In Case $(1)(i)$ in Definition \ref{def:Qstructure} we have that in
  particular $\rho_n(\Q) \leq \delta$ and if $\Q \lhd M$ for a
  premouse $M$ and if we let $\gamma = \Q \cap \Ord$, then we have
  that $\delta(\T)$ is not Woodin in $J^M_{\gamma + 1}$. The same
  thing for $M$ as above holds true in Case $(1)(ii)$ in Definition
  \ref{def:Qstructure}, because in this case $\delta(\T)$ is not even
  a cardinal in $J^M_{\gamma + 1}$.
\end{remark}

\begin{remark}
  Let $n \geq 1$ and assume that $M_n^\#(x)$ exists and is
  $\omega_1$-iterable for all reals $x$. Then any $\Q$-structure $\Q$
  for an iteration tree $\T$ on an $n$-small premouse is unique by a
  comparison argument as the one we will see in the proof of Lemma
  \ref{lem:iterability}.
\end{remark}

\begin{definition}
  \label{def:Qstritstr} Let $N$ be a premouse. Then a possibly partial
  iteration strategy $\Sigma$ for $N$ is the \emph{$\Q$-structure
    iteration strategy for
    $N$}\index{Qstructureiterationstrategy@$\Q$-structure iteration
    strategy for $N$} or we say that $\Sigma$ is \emph{guided by
    $\Q$-structures}, iff $\Sigma$ is defined as follows. For a tree
  $\U$ on $N$ of limit length and a cofinal branch $b$ through $\U$ we let
  \[ \Sigma(\U) = b \text{ iff } \Q(\U) \text{ exists and } \Q(b,\U) =
  \Q(\U), \]
  if such a branch $b$ exists and is unique. If no such unique branch
  $b$ through $\U$ exists, we let $\Sigma(\U)$ be undefined.
\end{definition}

\begin{lemma}\label{lem:Qstrunique}
  Let $n \geq 1$ and assume that $M_n^\#(x)$ exists and is
  $\omega_1$-iterable for all reals $x$. Let $N$ be an $n$-small
  premouse and let $\T$ be an iteration tree on $N$. Then the branch
  $b$ through $\T$ which satisfies $\Q(b,\T) = \Q(\T)$ as in the
  definition of the $\Q$-structure iteration strategy $\Sigma$ is in
  fact unique.
\end{lemma}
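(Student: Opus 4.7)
The plan is to argue by contradiction. Suppose that $b$ and $c$ are two distinct cofinal well-founded branches through $\T$, both satisfying $\Q(b,\T)=\Q(\T)=\Q(c,\T)$, and let $\Q$ denote this common Q-structure. By the very definition of $\Q(b,\T)$ and $\Q(c,\T)$, we have $\Q \unlhd \M_b^\T$ and $\Q \unlhd \M_c^\T$; in particular $\Q \unrhd \M(\T)$, and $\delta(\T)$ is a cutpoint of $\Q$. Hence every bounded subset of $\delta(\T)$ that lies in $\Q$, and in particular every $r\Sigma_n$-definable subset of $\delta(\T)$ over $\Q$, lies in both $\M_b^\T$ and $\M_c^\T$.

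The key ingredient is then the Martin-Steel branch uniqueness theorem (Theorem $6.2$ of \cite{MS94}, or the zipper argument reviewed in Section $6.1$ of \cite{St10}): whenever a normal iteration tree has two distinct cofinal well-founded branches $b$ and $c$, the ordinal $\delta(\T)$ is Woodin with respect to every $A \subseteq \delta(\T)$ lying in $\M_b^\T \cap \M_c^\T$, with the required $A$-strongness of some $\kappa < \delta(\T)$ witnessed by extenders on the common sequence $\M(\T)$, hence on $\Q$.

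The contradiction is now case-by-case against Definition \ref{def:Qstructure}(1). In Case $(1)(i)$, the $r\Sigma_n$-definable $A \subseteq \delta(\T)$ that witnesses the Q-structure property of $\Q$ is precisely a set to which branch uniqueness applies, producing a $\kappa < \delta(\T)$ that is $A$-strong up to $\delta(\T)$ via extenders on the $\Q$-sequence, directly contradicting the defining property of $\Q$ in clause $(i)$. In Case $(1)(ii)$, where $\rho_n(\Q) < \delta(\T)$ for some $n$, a standard fine-structural observation (using the standard parameter of $\Q$ and the fact that initial segments of a premouse are $\omega$-sound, cf.\ \cite{MS94} and \cite{SchZe10}) produces an $r\Sigma_n$-definable subset of $\delta(\T)$ over $\Q$ that codes the theory of $\Q$ in parameters below $\delta(\T)$ and hence witnesses non-Woodinness of $\delta(\T)$ via extenders on $\Q$; the argument from Case $(1)(i)$ then yields the same contradiction.

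The main obstacle I expect is spelling out branch uniqueness in exactly the form needed. In the Mitchell-Steel setting one has to verify that the reflecting extenders can be chosen so that they actually sit on the $\M(\T)$-sequence (and therefore on $\Q$), rather than on some portion of $\M_b^\T$ or $\M_c^\T$ strictly above $\Q$; this relies on the fact that $\delta(\T)$ is a cutpoint of $\Q$ and that, below $\delta(\T)$, the extender sequences of $\M_b^\T$ and $\M_c^\T$ both agree with that of $\M(\T)$. A secondary nuisance is the reduction from clause $(ii)$ to clause $(i)$, which is folklore but benefits from an explicit pointer to the standard fine structure. Once these points are dispatched, the argument closes and uniqueness of the branch $b$ with $\Q(b,\T) = \Q(\T)$ follows immediately.
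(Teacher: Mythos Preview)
Your proposal is correct and follows essentially the same approach as the paper: both reduce to the Branch Uniqueness Theorem (the zipper argument), which shows that two distinct cofinal well-founded branches would make $\delta(\T)$ Woodin with respect to every subset in $\M_b^\T \cap \M_c^\T$ via extenders on the $\M(\T)$-sequence, contradicting the defining property of the common $\Q$-structure. The paper simply cites Theorem~6.10 in \cite{St10} and Corollary~6 in \S6 of \cite{Je97} for this, whereas you have unpacked the argument and the case split between clauses $(i)$ and $(ii)$ of Definition~\ref{def:Qstructure}; your concerns about the extenders lying on the $\M(\T)$-sequence and the reduction of clause $(ii)$ to clause $(i)$ are exactly the points handled in those references.
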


This lemma holds true because for two branches $b$ and $c$ through an
iteration tree $\T$ as above, we have that $\Q(b, \T) = \Q(c, \T)$
implies that $b = c$ by the Branch Uniqueness Theorem (see Theorem
$6.10$ in \cite{St10}). This is proven in Corollary $6$ in §6 of
\cite{Je97} (written by Martin Zeman).

\begin{remark}
  Let $n \geq 1$ and assume that $M_n^\#(x)$ exists and is
  $\omega_1$-iterable for all reals $x$. Let $N$ be an $n$-small
  premouse and let $\T$ be an iteration tree on $N$. Then the branch
  $b$ through $\T$ which satisfies $\Q(b,\T) = \Q(\T)$ as in the
  definition of the $\Q$-structure iteration strategy $\Sigma$ is in
  fact cofinal.
\end{remark}

The following notion will be important in what follows to ensure that
$\Q$-structures exist.

\begin{definition}\label{def:notdefWdn}
  Let $M$ be a premouse and let $\delta$ be a cardinal in $M$ or
  $\delta = M \cap \Ord$. We say that \emph{$\delta$ is not definably
    Woodin over $M$} \index{definable Woodin cardinal} iff there
  exists an ordinal $\gamma \leq M \cap \Ord$ such that
  $\gamma \geq \delta$ and either
  \begin{enumerate}[$(i)$]
  \item over $J_\gamma^M$ there exists an $r\Sigma_n$-definable set
    $A \subset \delta$ such that there is no $\kappa < \delta$ such
    that $\kappa$ is strong up to $\delta$ with respect to $A$ as
    witnessed by extenders on the sequence of $M$ for some
    $n < \omega$, or
\item $\rho_n(J_\gamma^M) < \delta \text{ for some } n < \omega.$
  \end{enumerate}
\end{definition} 

For several iterability arguments to follow we need our premice to
satisfy the following property. By a fine structural argument this
property is preserved during an iteration and will therefore ensure
that $\Q$-structures exist in an iteration of a premouse $M$
satisfying this property.

\begin{definition}\label{def:nodefWdns}
  Let $M$ be a premouse. We say $M$ has \emph{no definable Woodin
    cardinals} \index{definable Woodin cardinals} iff for all
  $\delta \leq M \cap \Ord$ we have that $\delta$ is not definably
  Woodin over $M$.
\end{definition} 

\begin{remark}
  Let $M$ be a premouse which has no definable Woodin cardinals. Note
  that $M$ might still have Woodin cardinals. Consider for example the
  premouse $M_1^\#$, which by definition has a Woodin cardinal, but no
  definable Woodin cardinals since $\rho_\omega(M_1^\#) = \omega$.
\end{remark}

In what follows we sometimes want to consider premice which are
obtained from $M_n^\#$ ``constructed on top'' of a premouse $N$. The
following definition makes precise what we mean by that.

\begin{definition}
\label{def:M(N)}
Let $n \geq 1$ and assume that $M_n^\#(x)$ exists for all reals
$x$. Let $N$ be a countable $x$-premouse for some $x \in \BS$. Then we
say $M_n^\#(N)$ \index{MnsharpN@$M_n^\#(N)$} is the smallest
$x$-premouse $M \unrhd N$ with
\[ \rho_\omega(M) \leq N \cap \Ord \]
which is $\omega_1$-iterable above $N \cap \Ord$, sound above
$N \cap \Ord$ and such that either $M$ is not fully sound, or $M$ is
not $n$-small above $N \cap \Ord$.
\end{definition}

In the first case, i.e. if $M$ is not fully sound, we sometimes say that the
construction of $M_n^\#(N)$ breaks down.

\begin{remark}
  We can define a premouse $M_n(N)$ in a similar fashion, by iterating
  the top extender of $M = M_n^\#(N)$ out of the universe in the case
  that $M$ is not $n$-small above $N \cap \Ord$. In the case that $M$
  is not fully sound, we just let $M_n(N) = M$. So in particular
  $M_n(N)$ is a proper class model in the first case and a set in the
  latter case.
\end{remark}

In what follows we will point out if $M_n^\#(N)$ denotes the premouse
constructed in the sense of Definition \ref{def:M(N)} or if it denotes
the premouse $M_n^\#(x)$ in the usual sense, where $x$ is for example
a real coding the countable premouse $N$. Note that these two notions
are different since extenders on the $N$-sequence are included in the
$M_n^\#(N)$-sequence if $M_n^\#(N)$ is constructed in the sense of
Definition \ref{def:M(N)}.

We will need the following notation. 

\begin{definition}
  \begin{enumerate}[(1)]
  \item Let $x, y \in \BS$ be such that $x = (x_n \st n < \omega)$ for
    $x_n \in \omega$ and $y = (y_n \st n < \omega)$ for
    $y_n \in \omega$. Then we let
    $x \oplus y = (x_0, y_0, x_1, y_1, \dots ) \in \BS$.
  \item Let $M$ and $N$ be countable premice. We say a real $x$ codes
    $M \oplus N$ iff $x \geq_T x_M \oplus x_N$ for a real $x_M$ coding $M$
    and a real $x_N$ coding $N$.
  \end{enumerate}
\end{definition}

The following lemma proves that under the right hypothesis comparison
works for certain $\omega_1$-iterable premice instead of
$(\omega_1 + 1)$-iterable premice as in the usual Comparison Lemma
(see Theorem $3.11$ in \cite{St10}). Moreover the proof of this lemma
will use arguments that are explained here in full detail and will
show up in several different proofs throughout this paper again with
possibly less details given.

Recall that $\delta_x$ denotes the least Woodin cardinal in
$M_{n-1}^\#(x)$ for $n \geq 2$.

\begin{lemma}\label{lem:iterability}
  Let $n \geq 1$ and assume that $M_{n-1}^\#(x)$ exists and is
  $\omega_1$-iterable for all reals $x$. Let $M$ and $N$ be countable
  premice, such that $M$ and $N$ have a common cutpoint
  $\delta$. Assume that $M$ and $N$ both do not have definable Woodin
  cardinals above $\delta$ and that every proper initial segment of
  $M$ and $N$ is $(n-1)$-small above $\delta$. 
  \begin{enumerate}[(1)]
  \item Let $x$ be an arbitrary real and let $n \geq 2$. Then
    $H_{\delta_x}^{M_{n-1}^\#(x)}$ is closed under the operation
    \[ a \mapsto M_{n-2}^\#(a) \]
    and moreover this operation $a \mapsto M_{n-2}^\#(a)$ for
    $a \in H_{\delta_x}^{M_{n-1}^\#(x)}$ is contained in
    $M_{n-1}^\#(x)| \delta_x$.
  \item Let $x$ be a real coding $M$ and assume that the premouse $M$
    is $\omega_1$-iterable above $\delta$. If $\Sigma$ denotes the
    $\omega_1$-iteration strategy for $M$ above $\delta$, then for
    every iteration tree $\T \in H_{\delta_x}^{M_{n-1}^\#(x)}$ on $M$
    above $\delta$ of limit length,
    $\Sigma(\T) \in M_{n-1}^\#(x)|\delta_x$ and in fact the operation
    $\T \mapsto \Sigma(\T)$ is in $M_{n-1}^\#(x)|\delta_x$ for every
    such $\T$.
  \item Let $x$ be a real coding $M \oplus N$ and assume that the
    premice $M$ and $N$ are both $\omega_1$-iterable above
    $\delta$. Moreover assume that \[ M | \delta = N | \delta. \]
    Then we can successfully coiterate $M$ and $N$ above $\delta$
    inside the model $M_{n-1}^\#(x)$. That means there are iterates
    $M^*$ of $M$ and $N^*$ of $N$ above $\delta$ such that we have

 {\onehalfspacing
    \begin{enumerate}[$(a)$]
    \item $M^* \unlhd N^*$ and the iteration from $M$ to $M^*$ does
      not drop, or
    \item $N^* \unlhd M^*$ and the iteration from $N$ to $N^*$ does
      not drop. 
    \end{enumerate}
    In particular the coiteration is successful in $V$ in the same
    sense.}

  \item Let $x$ be a real coding $M \oplus N$ and assume that the
    premice $M$ and $N$ are both $\omega_1$-iterable above
    $\delta$. Moreover assume that \[ M | \delta = N | \delta, \]
    $M$ and $N$ are $\delta$-sound, $\rho_\omega(M) \leq \delta$ and
    $\rho_\omega(N) \leq \delta$. Then we have
    \[ M \unlhd N \text{ or } N \unlhd M. \]
\end{enumerate}
\end{lemma}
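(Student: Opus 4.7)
The plan is to establish the four parts in order, since each relies on the previous.

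\textbf{Part (1).} I would run a fully backgrounded $L[\vec{E}]$-construction above $a$ inside $M_{n-1}^\#(x)\mid\delta_x$, using the extenders on the $M_{n-1}^\#(x)$-sequence as background certificates. Since $a\in H_{\delta_x}^{M_{n-1}^\#(x)}$ and $\delta_x$ is Woodin in $M_{n-1}^\#(x)$, the standard theory of backgrounded constructions guarantees that every level of the construction is $\omega_1$-iterable (in $V$) and that the construction reaches a non-$(n-2)$-small stage strictly below $\delta_x$ (otherwise $\delta_x$ would be Woodin in that $(n-2)$-small model, contradicting the definition of $M_{n-1}^\#(x)$). Extracting the least such stage and truncating gives a countable sound $(n-2)$-small premouse over $a$ that is not $(n-2)$-small beyond its top, hence by uniqueness of $M_{n-2}^\#(a)$ (together with Lemma \ref{lem:corr} applied inductively), it must be $M_{n-2}^\#(a)$. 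Because the construction is uniformly definable inside $M_{n-1}^\#(x)\mid\delta_x$ from $a$, the operation $a\mapsto M_{n-2}^\#(a)$ belongs to $M_{n-1}^\#(x)\mid\delta_x$.

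\textbf{Part (2).} Since $M$ has no definable Woodin cardinals above $\delta$ and every proper initial segment of $M$ is $(n-1)$-small above $\delta$, the $\Q$-structures $\Q(\T)$ for normal trees $\T$ on $M$ above $\delta$ are $(n-2)$-small and iterable above $\delta(\T)$. By part (1), the $M_{n-2}^\#$-operation is uniformly available inside $M_{n-1}^\#(x)\mid\delta_x$, so $\Q(\T)$ can be computed there as an initial segment of $M_{n-2}^\#(\M(\T))$ (truncated at the first level witnessing that $\delta(\T)$ is not Woodin). Lemma \ref{lem:Qstrunique} then ensures that the branch $b$ satisfying $\Q(b,\T)=\Q(\T)$ is unique, and since $\Sigma$ is precisely the $\Q$-structure iteration strategy, this yields $\Sigma(\T)\in M_{n-1}^\#(x)\mid\delta_x$ uniformly in $\T$.

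\textbf{Parts (3) and (4).} Inside $M_{n-1}^\#(x)$, I would simulate the coiteration of $M$ and $N$ above $\delta$ using the two $\Q$-structure iteration strategies provided by part (2). Since $\delta_x$ is Woodin (hence strongly inaccessible) in $M_{n-1}^\#(x)$ and dwarfs the countable premice involved, the standard Comparison Lemma argument applies inside $M_{n-1}^\#(x)$ and produces iterates $M^*\unlhd N^*$ or $N^*\unlhd M^*$ at some stage below $\delta_x$; the same trees and branches witness the coiteration in $V$ since both strategies extend the (unique) true $\omega_1$-strategies. For (4), assuming $M$ and $N$ are $\delta$-sound with $\rho_\omega(M),\rho_\omega(N)\le\delta$ and $M\mid\delta=N\mid\delta$, the usual fine-structural argument shows that neither side can use any extender on the main branch nor suffer a drop during the comparison (any such activity above $\delta$ would contradict soundness combined with $\rho_\omega\le\delta$). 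Therefore $M^*=M$ and $N^*=N$, and the coiteration outcome from (3) directly gives $M\unlhd N$ or $N\unlhd M$.

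The main obstacle is part (1): verifying that the internal $L[\vec{E}]$-construction inside $M_{n-1}^\#(x)\mid\delta_x$ terminates with exactly $M_{n-2}^\#(a)$ and not a proper initial segment or a non-sound object. This requires carefully using Woodinness of $\delta_x$ to certify background extenders, combined with the inductive correctness statements (Lemma \ref{lem:corr}) and the fact that $M_{n-1}^\#(x)$ inherits iterability for its internally constructed models from the iterability of $M_{n-1}^\#(x)$ itself. Once (1) is in place, parts (2)--(4) follow from standard mouse-theoretic comparisons adapted to the internal setting.
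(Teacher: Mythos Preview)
Your overall strategy matches the paper's, but there are two genuine gaps.

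\textbf{Part (2): branch existence.} You correctly observe that the $\Q$-structure $\Q(\T)$ can be located inside $M_{n-1}^\#(x)\mid\delta_x$ as an initial segment of $M_{n-2}^\#(\M(\T))$, and that the branch $b$ with $\Q(b,\T)=\Q(\T)$ is unique. But uniqueness is not existence: you have not explained why such a branch $b$ lies in $M_{n-1}^\#(x)\mid\delta_x$. The branch exists in $V$ by $\omega_1$-iterability of $M$, but $M_{n-1}^\#(x)\mid\delta_x$ need not see it a priori. The paper closes this gap by a $\Sigma^1_1$-absoluteness argument: the statement ``there is a cofinal branch $b$ through $\T$ with $\Q\unlhd\M_b^\T$'' is $\Sigma^1_1$ in codes for $\T$ and $\Q$; one passes to a $\Col(\omega,\gamma)$-extension of $M_{n-1}^\#(x)\mid\delta_x$ making $\T,\Q$ countable, pulls the branch down by $\Sigma^1_1$-absoluteness, and then uses homogeneity of the collapse together with the uniqueness of $b$ to conclude $b\in M_{n-1}^\#(x)\mid\delta_x$. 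This step is essential and is missing from your outline.

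\textbf{Part (1) and the inductive structure.} Your reason for the backgrounded construction reaching a non-$(n-2)$-small stage (``otherwise $\delta_x$ would be Woodin in that $(n-2)$-small model, contradicting the definition of $M_{n-1}^\#(x)$'') does not work: there is no contradiction in $\delta_x$ being Woodin in an $(n-2)$-small inner model. The paper instead runs the construction inside $M_{n-1}^\#(x)\mid\kappa$, where $\kappa$ is the critical point of the top extender; then all $n-1$ Woodin cardinals of $M_{n-1}^\#(x)$ lie below $\kappa$ and are inherited by the construction, forcing it to be non-$(n-2)$-small. Relatedly, you gloss over \emph{why} $\Q(\T)$ is in fact an initial segment of $M_{n-2}^\#(\M(\T))$: the paper proves (2), (3), (4) by simultaneous induction on $n$, and this step uses the inductive instance of (4) at level $n-2$ to compare $\Q$ with $M_{n-2}^\#(\M(\T))$ above $\delta(\T)$, ruling out the case $M_{n-2}^\#(\M(\T))\lhd\Q$ by an $(n-1)$-smallness count. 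Your linear ``(1) then (2) then (3),(4)'' ordering hides this dependency.
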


\begin{remark}
  This lemma holds in particular for $\delta = \omega$. That means if
  we assume that $M_{n-1}^\#(x)$ exists and is $\omega_1$-iterable for
  all reals $x$ and $M$ and $N$ are $\omega_1$-iterable countable
  premice such that both do not have definable Woodin cardinals and
  such that every proper initial segment of $M$ and $N$ is
  $(n-1)$-small, then we can successfully compare $M$ and $N$ as in
  Lemma \ref{lem:iterability} $(3)$. 
\end{remark}

\begin{proof}[Proof of Lemma \ref{lem:iterability}]
  \textbf{Proof of $(1)$:} Let $a \in H_{\delta_x}^{M_{n-1}^\#(x)}$ be
  arbitrary and perform a fully backgrounded extender construction
  $L[E](a)^{M_{n-1}^\#(x)|\kappa}$ in the sense of \cite{MS94} (with
  the smallness hypothesis weakened to allow $\omega$-small premice in
  the construction) above $a$ inside the model
  $M_{n-1}^\#(x) | \kappa$, where $\kappa$ denotes the critical point
  of the top measure of the active premouse $M_{n-1}^\#(x)$. Then we
  have that the premouse $L[E](a)^{M_{n-1}^\#(x)|\kappa}$ has $n-1$
  Woodin cardinals by a generalization of Theorem $11.3$ in
  \cite{MS94}. So in particular it follows that
  $L[E](a)^{M_{n-1}^\#(x)|\kappa}$ is not $(n-2)$-small.  Moreover the
  premouse $L[E](a)^{M_{n-1}^\#(x)|\kappa}$ inherits the iterability
  from $M_{n-1}^\#(x)$ and therefore we have that
  \[ M_{n-2}^\#(a) \lhd L[E](a)^{M_{n-1}^\#(x)|\kappa}. \]

  In fact the operation $a \mapsto M_{n-2}^\#(a)$ for
  $a \in H_{\delta_x}^{M_{n-1}^\#(x)}$ is contained in
  $M_{n-1}^\#(x)|\delta_x$, because $M_{n-2}^\#(a)$ can be obtained
  from an $L[E]$-construction above $a$.

  \textbf{Proof of $(2)+(3)+(4)$:} We prove $(2)$,$(3)$ and $(4)$
  simultaneously by an inductive argument. For $n=1$ there is nothing
  to show, because we defined $M$ to be $0$-small iff $M$ is an
  initial segment of Gödel's constructible universe $L$. That means if
  $M$ and $N$ are such that every proper initial segment of $M$ or $N$
  is $0$-small above some common cutpoint $\delta$ as in the statement
  of Lemma \ref{lem:iterability}, then every iteration tree on $M$ or
  $N$ above $\delta$ is linear and there is nothing to show for
  $(2)$. Moreover we easily get that one is an initial segment of the
  other since every proper initial segment of $M$ or $N$ is above
  $\delta$ as an initial segment of $L$. Therefore $(3)$ and $(4)$
  hold.

  So let $n \geq 2$ and assume that $(2)$,$(3)$ and $(4)$ hold for
  $n-2$. We first want to show that $(2)$ holds for $n-1$. Let us
  assume for notational simplicity that $\delta = \omega$ and let $M$
  be an $\omega_1$-iterable premouse, such that every proper initial
  segment of $M$ is $(n-1)$-small and such that $M$ has no definable
  Woodin cardinals. Let $x$ be a real coding the premouse $M$ and
  assume that $M_{n-1}^\#(x)$ exists and is
  $\omega_1$-iterable. Further let $\Sigma$ be the
  $\omega_1$-iteration strategy for $M$ and let $\T$ be an iteration
  tree on $M$ in $V$ of length $\lambda + 1$ for some limit ordinal
  $\lambda < \omega_1^V$ such that $\T$ is according to $\Sigma$ and
  \[ \T \upharpoonright \lambda \in H_{\delta_x}^{M_{n-1}^\#(x)}. \]
  By assumption the premouse $M$ is $\omega_1$-iterable in $V$ and has
  no definable Woodin cardinals. Therefore $\Sigma$ is the
  $\Q$-structure iteration strategy (see Definition \ref{def:Qstritstr})
  and there exists a $\Q$-structure $\Q \unlhd \M_\lambda^\T$ for
  $\T \upharpoonright \lambda$ which is $\omega_1$-iterable in $V$.
  We first want to show that such an $\omega_1$-iterable
  $\Q$-structure already exists in the model $M_{n-1}^\#(x)|\delta_x$.

  First consider the case \[ \Q = \M(\T \upharpoonright \lambda), \]
  where the latter denotes the common part model of
  $\T \upharpoonright \lambda$. In this case $\Q$ is also a
  $\Q$-structure for $\T \upharpoonright \lambda$ inside the model
  $M_{n-1}^\#(x)|\delta_x$ for trivial reasons, because the condition
  that $\Q$ needs to be $\omega_1$-iterable above
  $\delta(\T \upharpoonright \lambda) = \M(\T \upharpoonright \lambda)
  \cap \Ord$
  is empty here and therefore $\Q$ can be isolated as the
  $\Q$-structure for $\T \upharpoonright \lambda$ in
  $M_{n-1}^\#(x)|\delta_x$.

  So we can assume now that
  \[ \M(\T \upharpoonright \lambda) \lhd Q. \]
  That means $\delta(\T \upharpoonright \lambda) \in \Q$ and therefore
  $\Q$ is by definition the longest initial segment of $\M_\lambda^\T$
  in $V$ such that
  \[ \Q \vDash \text{``} \delta(\T \upharpoonright \lambda) \text{ is
    Woodin''.} \]
  Every proper initial segment of $\M_\lambda^\T$ is $(n-1)$-small
  since the same holds for $M$. Thus every proper initial segment of
  $\Q$ is $(n-1)$-small. Together with the fact that
  \[ \Q \vDash \text{``} \delta(\T \upharpoonright \lambda) \text{ is
    Woodin'',} \]
  this implies that every proper initial segment of $\Q$ in fact has
  to be $(n-2)$-small above $\delta(\T \upharpoonright \lambda)$.

  Now consider the premouse
  $M_{n-2}^\#(\M(\T \upharpoonright \lambda))$ in the sense of
  Definition \ref{def:M(N)}, which exists inside
  $M_{n-1}^\#(x)|\delta_x$ because of $(1)$, since we have that
  $\T \upharpoonright \lambda \in H_{\delta_x}^{M_{n-1}^\#(x)}$. Note
  that all proper initial segments of
  $M_{n-2}^\#(\M(\T \upharpoonright \lambda))$ are $(n-2)$-small above
  \[ \delta(\T \upharpoonright \lambda) = \M(\T \upharpoonright
  \lambda) \cap \Ord, \]
  regardless of whether the case that
  $M_{n-2}^\#(\M(\T \upharpoonright \lambda))$ is not fully sound or
  the case that $M_{n-2}^\#(\M(\T \upharpoonright \lambda))$ is not
  $(n-2)$-small above $\delta(\T \upharpoonright \lambda)$ in the
  definition of $M_{n-2}^\#(\M(\T \upharpoonright \lambda))$ (see
  Definition \ref{def:M(N)}) holds. Moreover we have by definition
  that
  \[ \Q \, | \, \delta(\T \upharpoonright \lambda) = \M(\T
  \upharpoonright \lambda) = M_{n-2}^\#(\M(\T \upharpoonright
  \lambda)) \, | \, \delta(\T \upharpoonright \lambda). \]
  Thus a coiteration of $\Q$ and
  $M_{n-2}^\#(\M(\T \upharpoonright \lambda))$ would take place above
  $\delta(\T \upharpoonright \lambda)$. Moreover
  $M_{n-2}^\#(\M(\T \upharpoonright \lambda))$ and $\Q$ are both
  $\delta(\T \upharpoonright \lambda)$-sound and project to
  $\delta(\T \upharpoonright \lambda)$, so in particular they both do
  not have definable Woodin cardinals above
  $\delta(\T \upharpoonright \lambda)$. This implies by the inductive
  hypothesis $(4)$ that the comparison of $\Q$ and
  $M_{n-2}^\#(\M(\T \upharpoonright \lambda))$ is successful in $V$,
  because all proper initial segments of both $\Q$ and
  $M_{n-2}^\#(\M(\T \upharpoonright \lambda))$ are $(n-2)$-small above
  $\delta(\T \upharpoonright \lambda)$. Moreover both sides do not
  move in the comparison as in $(4)$ and therefore we can distinguish
  two cases as follows.

\begin{minipage}{\textwidth}
  \bigskip
  \textbf{Case 1.} Assume that
  \[ M_{n-2}^\#(\M(\T \upharpoonright \lambda)) \lhd \Q. \]
  \medskip
\end{minipage}

Then we have that
  \[ M_{n-2}^\#(\M(\T \upharpoonright \lambda)) \vDash \text{``}
  \delta(\T \upharpoonright \lambda) \text{ is Woodin'',} \]
  because we have by definition of the $\Q$-structure $\Q$ (see
  Definition \ref{def:Qstructure}) that
  \[ \Q \vDash \text{``} \delta(\T \upharpoonright \lambda) \text{ is
      Woodin''.} \] Moreover our case assumption implies that
  $M_{n-2}^\#(\M(\T \upharpoonright \lambda))$ is fully sound. That
  means we were able to construct the full premouse $M_{n-2}^\#$ on
  top of $\M(\T \upharpoonright \lambda)$ as in the sense of
  Definition \ref{def:M(N)}. In this case
  $M_{n-2}^\#(\M(\T \upharpoonright \lambda))$ is not $(n-1)$-small,
  because $\delta(\T \upharpoonright \lambda)$ is a Woodin cardinal in
  $M_{n-2}^\#(\M(\T \upharpoonright \lambda))$. But we argued earlier
  that every proper initial segment of $\Q$ is $(n-1)$-small, which
  contradicts $M_{n-2}^\#(\M(\T \upharpoonright \lambda)) \lhd \Q$.

\begin{minipage}{\textwidth}
  \bigskip
  \textbf{Case 2.} Assume that
  \[ \Q \unlhd M_{n-2}^\#(\M(\T \upharpoonright \lambda)). \]
  \medskip
\end{minipage}

This implies that $\Q$ is in $M_{n-1}^\#(x)|\delta_x$ and furthermore
is $\omega_1$-iterable above $\delta(\T \upharpoonright \lambda)$ in
$M_{n-1}^\#(x)|\delta_x$ since the same holds for
$M_{n-2}^\#(\M(\T \upharpoonright \lambda))$ by part $(1)$ of this
lemma.

So we showed that in both cases there exists an $\omega_1$-iterable
$\Q$-structure $\Q$ for $\T \upharpoonright \lambda$ in
$M_{n-1}^\#(x)|\delta_x$. We now aim to show that the cofinal
well-founded branch through $\T$ in $V$ which is given by the
$\Q$-structure iteration strategy $\Sigma$, that means the branch $b$
for which we have
\[ \Q(b, \T \upharpoonright \lambda) = \Q, \]
is also contained in $M_{n-1}^\#(x)|\delta_x$.

Consider the statement
\begin{eqnarray*}
\phi(\T \upharpoonright \lambda, \Q) \equiv &\text{``there is a cofinal
  branch } b \text{ through } \T \upharpoonright \lambda \text{ such
  that } \\ 
& \text{ there is a } \Q^* \unlhd \M_b^\T \text{ with } \Q^* \cong \Q \text{''.}   
\end{eqnarray*}
This statement is $\Sigma^1_1$-definable uniformly in codes for
$\T \upharpoonright \lambda$ and $\Q$ and obviously true in $V$ as
witnessed by the branch $b$ given by the $\omega_1$-iterability of the
premouse $M$. Since the iteration tree
$\T \upharpoonright \lambda \in H_{\delta_x}^{M_{n-1}^\#(x)}$ need not
be countable in the model $M_{n-1}^\#(x)|\delta_x$, we consider the
model $(M_{n-1}^\#(x)|\delta_x)^{\Col(\omega, \gamma)}$ instead, where
$\gamma < \delta_x$ is an ordinal such that
\[ \T \upharpoonright \lambda, \Q \in
(M_{n-1}^\#(x)|\delta_x)^{\Col(\omega, \gamma)} \]
are countable inside the model
$(M_{n-1}^\#(x)|\delta_x)^{\Col(\omega, \gamma)}$, where with the
model $(M_{n-1}^\#(x)|\delta_x)^{\Col(\omega, \gamma)}$ we denote an
arbitrary $\Col(\omega, \gamma)$-generic extension of the model
$M_{n-1}^\#(x)|\delta_x$.

Then it follows by $\SIGMA^1_1$-absoluteness that the statement
$\phi(\T \upharpoonright \lambda, \Q)$ holds in
$(M_{n-1}^\#(x)|\delta_x)^{\Col(\omega, \gamma)}$ as witnessed by some
branch $\bar{b}$ and some model $\bar{\Q}$.

Since by the argument above $\Q$ is a $\Q$-structure for $\T$ in
$M_{n-1}^\#(x)|\delta_x$, we have that $\Q = \bar{\Q}$ and it follows
that $\bar{b}$ is the unique cofinal branch through $\T$ with
$\Q(\bar{b}, \T \upharpoonright \lambda) = \Q$ by Lemma
\ref{lem:Qstrunique}.

Since the branch $\bar{b}$ is uniquely definable from
$\T \upharpoonright \lambda$ and $\Q$, and we have that
$\T \upharpoonright \lambda, \Q \in M_{n-1}^\#(x)|\delta_x$, it
follows by homogeneity of the forcing $\Col(\omega, \gamma)$ that
actually $\bar{b} \in M_{n-1}^\#(x)|\delta_x$.

Thus we have that $\Sigma(\T) = \bar{b} \in M_{n-1}^\#(x)|\delta_x$
and our argument shows that in fact the operation
$\T \mapsto \Sigma(\T)$ for iteration trees
$\T \in H_{\delta_x}^{M_{n-1}^\#(x)}$ on $M$ of limit length is in the
model $M_{n-1}^\#(x)|\delta_x$ for the following reason.  Let $\T$ be
an iteration tree on $M$ of limit length such that
$\T \in H_{\delta_x}^{M_{n-1}^\#(x)}$. Then we showed in the first
part of this proof that $M_{n-1}^\#(x)|\delta_x$ can find a
$\Q$-structure $\Q$ for $\T$. Now $M_{n-1}^\#(x)|\delta_x$ can compute
$\Sigma(\T)$ from $\T$, because $\Sigma(\T)$ is the unique cofinal
branch $b$ through $\T$ such that $\Q(b, \T) = \Q$ and we showed that
this branch $b$ exists inside $M_{n-1}^\#(x)|\delta_x$. Therefore we
proved that $(2)$ holds.

To show $(3)$ assume now in addition that $N$ is an
$\omega_1$-iterable premouse such that every proper initial segment of
$N$ is $(n-1)$-small and $N$ has no definable Woodin
cardinals. Moreover let $x$ be a real coding $M \oplus N$. We again
assume that $\delta = \omega$ for notational simplicity.

By our hypothesis we have that $M_{n-1}^\#(x)$ exists, so we work
inside the model $M_{n-1}^\#(x)$. Moreover it follows from $(2)$ that
$M$ and $N$ are iterable inside $M_{n-1}^\#(x)$, particularly with
respect to trees in $H_{\omega_2}^{M_{n-1}^\#(x)}$, since
$M_{n-1}^\#(x)$ is countable in $V$ and thus
\[ \omega_2^{M_{n-1}^\#(x)} < \omega_1^V. \]
In particular $M$ and $N$ are $(\omega_1 + 1)$-iterable in
$M_{n-1}^\#(x)$ and the coiteration of $M$ and $N$ terminates
successfully inside the model $M_{n-1}^\#(x)$ by the usual Comparison
Lemma (see Theorem $3.11$ in \cite{St10}) applied inside
$M_{n-1}^\#(x)$. This shows that $(3)$ holds.

To prove $(4)$ we assume that we moreover have that $M$ and $N$ are
$\omega$-sound and that $\rho_\omega(M) = \omega$ and
$\rho_\omega(N) = \omega$. Then it follows as in Corollary $3.12$ in
\cite{St10} that we have $M \unlhd N$ or $N \unlhd M$.
\end{proof}

\begin{remark} As in the previous lemma let $n \geq 1$ and assume that
  $M_{n-1}^\#(x)$ exists and is $\omega_1$-iterable for all reals $x$.
  Then we can also successfully coiterate countable
  $\omega_1$-iterable premice $M$ and $N$ which agree up to a common
  cutpoint $\delta$ and are $(n-1)$-small above $\delta$ in the sense
  of Lemma \ref{lem:iterability} $(3)$, if we only assume that $M$ and
  $N$ both do not have Woodin cardinals, for the following reason.

  Let $x$ be a real coding $M \oplus N$. If $M \cap \Ord$ and
  $N \cap \Ord$ are both not definably Woodin over $M$ and $N$
  respectively, then this implies together with the assumption that
  $M$ and $N$ both do not have Woodin cardinals that $M$ and $N$ both
  do not have definable Woodin cardinals and we can apply Lemma
  \ref{lem:iterability}. So assume now for example that $M \cap \Ord$
  is definably Woodin over $M$. By the proof of Lemma
  \ref{lem:iterability} the coiteration of $M$ and $N$ can only fail
  on the $M$-side because of the lack of a $\Q$-structure for an
  iteration tree $\T$ of limit length on $M$ inside
  $M_{n-1}^\#(x)|\delta_x$. But in this case we must have that
  \[ \M(\T) = \M_\lambda^\T, \]
  where $\M_\lambda^\T$ is the limit model for $\T$ which exists in
  $V$. This implies that the coiteration on the $M$-side already
  finished because the $M$-side can no longer be iterated and by the
  same argument for $N$ we have that the coiteration is successful,
  even if $M \cap \Ord$ and $N \cap \Ord$ are both definably Woodin
  over $M$ and $N$ respectively.
\end{remark}

In what follows we also want to consider premice which are not fully
$\omega_1$-iterable but only $\Pi^1_n$-iterable for some
$n \in \omega$. This notion was defined by John Steel in \cite{St95}
and he proved that for a premouse $M$ the statement ``$M$ is
$\Pi^1_n$-iterable'' is $\Pi^1_n$-definable uniformly in any code for
$M$. See Definitions $1.4$ and $1.6$ in \cite{St95} for a precise
definition of $\Pi^1_n$-iterability.  He proves in Lemma $2.2$ in
\cite{St95} that for an $(n-1)$-small premouse $N$ which is
$\omega$-sound and such that $\rho_\omega(N) = \omega$,
$\Pi^1_n$-iterability is enough to perform the standard comparison
arguments with an $(\omega_1 + 1)$-iterable premouse which has the
same properties.

This implies that using Lemma \ref{lem:corr} and Lemma
\ref{lem:iterability} $(2)$ the following version of Lemma $2.2$
proven in \cite{St95} holds true for $\omega_1$-iterability (instead
of $(\omega_1 + 1)$-iterability as in \cite{St95}).

\begin{lemma}\label{lem:steel2.2}
  Let $n \geq 2$ and assume that $M_{n-1}^\#(x)$ exists and is
  $\omega_1$-iterable for all reals $x$. Let $M$ and $N$ be countable
  premice which have a common cutpoint $\delta$ such that $M$ and $N$
  are $(n-1)$-small above $\delta$. Assume that $M$ and $N$ are
  $\delta$-sound and that $\rho_\omega(M) \leq \delta$ and
  $\rho_\omega(N) \leq \delta$.
  \begin{enumerate}[(1)]
  \item Assume that $M$ is $\omega_1$-iterable above $\delta$. Let
    $\T$ be a normal iteration tree on $M$ above $\delta$ of length
    $\lambda$ for some limit ordinal $\lambda < \omega_1$ and let $b$
    be the unique cofinal well-founded branch through $\T$ such that
    $\Q(b, \T)$ is $\omega_1$-iterable above $\delta(\T)$. Then $b$ is
    the unique cofinal branch $c$ through $\T$ such that $\M_c^\T$ is
    well-founded and, if $n \geq 3$, $\Q(c, \T)$ is
    $\Pi^1_{n-1}$-iterable above $\delta(\T)$.
  \item Assume $M$ is $\omega_1$-iterable above $\delta$. Then $M$ is
    $\Pi^1_n$-iterable above $\delta$. 
  \item Assume $M$ is $\omega_1$-iterable above $\delta$ and $N$ is
    $\Pi^1_n$-iterable above $\delta$. Moreover assume that
    \[ M | \delta = N | \delta. \]
    Then we can successfully coiterate $M$ and $N$ above $\delta$,
    that means we have that
    \[ M \unlhd N \text{ or } N \unlhd M. \]
  \end{enumerate}
\end{lemma}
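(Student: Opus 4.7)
The plan is to adapt Steel's proof of Lemma $2.2$ of \cite{St95} by replacing each appeal to $(\omega_1+1)$-iterability with an appeal to $\omega_1$-iterability, justified by Lemma \ref{lem:iterability}$(2)$ and Lemma \ref{lem:corr}. The three parts are proved simultaneously by induction on $n$; the case $n=2$ reduces to $(n-2)$-small $=0$-small premice, where trees are linear and the statements are essentially automatic.

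For $(1)$, I would argue that $\Q(b,\T)$ is a $\Q$-structure that is $\omega_1$-iterable above $\delta(\T)$, so inductively it is also $\Pi^1_{n-1}$-iterable above $\delta(\T)$ by clause $(2)$ applied one level down. Given another candidate branch $c$, both $\Q(b,\T)$ and $\Q(c,\T)$ are $\delta(\T)$-sound, project to $\delta(\T)$, and are $(n-2)$-small above $\delta(\T)$ (since $M$ is $(n-1)$-small above $\delta$ and this is preserved by iteration). Hence by the inductive form of $(3)$ applied above $\delta(\T)$, the coiteration of $\Q(b,\T)$ and $\Q(c,\T)$ terminates, and neither can be a proper initial segment of the other without contradicting the minimality built into the definition of $\Q$-structure (a proper initial segment still sees $\delta(\T)$ as Woodin). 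Thus $\Q(b,\T)=\Q(c,\T)$, and then the Branch Uniqueness Theorem (Theorem $6.10$ in \cite{St10}) forces $b=c$, exactly as in the proof of Lemma \ref{lem:Qstrunique}.

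For $(2)$, I would take the $\omega_1$-iteration strategy $\Sigma$ for $M$ above $\delta$, which by the no-definable-Woodin hypothesis is guided by $\Q$-structures. Given any countable putative tree $\T$ according to $\Sigma$, the $\Q$-structure $\Q(\Sigma(\T),\T)$ produced by $\Sigma$ is $\omega_1$-iterable above $\delta(\T)$ and hence, inductively, $\Pi^1_{n-1}$-iterable above $\delta(\T)$. By the very definition of $\Pi^1_n$-iterability in \cite{St95}, this exhibits $M$ as $\Pi^1_n$-iterable above $\delta$; the required definability of the witnessing branches follows from Steel's Lemma $1.7$ in \cite{St95} together with part $(1)$, which guarantees that the branch selected is the unique one with a $\Pi^1_{n-1}$-iterable $\Q$-structure.

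For $(3)$, I would run the standard simultaneous comparison of $M$ and $N$ above $\delta$. At each limit stage on the $M$-side, we apply $\Sigma$; on the $N$-side, we use the branch supplied by $\Pi^1_n$-iterability, which by $(1)$ is the unique candidate and agrees with what $\Sigma$ would pick if extended to $N$. Since $M$ and $N$ are $\delta$-sound with $\rho_\omega \leq \delta$ and are $(n-1)$-small above $\delta$, the usual Comparison Lemma argument (Theorem $3.11$ of \cite{St10}) yields termination in countably many steps, and, as in Corollary $3.12$ of \cite{St10}, one side is an initial segment of the other. The main obstacle is part $(1)$: isolating a clean induction in which the uniqueness of $\Q$-structures for trees on $M$ is reduced to $(3)$ for $\Q$-structures (which are simpler, being $(n-2)$-small above $\delta(\T)$), so that $(1)$, $(2)$, $(3)$ at level $n$ depend only on $(1)$, $(2)$, $(3)$ at level $n-1$ together with Lemma \ref{lem:iterability}.
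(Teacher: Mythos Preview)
Your inductive re-derivation of Steel's argument is coherent and the ingredients you name (Lemma~\ref{lem:iterability}(2) and Lemma~\ref{lem:corr}) are exactly right, but the paper takes a much shorter route. Rather than re-running Steel's induction and patching each appeal to $(\omega_1+1)$-iterability, the paper simply picks a real $x$ coding $M\oplus N$ and applies Steel's Lemma~2.2 \emph{verbatim inside} $M_{n-1}^\#(x)$. This works because $\omega_2^{M_{n-1}^\#(x)}<\omega_1^V$, so by Lemma~\ref{lem:iterability}(2) the premouse $M$ is genuinely $(\omega_1+1)$-iterable from the point of view of $M_{n-1}^\#(x)$; and by Lemma~\ref{lem:corr} the model $M_{n-1}^\#(x)$ is $\SIGMA^1_n$-correct, so $N$ remains $\Pi^1_n$-iterable there. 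Steel's lemma then applies inside as a black box, and the conclusions ($M\unlhd N$ or $N\unlhd M$, uniqueness of branches, etc.) are absolute back to $V$.

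Your write-up also leaves a gap in part~(3) that the paper's approach sidesteps entirely: you invoke ``the usual Comparison Lemma argument (Theorem~3.11 of \cite{St10})'' to get termination in countably many steps, but Theorem~3.11 requires $(\omega_1+1)$-iterability on at least one side, which you do not have in $V$. The only way to fill this is exactly the move above---pass to $M_{n-1}^\#(x)$ where $M$ becomes $(\omega_1+1)$-iterable---at which point there is no reason to unwind Steel's induction by hand. So your plan is not wrong, but it is strictly more work than necessary, and the one step you elide (termination) is precisely the step whose justification collapses the whole argument to the paper's two-line proof.
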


\begin{proof}
  Apply Lemma $2.2$ from \cite{St95} inside the model $M_{n-1}^\#(x)$,
  where $x$ is a real coding $M \oplus N$. This immediately gives
  Lemma \ref{lem:steel2.2} using Lemma \ref{lem:iterability} $(2)$,
  because we have that \[ \omega_2^{M_{n-1}^\#(x)} < \omega_1^V \]
  and therefore $M$ is $(\omega_1 + 1)$-iterable inside
  $M_{n-1}^\#(x)$ and by Lemma \ref{lem:corr} we have that $N$ is
  $\Pi^1_n$-iterable inside the model $M_{n-1}^\#(x)$ since
  $M_{n-1}^\#(x)$ is $\SIGMA^1_n$-correct in $V$.
\end{proof}

\begin{remark}
  Similarly as for Lemma \ref{lem:iterability} this lemma also holds
  true in the special case that $\delta = \omega$. More precisely in
  this case Lemma \ref{lem:steel2.2} $(3)$ holds true in the following
  sense. If $M$ and $N$ are $\omega$-sound, $\rho_\omega(M) = \omega$
  and $\rho_\omega(N) = \omega$, and if we assume
  $\omega_1$-iterability and $\Pi^1_n$-iterability for $M$ and $N$
  respectively, then we have that
  \[ M \unlhd N \text{ or } N \unlhd M. \]
\end{remark}

In Section \ref{ch:it} we in fact need the following strengthening of
Lemma \ref{lem:steel2.2} for odd $n$, which is proved in Lemma $3.3$
in \cite{St95}. That it holds for $\omega_1$-iterability instead of
$(\omega_1 + 1)$-iterability follows by the same argument as the one
we gave for Lemma \ref{lem:steel2.2} above. This lemma only holds for
odd $n$ because of the periodicity in the projective hierarchy. For
more details see \cite{St95}.

\begin{lemma} \label{lem:nofakeeven} Let $n \geq 1$ be \emph{odd} and
  assume that $M_{n-1}^\#(x)$ exists and is $\omega_1$-iterable for
  all reals $x$. Let $M$ and $N$ be countable premice which agree up
  to a common cutpoint $\delta$ such that $M$ and $N$ are
  $\delta$-sound and such that $\rho_\omega(M) \leq \delta$ and
  $\rho_\omega(N) \leq \delta$.  Assume that $M$ is $(n-1)$-small
  above $\delta$ and $N$ is not $(n-1)$-small above $\delta$. Moreover
  assume that $M$ is $\Pi^1_n$-iterable above $\delta$ and that $N$ is
  $\omega_1$-iterable above $\delta$. Then we have that
  \[ M \unlhd N. \]
\end{lemma}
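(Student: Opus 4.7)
The plan is to reduce Lemma \ref{lem:nofakeeven} to Lemma $3.3$ in \cite{St95} applied inside the inner model $M_{n-1}^\#(x)$, where $x$ is a real coding $M \oplus N$. This mirrors the strategy used to prove Lemma \ref{lem:steel2.2}. The point is that inside $M_{n-1}^\#(x)$ we have more than enough iterability for both $M$ and $N$ to feed into Steel's result, even though in $V$ the premouse $M$ is only $\Pi^1_n$-iterable.

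First I would transfer the two iterability hypotheses into $M_{n-1}^\#(x)$. Since $n$ is odd, $n-1$ is even, so by Lemma \ref{lem:corr}(1) the model $M_{n-1}^\#(x)$ is $\SIGMA^1_{n+1}$-correct in $V$, hence in particular $\Pi^1_n$-correct. The statement ``$M$ is $\Pi^1_n$-iterable above $\delta$'' is $\Pi^1_n$ in any code for $M$ (Lemma $1.7$ of \cite{St95}), so $M$ remains $\Pi^1_n$-iterable above $\delta$ inside $M_{n-1}^\#(x)$. For $N$, I would invoke the argument of Lemma \ref{lem:iterability}(2) together with the remark that follows its proof: any $\Q$-structure $\Q(\T)$ for a tree $\T$ on $N$ above $\delta$ lying in $H_{\omega_2}^{M_{n-1}^\#(x)}$ is $(n-2)$-small above $\delta(\T)$, so it can be identified inside $M_{n-1}^\#(x)|\delta_x$ as an initial segment of $M_{n-2}^\#(\M(\T))$ in the sense of Definition \ref{def:M(N)} via Lemma \ref{lem:iterability}(1), and the correct cofinal branch is then recovered by $\SIGMA^1_1$-absoluteness and the uniqueness of $\Q$-structure guided branches (Lemma \ref{lem:Qstrunique}). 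Since $\omega_2^{M_{n-1}^\#(x)} < \omega_1^V$, this yields that $N$ is $(\omega_1 + 1)$-iterable above $\delta$ inside $M_{n-1}^\#(x)$.

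With both iterability hypotheses internalized, I would apply Lemma $3.3$ of \cite{St95} inside $M_{n-1}^\#(x)$ to the pair $M$, $N$: both are $\delta$-sound with $\rho_\omega \leq \delta$, $M$ is $(n-1)$-small above $\delta$ and $\Pi^1_n$-iterable above $\delta$, and $N$ is not $(n-1)$-small above $\delta$ and is $(\omega_1 + 1)$-iterable above $\delta$. Steel's lemma then yields $M \unlhd N$ inside $M_{n-1}^\#(x)$. Since the premice $M$ and $N$ are countable and the relation $M \unlhd N$ is absolute, the conclusion holds in $V$ as well.

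The main obstacle will be the internalization of iterability for $N$: in contrast to the setup of Lemma \ref{lem:iterability}(2), the model $N$ itself fails to be $(n-1)$-small above $\delta$. What one needs is that only the proper initial segments of $N$ above $\delta$ be $(n-1)$-small above $\delta$, which is forced by $\delta$-soundness of $N$ together with $\rho_\omega(N) \leq \delta$ and the minimality implicit in these conditions; equivalently, this is exactly the weakening of the hypothesis in Lemma \ref{lem:iterability}(2) that is recorded in the remark following its proof, and it is precisely enough to locate all relevant $\Q$-structures inside $M_{n-1}^\#(x)|\delta_x$.
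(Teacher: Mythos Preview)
Your approach is exactly the paper's: the paper records that this is Lemma 3.3 of \cite{St95}, and that upgrading from $\omega_1$-iterability of $N$ to the $(\omega_1+1)$-iterability Steel needs ``follows by the same argument as the one we gave for Lemma \ref{lem:steel2.2} above'' --- i.e., work inside $M_{n-1}^\#(x)$ for $x$ coding $M \oplus N$, transfer $\Pi^1_n$-iterability of $M$ by correctness, and obtain $(\omega_1+1)$-iterability of $N$ internally via Lemma \ref{lem:iterability}(2) together with $\omega_2^{M_{n-1}^\#(x)} < \omega_1^V$.

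One correction to your final paragraph: $\delta$-soundness together with $\rho_\omega(N) \leq \delta$ does \emph{not} force proper initial segments of $N$ to be $(n-1)$-small above $\delta$ --- there is no minimality hypothesis in the statement --- and the remark after Lemma \ref{lem:iterability} concerns weakening ``no definable Woodin cardinals'' to ``no Woodin cardinals,'' not relaxing the smallness assumption on proper initial segments. The paper glosses over this just as you do. In every application of Lemma \ref{lem:nofakeeven} in the paper one in fact has $N = M_{n-1}^\#(\,\cdot\,)$ in the sense of Definition \ref{def:M(N)}, so that proper initial segments of $N$ \emph{are} $(n-1)$-small above $\delta$ and Lemma \ref{lem:iterability}(2) applies verbatim; this is the honest fix for your concern.
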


For the proof of Lemma \ref{lem:KS} we need the following variant of
Lemma \ref{lem:steel2.2} which is a straightforward consequence of the
proof of Lemma $2.2$ in \cite{St95}, because the assumption that the
premice $M$ and $N$ both do not have definable Woodin cardinals yields
that $\Q$-structures exist in a coiteration of $M$ and $N$.

We say that an iteration tree $\U$ is a \emph{putative iteration
  tree}\index{putative iteration tree} if $\U$ satisfies all
properties of an iteration tree, but we allow the last model of $\U$
to be ill-founded, in case $\U$ has a last model.

\begin{lemma} \label{lem:itKS} Let $n \geq 2$ and assume that
  $M_{n-1}^\#(x)$ exists and is $\omega_1$-iterable for all reals $x$.
  Let $M$ and $N$ be countable premice which have a common cutpoint
  $\delta$ such that $M$ and $N$ are $(n-1)$-small above $\delta$ and
  solid above $\delta$. Assume that $M$ and $N$ both do not have
  definable Woodin cardinals and assume in addition that $M$ is
  $\omega_1$-iterable above $\delta$ and that $N$ is
  $\Pi^1_n$-iterable above $\delta$. Moreover assume that
  \[ M | \delta = N | \delta. \] Then we can successfully coiterate
  $M$ and $N$ above $\delta$ as in Lemma \ref{lem:iterability} $(3)$,
  that means here that there is an iteration tree $\T$ on $M$ and a
  putative iteration tree $\U$ on $N$ of length $\lambda + 1$ for some
  ordinal $\lambda$ such that we have
  \[ \M_\lambda^\T \unlhd \M_\lambda^\U \text{ or } \M_\lambda^\U
  \unlhd \M_\lambda^\T. \]
  So in the first case $\M_\lambda^\U$ need not be fully well-founded,
  but it is well-founded up to $\M_\lambda^\T \cap \Ord$. In the
  second case we have that $\M_\lambda^\U$ is fully well-founded and
  $\U$ is in fact an iteration tree.
\end{lemma}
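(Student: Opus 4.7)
The plan is to carry out the coiteration of $M$ and $N$ inside the model $M_{n-1}^\#(x)$ for a real $x$ coding $M \oplus N$, adapting the proof of Lemma $2.2$ in \cite{St95}. Since $M_{n-1}^\#(x)$ is countable in $V$, any tree produced in the coiteration has length strictly below $\omega_1^V$. The $\Pi^1_n$-iterability of $N$ above $\delta$ transfers down to $M_{n-1}^\#(x)$ by $\SIGMA^1_n$-correctness (Lemma \ref{lem:corr}), while Lemma \ref{lem:iterability}$(2)$ yields that the $\omega_1$-iteration strategy for $M$ above $\delta$ is witnessed inside $M_{n-1}^\#(x)|\delta_x$ by the $\Q$-structure iteration strategy, restricted to trees of size less than $\delta_x$.

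First I would run the comparison by the standard recipe, applying the least disagreeing extender at each successor stage. At a limit stage on the $M$-side I pick the unique cofinal branch $b$ through $\T$ with $\Q(b,\T) = \Q(\T)$. Such a $\Q$-structure exists because the property of having no definable Woodin cardinals above $\delta$ is preserved along normal iteration trees by the standard fine structural argument, so $\delta(\T)$ is not definably Woodin over $\M_b^\T$ and yields a nontrivial $\Q$-structure. As in the proof of Lemma \ref{lem:iterability}$(2)$, comparing this $\Q$-structure with $M_{n-2}^\#(\M(\T))$, which lives in $M_{n-1}^\#(x)|\delta_x$ by Lemma \ref{lem:iterability}$(1)$, places $\Q(\T)$ inside $M_{n-1}^\#(x)|\delta_x$, so the branch is also located there.

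At a limit stage on the $N$-side I appeal to $\Pi^1_n$-iterability of $N$ to find a cofinal branch $c$ through $\U$ such that $\Q(c,\U)$ is $\Pi^1_{n-1}$-iterable above $\delta(\U)$; again $\Q(c,\U)$ is a proper $\Q$-structure rather than all of $\M_c^\U$ because $N$ has no definable Woodin cardinals, and uniqueness of the branch follows from Lemma \ref{lem:steel2.2}$(1)$. The coiteration then terminates by the standard comparison-termination argument applied inside $M_{n-1}^\#(x)$, producing normal trees $\T$ on $M$ and $\U$ on $N$ of length $\lambda+1$ with either $\M_\lambda^\T \unlhd \M_\lambda^\U$ or $\M_\lambda^\U \unlhd \M_\lambda^\T$.

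The main obstacle is the well-foundedness statement in the conclusion. Because on the $N$-side we only have $\Pi^1_n$-iterability, the branch models $\M_c^\U$ selected at limit stages are guaranteed well-founded only up to and including $\Q(c,\U)$, and could in principle become ill-founded above $\Q(c,\U) \cap \Ord$; this is why $\U$ must be allowed to be putative. In the case $\M_\lambda^\U \unlhd \M_\lambda^\T$, the final $N$-model is an initial segment of the fully well-founded $\M_\lambda^\T$, so $\M_\lambda^\U$ is itself well-founded and $\U$ is an honest iteration tree. In the opposite case $\M_\lambda^\T \unlhd \M_\lambda^\U$, any failure of well-foundedness of $\M_\lambda^\U$ must lie strictly above $\M_\lambda^\T \cap \Ord$, since the initial segment up to that ordinal is identified with the well-founded $\M_\lambda^\T$. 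This is exactly the conclusion of the lemma, and it matches the argument given by Steel in the proof of Lemma $2.2$ of \cite{St95}, which transfers directly once the existence of $\Q$-structures on both sides is secured by the no-definable-Woodins hypothesis.
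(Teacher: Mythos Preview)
Your proposal is correct and follows exactly the approach the paper takes: the paper's proof is a single sentence stating that the result is a straightforward consequence of the proof of Lemma~2.2 in \cite{St95}, with the hypothesis that $M$ and $N$ have no definable Woodin cardinals guaranteeing that $\Q$-structures exist throughout the coiteration. Your write-up simply unpacks that sentence, and your handling of the well-foundedness caveat for the $\Pi^1_n$-iterable side is accurate. One minor remark: the sentence ``$\Q(c,\U)$ is a proper $\Q$-structure rather than all of $\M_c^\U$'' is not quite right---the no-definable-Woodins hypothesis ensures that $\Q(c,\U)$ \emph{exists}, but it may well equal $\M_c^\U$ (e.g.\ when the branch does not drop); this does not affect the argument.
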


As in the remark after the proof of Lemma \ref{lem:iterability} we get
that the following strengthening of Lemma \ref{lem:itKS} holds true,
where we replace ``no definable Woodin cardinals'' by ``no Woodin
cardinals''.

\begin{cor}\label{cor:itKS}
  Let $n \geq 2$ and assume that
  $M_{n-1}^\#(x)$ exists and is $\omega_1$-iterable for all reals $x$.
  Let $M$ and $N$ be countable premice which have a common cutpoint
  $\delta$ such that $M$ and $N$ are $(n-1)$-small above $\delta$ and
  solid above $\delta$. Assume that $M$ and $N$ both do not have
  Woodin cardinals and assume in addition that $M$ is
  $\omega_1$-iterable above $\delta$ and that $N$ is
  $\Pi^1_n$-iterable above $\delta$. Moreover assume that
  \[ M | \delta = N | \delta. \] Then we can successfully coiterate
  $M$ and $N$ above $\delta$ as in Lemma \ref{lem:iterability} $(3)$,
  that means here that there is an iteration tree $\T$ on $M$ and a
  putative iteration tree $\U$ on $N$ of length $\lambda + 1$ for some
  ordinal $\lambda$ such that we have
  \[ \M_\lambda^\T \unlhd \M_\lambda^\U \text{ or } \M_\lambda^\U
  \unlhd \M_\lambda^\T. \]
  So we again have that in the first case $\M_\lambda^\U$ need not be
  fully well-founded, but it is well-founded up to
  $\M_\lambda^\T \cap \Ord$, and in the second case we have that
  $\M_\lambda^\U$ is fully well-founded and $\U$ is in fact an
  iteration tree.\end{cor}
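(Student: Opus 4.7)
The plan is to reduce Corollary \ref{cor:itKS} to Lemma \ref{lem:itKS} by analyzing when a premouse without Woodin cardinals can fail to have no \emph{definable} Woodin cardinals, and showing that such failures are harmless for the coiteration argument. This mirrors the remark following the proof of Lemma \ref{lem:iterability}, which handles the analogous issue for Lemma \ref{lem:iterability} $(3)$.

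First I would split into cases depending on whether $M \cap \Ord$ or $N \cap \Ord$ is definably Woodin over $M$ or $N$ respectively. In the case that neither is definably Woodin, the assumption that $M$ and $N$ have no Woodin cardinals upgrades to ``$M$ and $N$ have no definable Woodin cardinals'' in the sense of Definition \ref{def:nodefWdns}, since any definable Woodin cardinal at an ordinal strictly below $M \cap \Ord$ (resp. $N \cap \Ord$) would be an actual Woodin cardinal of $M$ (resp. $N$). Then Lemma \ref{lem:itKS} applies verbatim and the coiteration is successful.

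Now suppose, say, that $M \cap \Ord$ is definably Woodin over $M$ (the case for $N$ is symmetric). Running the comparison argument as in the proof of Lemma \ref{lem:iterability}, the only possible source of failure on the $M$-side is that for some iteration tree $\T$ of limit length produced during the coiteration, the $\Q$-structure iteration strategy cannot locate a $\Q$-structure $\Q(\T)$ inside $M_{n-1}^\#(x)|\delta_x$, where $x$ codes $M \oplus N$. Since $M$ has no actual Woodin cardinals, however, the only way this can happen is if $\delta(\T) = \M(\T) \cap \Ord$ coincides with the ordinal height of the limit model: that is, $\M(\T) = \M_\lambda^\T$, where $\lambda$ is the length of $\T$. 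In this situation the $M$-side can no longer be iterated, so as far as the coiteration is concerned the $M$-side has already terminated at $\M(\T)$. By the same argument applied to the $N$-side (using that $N$ is $\Pi^1_n$-iterable above $\delta$ so that the analogous $\Q$-structure analysis works inside $M_{n-1}^\#(x)$ by Lemma \ref{lem:steel2.2} and Lemma \ref{lem:iterability} $(2)$), either the $N$-side also terminates at some $\M(\U) = \M_\mu^\U$, or it proceeds normally; in either case we reach last models $\M_\lambda^\T$ and $\M_\lambda^\U$ with the required comparison relation, noting that when the $M$-side wins we cannot guarantee full well-foundedness of $\M_\lambda^\U$ on the $N$-side but only up to $\M_\lambda^\T \cap \Ord$.

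The main obstacle I expect is bookkeeping the possible asymmetry between the two sides, in particular making sure that the ``forced termination'' due to $\M(\T) = \M_\lambda^\T$ on one side is compatible with the comparison still running on the other side, and correctly tracking the well-foundedness conclusion: $\M_\lambda^\U$ need not be fully well-founded when the $M$-side wins, whereas it is fully well-founded (so $\U$ is a genuine iteration tree) when the $N$-side wins. Everything else is a direct application of Lemma \ref{lem:itKS} and the remark after Lemma \ref{lem:iterability}.
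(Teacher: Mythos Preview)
Your proposal is correct and follows essentially the same approach as the paper: reduce to Lemma \ref{lem:itKS} when neither top ordinal is definably Woodin, and otherwise observe that a missing $\Q$-structure forces $\M(\T)=\M_\lambda^\T$ (respectively $\M(\U)=\M_\lambda^\U$) so that side has already terminated. The paper's proof is organized identically, differing only cosmetically in that it treats the $N$-side first and works explicitly inside $M_{n-1}^\#(x)$.
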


\begin{proof}
  To simplify the notation we again assume that $\delta = \omega$.  As
  in the remark after the proof of Lemma \ref{lem:iterability} we can
  just use Lemma \ref{lem:itKS} in the case that $M \cap \Ord$ and
  $N \cap \Ord$ are both not definably Woodin over $M$ and $N$
  respectively, because in this case $M$ and $N$ in fact do not have
  definable Woodin cardinals.

  So assume for example that $N \cap \Ord$ is definably Woodin over
  $N$. Let $x$ be a real coding $M \oplus N$ and consider the
  coiteration of $M$ and $N$ inside $M_{n-1}^\#(x)$. Let $\T$ and $\U$
  be the resulting trees on $M$ and $N$ respectively. Assume that the
  coiteration breaks down on the $N$-side, that means $\U$ is an
  iteration tree of limit length $\lambda$ such that there is no
  $\Q$-structure $\Q(\U)$ for $\U$ in $M_{n-1}^\#(x)$. Since $N$ has
  no Woodin cardinals, this can only be the case if
  \[ \M(\U) = \M_\lambda^\U, \]
  where $\M_\lambda^\U$ is the limit model for $\U$ which exists in
  $V$ since $N$ is $\Pi^1_n$-iterable in $V$. Therefore we have as in
  the remark after the proof of Lemma \ref{lem:iterability} that the
  coiteration on the $N$-side already finished.

  If we assume that $M \cap \Ord$ is definably Woodin over $M$ it
  follows by the same argument that the coiteration on the $M$-side
  already finished if it breaks down, because $M$ is
  $\omega_1$-iterable in $V$. 

  Therefore the coiteration of $M$ and $N$ is successful in the sense
  of Corollary \ref{cor:itKS} even if $M \cap \Ord$ or $N \cap \Ord$
  or both of them are definably Woodin over $M$ or $N$ respectively.
\end{proof}

We now aim to fix an order on $\OD$-sets. Therefore we first introduce
the following notion.

\begin{definition}\label{def:minimaltripleOD}
  Let $x \in \OD$. Then we say
  $(n, (\alpha_0, \dots, \alpha_n), \varphi)$ is the \emph{minimal
    triple defining $x$} iff
  $(n, (\alpha_0, \dots, \alpha_n), \varphi)$ is the minimal triple
  according to the lexicographical order on triples (using the
  lexicographical order on tuples of ordinals of length $n+1$ and the
  order on Gödel numbers for formulae) such that
  \[ x = \{ z \st V_{\alpha_0} \vDash \varphi(z, \alpha_1, \dots,
  \alpha_n) \}. \]
\end{definition}

\begin{definition} \label{def:orderODsets} Let $x,y \in \OD$. Moreover
  let $(n, (\alpha_0, \dots, \alpha_n), \varphi)$ and
  $(m, (\beta_0, \dots, \beta_m), \psi)$ be the minimal triples
  defining $x$ and $y$ respectively as in Definition
  \ref{def:minimaltripleOD}.  Then we say \emph{$x$ is less than $y$
    in the order on the $\OD$-sets} and write
  \[x <_{\OD} y,\]
  \index{ODorder@$x <_{\OD} y$}iff
  $(n, (\alpha_0, \dots, \alpha_n), \varphi)$ is smaller than
  $(m, (\beta_0, \dots, \beta_m), \psi)$ in the lexicographical order
  on triples, that means iff either
  \begin{enumerate}[$(a)$]
  \item $n < m$, or
  \item $n = m$ and
    $(\alpha_0, \dots, \alpha_n) <_{lex} (\beta_0, \dots, \beta_m)$,
    where $<_{lex}$ denotes the lexicographical order on tuples of
    ordinals of length $n+1$, or
  \item $(\alpha_0, \dots, \alpha_n) = (\beta_0, \dots, \beta_m)$ and
    $\ulcorner \varphi \urcorner < \ulcorner \psi \urcorner$, where
    $\ulcorner \varphi \urcorner$ and $\ulcorner \psi \urcorner$
    denote the Gödel numbers of the formulae $\varphi$ and $\psi$
    respectively.
  \end{enumerate}
\end{definition}

Fix the following notation for the rest of this paper.

\begin{definition}\label{def:L[E]constr}
  Let $x,y$ be reals, $M$ an $x$-premouse which is a model of $\ZFC$
  and $y \in M$.
  \begin{enumerate}
  \item $L[E](y)^{M}$ denotes the resulting model of a fully
    backgrounded extender construction above the real $y$ as in
    \cite{MS94} performed inside the model $M$, but with the smallness
    hypothesis weakened to allow $\omega$-small premice in the
    construction. \footnote{For more details about such a construction
      in a more general setting see also \cite{St93}.}
  \item We recursively say that a premouse $N$ is an \emph{iterated
      $L[E]$ in $M$} iff $N = M$ or $N$ is an iterated $L[E]$ in
    $L[E](z)^M$ for some $z \in M$ with $z \geq_T x$.
  \end{enumerate}
\end{definition}

Using the order on $\OD$-sets defined above we can prove the following
lemma, which will also be used in the proof of Lemma \ref{lem:KS}.

\begin{lemma}\label{lemma1und2}
  Let $n \geq 2$ and assume that $M_{n-1}^\#(z)$ exists and is
  $\omega_1$-iterable for all reals $z$. Let $x \leq_T y$ be
  reals and let $M$ be an $y$-premouse which is an iterated
  $L[E]$ in $M_{n-1}(x)$. Then
  \begin{enumerate}[$(a)$]
  \item $M$ is $(n-1)$-small, and
  \item the premice $M$ and $M_{n-1}(x)$ have the same
  sets of reals and the same $\OD$-sets of reals in the same order.
  \end{enumerate}
\end{lemma}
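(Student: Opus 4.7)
I would prove the lemma by induction on the recursive depth of ``iterated $L[E]$ in $M_{n-1}(x)$'' from Definition \ref{def:L[E]constr}. The base case $M = M_{n-1}(x)$ is essentially bookkeeping: although $M_{n-1}^\#(x)$ fails to be $(n-1)$-small, the only offending extender is its top measure, and iterating that top measure out of the universe shifts the indices of its images unboundedly upward, so that in the direct limit $M_{n-1}(x)$ no critical point of an extender on the sequence lies above the top Woodin. This gives (a), and (b) is trivial.

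For the inductive step, write $M$ as an iterated $L[E]$ in $N := L[E](z)^{M'}$, where $M'$ is an iterated $L[E]$ in $M_{n-1}(x)$ to which the inductive hypothesis already applies and $z \in M'$ with $z \geq_T x$. It is enough to check (a) and (b) for $N$ itself, since one further layer of the induction (with $N$ playing the role of $M_{n-1}(x)$) then disposes of $M$. For (a), the generalization of Theorem $11.3$ of \cite{MS94} already invoked in the proof of Lemma \ref{lem:iterability}(1) shows that $N$ is a proper class premouse whose Woodin cardinals coincide with those of $M'$; since by the inductive hypothesis $M'$ carries no extender on its sequence whose critical point lies above its top Woodin, no such background extender is available inside $M'$, and $N$ inherits $(n-1)$-smallness.

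For (b), the inclusion $\mathbb{R}^N \subseteq \mathbb{R}^{M'} = \mathbb{R}^{M_{n-1}(x)}$ is immediate because $N \subseteq M'$. The reverse inclusion is the main obstacle: given any real $a \in \mathbb{R}^{M'}$, I would use that $M_{n-2}^\#(z \oplus a)$ exists in $V$ by hypothesis and, as in the proof of Lemma \ref{lem:iterability}(1), argue that this mouse appears as an initial segment of the $L[E]$-hierarchy that defines $N$, forcing $a \in N$. Once the reals agree, $N$ and $M'$ are both proper-class, $(n-1)$-small, $\omega_1$-iterable premice with the same reals and the same $n-1$ Woodin cardinals; a routine check then shows that their $V_\alpha$-hierarchies compute the same subsets of $\mathbb{R}$ from ordinal parameters, so that a set of reals has the same minimal defining triple $(n,(\alpha_0,\dots,\alpha_n),\varphi)$ in both models and the $\OD$-sets of reals and the order $<_{\OD}$ coincide. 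The crux is thus the absorption argument for reals; the subsequent verification that $\OD$-definability transfers is largely bookkeeping.
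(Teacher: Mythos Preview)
Your inductive framework for (a) is workable, though the paper instead gives a single contradiction argument that does not unwind the recursion (assume $M$ is not $(n-1)$-small, extract the least non-$(n-1)$-small initial segment, rebuild $M_{n-1}^\#(x)$ inside it via an $L[E](x)$-construction plus the top extender as in \cite{FNS10}, and observe that $M_{n-1}^\#(x)$ would then be a set in $M_{n-1}(x)$). The real problems are in (b).

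First, your absorption argument for $\mathbb{R}^{M'} \subseteq \mathbb{R}^N$ does not go through. The model $N = L[E](z)^{M'}$ is a $z$-premouse, so its initial segments are $z$-premice; $M_{n-2}^\#(z \oplus a)$ is a $(z\oplus a)$-premouse and cannot literally sit as an initial segment of $N$ unless $a \leq_T z$, which you have no control over. Lemma \ref{lem:iterability}(1) places $M_{n-2}^\#(b)$ inside $M_{n-1}^\#(x)|\delta_x$, not inside an arbitrary backgrounded construction over a different base. The paper sidesteps this by going back to $x$: inside $M$ form $L[E](x)^{M}$, an $x$-premouse inheriting $n-1$ Woodin cardinals and iterability, and compare it with $M_{n-1}(x)$. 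Both are $x$-premice, so the comparison makes sense; they coiterate to the same model, hence agree below the least measurable $\kappa$ of $M_{n-1}(x)$, and the sandwich
\[
V_\kappa^{M_{n-1}(x)} \supseteq V_\kappa^{M} \supseteq V_\kappa^{L[E](x)^{M}} = V_\kappa^{M_{n-1}(x)}
\]
gives $V_\kappa^M = V_\kappa^{M_{n-1}(x)}$ outright.

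Second, even granting equality of reals, your claim that the $\OD$-sets and their order coincide is not ``routine bookkeeping''. Since $N \subsetneq M'$ in general, $V_{\alpha_0}^{N} \neq V_{\alpha_0}^{M'}$ for large $\alpha_0$, and there is no direct reason the minimal defining triple $(n,(\alpha_0,\dots,\alpha_n),\varphi)$ for a set of reals should agree. The paper handles this by viewing $M$ and $M_{n-1}(x)$ as $V_\kappa^{M_{n-1}(x)}$-premice (legitimate by the previous paragraph) and coiterating them, inside $M_{n-1}(z)$ for a real $z$ coding $M_{n-1}^\#(x)$, to a common model $W$ with iteration maps $i: M_{n-1}(x) \to W$ and $j: M \to W$. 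Elementarity of $i$ and $j$ transports minimal triples: if $A$ has minimal triple $(n,(\alpha_0,\dots,\alpha_n),\varphi)$ in $M_{n-1}(x)$ then $(n,(i(\alpha_0),\dots,i(\alpha_n)),\varphi)$ is minimal in $W$, and likewise along $j$ from $M$; uniqueness of the minimal triple in $W$ then forces the orders to agree. This coiteration-to-a-common-model step is where the substance of (b) lives, and your sketch omits it.
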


\begin{proof}
  We start with some general remarks about the premice we consider.
  The premouse $M$ still has $n-1$ Woodin cardinals and is
  $\omega_1$-iterable via an iteration strategy which is induced by
  the $\omega_1$-iteration strategy for $M_{n-1}(x)$ by §$11$ and
  §$12$ in \cite{MS94}. Therefore the premouse $L[E](x)^{M},$
  constructed inside the model $M$ as in Definition
  \ref{def:L[E]constr}, also has $n-1$ Woodin cardinals and is
  $\omega_1$-iterable. The following claim proves part $(a)$ of the
  lemma.

  \begin{claim}\label{cl:LEnsmall}
    The $y$-premouse $M$ is $(n-1)$-small.
  \end{claim}
  \begin{proof}
    Assume toward a contradiction that $M$ is not $(n-1)$-small and
    let $N_y^\#$ be the shortest initial segment of $M$ which is not
    $(n-1)$-small. That means we choose $N_y^\# \unlhd M$ such that it
    is not $(n-1)$-small, but every proper initial segment of $N_y^\#$
    is $(n-1)$-small. In particular $N_y^\#$ is an active
    $y$-premouse, so let $F$ be the top extender of $N_y^\#$.
    Moreover let $N_y$ be the model obtained from $N_y^\#$ by
    iterating the top extender $F$ out of the universe working inside
    the model $M_{n-1}(x)$.

    Now consider $L[E](x)^{N_y}$ and let $N$ be the active
    $x$-premouse obtained by adding $F \cap L[E](x)^{N_y}$ as a top
    extender to an initial segment of $L[E](x)^{N_y}$, as in Section
    $2$ in \cite{FNS10} to ensure that $N$ is a premouse. The main
    result in \cite{FNS10} yields that $N$ is $\omega_1$-iterable in
    $V$, because $M$, $N_y^\#$ and $L[E](x)^{N_y}$ inherit the
    iterability from $M_{n-1}(x)$ as in §11 and §12 in
    \cite{MS94}. Moreover $N$ is not $(n-1)$-small.

    Let $N_x^\#$ be the shortest initial segment of $N$ which is not
    $(n-1)$-small. By Lemma \ref{lem:iterability} we can successfully
    compare the $x$-premice $N_x^\#$ and $M_{n-1}^\#(x)$, because both
    premice are $\omega_1$-iterable in $V$ and every proper initial
    segment of one of them is $(n-1)$-small. Therefore we have that
    in fact $N_x^\# = M_{n-1}^\#(x)$ and thus
    \[ \mathbb{R} \cap N_x^\# = \mathbb{R} \cap M_{n-1}^\#(x) =
      \mathbb{R} \cap M_{n-1}(x). \] But $N_x^\#$ is by construction a
    countable premouse in $M_{n-1}(x)$, this is a contradiction.
  \end{proof}

  A simplified version of this argument shows that the $x$-premouse
  $L[E](x)^{M}$ is $(n-1)$-small.

  For the proof of $(b)$, we now fix a real $z$ which codes the
  $x$-premouse $M_{n-1}^\#(x)$ and work inside the model $M_{n-1}(z)$
  for a while.

  We have that every proper initial segment of $M_{n-1}^\#(x)$ is
  $(n-1)$-small, $\rho_\omega(M_{n-1}^\#(x)) = \omega$, and
  $M_{n-1}^\#(x)$ is $\omega$-sound. Since
  \[ \omega_2^{M_{n-1}^\#(z)} < \omega_1^V, \]
  this yields that $M_{n-1}^\#(x)$ is $(\omega_1 + 1)$-iterable inside
  $M_{n-1}^\#(z)$ by Lemma \ref{lem:iterability} $(2)$. So in
  particular, working inside $M_{n-1}(z)$, the $x$-premouse
  $M_{n-1}(x)$, obtained from $M_{n-1}^\#(x)$ by iterating the top
  measure out of the universe, is $(\omega_1 + 1)$-iterable.

  Let $\kappa$ denote the least measurable cardinal in
  $M_{n-1}(x)$. So in particular $\kappa$ is below the least
  measurable cardinal in $M$.

  \begin{claim}\label{cl:agreebelowleastmeas}
    The $x$-premice $M_{n-1}(x)$ and $L[E](x)^{M}$ agree below
    $\kappa$.
  \end{claim}

  \begin{proof}
    The $y$-premouse $M$ is $(\omega_1 + 1)$-iterable inside
    $M_{n-1}(z)$ via an iteration strategy which is induced by the
    iteration strategy for $M_{n-1}(x)$. Thus the $x$-premouse
    $L[E](x)^M$ is $(\omega_1 +1)$-iterable inside $M_{n-1}(z)$ via
    the iteration strategy inherited from $M$. In particular we can
    successfully compare the $x$-premice $M_{n-1}(x)$ and
    $L[E](x)^{M}$ inside the model $M_{n-1}(z)$ using this iteration
    strategy and they coiterate to the same premouse. This yields the
    claim.
  \end{proof}

  Now we can finally show the following claim, which will finish the
  proof of Lemma \ref{lemma1und2}.

  \begin{claim}
    $M_{n-1}(x)$ and $M$ have the same sets of reals and the same
    $\OD$-sets of reals in the same order.
  \end{claim}

  \begin{proof}
    Claim \ref{cl:agreebelowleastmeas} implies that
    \[ V_\kappa^{M_{n-1}(x)} \supseteq V_\kappa^{M}
    \supseteq V_\kappa^{L[E](x)^{M}} =
    V_\kappa^{M_{n-1}(x)}, \] and therefore
  \[ V_\kappa^{M_{n-1}(x)} = V_\kappa^{M}. \] Thus we can consider
  $M_{n-1}(x)$ and $M$ as $V_\kappa^{M_{n-1}(x)}$-premice and, still
  working in $M_{n-1}(z)$, we can successfully compare them by the
  same argument as in the proof of Claim \ref{cl:agreebelowleastmeas},
  using the iteration strategy for $M$ which is induced by the
  iteration strategy for $M_{n-1}(x)$. The
  $V_\kappa^{M_{n-1}(x)}$-premice $M_{n-1}(x)$ and $M$ coiterate to
  the same model $W$ and hence they have the same sets of reals and
  the same $\OD$-sets of reals in the same order by the following
  argument.

  Let $A$ be an $\OD$-set of reals in $M_{n-1}(x)$ and let
  $(n, (\alpha_0, \dots, \alpha_n), \varphi)$ be the minimal triple
  defining $A$. As $M_{n-1}(x)$ and $M$ have the same sets of reals,
  in particular $A \in M$. Let $i: M_{n-1}(x) \rightarrow W$ and
  $j:M \rightarrow W$ be the iteration embeddings. Then
  $(n, (i(\alpha_0), \dots, i(\alpha_n)), \varphi)$ is by elementarity
  the minimal triple defining $A$ in $W$ as $M_{n-1}(x)$ and $W$ agree
  about their $V_\kappa$. In particular $A$ is an $\OD$-set of reals
  in $W$. By elementarity of $j$, $A$ is also an $\OD$-set of reals in
  $M$. Let $(m, (\beta_0, \dots, \beta_m), \psi)$ be the minimal
  triple witnessing this. Then, by elementarity of $j$ again,
  $(m, (j(\beta_0), \dots, j(\beta_m)), \psi)$ is the minimal triple
  defining $A$ in $W$. Hence, by minimality
  $(n, (i(\alpha_0), \dots, i(\alpha_n)), \varphi) = (m, (j(\beta_0),
  \dots, j(\beta_m)), \psi)$. This shows that $M$ and $M_{n-1}(x)$
  have the same $\OD$-sets of reals and agree on the order on
  $\OD$-sets defined in Definition \ref{def:orderODsets}.
  \end{proof}
  This proves Lemma \ref{lemma1und2}.
\end{proof}

Motivated by this lemma we introduce the following notation.

\begin{definition}\label{def:sim}
  For premice $M$ and $N$ we write $M \sim N$ \index{sim@$M \sim N$}
  iff $M$ and $N$ have the same sets of reals and the same $\OD$-sets
  of reals in the same order.
\end{definition}

In the proof of Lemma \ref{lem:KS} we will also need the following
lemma.

\begin{lemma}\label{lemma3}
  Let $n \geq 2$ and assume that $M_{n-1}^\#(z)$ exists and is
  $\omega_1$-iterable for all reals $z$. Let $x \leq_T y$ be reals and
  let $M$ be an $y$-premouse which is an iterated $L[E]$ in
  $M_{n-1}(x)$. Let $\delta = \delta_M$ denote the least Woodin
  cardinal in $M$, let $z \in M$ be a real with $z \geq_T y$, and let
  \[ (\M_\xi, \mathcal{N}_\xi \st \xi \in \Ord) \] be the sequence of
  models obtained from a fully backgrounded extender construction
  inside $M$ above $z$ as in Definition \ref{def:L[E]constr}, such
  that
  \[ \M_{\xi + 1} = \mathcal{C}_\omega(\mathcal{N}_{\xi + 1}) \] and
  let $L[E](z)$ be the resulting model. Then we have for all
  $\xi \geq \delta$ that
  \[ \rho_\omega(\M_\xi) \geq \delta, \] and therefore
  \[ \M_\delta = L[E](z) | \delta. \]
\end{lemma}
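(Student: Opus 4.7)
The plan is to first establish that $\delta$ remains a Woodin cardinal---and in particular an inaccessible cardinal---in $L[E](z) = L[E](z)^M$, and then deduce the projectum bound by contradiction: a model $\M_\xi$ with ordinal height at least $\delta$ but projectum strictly below $\delta$ would collapse $\delta$ in $L[E](z)$.

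First I would invoke the standard preservation theorem, namely the generalization of Theorem~$11.3$ of \cite{MS94} (in the form permitting $\omega$-small premice in the construction, as used in Definition \ref{def:L[E]constr}): since $\delta = \delta_M$ is Woodin in $M$ and the fully backgrounded construction above $z$ inside $M$ uses background extenders drawn from the $M$-sequence, $\delta$ remains Woodin in $L[E](z)$. In particular $\delta$ is inaccessible, and hence a cardinal, in $L[E](z)$. A second routine fact about the construction is that $\M_\xi$ has ordinal height at least $\xi$ for every $\xi$; so for $\xi \geq \delta$, the premouse $\M_\xi$ has ordinal height at least $\delta$.

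Next, suppose toward a contradiction that there exists $\xi \geq \delta$ with $\rho_\omega(\M_\xi) = \alpha < \delta$, and pick the least such $\xi$. By $\omega$-soundness of $\M_\xi = \mathcal{C}_\omega(\N_\xi)$, there is a $\Sigma_\omega(\M_\xi)$-definable surjection of $[\alpha]^{<\omega}$ onto $\M_\xi$ from the standard parameter $p_\omega(\M_\xi)$. Composing with the rank function on $\M_\xi$ yields a surjection from $\alpha$ onto the ordinal height of $\M_\xi$, which is at least $\delta$. This surjection is a set in $L[E](z)$, so $\delta$ is collapsed in $L[E](z)$, contradicting the fact that $\delta$ is a cardinal there.

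Finally, the identification $\M_\delta = L[E](z) \, | \, \delta$ follows from inaccessibility of $\delta$ in $M$ (recall that $\delta$ is Woodin in $M$): each $\M_\xi$ for $\xi < \delta$ sits inside $V_\delta^M$, and taking the natural direct limit at stage $\delta$ produces a model of ordinal height exactly $\delta$, which by construction is the initial segment $L[E](z) \, | \, \delta$. The main point requiring care is the invocation of the Woodin-preservation theorem in the slightly generalized setting of Definition \ref{def:L[E]constr}, but the argument is entirely standard, with background witnesses for the Woodinness of $\delta$ in $L[E](z)$ pulled from the $M$-sequence as in \cite{MS94}.
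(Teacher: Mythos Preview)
Your argument has a genuine gap at the step ``This surjection is a set in $L[E](z)$.'' The surjection from $\alpha$ onto $\M_\xi$ (and hence onto $\delta$) is definable over $\M_\xi$, but $\M_\xi$ is merely one intermediate stage of the construction and is not in general an initial segment of the final model $L[E](z)$. There is no direct reason why a set definable over $\M_\xi$ should land in $L[E](z)$; later coring may alter the model below $\delta$, and the hypothetical collapsing map need not survive into the limit. You might try to rescue this by replacing $L[E](z)$ with $\N_{\xi+1}$ (where the surjection does live, at least in the passive case), but then the invocation of Theorem~11.3 of \cite{MS94} becomes problematic: once $\rho_\omega(\M_\xi) < \delta$, the model $\N_{\xi+1}$ acquires new subsets of ordinals below $\delta$, and the argument that $\delta$ remains Woodin (in particular, a cardinal) there does not go through cleanly for these new sets.

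The paper's proof supplies the missing idea via a reflection argument. One works with the first bad stage $\N_\xi$, where $\delta$ \emph{is} Woodin by Theorem~11.3, and takes a hull $X \prec \N_\xi$ with $X \cap \delta = \bar\delta \in \delta$, chosen so that $\bar\delta$ lies above both $\rho_\omega(\N_\xi)$ and every Woodin cardinal of $\N_\delta$ below $\delta$ (here one uses that $\N_\delta$ is $(n-1)$-small, so $\delta$ is not a limit of Woodins in $\N_\delta$). The $\bar\delta$-core $\mathcal{P}$ of the collapse $\bar X$ is then $\bar\delta$-sound with $\rho_\omega(\mathcal{P}) < \bar\delta$ and $\bar\delta$ Woodin in $\mathcal{P}$. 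On the other hand, $\bar\delta$ is not Woodin in $\N_\delta$, so there is a least $\mathcal{P}' \unlhd \N_\delta$ over which $\bar\delta$ fails to be definably Woodin. A comparison gives $\mathcal{P} = \mathcal{P}'$, which is contradictory. The entire point of reflecting down to $\bar\delta < \delta$ is to bring the problem inside $\N_\delta$, where one has a fixed, iterable model to compare against; your direct approach attempts to work at $\delta$ itself and has no such anchor.
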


\begin{proof}\footnote{The authors would like to thank John Steel for
    communicating this argument to them.}  Work in $M$. Assume not and
  let $\N_\xi$ be the least model with $\xi \geq \delta$ such that
  $\rho_\omega(\N_\xi) < \delta$. By construction (see Theorem $11.3$
  in \cite{MS94}) we have that $\delta$ is a Woodin cardinal in
  $\N_\xi$. An argument as in the proof of Claim \ref{cl:LEnsmall} in
  Lemma \ref{lemma1und2} shows that $\N_\delta$ is $(n-1)$-small, so
  in particular $\delta$ cannot be a limit of Woodin cardinals in
  $\N_\delta$. Hence there is an $\eta < \delta$ such that
  $\rho_\omega(\N_\xi) < \eta$ and all Woodin cardinals below $\delta$
  in $\N_\delta$ are in fact below $\eta$. Using the regularity of
  $\delta$, let $X$ be a fully elementary substructure of $\N_\xi$
  such that $\eta \subset X$, $\card{X} < \delta$ and
  $X \cap \delta \in \delta$.

  Let $\bar{X}$ be the Mostowski collapse of $X$ and let
  $\sigma: \bar{X} \rightarrow \N_\xi$ be the uncollapse map, i.e.
  \[ \bar{X} \overset{\sigma}{\cong} X \prec \N_\xi. \] Let
  $\bar{\delta} = X \cap \delta$, i.e.
  $\sigma(\bar{\delta}) = \delta$, and let $\mathcal{P}$ be the
  $\bar{\delta}$-core of $\bar{X}$. Then $\mathcal{P}$ is
  $\bar{\delta}$-sound and $\rho_\omega(\mathcal{P}) < \bar{\delta}$
  as $\rho_\omega(\bar{X}) < \bar{\delta}$. Moreover $\bar{\delta}$ is
  Woodin in $\bar{X}$ by elementarity and thus Woodin in
  $\mathcal{P}$.

  As $\eta < \bar{\delta} < \delta$, we have that $\bar{\delta}$
  cannot be Woodin in $\N_\delta$. Let
  $\mathcal{P}^\prime \unlhd \N_\delta$ be the shortest initial
  segment of $\N_\delta$ such that
  $\bar{\delta} \leq \mathcal{P}^\prime \cap \Ord$ and $\bar{\delta}$
  is not definably Woodin over $\mathcal{P}^\prime$. Then
  $\mathcal{P}^\prime$ is $\bar{\delta}$-sound and
  $\rho_\omega(\mathcal{P}^\prime) \leq \bar{\delta}$.

  $\mathcal{P}$ and $\mathcal{P}^\prime$ are both countably iterable
  in $M$, i.e. countable substructures of $\mathcal{P}$ and
  $\mathcal{P}^\prime$ are $(\omega_1 +1)$-iterable in $M$, by an
  argument similar to Lemma \ref{lem:iterability}. Therefore they can
  be successfully compared inside $M$ and in fact
  $\mathcal{P} = \mathcal{P}^\prime$. Thus the witness for the
  statement $\rho_\omega(\mathcal{P}) < \bar{\delta}$ is an element of
  $\N_\delta$. But $\bar{\delta}$ is a cardinal in
  $\mathcal{P} = \mathcal{P}^\prime \unlhd \N_\delta$, a
  contradiction.
\end{proof}

%%% Local Variables:
%%% mode: latex
%%% TeX-master: "Main"
%%% End:

\subsection{$\OD$-Determinacy for an Initial Segment of $M_{n-1}$}

Now we can turn to the proof of Lemma \ref{lem:KS}, which is a
generalization of Theorem $3.1$ in \cite{KS85}. Recall

\begin{namedthm}{Lemma \ref{lem:KS}}
  Let $n \geq 1$. Assume that $M_{n-1}^\#(x)$ exists and is
  $\omega_1$-iterable for all $x \in {}^\omega\omega$ and that all
  $\Sigma^1_{n+1}$-definable sets of reals are determined. Then there
  exists a real $y_0$ such that for all reals $x \geq_T y_0$,
  \[ M_{n-1}(x)|\delta_x \vDash \ODdet, \]
  where $\delta_x$ denotes the least Woodin cardinal in $M_{n-1}(x)$
  if $n>1$ and $\delta_x$ denotes the least $x$-indiscernible in
  $M_0(x) = L[x]$ if $n = 1$.
\end{namedthm}

\begin{proof}[Proof of Lemma \ref{lem:KS}]
\setcounter{cl}{0}
  For $n = 1$ we have that Lemma \ref{lem:KS} immediately follows from
  Theorem $3.1$ in \cite{KS85}. So assume that $n > 1$ and assume
  further toward a contradiction that there is no such real $y_0$ as
  in the statement of Lemma \ref{lem:KS}.

  Then there are cofinally (in the Turing degrees) many $y \in \BS$
  such that there exists an $M \unlhd M_{n-1}(y)|\delta_y$ with
  $$ M \vDash \neg \ODdet. $$

  We want to consider $x$-premice $M$ for $x \in \BS$ which have the
  following properties:
\begin{enumerate}[(1)]
\item $M \vDash \ZFC$, $M$ is countable, and we have that the
  following formula holds true:
\begin{eqnarray*}
  & \text{for all } m < \omega \text{ and for all } (x_k \st k < m) \text{ and } (M^k
    \st k \leq m) \text{ such that } \\
  & x_k \in M^k \cap \BS, \; x_{k+1} \geq_T x_k \geq_T x, \\
  & M^0 = M, \, M^{k+1} = L[E](x_k)^{M^k}. \text{ We have}\\
  & \text{ for all } k+1 \leq m \text{ that } M^{k+1} \sim M^k,\\
  & M^k \text{ is } (n-1)\text{-small, and}\\
  & M^k \text{ does not have a
    Woodin cardinal,}
\end{eqnarray*}
where the relation ``$\sim$'' is as defined in Definition
\ref{def:sim},
\item \[ M \vDash \neg \ODdet, \]
\item for all $\xi < M \cap \OR$ such that $M|\xi$ satisfies property
  $(1)$, \[ M | \xi \vDash \ODdet, \] and
\item $M$ is $\Pi^1_n$-iterable.
\end{enumerate}

If $M$ is an $x$-premouse, then we have for every real $y$ such that
$y \geq_T x$ and $y \in M$, that
\[M \vDash (1) \; \; \Rightarrow \; \; L[E](y)^M \vDash (1), \] where
$L[E](y)^M$ is as in Definition \ref{def:L[E]constr}.

We first show that there exists a Turing cone of reals $x$ such that
there exists an $x$-premouse $M$ satisfying properties $(1)$ - $(4)$
as above.

\begin{claim}\label{lemma4}
For cofinally many $x \in \BS$, $M_{n-1}(x)|\delta_x$ satisfies
properties $(1)$ and $(2)$.
\end{claim}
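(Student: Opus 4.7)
The plan is to treat (1) and (2) independently: (1) is a structural fact about $M_{n-1}(x)|\delta_x$ that holds for every $x$, while (2) is the place where the standing contradiction hypothesis is exploited.

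For (1), fix $x$ and write $M := M_{n-1}(x)|\delta_x$. Since $\delta_x$ is a Woodin cardinal of $M_{n-1}(x)$, hence inaccessible there, $M$ is a countable model of $\ZFC$; since $\delta_x$ is the \emph{least} Woodin of $M_{n-1}(x)$, the base model $M^0 := M$ is $(n-1)$-small and has no Woodin cardinal. The iterated $L[E]$-closure condition in~(1) is then proved by induction on $k$: given $M^k$ with these properties and $x_k \in M^k \cap \BS$ with $x_k \geq_T x$, one computes $M^{k+1} = L[E](x_k)^{M^k}$ as a backgrounded construction inside $M^k$. By absoluteness of $L[E]$ this agrees with the corresponding construction performed inside the ambient $M_{n-1}$-model to which Lemma~\ref{lemma1und2} applies, yielding $M^{k+1} \sim M^k$ together with the $(n-1)$-smallness of $M^{k+1}$. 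The absence of Woodin cardinals in $M^{k+1}$ follows from Lemma~\ref{lemma3}: any Woodin produced by such a construction would first appear at the least-Woodin level of the ambient model, which by hypothesis is already above the ordinal height of $M^k$.

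For (2), invoke the standing assumption: for cofinally many $y$ there is $M \unlhd M_{n-1}(y)|\delta_y$ with $M \vDash \neg \ODdet$. If $M = M_{n-1}(y)|\delta_y$, then $x := y$ witnesses~(2) directly. Otherwise $M \lhd M_{n-1}(y)|\delta_y$ is a proper initial segment; I would replace $y$ by a real $x \geq_T y$ coding $M$, and argue that $M_{n-1}(x)|\delta_x \vDash \neg \ODdet$. Let $A \in M$ be the $<_{\OD}^{M}$-least $\OD$-set of reals not determined inside $M$ (see Definition~\ref{def:orderODsets}). Since $M$ is decodable from $x$, the set $A$ is also $\OD$ inside $M_{n-1}(x)|\delta_x$. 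Toward a contradiction, suppose there is a winning strategy $\sigma$ for $A$ inside $M_{n-1}(x)|\delta_x$, and take $\sigma$ to be $<_{\OD}$-least in $M_{n-1}(x)|\delta_x$ with this property, so that $\sigma$ is itself $\OD$ in $M_{n-1}(x)|\delta_x$. The $\sim$-equivalence of part~(1), applied to a suitable iterated $L[E]$-construction over a real coding both $M$ and $\sigma$, would then place $\sigma$ inside $M$, contradicting the choice of $A$.

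The hardest point is precisely this strategy-descent in~(2). The $\sim$-relation developed in Section~\ref{sec:prepKS} controls the passage between $M_{n-1}(x)|\delta_x$ and its iterated $L[E]$-submodels, but the initial segment $M$ of $M_{n-1}(y)|\delta_y$ is not given to us in the form of such an $L[E]$-submodel. Realising $M$ inside the framework of Lemma~\ref{lemma1und2} and Lemma~\ref{lemma3}, so that an $\OD$-strategy living in $M_{n-1}(x)|\delta_x$ can be shown to already appear inside $M$, is where the real work of the claim lies and is the step most likely to require a careful choice of the coding real $x$ together with a delicate use of the canonical ordering $<_{\OD}$.
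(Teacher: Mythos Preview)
Your treatment of property~(1) is essentially the paper's argument. The paper makes the lift to the ambient model explicit by introducing $\hat{M}^0 = M_{n-1}(x)$ and $\hat{M}^{k+1} = L[E](x_k)^{\hat{M}^k}$; Lemma~\ref{lemma3} gives $\hat{M}^{k+1}|\delta_x = L[E](x_k)^{M^k} = M^{k+1}$, and Lemma~\ref{lemma1und2} applied to the $\hat{M}^k$'s yields $(n-1)$-smallness and the $\sim$-relations. Your ``absoluteness of $L[E]$'' and your reading of Lemma~\ref{lemma3} amount to the same chain, if less precisely phrased.

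For property~(2), however, you have misread the contradiction hypothesis and created a difficulty that is not there. The assumption being contradicted is the conclusion of Lemma~\ref{lem:KS}: there is a $y_0$ such that for all $x \geq_T y_0$ one has $M_{n-1}(x)|\delta_x \vDash \ODdet$. Its negation says directly that for cofinally many $x$ the full model $M_{n-1}(x)|\delta_x$ itself fails $\ODdet$. The paper's sentence ``there exists an $M \unlhd M_{n-1}(y)|\delta_y$ with $M \vDash \neg\ODdet$'' is not a weaker datum you must strengthen; one may simply take $M = M_{n-1}(y)|\delta_y$. So~(2) is immediate for those cofinally many $x$, and no strategy-descent is required.

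This matters because the descent argument you sketch does not go through, as you yourself suspect. A proper initial segment $M \lhd M_{n-1}(y)|\delta_y$ has strictly fewer reals than $M_{n-1}(x)|\delta_x$, and $M$ is not presented as an iterated $L[E]$ inside any $M_{n-1}$-model, so the $\sim$-machinery of Lemma~\ref{lemma1und2} gives you no handle on it. An $\OD$ strategy $\sigma$ living in $M_{n-1}(x)|\delta_x$ need not lie in $M$, and even if it did, ``winning'' is relative to the reals of the ambient model, which differ. Fortunately none of this is needed: once you unpack the contradiction hypothesis correctly, (2) holds on the nose for $M_{n-1}(x)|\delta_x$.
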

\begin{proof}
  By assumption there are cofinally many $x \in \BS$ such that
  \[ M_{n-1}(x)|\delta_x \vDash \neg \ODdet. \] Pick such an
  $x \in \BS$. This already implies that property $(2)$ holds true for
  $M_{n-1}(x)|\delta_x$. In addition $M_{n-1}(x)|\delta_x$ is
  obviously a countable $\ZFC$ model without a Woodin cardinal.  To
  show the formula in property $(1)$, let $(x_k \st k < m)$ and
  $(M^k \st k \leq m)$ be as in property $(1)$ with
  $M^0 = M_{n-1}(x)|\delta_x$. Moreover let $\hat{M}^0 = M_{n-1}(x)$
  and $\hat{M}^{k+1} = L[E](x_k)^{\hat{M}^k}$. An inductive
  application of Lemma \ref{lemma3} implies that for all $k < m$,
  \[ \hat{M}^{k+1} | \delta_x = L[E](x_k)^{\hat{M}^k} | \delta_x =
    L[E](x_k)^{\hat{M}^k | \delta_x} = L[E](x_k)^{M^k} = M^{k+1}. \]
  Moreover Lemma \ref{lemma1und2} yields that $\hat{M}^k$ and thus
  $M^k$ is $(n-1)$-small and
  \[ \hat{M}^k \sim M_{n-1}(x) \sim \hat{M}^{k+1}, \] for all
  $k+1 \leq m$. Hence
  \[ M_{n-1}(x)|\delta_x \sim \hat{M}^{k+1}|\delta_x = M^{k+1}. \]
  Therefore the formula in property $(1)$ holds true for
  $M^0 = M_{n-1}(x)|\delta_x$ as we also have that no $M^k$ has a
  Woodin cardinal.
\end{proof}

By Claim \ref{lemma4} there are cofinally many $x \in \BS$ such that
there exists an $x$-premouse $M$ which satisfies properties
$(1) - (4)$ defined above. Such $x$-premice $M$ can be obtained by
taking the smallest initial segment of $M_{n-1}(x)|\delta_x$ which
satisfies properties $(1)$ and $(2)$ defined above. Moreover the set
\[ A = \{ x \in \BS \st \text{there is an } x \text{-premouse } M
\text{ with } (1) - (4)\} \]
is $\Sigma^1_{n+1}$-definable and Turing invariant. So by
$\Sigma^1_{n+1}$ (Turing-)determinacy there exists a cone of such
reals $x \in A$, because the set $A$ cannot be completely disjoint
from a cone of reals since there are cofinally many reals $x \in A$ as
argued above. Let $v$ be a base of this cone and consider a real
$x \geq_T v$ in the cone.

\begin{claim}
Let $M$ be an $\omega_1$-iterable $x$-premouse with properties $(1)$
and $(2)$ and let $N$ be an $x$-premouse satisfying properties
$(1)-(4)$. Then $N$ cannot win the comparison against $M$.
\end{claim}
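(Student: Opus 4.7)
The plan is to derive a contradiction from the assumption that $N$ wins, using the minimality clause $(3)$ of $N$ together with elementarity of the $M$-side of the coiteration to force $M^*$ to satisfy both $\ODdet$ and its negation.

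First I would apply Corollary \ref{cor:itKS} to coiterate $M$ and $N$. The hypotheses hold: both are countable $x$-premice, both are $(n-1)$-small and have no Woodin cardinals by property $(1)$, they trivially agree through the common cutpoint $\omega$, $M$ is $\omega_1$-iterable, and $N$ is $\Pi^1_n$-iterable by property $(4)$. The corollary produces iteration trees $\T$ on $M$ and $\U$ on $N$ with last models $M^*, N^*$ satisfying $M^* \unlhd N^*$ or $N^* \unlhd M^*$.

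Now suppose toward a contradiction that $N$ wins, i.e.\ $M^* \lhd N^*$ strictly and the main $M$-branch is non-dropping. The iteration embedding $\pi : M \to M^*$ is then fully elementary. By property $(2)$ and elementarity, $M^* \vDash \neg \ODdet$; and since property $(1)$, together with $(n-1)$-smallness and the $L[E]$-closure condition, is a first-order scheme over $M$ (using that the operation $a \mapsto L[E](a)^M$ is internally definable), elementarity also yields $M^* \vDash (1)$. On the $N$-side, either the main branch of $\U$ is non-dropping, in which case elementarity of $\sigma : N \to N^*$ transfers $(3)$ directly to $N^*$, or it drops at an initial segment $N|\bar\xi$; in the drop case I would argue that $N|\bar\xi$ still satisfies $(1)$, so clause $(3)$ applied to the original $N$ gives $N|\bar\xi \vDash \ODdet$, and elementarity from $N|\bar\xi$ up to $N^*$ preserves both $(3)$ and $\ODdet$. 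Either way, $N^*$ satisfies $(3)$ relative to its own initial segments. Since $M^* \lhd N^*$ and $M^* \vDash (1)$, clause $(3)$ applied to $N^*$ forces $M^* \vDash \ODdet$, contradicting the previous step. Hence $N$ cannot win.

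The main obstacle is the dropping case on the $N$-side: one must verify that the drop occurs at an initial segment $N|\bar\xi$ for which clause $(1)$ still holds, so that the minimality assumption $(3)$ of the original $N$ applies at that level. This requires fine-structural bookkeeping, made tractable by the $(n-1)$-smallness and the absence of Woodin cardinals in both $M$ and $N$, and by the fact — established in Lemmas \ref{lem:iterability} and \ref{lemma1und2} — that $L[E]$-closure and the $\sim$-relation persist along iterations of such premice.
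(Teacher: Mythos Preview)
Your overall strategy matches the paper's --- coiterate via Corollary~\ref{cor:itKS}, transfer $(1)$ and $\neg\ODdet$ to $M^*$ by elementarity on the $M$-side, then contradict the minimality clause $(3)$ for $N$. The non-dropping subcase on the $N$-side is fine.

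There are two genuine gaps.

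First, ``$N$ wins'' also covers the case $M^* = N^*$ with no drop on the $M$-side but a drop on the $N$-side (the paper's Case~2). You omit this. It is easy but must be said: the non-dropping $M$-side gives $M^* \vDash \ZFC$, while the drop on the $N$-side forces $\rho_\omega(N^*) < N^* \cap \Ord$, which is incompatible with $M^* = N^*$.

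Second, your treatment of the dropping subcase is confused. A drop at stage $\beta+1$ in $\U$ means the extender is applied to a proper initial segment $\M_{\beta+1}^*$ of an \emph{iterate} $\M_\gamma$ of $N$, not to some $N|\bar\xi$. Moreover, the drop level is dictated purely by the critical point of the extender and projectum conditions; it has nothing to do with whether property $(1)$ holds there, so your plan to ``verify that the drop occurs at an initial segment for which clause $(1)$ still holds'' cannot succeed. And pushing $(3)$ forward to $N^*$ is problematic anyway: after a drop the iteration map is only $r\Sigma_n$-elementary for the relevant finite $n$, which need not preserve the $\Pi_1$ statement $(3)$ upward.

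The paper's fix reverses the direction: rather than push $(3)$ forward, pull the $\Sigma_1$ witness back. Since $M^* = \M_\lambda^\U | \alpha$ satisfies $(1) + \neg\ODdet$, the statement ``there is a proper initial segment satisfying $(1) + \neg\ODdet$'' holds in $\M_\lambda^\U$; by elementarity it reflects down to $\M_{\beta+1}^*$, hence holds in $\M_\gamma \rhd \M_{\beta+1}^*$, and then (assuming this was the only drop, iterating the argument otherwise) reflects via the embedding $N \to \M_\gamma$ to $N$ itself, contradicting $(3)$ for $N$ directly.
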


\begin{proof}
  By Corollary \ref{cor:itKS} we can successfully coiterate the
  $x$-premice $M$ and $N$ because they are both $(n-1)$-small, solid
  and without Woodin cardinals and moreover $M$ is $\omega_1$-iterable
  and $N$ is $\Pi^1_n$-iterable. Let $\T$ and $\U$ be the resulting
  trees of length $\lambda + 1$ for some ordinal $\lambda$ on $M$ and
  $N$ respectively, in particular $\U$ might be a putative iteration
  tree.

  Assume toward a contradiction that $N$ wins the comparison against
  $M$ and consider the following two cases. 

\begin{minipage}{\textwidth}
  \bigskip \textbf{Case 1.} We have
  $\M_\lambda^\T \lhd \M_\lambda^\U$. \bigskip
\end{minipage}

In this case $\M_\lambda^\U$ need not be fully well-founded, but this
will not affect our argument to follow, because we have that
$\M_\lambda^\U$ is well-founded up to $\M_\lambda^\T \cap \Ord$.  So
there exists an ordinal $\alpha < \M_\lambda^\U \cap \Ord$ such that
$\M_\lambda^\U | \alpha = \M_\lambda^\T$ and we have by elementarity
that
\[ \M_\lambda^\T \vDash \neg \ODdet, \]
since property $(2)$ holds for $M$. Since
$\alpha = \M_\lambda^\T \cap \Ord$ it follows that
\[ \M_\lambda^\U | \alpha \vDash \neg \ODdet. \]
Moreover we have that $\M_\lambda^\U | \alpha \vDash (1),$ because we
have by elementarity that $\M_\lambda^\T \vDash (1)$.  If there is no
drop in the iteration from $N$ to $\M_\lambda^\U$, this contradicts
the minimality property $(3)$ for $N$ by elementarity. But even if
there is a drop on the main branch in $\U$ this statement is
transferred along the branch to $N$ by the following argument. 

Assume there is a drop at stage $\beta+1$ on the main branch through
$\U$, that means $\M_{\beta+1}^*$ is a proper initial segment of
$\M_\gamma$ for $\gamma = \pred_U(\beta + 1)$, where $\M_{\beta+1}^*$
is the model to which the next extender $F_\beta$ from the $\M_\beta$
sequence is applied in the iteration as introduced in Section $3.1$ in
\cite{St10}.  Since there can only be finitely many drops along the
main branch through the iteration tree $\U$, we can assume further
without loss of generality that this is the only drop along the main
branch through $\U$. (If there is more than one drop on the main
branch through $\U$, we repeat the argument to follow for the
remaining drops.) Then by elementarity there is an ordinal
$\alpha^\prime < \M_{\beta+1}^* \cap \Ord < \M_\gamma \cap \Ord$ such
that
\[ \M_{\beta+1}^* | \alpha^\prime \vDash (1) + \neg \ODdet. \]
But then also
\[ \M_\gamma | \alpha^\prime \vDash (1) + \neg \ODdet, \]
and therefore by elementarity there is an ordinal
$\alpha^{\prime \prime} < N \cap \Ord$ such that
\[ N | \alpha^{\prime \prime} \vDash (1) + \neg \ODdet. \]
This now again contradicts the minimality property $(3)$ for $N$.

\begin{minipage}{\textwidth}
  \bigskip \textbf{Case 2.} We have $\M_\lambda^\T = \M_\lambda^\U$,
  there is no drop on the main branch in the iteration from $M$ to
  $\M_\lambda^\T$, but there is a drop on the main branch in the
  iteration from $N$ to $\M_\lambda^\U$.  \bigskip
\end{minipage}

This immediately is a contradiction because it implies that we have
$\M_\lambda^\T \vDash \ZFC$, but at the same time
$\rho_\omega(\M_\lambda^\U) < \M_\lambda^\U \cap \Ord$.
\end{proof}

So in the situation of the above claim, the premouse $N$ is elementary
embedded into an $\omega_1$-iterable model and thus
$\omega_1$-iterable itself. As a corollary we obtain the following
claim.

\begin{claim}
\label{cl:it}
If there is an $\omega_1$-iterable $x$-premouse $M$ with properties $(1)$
and $(2)$, then every $x$-premouse $N$ satisfying properties $(1)-(4)$
is in fact already $\omega_1$-iterable.
\end{claim}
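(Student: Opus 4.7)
The plan is to derive Claim~\ref{cl:it} almost immediately from the coiteration established in the preceding claim together with the standard copying construction. I would apply Corollary~\ref{cor:itKS} to coiterate $M$ and $N$ above $\omega$, obtaining an iteration tree $\T$ on $M$ and a (possibly putative) iteration tree $\U$ on $N$ of common length $\lambda+1$. The preceding claim rules out the outcome $\M_\lambda^\T \lhd \M_\lambda^\U$ as well as the outcome ``$\M_\lambda^\T = \M_\lambda^\U$ with a drop on the $\U$-main branch but not on the $\T$-main branch'', so the coiteration must in fact terminate with $\M_\lambda^\U \unlhd \M_\lambda^\T$.

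Next, I would verify that the main branch of $\U$ does not drop. If it dropped while the main branch of $\T$ did not, then by elementarity $\M_\lambda^\T \models \ZFC$, while the $\U$-drop forces $\rho_\omega(\M_\lambda^\U) < \M_\lambda^\U \cap \Ord$. When the two final models coincide this is precisely the contradiction of Case~2 from the preceding claim; when $\M_\lambda^\U \lhd \M_\lambda^\T$ it is still inconsistent, since $\M_\lambda^\T$, being a model of $\ZFC$, cannot contain a proper initial segment whose $\omega$-th projectum is bounded by an ordinal definable over that segment. The remaining scenario, in which both main branches drop, can be dispatched by a Case~1 style argument: restrict to the initial segment of $N$ occurring just after the last drop on the $\U$-main branch; by elementarity of the tail of the $\U$-map it still satisfies property~$(1)$ and reflects $\neg \ODdet$ from the corresponding initial segment of $\M_\lambda^\U$, thereby contradicting the minimality property~$(3)$ of $N$.

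Having ruled out drops on the main branch of $\U$, we obtain a genuine iteration embedding $i : N \to \M_\lambda^\U$. The iterate $\M_\lambda^\T$ of the $\omega_1$-iterable premouse $M$ is itself $\omega_1$-iterable, and since $\M_\lambda^\U \unlhd \M_\lambda^\T$ the model $\M_\lambda^\U$ inherits $\omega_1$-iterability. The standard copying construction for iteration trees then pulls the iteration strategy for $\M_\lambda^\U$ back along the elementary embedding $i$ to yield an $\omega_1$-iteration strategy for $N$, establishing that $N$ is $\omega_1$-iterable.

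The main obstacle is the drop analysis on the $\U$-main branch in the second paragraph; however, this is essentially a recycling of the Case~1 and Case~2 arguments from the proof of the preceding claim. Once it is in place, the conclusion is exactly the elementary-embedding-plus-copying argument already anticipated by the paragraph preceding the statement of Claim~\ref{cl:it}.
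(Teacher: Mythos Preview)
Your overall approach is correct and matches the paper's: once the preceding claim rules out ``$N$ wins'', $N$ elementarily embeds (along the non-dropping $\U$-main branch) into an initial segment of an iterate of the $\omega_1$-iterable $M$, and the copying construction then yields $\omega_1$-iterability of $N$. The paper records exactly this in one sentence.

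Your second paragraph, however, is both unnecessary and partly incorrect. The coiteration you invoke (Corollary~\ref{cor:itKS}) is explicitly stated ``as in Lemma~\ref{lem:iterability}~(3)'', and that lemma already packages the standard dichotomy of the Comparison Lemma: either $\M_\lambda^\T \unlhd \M_\lambda^\U$ with no drop on the $\T$-main branch, or $\M_\lambda^\U \unlhd \M_\lambda^\T$ with no drop on the $\U$-main branch. The preceding claim rules out the first alternative (its Cases~1 and~2 are exactly ``(a) holds and (b) fails''), so (b) holds and the $\U$-main branch does not drop --- no further case analysis is needed.

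Your hand-argument for the sub-case ``$\M_\lambda^\U \lhd \M_\lambda^\T$ with a drop on $\U$ but not on $\T$'' is wrong as written: a $\ZFC$ model, like any premouse, certainly \emph{does} have many proper initial segments whose $\omega$-th projectum lies strictly below their height. What actually excludes this sub-case is that every proper initial segment of a premouse is $\omega$-sound by definition, whereas the last model on a dropping branch is not $\omega$-sound; but this is precisely the soundness half of the standard comparison dichotomy, so you should cite it rather than attempt to re-derive it with an incorrect justification. The ``both branches drop'' scenario is likewise already excluded by the dichotomy, so your Case~1--style reflection argument there is superfluous.
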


Since by Claim \ref{lemma4} there are cofinally many reals $x$ such
that the $\omega_1$-iterable $x$-premouse $M_{n-1}(x)|\delta_x$
satisfies properties $(1)$ and $(2)$, Claim \ref{cl:it} yields that
the following claim holds true.

\begin{claim}\label{cl:1-4w1it}
  There are cofinally many reals $x$ such that every $x$-premouse
  satisfying properties $(1)-(4)$ is in fact $\omega_1$-iterable.
\end{claim}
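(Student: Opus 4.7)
The plan is to obtain Claim \ref{cl:1-4w1it} as an immediate combination of the two preceding results in the proof, namely Claim \ref{lemma4} and Claim \ref{cl:it}. Since both have already been proved in the text, the argument for Claim \ref{cl:1-4w1it} should be essentially a short synthesis rather than a fresh piece of mathematics.

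First I would unpack what Claim \ref{lemma4} gives: there are cofinally many reals $x \in \BS$ such that $M_{n-1}(x)|\delta_x$ itself witnesses properties $(1)$ and $(2)$. The second observation is that $M_{n-1}(x)|\delta_x$ is not just any $x$-premouse satisfying $(1)$ and $(2)$ -- it is $\omega_1$-iterable. This is because $M_{n-1}(x)|\delta_x$ is an initial segment of $M_{n-1}(x)$ below the top measure (in particular $\delta_x$ sits well below the critical point of the top extender of $M_{n-1}^\#(x)$), so the $\omega_1$-iteration strategy for $M_{n-1}(x)$, which exists by the standing hypothesis of Lemma \ref{lem:KS}, restricts to an $\omega_1$-iteration strategy for $M_{n-1}(x)|\delta_x$.

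Having established, for cofinally many $x$, the existence of an $\omega_1$-iterable $x$-premouse $M = M_{n-1}(x)|\delta_x$ satisfying $(1)$ and $(2)$, the hypothesis of Claim \ref{cl:it} is verified for each such $x$. Applying Claim \ref{cl:it}, I conclude that for each of these cofinally many $x$, every $x$-premouse $N$ satisfying properties $(1)$--$(4)$ is already $\omega_1$-iterable, which is exactly the statement of Claim \ref{cl:1-4w1it}.

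The only step that requires any care at all is the assertion that $\omega_1$-iterability descends from $M_{n-1}(x)$ to the initial segment $M_{n-1}(x)|\delta_x$, but this is routine: any normal iteration tree on $M_{n-1}(x)|\delta_x$ is a normal iteration tree on $M_{n-1}(x)$ (since $\delta_x$ is a cutpoint and all extenders used have critical point below $\delta_x$), so cofinal well-founded branches are provided by the strategy for $M_{n-1}(x)$. I do not expect any serious obstacle here; the real content has already been dispatched in the earlier claims.
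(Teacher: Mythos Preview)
Your proposal is correct and takes essentially the same approach as the paper: the paper likewise derives Claim~\ref{cl:1-4w1it} directly from Claim~\ref{lemma4} together with Claim~\ref{cl:it}, noting that $M_{n-1}(x)|\delta_x$ is an $\omega_1$-iterable $x$-premouse satisfying $(1)$ and $(2)$. Your extra paragraph explaining why the iterability of $M_{n-1}(x)$ descends to the initial segment is a reasonable elaboration of a point the paper leaves implicit.
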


Consider the game $G$ which is a generalization of the
Kechris-Solovay game in \cite{KS85} and defined as follows.

\begin{figure}[h]
\centering
\begin{tabular}{c|cc}
 $\mathrm{I}$ & $x \oplus a$ & \\ \hline
 $\mathrm{II}$ & & $y \oplus b$ 
\end{tabular}
\; \; for $x,a,y,b \in {}^\omega\omega$.
\end{figure}

The players $\mathrm{I}$ and $\mathrm{II}$ alternate playing natural
numbers and the game lasts $\omega$ steps. Say player $\mathrm{I}$
produces a real $x \oplus a$ and player $\mathrm{II}$ produces a real
$y \oplus b$. Then player $\mathrm{I}$ wins $G$ iff there exists an
$(x \oplus y)$-premouse $M$ which satisfies properties $(1)-(4)$ and
if $A_M$ denotes the least $\OD$-set of reals in $M$ which is not
determined, then $a \oplus b \in A_M$.

This game is $\Sigma^1_{n+1}$-definable and therefore determined. So
say first player $\mathrm{I}$ has a winning strategy $\tau$ for
$G$. Recall that $v$ denotes a base for a cone of reals $x$ such that
there exists an $x$-premouse which satisfies properties $(1) - (4)$
and pick a real $z^* \geq_T x_\tau \oplus v$, where $x_\tau$ is a real
coding the winning strategy $\tau$ for player $\mathrm{I}$, such that
every $z^*$-premouse which satisfies properties $(1)-(4)$ is in fact
$\omega_1$-iterable (using Claim \ref{cl:1-4w1it}).

We now aim to construct a real $z \geq_T z^*$ such that there is a
$z$-premouse $N$ which satisfies properties $(1)-(4)$, is
$\omega_1$-iterable, and satisfies the following additional property
$(3^*)$.

\begin{enumerate}[$(3^*)$]
\item For all reals $y \in N$ such that $y \geq_T z$ there exists no
  ordinal $\xi < N \cap \Ord$ such that
  \[ L[E](y)^N | \xi \vDash (1) + \neg \ODdet. \]
\end{enumerate}

Let $M^0$ be an arbitrary $z^*$-premouse satisfying properties
$(1)-(4)$, which is therefore $\omega_1$-iterable. Assume that if we
let $N = M^0$ and $z = z^*$, then property $(3^*)$ is not
satisfied. So there is a real $y \in M^0$ with $y \geq_T z^*$
witnessing the failure of property $(3^*)$ in $M^0$. Let $y_0 \in M^0$
with $y_0 \geq_T z^*$ be a witness for that fact such that the ordinal
$\xi^0 < L[E](y_0)^{M^0} \cap \Ord$ with
\[ L[E](y_0)^{M^0} | \xi^0 \vDash (1) + \neg \ODdet \]
is minimal. Let $M^1 = L[E](y_0)^{M^0} | \xi^0$ and assume that
\[ M^1 \nvDash (3^*), \]
because otherwise we could pick $N = M^1$ and $z = y_0$ and the
construction would be finished. Then as before there is a real
$y_1 \in M^1$ with $y_1 \geq_T y_0$ and a minimal ordinal
$\xi^1 < L[E](y_1)^{M^1} \cap \Ord = M^1 \cap \Ord = \xi^0$ such
that \[ L[E](y_1)^{M^1} | \xi^1 \vDash (1) + \neg \ODdet. \]
This construction has to stop at a finite stage, because otherwise we
have that $\xi^0 > \xi^1 > \dots$ is an infinite descending chain of
ordinals. Therefore there is a natural number $n < \omega$ such that
\[M^n = L[E](y_{n-1})^{M^{n-1}} | \xi^{n-1}, \] and
\[ M^n \vDash (3^*). \] Let $z = y_{n-1}$ and $N = M^n$. Then we have
that $z \geq_T z^*$ and $N$ is a $z$-premouse which satisfies
properties $(1)$ and $(3^*)$. Moreover by minimality of the ordinal
$\xi^{n-1}$ we have that $N$ satisfies property $(3)$. From the
construction we also get that \[N \vDash \neg \ODdet.\] Furthermore
$N$ inherits property $(4)$ and the $\omega_1$-iterability from $M^0$
since it is obtained by performing multiple fully backgrounded
extender constructions inside the $\omega_1$-iterable premouse
$M^0$. Thus $N$ and $z$ are as desired.

Let $A_N$ denote the least non-determined $\OD$-set of reals in $N$.
We define a strategy $\tau^*$ for player $\mathrm{I}$ in the usual
Gale-Stewart game $G(A_N)$ with payoff set $A_N$ played inside the
model $N$ as follows. Assume player $\mathrm{II}$ produces the real
$b \in N$. Then we consider the following run of the original game $G$
defined above:

\begin{figure}[h]
\centering
\begin{tabular}{c|cc}
 $\mathrm{I}$ & $x \oplus a = \tau((z \oplus b) \oplus b)$ & \\ \hline
 $\mathrm{II}$ & & $(z \oplus b) \oplus b$ 
\end{tabular}
\end{figure}

Player $\mathrm{II}$ plays the real $(z \oplus b) \oplus b$ and player
$\mathrm{I}$ responds with the real $x \oplus a$ according to his
winning strategy $\tau$ in $G$. Note that this run of the game $G$ is
in the model $N$. We define the strategy $\tau^*$ such that in a run
of the game $G(A_N)$ inside $N$ according to $\tau^*$ player
$\mathrm{I}$ has to respond to the real $b$ with producing the real
$a$.

\begin{figure}[h]
\centering
\begin{tabular}{c|cc}
 $\mathrm{I}$ & $a = \tau^*(b)$ & \\ \hline
 $\mathrm{II}$ & & $b$ 
\end{tabular}
\end{figure}

\begin{claim} \label{cl:tau*} $\tau^*$ is a winning strategy for
  player $\mathrm{I}$ in the Gale-Stewart game with payoff set $A_N$
  played in $N$.
\end{claim}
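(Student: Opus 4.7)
The plan is to show that for every real $b \in N$ played by player II, the response $a := \tau^*(b)$ satisfies $a \oplus b \in A_N$. Unwinding the definition, $x \oplus a = \tau((z \oplus b) \oplus b)$ is player I's response under the winning strategy $\tau$ in the master game $G$, and so the winning condition for $G$ delivers an $(x \oplus z \oplus b)$-premouse $M$ with properties $(1)$--$(4)$ such that $a \oplus b \in A_M$. The whole job reduces to establishing $A_M = A_N$, which I will obtain by identifying $M$ with the $L[E]$-construction $M^\prime := L[E](x \oplus z \oplus b)^N$ performed inside $N$.

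First I verify that $M^\prime$ faithfully mirrors $N$ from the viewpoint at hand. Since $x \oplus z \oplus b \in N$ and $x \oplus z \oplus b \geq_T z$, property $(1)$ of $N$ applied at $m = 1$ with $x_0 = x \oplus z \oplus b$ gives $M^\prime \sim N$, that $M^\prime$ is $(n-1)$-small with no Woodin cardinals, and that $M^\prime$ itself satisfies property $(1)$. The relation $\sim$ preserves the reals and the $\OD$-order, and since determinacy of a set of reals is an existence statement about reals (namely, winning strategies), $M^\prime \vDash \neg\ODdet$ and the least non-determined $\OD$-set of $M^\prime$ is precisely $A_N$. Moreover, $M^\prime$ inherits $\omega_1$-iterability from $N$ by the standard theory of fully-backgrounded $L[E]$-constructions. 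Applying Claim \ref{cl:it} to the $\omega_1$-iterable premouse $M^\prime$ (which satisfies $(1)$ and $(2)$) then promotes the $\Pi^1_n$-iterability of $M$ to genuine $\omega_1$-iterability.

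Now coiterate $M$ against $M^\prime$ via Corollary \ref{cor:itKS} (or Lemma \ref{lem:iterability}(3) suitably applied): both are countable, $(n-1)$-small, lack Woodin cardinals, and are $\omega_1$-iterable above the common cutpoint $\omega$, so the coiteration terminates successfully with one side an initial segment of an iterate of the other. Standard analysis, using that $M$ is $\omega$-sound by the minimality in property $(3)$ and that $\rho_\omega$ can be forced down to $\omega$ on the $M^\prime$-side by passing, if necessary, to the minimal initial segment of $M^\prime$ witnessing $(1) + \neg\ODdet$ (which by $(3^*)$ must be $M^\prime$ itself), reduces the situation to a direct initial-segment comparison: either $M \unlhd M^\prime$ or $M^\prime \unlhd M$. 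If $M \lhd M^\prime$ strictly, then $M$ is a proper initial segment of $L[E](x \oplus z \oplus b)^N$ witnessing $(1) + \neg\ODdet$, directly contradicting property $(3^*)$ of $N$. If $M^\prime \lhd M$ strictly, then $M^\prime$ is a proper initial segment of $M$ satisfying $(1) + \neg\ODdet$, contradicting the minimality property $(3)$ of $M$. Hence $M = M^\prime$, so $A_M = A_{M^\prime} = A_N$, and $a \oplus b \in A_N$.

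The main obstacle is verifying that the coiteration of $M$ against $M^\prime$ is trivial on both sides, so that the resulting comparison gives a literal initial-segment relation between the original $M$ and $M^\prime$ rather than merely between their iterates; this is essentially a fine-structural book-keeping step, handled by replacing $M^\prime$ with its minimal initial segment witnessing $(1) + \neg\ODdet$ if needed. Once that is in place, the two minimality clauses $(3)$ and $(3^*)$ play off each other through the comparison and collapse to the identification $M = M^\prime$, from which the payoff $a \oplus b \in A_N$ follows at once.
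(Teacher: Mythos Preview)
Your setup is correct and follows the paper: you correctly isolate $M' = L[E](x \oplus z \oplus b)^N$, verify $M' \sim N$ via property~$(1)$, deduce $A_{M'} = A_N$, and use Claim~\ref{cl:it} to upgrade $M$ to $\omega_1$-iterability. The gap is in the coiteration step.

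You claim that ``standard analysis'' reduces the coiteration of $M$ against $M'$ to a literal initial-segment relation $M \unlhd M'$ or $M' \unlhd M$. This does not follow. Both $M$ and $M'$ are models of $\ZFC$ (this is part of property~$(1)$), so $\rho_\omega(M) = M \cap \Ord$ and $\rho_\omega(M') = M' \cap \Ord$; neither projects to $\omega$. The conclusion of Lemma~\ref{lem:iterability}(4) (and the underlying Corollary~3.12 of \cite{St10}) requires $\rho_\omega \leq \delta$ and $\delta$-soundness, which fail here. Property~$(3)$ is a \emph{semantic} minimality condition (no proper initial segment is a $\ZFC$ model with $\neg\ODdet$), not a fine-structural one; it does not force $\rho_\omega(M) = \omega$. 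Your proposed fix of passing to the minimal initial segment of $M'$ witnessing $(1) + \neg\ODdet$ is vacuous: by $(3^*)$ this is $M'$ itself, still a $\ZFC$ model. So the coiteration may well be non-trivial on both sides, and you cannot conclude $M = M'$.

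The paper repairs this by working in the mouse order $\leq^*$ rather than aiming for literal equality. The argument of Claim~2 (applied with the roles suitably assigned) uses property~$(3)$ of $M$ to show $M' \geq^* M$: if $M$ won the comparison, an elementarity-plus-pullback argument (handling drops as in Case~1 there) would produce a proper initial segment of $M$ satisfying $(1) + \neg\ODdet$. Symmetrically, property~$(3^*)$ of $N$ gives the minimality needed to show $M' \leq^* M$. Hence $M =^* M'$, i.e.\ they coiterate to a common model with no drops on either main branch. Elementarity of the iteration maps then yields $M \sim M'$ (same reals, same $\OD$-sets in the same order, exactly as in the final claim of Lemma~\ref{lemma1und2}), so $A_M = A_{M'} = A_N$ and $a \oplus b \in A_N$ as required. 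Your two minimality clauses $(3)$ and $(3^*)$ are indeed the engine of the argument; they just operate through $\leq^*$, not through literal $\unlhd$.
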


This claim implies that $A_N$ is determined in $N$, contradicting that
$A_N$ was assumed to be the least non-determined $\OD$-set of reals in
$N$.

\begin{proof}[Proof of Claim \ref{cl:tau*}]
  Since $\tau$ is a winning strategy for player $\mathrm{I}$ in the
  original game $G$, there exists an
  $(x \oplus (z \oplus b))$-premouse $N^\prime$ which satisfies
  properties $(1)-(4)$ such that
  \[ a \oplus b \in A_{N^\prime}, \]
  where $A_{N^\prime}$ denotes the least non-determined $\OD$-set of
  reals in $N^\prime$.

  We want to show that \[ A_{N^\prime} = A_N \]
  in order to conclude that $\tau^*$ is a winning strategy for player
  $\mathrm{I}$ in the Gale-Stewart game with payoff set $A_N$ played
  in $N$.  

  Property $(1)$ yields that
  \[ L[E](x \oplus (z \oplus b))^{N} \sim N, \]
  because $x,z,b \in N$ and $x \oplus (z \oplus b) \geq_T
  z$.
  Therefore $L[E](x \oplus (z \oplus b))^{N}$ is an
  $(x \oplus (z \oplus b))$-premouse which satisfies property $(2)$,
  because it has the same sets of reals and the same $\OD$-sets of
  reals as $N$ and hence
  \[ L[E](x \oplus (z \oplus b))^{N} \vDash \neg \ODdet. \] Moreover
  $L[E](x \oplus (z \oplus b))^{N}$ inherits property $(1)$ from $N$.

  Since $L[E](x \oplus (z \oplus b))^{N}$ is the result of a fully
  backgrounded extender construction inside the $\omega_1$-iterable
  premouse $N$, it is $\omega_1$-iterable itself. Therefore Claim
  \ref{cl:it} yields that in particular the
  $(x \oplus (z \oplus b))$-premouse $N^\prime$ is also
  $\omega_1$-iterable, because it was choosen such that it satisfies
  properties $(1)-(4)$.

  So we can coiterate $L[E](x \oplus (z \oplus b))^{N}$ and $N^\prime$
  by the remark after Lemma \ref{lem:iterability} since they are both
  $(n-1)$-small $\omega_1$-iterable $(x \oplus (z \oplus b))$-premice
  which do not have Woodin cardinals. Thus by the minimality of
  $N^\prime$ from property $(3)$ and an argument analogous to the one
  we already gave in the proof of Claim \ref{cl:it}, we have that
  \[ L[E](x \oplus (z \oplus b))^{N} \geq^* N^\prime, \] where
  $\leq^*$ denotes the usual mouse order\footnote{We say an
    $\omega_1$-iterable premouse $M$ is smaller or equal in the
    \emph{mouse order} \index{mouseorder@mouse order $\leq^*$} than an
    $\omega_1$-iterable premouse $N$ and write ``$M \leq^* N$'' iff
    $M$ and $N$ successfully coiterate to premice $M^*$ and $N^*$ such
    that $M^* \unlhd N^*$ and there are no drops on the main branch in
    the iteration from $M$ to $M^*$. Moreover we say that $N$ and $M$
    are equal in the mouse order and write ``$M =^* N$'' iff
    $M \leq^* N \text{ and } N \leq^* M.$}. Moreover we have
  minimality for the premouse $L[E](x \oplus (z \oplus b))^{N}$ in the
  sense of property $(3^*)$ for $N$. This yields again by an argument
  analogous to the one we already gave in the proof of Claim
  \ref{cl:it} that
  \[ L[E](x \oplus (z \oplus b))^{N} \leq^* N^\prime. \]

  Therefore we have that in fact
  \[ L[E](x \oplus (z \oplus b))^{N} =^* N^\prime, \] and hence
  \[ L[E](x \oplus (z \oplus b))^{N} \sim N^\prime. \]
  Using $L[E](x \oplus (z \oplus b))^{N} \sim N$ it follows that
  \[ N \sim N^\prime, \] and thus $A_N = A_{N^\prime}$.
\end{proof}

Now suppose player $\mathrm{II}$ has a winning strategy $\sigma$ in
the game $G$ introduced above and recall that $v$ is a base of a cone
of reals $x$ such that there exists an $x$-premouse which satisfies
properties $(1) - (4)$.  As in the situation when player $\mathrm{I}$
has a winning strategy, we pick a real $z^* \geq_T x_\sigma \oplus v$,
where $x_\sigma$ is a real coding the winning strategy $\sigma$ for
player $\mathrm{II}$, such that every $z^*$-premouse which satisfies
properties $(1)-(4)$ is already $\omega_1$-iterable (see Claims
\ref{lemma4} and \ref{cl:it}).

As in the argument for player $\mathrm{I}$ we can construct a real
$z \geq_T z^*$ such that there exists a $z$-premouse $N$ which
satisfies properties $(1)-(4)$, is $\omega_1$-iterable, and satisfies
the additional property $(3^*)$. As before we let $A_N$ denote the
least non-determined $\OD$-set in $N$, which exists because the
$z$-premouse $N$ satisfies property $(2)$.

Now we define a strategy $\sigma^*$ for player $\mathrm{II}$ in the
usual Gale-Stewart game $G(A_N)$ with payoff set $A_N$ played inside
$N$ as follows. Assume that player $\mathrm{I}$ produces a real
$a \in N$ in a run of the game $G(A_N)$ inside the model $N$. Then we
consider the following run of the game $G$:

\begin{figure}[h]
\centering
\begin{tabular}{c|cc}
  $\mathrm{I}$ & $(z \oplus a) \oplus a$ & \\ \hline
  $\mathrm{II}$ & & $y \oplus b = \sigma((z \oplus a) \oplus a)$ 
\end{tabular}
\end{figure}

Player $\mathrm{I}$ plays a real $(z \oplus a) \oplus a$ and player
$\mathrm{II}$ responds with a real $y \oplus b$ according to his
winning strategy $\sigma$ in $G$. We define the strategy $\sigma^*$
such that in a run of the game $G(A_N)$ inside the model $N$ according
to $\sigma^*$ player $\mathrm{II}$ has to respond to the real $a$ with
producing the real $b$.

\begin{figure}[h]
\centering
\begin{tabular}{c|cc}
 $\mathrm{I}$ & $a$ & \\ \hline
 $\mathrm{II}$ & & $b = \sigma^*(a)$ 
\end{tabular}
\end{figure}

\begin{claim}\label{cl:sigma*}
  $\sigma^*$ is a winning strategy for player $\mathrm{II}$ in the
  Gale-Stewart game with payoff set $A_N$ played in $N$.
\end{claim}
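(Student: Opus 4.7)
The plan is to mirror the argument of Claim \ref{cl:tau*} symmetrically, since the two players play dual roles in $G$. Fix a real $a \in N$ played by Player I in $G(A_N)$ and let $b = \sigma^*(a)$. By construction of $\sigma^*$, in the associated run of $G$ Player I plays $(z \oplus a) \oplus a$ while Player II plays $y \oplus b = \sigma((z \oplus a) \oplus a)$. Since $\sigma$ wins for Player II, the winning condition for Player I fails, which translates into: for \emph{every} $((z \oplus a) \oplus y)$-premouse $N^\prime$ satisfying properties $(1)-(4)$ one has $a \oplus b \notin A_{N^\prime}$. It therefore suffices to exhibit at least one such $N^\prime$ with $A_{N^\prime} = A_N$, because then $a \oplus b \notin A_N$, which is exactly what it means for $\sigma^*$ to defeat the move $a$.

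To produce such an $N^\prime$, I first note that $y \in N$, since $\sigma \in N$ (using $z \geq_T z^* \geq_T x_\sigma$) and $a, z \in N$. Moreover $(z \oplus a) \oplus y \geq_T z \geq_T v$, so this real lies in the Turing cone furnished by Claim \ref{lemma4} via $\Sigma^1_{n+1}$-determinacy, and thus there is an $((z \oplus a) \oplus y)$-premouse $N^\prime$ satisfying $(1)-(4)$. By the choice of $z^*$ together with Claim \ref{cl:1-4w1it}, this $N^\prime$ is moreover $\omega_1$-iterable.

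To show $A_{N^\prime} = A_N$, I would run the same coiteration as in Claim \ref{cl:tau*}. Consider $L[E]((z \oplus a) \oplus y)^N$; it is $\omega_1$-iterable as a fully backgrounded extender construction performed inside the $\omega_1$-iterable premouse $N$, and property $(1)$ of $N$ applied to $(z \oplus a) \oplus y \geq_T z$ gives $L[E]((z \oplus a) \oplus y)^N \sim N$, so in particular it satisfies $(1)$ and $(2)$. Both this $L[E]$-model and $N^\prime$ are $(n-1)$-small and without Woodin cardinals, so by Corollary \ref{cor:itKS} and the remark after Lemma \ref{lem:iterability} the coiteration succeeds. The minimality $(3)$ of $N^\prime$ precludes $L[E]((z \oplus a) \oplus y)^N <^* N^\prime$, while the minimality $(3^*)$ of $N$, which transfers to $L[E]((z \oplus a) \oplus y)^N$, precludes $N^\prime <^* L[E]((z \oplus a) \oplus y)^N$. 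Hence $L[E]((z \oplus a) \oplus y)^N =^* N^\prime$, so $L[E]((z \oplus a) \oplus y)^N \sim N^\prime$, and concatenating $\sim$'s yields $N \sim N^\prime$ and $A_N = A_{N^\prime}$.

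The main obstacle, exactly as in Claim \ref{cl:tau*}, is to verify that properties $(3)$ and $(3^*)$ really do force equality in the mouse order even in the presence of drops on the main branches of the comparison. This is handled as in the proof of Claim \ref{cl:it} by tracking the witnessing ordinal for $(1) + \neg \ODdet$ along the main branch, transferring it down by elementarity through each drop, and using that only finitely many drops can occur along a main branch to obtain an initial segment of the base premouse violating the relevant minimality clause.
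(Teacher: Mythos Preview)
Your proof is correct, but the paper takes a more direct route. Instead of producing an arbitrary $N'$ satisfying $(1)$--$(4)$ from the cone and then coiterating it with $L[E]((z\oplus a)\oplus y)^N$, the paper observes that $L[E]((z\oplus a)\oplus y)^N$ \emph{itself} satisfies $(1)$--$(4)$: properties $(1)$, $(2)$, $(4)$ follow as you note, and property $(3)$ is exactly what $(3^*)$ for $N$ asserts about $L[E](y')^N$ for $y' = (z\oplus a)\oplus y \geq_T z$. One may then simply \emph{take} $N' = L[E]((z\oplus a)\oplus y)^N$, and $A_{N'} = A_N$ follows immediately from $N' \sim N$ with no coiteration at all. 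Your argument recovers the same conclusion via the comparison of Claim~\ref{cl:tau*}; the paper just exploits the asymmetry between the two cases --- here Player~II wins by defeating \emph{every} candidate $N'$, so one is free to choose the most convenient one.

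One minor point: your justification ``by the choice of $z^*$ together with Claim~\ref{cl:1-4w1it}'' for the $\omega_1$-iterability of the arbitrary $N'$ does not quite work, since $z^*$ was chosen so that $z^*$-premice with $(1)$--$(4)$ are $\omega_1$-iterable, whereas $N'$ is a $((z\oplus a)\oplus y)$-premouse. The correct route (as in Claim~\ref{cl:tau*}) is Claim~\ref{cl:it}, using that $L[E]((z\oplus a)\oplus y)^N$ is an $\omega_1$-iterable $((z\oplus a)\oplus y)$-premouse with $(1)$ and $(2)$. This does not affect your argument, since the coiteration via Corollary~\ref{cor:itKS} only needs $\Pi^1_n$-iterability on the $N'$-side anyway.
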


This claim implies that $A_N$ is determined in $N$, again
contradicting that $A_N$ was assumed to be the least non-determined
$\OD$-set of reals in $N$.

\begin{proof}[Proof of Claim \ref{cl:sigma*}]
  We first want to show that the $((z \oplus a) \oplus y)$-premouse
  $L[E]((z \oplus a) \oplus y)^N$ satifies properties $(1) - (4)$.

  First property $(1)$ for $N$ yields that
  \[ L[E]((z \oplus a) \oplus y)^{N} \sim N, \]
  because we have $z,a,y \in N$.  Therefore
  $L[E]((z \oplus a) \oplus y)^{N}$ is a
  $((z \oplus a) \oplus y)$-premouse which satisfies property $(2)$,
  because it has the same sets of reals and the same $\OD$-sets of
  reals as $N$ and hence as before
  \[ L[E]((z \oplus a) \oplus y)^{N} \vDash \neg \ODdet. \] Moreover
  $L[E]((z \oplus a) \oplus y)^{N}$ inherits condition $(1)$ from $N$.

  Since $L[E]((z \oplus a) \oplus y)^{N}$ is a fully backgrounded
  extender construction inside the $\omega_1$-iterable mouse $N$ it is
  $\omega_1$-iterable itself by §$12$ in \cite{MS94}. Therefore
  $L[E]((z \oplus a) \oplus y)^{N}$ satisfies properties $(1), (2)$
  and $(4)$. By property $(3^*)$ for the $z$-premouse $N$ we
  additionally have that $L[E]((z \oplus a) \oplus y)^{N}$ also
  satisfies property $(3)$.

  Since $\sigma$ is a winning strategy for player $\mathrm{II}$ in the
  original game $G$, we have that if $a, b, z$ and $y$ are as above,
  then
  \[ a \oplus b \notin A_{N^\prime}, \]
  for all $((z \oplus a) \oplus y)$-premice $N^\prime$ satisfying
  properties $(1)-(4)$, where $A_{N^\prime}$ denotes the least
  non-determined $\OD$-set of reals in $N^\prime$. Now let
  \[ N^\prime = L[E]((z \oplus a) \oplus y)^{N}, \]
  so we have that in particular $N^\prime$ is a
  $((z \oplus a) \oplus y)$-premouse and satisfies properties
  $(1)-(4)$ as above.

  As in the previous case where we assumed that player $\mathrm{I}$
  has a winning strategy in $G$, we want to show that
  \[ A_{N^\prime} = A_N \]
  in order to conclude that $\sigma^*$ is a winning strategy for
  player $\mathrm{II}$ in the Gale-Stewart game with payoff set $A_N$
  played inside $N$, using that $N^\prime$ satisfies properties
  $(1)-(4)$ as argued above.

  Using $L[E]((z \oplus a) \oplus y)^{N} \sim N$ it follows that
  \[ N \sim N^\prime, \] and thus $A_N = A_{N^\prime}$, as desired.
\end{proof}
This finishes the proof of Lemma \ref{lem:KS}.
\end{proof}

%%% Local Variables:
%%% mode: latex
%%% TeX-master: "Main"
%%% End:

\subsection{Applications}
\label{sec:CorollarysFromKS}

This section is devoted to two important corollaries of Lemma
\ref{lem:KS} which are going to be used in Sections
\ref{sec:SteelsArgumentProj} and \ref{sec:even}.

\begin{cor}
\label{cor:w1}
Let $n \geq 1$. Assume that $M_{n-1}^\#(x)$ exists and is
$\omega_1$-iterable for all $x \in {}^\omega\omega$ and that all
$\Sigma^1_{n+1}$-definable sets of reals are determined. Then
\[ \omega_1^{M_{n-1}(x)} \text{ is measurable in }
\HOD^{M_{n-1}(x)|\delta_x}, \] for a cone of reals $x$.
\end{cor}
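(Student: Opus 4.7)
The plan is to combine Lemma \ref{lem:KS} with Solovay's classical argument that $\ODdet$ implies $\omega_1$ is measurable in $\HOD$, carried out internally to the model $M_{n-1}(x)|\delta_x$.

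First, I would invoke Lemma \ref{lem:KS} to fix a real $y_0$ such that for every real $x \geq_T y_0$ the transitive $\ZFC$ model $N := M_{n-1}(x)|\delta_x$ satisfies $\ODdet$. Along this cone one has $\omega_1^N = \omega_1^{M_{n-1}(x)}$, because $\delta_x$ is inaccessible in $M_{n-1}(x)$: this is immediate for $n > 1$ since $\delta_x$ is Woodin there, and for $n = 1$ it holds because $\delta_x$ is an $x$-indiscernible of $L[x]$.

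Second, I would run Solovay's argument inside $N$. Given an $\OD^N$-subset $A \subseteq \omega_1^N$, consider the Gale-Stewart game in $N$ in which players alternately produce reals $x_{\mathrm{I}}, x_{\mathrm{II}}$ coding countable ordinals $\alpha_{\mathrm{I}}, \alpha_{\mathrm{II}}$ (with the standard forfeiture convention when the play fails to code a well-ordering), and player $\mathrm{I}$ wins iff $\max(\alpha_{\mathrm{I}}, \alpha_{\mathrm{II}}) \in A$. The payoff set is $\OD^N$ uniformly in $A$, so the game is determined in $N$ by hypothesis. A standard closure-point argument converts a winning strategy for $\mathrm{I}$ into a club $C \subseteq \omega_1^N$ with $C \subseteq A$, and symmetrically a winning strategy for $\mathrm{II}$ yields a club contained in $\omega_1^N \setminus A$. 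Hence the restriction of the club filter to the $\OD^N$-subsets of $\omega_1^N$ is an ultrafilter.

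Third, let $\mu := \{A \in \HOD^N : A \subseteq \omega_1^N \text{ and } A \text{ contains a club in } N\}$. Since the defining condition is $\OD^N$, we have $\mu \in \HOD^N$; by the previous step, $\mu$ is a non-principal uniform ultrafilter on $\Pot(\omega_1^N) \cap \HOD^N$; and $\omega_1^N$-completeness of $\mu$ follows from the regularity of $\omega_1^N$ in $N$, so that countable intersections of clubs remain clubs. This exhibits $\omega_1^N = \omega_1^{M_{n-1}(x)}$ as a measurable cardinal in $\HOD^N = \HOD^{M_{n-1}(x)|\delta_x}$ for every $x$ on the cone. The main point to verify is that the Solovay game and the strategy-to-club conversion can genuinely be executed inside the $\ZFC$ model $N$, so that the resulting ultrafilter $\mu$ is an element of $\HOD^N$ rather than merely of $V$; this is routine once the payoff is arranged to be $\OD^N$, since $\ODdet$ holds in $N$ by Lemma \ref{lem:KS}.
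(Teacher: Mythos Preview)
Your proof is correct, and it follows the same overall architecture as the paper: invoke Lemma~\ref{lem:KS} to get $\ODdet$ inside $N = M_{n-1}(x)|\delta_x$ on a cone, then run Solovay's argument internally to produce a $<\omega_1^N$-complete ultrafilter on $\omega_1^N$ lying in $\HOD^N$. The difference is only in which classical proof of Solovay's theorem you use. You take the club-filter route via the Solovay game (players code ordinals, a winning strategy yields a club inside $A$ or its complement), whereas the paper uses the Martin--Turing-cone route: it defines $\mathcal{U}$ by declaring $A \in \mathcal{U}$ iff $\{x : |x| \in \pi'' A\}$ contains a Turing cone, where $|x| = \sup\{\|y\| : y \in \WO,\ y \equiv_T x\}$, and then checks $<\omega_1$-completeness by joining countably many cone bases. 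Both arguments are standard; yours has the minor advantage that the filter is canonical (the club filter restricted to $\HOD^N$) and its $\OD^N$-definability is immediate, while the paper's version makes $<\omega_1$-completeness slightly more transparent. Your observation that $\omega_1^N = \omega_1^{M_{n-1}(x)}$ because $\delta_x$ is inaccessible in $M_{n-1}(x)$ is a point the paper leaves implicit.
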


\begin{proof}
  This follows from Lemma \ref{lem:KS} with a generalized version of
  Solovay's theorem that, under the Axiom of Determinacy $\AD$,
  $\omega_1$ is measurable. In fact Solovay's proof shows that
  $\OD$-determinacy implies that $\omega_1$ is measurable in
  $\HOD$. For the readers convenience, we will present a proof of this
  result, following the proof of the classical result as in Theorem
  $12.18$ $(b)$ in \cite{Sch14} or Lemma $6.2.2$ in \cite{SchSt}.

  Lemma \ref{lem:KS} yields that
  \[ M_{n-1}(x)|\delta_x \vDash \ODdet, \]
  for a cone of reals $x$. Let $x \in \BS$ be an arbitrary element of
  this cone and let us work inside $M_{n-1}(x)|\delta_x$ for the rest
  of the proof. We aim to define a $< \omega_1$-complete ultrafilter
  $\mathcal{U}$ inside $\HOD^{M_{n-1}(x)|\delta_x}$ on
  $\omega_1 \defeq \omega_1^{M_{n-1}(x)|\delta_x} =
  \omega_1^{M_{n-1}(x)}$,
  witnessing that $\omega_1$ is measurable in
  $\HOD^{M_{n-1}(x)|\delta_x}$.

  Let $n,m \mapsto \langle n,m \rangle$ be the Gödel pairing function
  for $n,m < \omega$ and recall that
  \[\mathsf{WO} \defeq \{ x \in \BS \st R_x \text{ is a well-ordering} \},\]
  where we let $(n,m) \in R_x$ iff $x( \langle n,m \rangle ) = 1$ for
  $x \in \BS$. For $y \in \mathsf{WO}$ we write $||y||$ for the order
  type of $R_y$ and for $x \in \BS$ we let
  \[ |x| = \sup \{ ||y|| \st y \in \mathsf{WO} \wedge y \equiv_T x
  \}. \]
  Consider the set $S = \{ |x| \st x \in \BS \}$ and let
  $\pi : \omega_1 \rightarrow S$ be an order isomorphism. Now we
  define the filter $\mathcal{U}$ on $\omega_1$ as follows. For
  $A \subset \omega_1$ such that $A \in \OD$ we let
  \[ A \in \mathcal{U} \; \text{ iff } \; \{ x \in \BS \st |x| \in \pi
  \pwimg A \} \text{ contains a cone of reals. } \]

  \begin{claim}
    $\mathcal{U} \cap \HOD$ is a $<\omega_1^{M_{n-1}(x)}$-complete
    ultrafilter in $\HOD$.
  \end{claim}

  \begin{proof}
    The set $\{ x \in \BS \st |x| \in \pi \pwimg A \}$ is Turing
    invariant. Therefore we have that $\OD$-Turing-determinacy implies
    that the set $\{ x \in \BS \st |x| \in \pi \pwimg A \}$ for
    $A \in \OD$ either contains a cone of reals or is completely
    disjoint from a cone of reals. Hence we have that
    $\mathcal{U} \cap \HOD$ is an ultrafilter on
    $\omega_1 = \omega_1^{M_{n-1}(x)}$ in $\HOD$. Moreover the
    following argument shows that this ultrafilter
    $\mathcal{U} \cap \HOD$ is $<\omega_1^{M_{n-1}(x)}$-complete in
    $\HOD$.

    Let
    $\{ A_\alpha \st \alpha < \gamma \} \subset \mathcal{U} \cap \HOD$
    be such that $\{ A_\alpha \st \alpha < \gamma \} \in \HOD$ for an
    ordinal $\gamma < \omega_1^{M_{n-1}(x)}$. Then there is a sequence
    $(a_\alpha \st \alpha < \gamma)$ of reals such that for each
    $\alpha < \gamma$, the real $a_\alpha$ is a base for a cone of
    reals contained in
    $\{ x \in \BS \st |x| \in \pi \pwimg A_\alpha \}$.  Since
    $\gamma < \omega_1^{M_{n-1}(x)}$, we can fix a bijection
    $f: \omega \rightarrow \gamma$ in $M_{n-1}(x)$. But then
    $\bigoplus_{n < \omega} a_{f(n)}$ is a base for a cone of reals
    contained in
    \[ \bigcap_{\alpha < \gamma} \{ x \in \BS \st |x| \in \pi \pwimg
    A_\alpha \} = \{ x \in \BS \st |x| \in \pi \pwimg \bigcap_{\alpha
      < \gamma} A_\alpha \}. \]

    So we have that
    $\bigcap_{\alpha < \gamma} A_\alpha \in \mathcal{U} \cap \HOD$ and
    thus the filter $\mathcal{U} \cap \HOD$ is
    $<\omega_1^{M_{n-1}(x)}$-complete.
  \end{proof}
  Therefore $\mathcal{U} \cap \HOD$ witnesses that
  $\omega_1^{M_{n-1}(x)}$ is measurable in
  $\HOD^{M_{n-1}(x)|\delta_x}$.
\end{proof}

In what follows we will prove that in the same situation as above
$\omega_2^{M_{n-1}(x)}$ is strongly inaccessible in
$\HOD^{M_{n-1}(x)|\delta_x}$, which is another consequence of Lemma
\ref{lem:KS}. This is going to be used later in Section
\ref{sec:even}.

In fact the following theorem holds true. It is due to the third
author and a consequence of the ``Generation Theorems'' in \cite{KW10}
(see Theorem $5.4$ in \cite{KW10}).

\begin{thm}\label{thm:genthm}
  Let $n \geq 1$. Assume that $M_{n-1}^\#(x)$ exists and is
  $\omega_1$-iterable for all $x \in {}^\omega\omega$ and that all
  $\Sigma^1_{n+1}$-definable sets of reals are determined. Then for a
  cone of reals $x$,
  \[ \omega_2^{M_{n-1}(x)} \text{ is a Woodin cardinal in }
  \HOD^{M_{n-1}(x)|\delta_x}. \]
\end{thm}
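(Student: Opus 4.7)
The plan is to follow exactly the template of Corollary \ref{cor:w1}, but with Solovay's classical theorem ``$\AD$ implies $\omega_1$ is measurable in $\HOD$'' replaced by the Steel--Woodin theorem ``$\ODdet$ implies $\omega_2$ is Woodin in $\HOD$'', which is one of the ``Generation Theorems'' of \cite{KW10} (their Theorem $5.4$). First I apply Lemma \ref{lem:KS} to fix a real $y_0$ such that
\[ M := M_{n-1}(x) | \delta_x \vDash \ODdet \]
for every real $x \geq_T y_0$. Since $\delta_x$ is Woodin and hence inaccessible in $M_{n-1}(x)$, the model $M$ satisfies $\ZFC$, so $M \vDash \ZFC + \ODdet$. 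By Lemma \ref{lem:iterability}$(1)$, $M$ is moreover closed under the operation $a \mapsto M_{n-2}^\#(a)$, which is the closure needed to run boldface determinacy arguments from $\OD$-Turing determinacy in $M$.

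Next I apply the Generation Theorem (Theorem $5.4$ of \cite{KW10}) inside $M$. Let $A \in \HOD^M$ with $A \subseteq \omega_2^M$ and let $\lambda < \omega_2^M$; the goal is to produce, inside $\HOD^M$, an extender $E$ with critical point some $\kappa < \omega_2^M$ witnessing that $\kappa$ is $A$-reflecting up to $\lambda$ in $\HOD^M$. The ingredients of the argument, all available in $M$, are: $(i)$ the Moschovakis coding lemma inside $M$, which together with $\ODdet$ yields $\omega_2^M = (\bs\delta^1_2)^M$ and provides $\OD$-codes in $M$ for every subset of $\omega_2^M$ lying in $\HOD^M$; $(ii)$ Martin's ultrafilter on Turing degrees in $M$, which is $\OD$ in $M$ and whose restriction to $\HOD^M$ is countably complete by the proof of Corollary \ref{cor:w1}; $(iii)$ an ultrapower-and-hull construction which, from $(i)$ and $(ii)$ and a given $A$ and $\lambda$, extracts the required extender $E$ and shows $E \in \HOD^M$. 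Since $\omega_2^M = \omega_2^{M_{n-1}(x)}$ and $\HOD^M = \HOD^{M_{n-1}(x) | \delta_x}$, the conclusion transfers to the statement of the theorem.

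The main obstacle is that the Koellner--Woodin generation arguments are, in their most familiar form, stated inside $L(\R)$ under $\AD$, whereas $M$ is an $x$-premouse with lots of extra structure and only $\ODdet$ rather than full $\AD$. Two points require care. First, one must verify that every ingredient entering the extender construction is $\OD$ in $M$ from ordinal parameters below $\delta_x$, so that the resulting $E$ lies in $\HOD^M$; this is straightforward since all the relevant objects (Turing measures, prewellorderings from the coding lemma, the operator $a \mapsto M_{n-2}^\#(a)$) are $\OD$ in $M$. Second, one must check that the boldface coding-lemma arguments which classically rely on $\bs\Sigma^1_2$-determinacy in $L(\R)$ go through using only $\ODdet$ in $M$: here Lemma \ref{lem:iterability}$(1)$ supplies the correct substitute, exactly as in the proof of Corollary \ref{cor:w1}. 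With these two verifications, the generation argument proceeds unchanged and gives the conclusion.
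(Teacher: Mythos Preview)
Your approach is correct and is exactly what the paper does: the paper does not give a proof of this theorem at all, but simply attributes it to Woodin and cites the Generation Theorems (Theorem~5.4 of \cite{KW10}), then explicitly declines to use it in favor of the weaker Theorem~\ref{thm:w2inacc}. Your proposal is a more fleshed-out version of that same citation, correctly identifying that Lemma~\ref{lem:KS} supplies $\ODdet$ inside $M_{n-1}(x)|\delta_x$ and that the Generation Theorem then runs there.

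One small correction: your appeal to Lemma~\ref{lem:iterability}(1) as ``the correct substitute'' for boldface determinacy in the coding-lemma step is misplaced. Neither the proof of Corollary~\ref{cor:w1} nor the coding-lemma argument in Theorem~\ref{thm:w2inacc} uses that lemma; they run directly from $\ODdet$ in $M$. The closure under $a\mapsto M_{n-2}^\#(a)$ is not what makes the coding lemma go through---$\ODdet$ alone suffices, as the proof of Claim~\ref{cl:w2inacc} shows.
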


In order to make this paper more self-contained, we shall not use
Theorem \ref{thm:genthm} here, though, and we will give a proof of the
following theorem, which is essentially due to Moschovakis, and will
be used in Section \ref{sec:even}. A version of it can also be found
in \cite{KW10} (see Theorem $3.9$ in \cite{KW10}). The proof shows in
fact that $\OD$-determinacy implies that $\omega_2$ is inaccessible in
$\HOD$.

\begin{thm}\label{thm:w2inacc}
  Let $n \geq 1$. Assume that $M_{n-1}^\#(x)$ exists and is
  $\omega_1$-iterable for all $x \in {}^\omega\omega$ and that all
  $\Sigma^1_{n+1}$-definable sets of reals are determined. Then for a
  cone of reals $x$,
  \[ \omega_2^{M_{n-1}(x)} \text{ is strongly inaccessible in
  } \HOD^{M_{n-1}(x)|\delta_x}. \]
\end{thm}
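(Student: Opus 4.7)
The plan is to apply Lemma \ref{lem:KS} to pass to a model satisfying $\ODdet$, and then run a Moschovakis-style coding argument. First I would fix a real $x$ in the cone produced by Lemma \ref{lem:KS}, so that $M := M_{n-1}(x)|\delta_x \vDash \ODdet$. Write $\omega_2 := \omega_2^{M_{n-1}(x)} = \omega_2^M$ and $H := \HOD^M$, and work inside $M$ throughout.

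Since $H \subseteq M$ and $\omega_2$ is regular in the $\ZFC$-model $M_{n-1}(x)$, regularity of $\omega_2$ in $H$ is immediate: any $H$-cofinal map $f : \lambda \to \omega_2$ with $\lambda < \omega_2$ would also be $M$-cofinal, contradicting the regularity of $\omega_2$ in $M$. So the remaining task is the strong-limit property, i.e.\ $(2^\kappa)^H < \omega_2$ for each $\kappa < \omega_2$. For this I would run the Moschovakis Coding Lemma inside $M$ (available under $\ODdet$) to produce, for each $\kappa < \omega_2$, a uniformly $\OD$ surjection $\pi_\kappa : \R^M \twoheadrightarrow \kappa$; such surjections exist because $M \vDash \mathrm{CH}$ yields $|\kappa|^M \leq \omega_1^M = |\R|^M$, and $\OD$ prewellorderings of $\R^M$ of length $\omega_1^M$ are available in $M$ via its canonical fine-structural wellordering. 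Every $A \in \mathcal{P}(\kappa) \cap H$ then pulls back to $\pi_\kappa^{-1}[A]$, an $\OD$ subset of $\R^M$ lying in $H$, giving an $H$-injection $\mathcal{P}(\kappa) \cap H \hookrightarrow \mathcal{P}(\R^M) \cap H$.

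To conclude, I would bound $|\mathcal{P}(\R^M) \cap H|^H$ strictly below $\omega_2$. Here Corollary \ref{cor:w1} is essential: $\omega_1^M$ is measurable in $H$, hence a strong-limit cardinal in $H$, so $(\omega_1^M)^{+,H}$ is a genuine $H$-cardinal lying strictly below $\omega_2^M$. A second coding-lemma argument, parametrizing every $\OD$ subset of $\R^M$ by a real together with an ordinal coding its $\OD$-definition, yields $|\mathcal{P}(\R^M) \cap H|^H \leq (\omega_1^M)^{+,H}$ in $H$, and therefore $(2^\kappa)^H \leq (\omega_1^M)^{+,H} < \omega_2$, as required.

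The hardest part will be making the final cardinal bound \emph{strict}: a naive count only yields $|\mathcal{P}(\R^M) \cap H|^H \leq \omega_2$. The gap between $\omega_1^M$ and $\omega_2^M$ in the $H$-cardinal spectrum, provided by the measurability of $\omega_1^M$ in $H$ together with $\mathrm{GCH}$ in $M$, is what allows one to push through a strict inequality; verifying that the Coding Lemma's parametrization really lands below $(\omega_1^M)^{+,H}$ rather than merely below $\omega_2^M$ is the delicate point of the argument.
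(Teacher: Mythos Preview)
Your overall setup is right, but there is a genuine gap in the endgame, and the role you assign to the Coding Lemma is off.

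First, the existence of an $\OD$ surjection $\pi_\kappa:\R^M\twoheadrightarrow\kappa$ for every $\kappa<\omega_2$ is not what the Coding Lemma delivers; it is the statement $\omega_2=\Theta_0$, where $\Theta_0=\sup\{\alpha:\text{there is an }\OD\text{ surjection }\R\twoheadrightarrow\alpha\}$. Your justification (``CH plus the fine-structural wellorder'') only gives $\OD_x$-surjections, not $\OD$-surjections. The paper first proves $\omega_2^{M_{n-1}(x)}=\Theta_0$ inside $M_{n-1}(x)$ by observing that $M_{n-1}(x)|\omega_2\subseteq\HOD_x$ and then \emph{varying} $x$ over $\R$ to turn the $\OD_x$-surjection into an $\OD$-surjection $\R\times\R\to\alpha$.

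Second, the step ``$\omega_1^M$ is measurable in $H$, hence $(\omega_1^M)^{+,H}<\omega_2^M$'' is a non-sequitur: measurability of $\omega_1^M$ says nothing about whether its $H$-successor reaches $\omega_2^M$. And your proposed ``second coding-lemma argument, parametrizing every $\OD$ subset of $\R^M$ by a real together with an ordinal'' yields only a surjection from $\R\times\Ord$ onto $\mathcal P(\R)\cap H$, which gives no bound below $\omega_2$. So the strict inequality you flag as the hard point is not actually established.

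The paper avoids this detour entirely and does not use Corollary~\ref{cor:w1} here. Given the $\OD$ surjection $f:\R\to\alpha$, it works in the pointclass $\SIGMA^1_1(\leq_f)$, which has a universal set $U$, and proves (via the Moschovakis Coding Lemma under $\OD$-determinacy) that every $X\in\mathcal P(\alpha)\cap\OD$ is of the form $f{''}U_z$ for some real $z$. This yields an $\OD$ surjection $g:\R\to\mathcal P(\alpha)\cap\OD$, $g(z)=f{''}U_z$, and hence $|\mathcal P(\alpha)^{\HOD}|<\Theta_0=\omega_2$ directly. The point is that the Coding Lemma lets one parametrize the relevant sets by \emph{single reals} (via a universal set), not by real-plus-ordinal pairs; that is exactly what produces the strict bound.
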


\begin{proof}
  Using Lemma \ref{lem:KS} we have as above that there is a cone of
  reals $x$ such that
  \[ M_{n-1}(x)|\delta_x \vDash \ODdet. \]
  Let $x$ be an element of that cone. 
  \begin{claim}
    We have that $\omega_2^{M_{n-1}(x)} = (\Theta_0)^{M_{n-1}(x)}$,
    where
    \[ \Theta_0 = \sup\{ \alpha \st \text{ there exists an
    }\OD\text{-surjection } f : \BS \rightarrow \alpha\}. \]
  \end{claim}
  \begin{proof}
    Work inside the model $M_{n-1}(x)$. Since $\CH$ holds in
    $M_{n-1}(x)$, it follows that $\Theta_0 \leq \omega_2$.  For the
    other inequality let $\alpha < \omega_2$ be arbitrary. Then there
    exists an $\OD_x$-surjection $g : \BS \rightarrow \alpha$ because
    by definability (using the definability results from \cite{St95})
    we have that
    \[ M_{n-1}(x)|\omega_2^{M_{n-1}(x)} \subseteq
    \HOD_x^{M_{n-1}(x)}. \]
    This implies that there is an $\OD$-surjection
    $f : \BS \times \BS \rightarrow \alpha$ by varying $x$ and thus it
    follows that $\alpha \leq \Theta_0$.
  \end{proof}

  Work inside the model $M_{n-1}(x)|\delta_x$ from now on and note
  that it is trivial that $\Theta_0 = \omega_2$ is a regular cardinal
  in $\HOD$. So we focus on proving that $\omega_2$ is a strong
  limit. For this purpose we fix an arbitrary ordinal
  $\alpha < \omega_2$ and prove that
  $\card{\Pot(\alpha)^{\HOD}} < \omega_2$.

  Since $\alpha < \omega_2 = \Theta_0$, we can fix a surjection
  $f : \BS \rightarrow \alpha$ such that $f \in \OD$. This surjection
  $f$ induces a prewellordering $\leq_f \in \OD$ on $\BS$ if we let
  \[x \leq_f y \text{ iff } f(x) \leq f(y)\]
  for $x, y \in \BS$. Now consider the pointclass $\SIGMA^1_1(\leq_f)$
  which is defined as follows. For a set of reals $A$ (or analogously
  for a set $A \subset (\BS)^{k}$ for some $k<\omega$) we say
  $A \in \SIGMA^1_1(\leq_f)$ iff there is a $\Sigma_0$-formula
  $\varphi$ and a real $z \in \BS$ such that
  \[ A = \{ y \in \BS \st \exists x \in \BS \; \varphi(y,x,\leq_f,\BS
  \setminus \leq_f, z) \}. \]
  The pointclass $\Sigma^1_1(\leq_f)$ is defined analogous without the
  parameter $z$. We have that there exists a universal
  $\Sigma^1_1(\leq_f)$-definable set $U \subseteq \BS \times \BS$ for
  the pointclass $\SIGMA^1_1(\leq_f)$. So we have that for every
  $\SIGMA^1_1(\leq_f)$-definable set $A$ there exists a $z \in \BS$
  such that $A = U_z = \{ x \in \BS \st (z,x) \in U \}$. Now it
  suffices to prove the following claim.

  \begin{claim}\label{cl:w2inacc}
    Let $X \subset \alpha$ with $X \in \OD$ be arbitrary. Then there
    exists a $\SIGMA^1_1(\leq_f)$-definable set $A \subset \BS$ such
    that $X = f \pwimg A$.
  \end{claim}

  Using Claim \ref{cl:w2inacc} we can define a surjection
  \[ g : \BS \rightarrow \Pot(\alpha) \cap \OD \]
  such that $g \in \OD$ by letting $g(z) = f \pwimg U_z$ for
  $z \in \BS$. This yields that we have
  $\card{\Pot(\alpha)^{\HOD}} < \omega_2$ as desired.
  
  Therefore we are left with proving Claim \ref{cl:w2inacc} to finish
  the proof of Theorem \ref{thm:w2inacc}.
  The proof of Claim \ref{cl:w2inacc} is mainly a special case of
  Moschovakis' Coding Lemma as in Theorem $3.2$ in \cite{KW10}, so we
  will outline the proof in this special case. 

  \begin{proof}[Proof of Claim \ref{cl:w2inacc}]
    Let $X \in \Pot(\alpha) \cap \OD$ be arbitrary. We aim to show
    that there is a real $z \in \BS$ such that $X = f \pwimg U_z$. Let
    \[ B = \{ z \in \BS \st f \pwimg U_z \subseteq X \}. \]
    Moreover let $\alpha_z$ for $z \in B$ be the minimal ordinal
    $\beta$ such that $\beta \in X \setminus f \pwimg U_z$, if it
    exists. We aim to show that there exists a real $z \in B$ such
    that $\alpha_z$ does not exist. So assume toward a contradiction
    that the ordinal $\alpha_z$ exists for all $z \in B$.

    Now consider the following game $G$ of length $\omega$, where
    player $\mathrm{I}$ and player $\mathrm{II}$ alternate playing
    natural numbers such that in the end player $\mathrm{I}$ plays a
    real $x$ and player $\mathrm{II}$ plays a real $y$.
    \begin{figure}[h]
      \centering
      \begin{tabular}{c|cc}
        $\mathrm{I}$ & $x$ & \\ \hline
        $\mathrm{II}$ & & $y$ 
      \end{tabular}
      for $x, y \in \BS$. 
    \end{figure}

    We define that player $\mathrm{I}$ wins the game $G$ iff
    \[ x \in B \wedge (y \in B \rightarrow \alpha_x \geq \alpha_y). \]
    
    Note that we have $B \in \OD$ since $f, U, X \in \OD$. Therefore
    the game $G$ is $\OD$ and thus determined by our hypothesis.

    Assume first that player $\mathrm{I}$ has a winning strategy
    $\sigma$ in $G$. For a real $y$ let $(\sigma * y)_{\mathrm{I}}$
    denote player $\mathrm{I}$'s moves in a run of the game $G$, where
    player $\mathrm{II}$ plays $y$ and player $\mathrm{I}$ responds
    according to his winning strategy $\sigma$. Then there exists a
    real $z_0$ such that
    \[ U_{z_0} = \bigcup \{ U_{(\sigma * y)_{\mathrm{I}}} \st y \in
    \BS \} \]
    because the right hand side of this equation is
    $\SIGMA^1_1(\leq_f)$-definable by choice of $U$. Since $\sigma$ is
    a winning strategy for player $\mathrm{I}$, we have that
    $(\sigma * y)_{\mathrm{I}} \in B$ for all $y \in \BS$ and thus it
    follows that $z_0 \in B$. Moreover we have that
    $\alpha_{(\sigma * y)_{\mathrm{I}}} \leq \alpha_{z_0}$ for all
    $y \in \BS$ by definition of $z_0$.

    Now we aim to construct a play $z^*$ for player $\mathrm{II}$
    defeating the strategy $\sigma$. Since $f: \BS \rightarrow \alpha$
    is a surjection we can choose $a \in \BS$ such that
    $f(a) = \alpha_{z_0}$. Moreover we let $z^* \in \BS$ be such that
    $U_{z^*} = U_{z_0} \cup \{ a \}$. Then we have that
    \[ f \pwimg U_{z^*} = f \pwimg U_{z_0} \cup \{ f(a) \} = f \pwimg
    U_{z_0} \cup \{ \alpha_{z_0} \} \subset X, \]
    since $z_0 \in B$. Hence $z^* \in B$. Moreover we have that
    \[ \alpha_{z^*} > \alpha_{z_0} \geq \alpha_{(\sigma *
      y)_{\mathrm{I}}} \]
    for all $y \in \BS$. Therefore player $\mathrm{II}$ can defeat
    $\sigma$ by playing the real $z^*$, contradicting the fact that
    $\sigma$ is a winning strategy for player $\mathrm{I}$.

    Assume now that player $\mathrm{II}$ has a winning strategy $\tau$
    in the game $G$. Let \[ h_0 : \BS \times \BS \rightarrow \BS \]
    be a $\Sigma^1_1(\leq_f)$-definable function such that for all
    $y,z \in \BS$,
    \[U_{h_0(z, y)} = U_z \cap \{ x \in \BS \st f(x) < f(y)
    \}. \]
    Choose $h_1 : \BS \rightarrow \BS$ such that $h_1$ is
    $\SIGMA^1_1(\leq_f)$-definable and
    \[ U_{h_1(z)} = \bigcup \{ U_{(h_0(z, y) *
      \tau)_{\mathrm{II}}} \cap \{ x \in \BS \st f(x) = f(y) \} \st
    y \in \BS \}, \]
    where the notion $(h_0(z, y) * \tau)_{\mathrm{II}}$ is
    defined analogous to the corresponding notion for player
    $\mathrm{I}$ introduced above. By Kleene's Recursion Theorem (see
    for example Theorem $3.1$ in \cite{KW10}) there exists a fixed
    point for $h_1$ with respect to the set $U$, that means there
    exists a real $z^* \in \BS$ such that we have
    \[ U_{z^*} = U_{h_1(z^*)}. \]

    Now our first step is to prove that $z^* \in B$. Assume toward a
    contradiction that $(f \pwimg U_{z^*}) \setminus X \neq \emptyset$
    and let $\gamma_0 \in (f \pwimg U_{z^*}) \setminus X$ be
    minimal. Moreover let $y_0 \in U_{z^*}$ be such that $f(y_0) =
    \gamma_0$. Then
    \[ \gamma_0 \in f \pwimg U_{z^*} = f \pwimg U_{h_1(z^*)} \]
    and by definition of the function $h_1$ it follows that
    $\gamma_0 \in f \pwimg U_{(h_0(z^*, y_0) *
      \tau)_{\mathrm{II}}}$.
    Since $\gamma_0$ was picked to be minimal in
    $(f \pwimg U_{z^*}) \setminus X$, we have 
    $h_0(z^*, y_0) \in B$ because we have by definition that
    \[ U_{h_0(z^*, y_0)} = U_{z^*} \cap \{ x \in \BS \st f(x) < f(y_0)
    \} = U_{z^*} \cap \{ x \in \BS \st f(x) < \gamma_0 \} \]
    and thus $f \pwimg U_{h_0(z^*, y_0)} \subseteq X$. Since $\tau$ is
    a winning strategy for player $\mathrm{II}$, we have that
    $(h_0(z^*, y_0) * \tau)_{\mathrm{II}} \in B$. Taken all together
    it follows that
    \[ \gamma_0 \in f \pwimg U_{(h_0(z^*, y_0) *
      \tau)_{\mathrm{II}}} \subseteq X. \]
    This contradicts the fact that
    $\gamma_0 \in (f \pwimg U_{z^*}) \setminus X$.

    Recall that we assumed toward a contradiction that the ordinal
    $\alpha_{z^*}$ exists. Let $a^* \in \BS$ be such that
    \[f(a^*) = \alpha_{z^*}\]
    and note that such an $a^*$ exists since
    $f: \BS \rightarrow \alpha$ is a surjection and
    $\alpha_{z^*} < \alpha$. Then we have by definition of the
    function $h_0$ that $h_0(z^*, a^*) \in B$ because $z^* \in B$.
    Moreover we have that $\alpha_{z^*} = \alpha_{h_0(z^*, a^*)}$
    holds by definition of $\alpha_{z^*}$ since
    $f(a^*) = \alpha_{z^*}$. As $\tau$ is a winning strategy for
    player $\mathrm{II}$ in the game $G$, we finally have that
    \[ \alpha_{(h_0(z^*, a^*) * \tau)_{\mathrm{II}}} >
    \alpha_{h_0(z^*, a^*)} = \alpha_{z^*}, \]
    because $h_0(z^*, a^*) \in B$. This contradicts the fact that
    \[ U_{(h_0(z^*, a^*) * \tau)_{\mathrm{II}}} \subset
    U_{h_1(z^*)} = U_{z^*}, \]
    by definition of $\alpha_{z^*}$ and
    $\alpha_{(h_0(z^*, a^*) * \tau)_{\mathrm{II}}}$.
    Therefore the ordinal $\alpha_{z^*}$ does not exist and thus we
    finally have that $f \pwimg U_{z^*} = X$, as desired.
  \end{proof}

  This finishes the proof of Theorem \ref{thm:w2inacc}. 
\end{proof}

%%% Local Variables:
%%% mode: latex
%%% TeX-master: "Main"
%%% End:

\subsection{A Proper Class Inner Model with $n$ Woodin Cardinals}
\label{sec:SteelsArgumentProj}

In this section we are now able to apply the results from the previous
sections to show the existence of a proper class inner model with $n$
Woodin cardinals from determinacy for $\PI^1_n$- and
$\Pi^1_{n+1}$-definable sets (if we assume inductively that $\PI^1_n$
determinacy implies that $M_{n-1}^\#(x)$ exists and is
$\omega_1$-iterable for all $x \in \BS$). This is done in the
following theorem, which is a generalization of Theorem $7.7$ in
\cite{St96} using Lemma \ref{lem:KS} and Corollary \ref{cor:w1}.

\begin{thm}\label{Steel}
  Let $n \geq 1$. If $M_{n-1}^\#(x)$ exists and is $\omega_1$-iterable
  for all $x \in {}^\omega\omega$ and all $\Sigma_{n+1}^1$-definable
  sets of reals are determined, then there exists a proper class inner
  model with $n$ Woodin cardinals.
\end{thm}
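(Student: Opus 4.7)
The plan is to argue by contradiction, adapting Steel's proof of Theorem~$7.7$ in \cite{St96} so that it takes place inside a suitable premouse with $n-1$ Woodin cardinals rather than in $V$ itself. Fix a real $x$ in the intersection of the two Turing cones supplied by Lemma~\ref{lem:KS} and Corollary~\ref{cor:w1}, and write $M = M_{n-1}(x)$, $N = M|\delta_x$, and $H = \HOD^N$. Thus $N \vDash \ODdet$ and $\omega_1^M$ is measurable in $H$, as witnessed by a normal ultrafilter $\U \in H$. Assume toward contradiction that no proper class inner model with $n$ Woodin cardinals exists. Since $M$ has only $n-1$ Woodin cardinals on its sequence, the same absence of an $n$-Woodin inner model persists when we work inside $M$.

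Under this contradictory hypothesis, I would carry out the Steel--Jensen $K^c$-construction ``below $n$ Woodin cardinals'' inside $M$, as in Chapters~$7$--$9$ of \cite{St96}, together with the fine-structural adjustments of \cite{SchStZe02}. This produces an iterable premouse $K = K^M$ (a class in $M$) which is definable over $M$, hence satisfies $K \cap N \subseteq \HOD^N = H$, is universal against all $(n-1)$-small mice in $M$, and enjoys the weak covering property $(\kappa^+)^K = \kappa^+$ in $M$ for every singular cardinal $\kappa$ of uncountable cofinality in $M$. Because $K \cap N \subseteq H$, the trace $\U \cap K$ of the normal measure $\U$ is a countably complete $K$-ultrafilter on $\omega_1^M$, so that $j \colon K \to K^{*} := \Ult(K, \U \cap K)$ is a well-founded elementary embedding with $\crit(j) = \omega_1^M$.

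To derive the final contradiction, I would appeal to the rigidity and Dodd--Jensen comparison machinery for $K$ below $n$ Woodin cardinals: no nontrivial elementary embedding $j \colon K \to K^{*}$ with critical point strictly below the least cardinal at which $K^{*}$ could see $n$ Woodin cardinals is compatible with the absence of an inner model with $n$ Woodin cardinals. Either $j$ is already internal to $K$, witnessing an $n$-th Woodin on the $K$-sequence by a Kunen-style argument, or weak covering fails at $\omega_1^M$ in a way that contradicts the structure of $\U$ in $H$. Either way one obtains a proper class inner model with $n$ Woodin cardinals, contrary to our assumption.

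The main obstacle is precisely the setup of the below-$n$-Woodin core model inside $M_{n-1}(x)$: $M$ itself carries $n-1$ Woodin cardinals on its extender sequence, so the background condition for the $K^c$-construction must certify only extenders whose critical points lie strictly below the first Woodin cardinal of $M$, while still being rich enough for universality, weak covering, and rigidity of $K^M$ to hold in the sense of \cite{St96}. Verifying that the resulting $K$ enjoys the full Jensen--Steel package inside $M$ is the technical heart of the argument; once this is in place, the combinatorial endgame is Steel's original one from the one-Woodin case, but lifted one level higher in the projective hierarchy by substituting $M_{n-1}(x)|\delta_x$ for $L(\R)$ as the ambient model of $\ODdet$.
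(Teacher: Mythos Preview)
Your overall strategy---generalize Steel's Theorem~$7.7$ by running core model theory inside a premouse with $n-1$ Woodins and exploiting the measure on $\omega_1$ coming from $\ODdet$---is the right one, and it is what the paper does. But there are two genuine gaps in your execution.

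\medskip

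\textbf{First, the missing case analysis.} You assume you can build $K$ with weak covering inside $M = M_{n-1}(x)$, but $M$ itself has $n-1$ Woodin cardinals, so the anti-large-cardinal hypothesis for core model theory fails there. The paper instead runs the robust $K^c$-construction inside $N = M_{n-1}(x)|\delta_x$, which has no Woodins. More importantly, you only treat the scenario where $K$ exists; the paper's proof is a trichotomy. If $(K^c)^N$ already has a Woodin cardinal $\delta$, or if it fails to be $\Q$-structure-iterable and some tree $\T$ has no $\Q$-structure, then you cannot isolate $K$ at all. In those cases the paper produces the $n$-Woodin model directly: run a fully backgrounded $L[E]$-construction inside $M_{n-1}(x)$ (not inside $N$) on top of $(K^c)^N|\delta$ or $\M(\T)$; the $n-1$ Woodin cardinals of $M_{n-1}(x)$ supply the background certificates for $n-1$ further Woodins, and the bottom one survives because nothing above it kills its Woodinness. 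Your proof-by-contradiction framing hides this, and your identification of ``the main obstacle'' as merely tuning the certification condition understates what is needed.

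\medskip

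\textbf{Second, the endgame in the core-model case does not close.} Granting that $K = K^N$ exists with weak covering, your rigidity argument is incomplete: an elementary $j\colon K \to \Ult(K,\U\cap K)$ with $\crit(j)=\omega_1^M$ does not by itself yield an $n$-th Woodin or a failure of covering---at best it shows $\omega_1^M$ is measurable in $K$, which is no contradiction. Weak covering applies at \emph{singular} cardinals, so it says nothing directly about $\omega_1^M$. The paper's route is cleaner and actually completes: take $\alpha = \aleph_\omega^{M_{n-1}(x)}$, a singular cutpoint, so weak covering gives $(\alpha^+)^K = (\alpha^+)^{M_{n-1}(x)}$; then collapse $\alpha$ to $\omega$ by $\Col(\omega,\alpha)$. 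The extension is $M_{n-1}(y)$ for some $y\geq_T x$, so Corollary~\ref{cor:w1} applies in the extension and makes $\omega_1^{M_{n-1}(y)} = (\alpha^+)^K$ measurable in $\HOD^{M_{n-1}(y)|\delta_y}$. Since $K$ is forcing-absolute and $K\subseteq\HOD$, this ordinal is a successor in $\HOD$, contradicting its measurability there.
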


We are not claiming here that the model obtained in Theorem
\ref{Steel} is iterable in any sense. We will show how to construct an
$\omega_1$-iterable premouse with $n$ Woodin cardinals using this
model in the next section, but for that we need to assume slightly
more determinacy (namely a consequence of determinacy for all
$\SIGMA_{n+1}^1$-definable sets of reals).

\begin{proof}
  As before let $\delta_x$ denote the least Woodin cardinal in
  $M_{n-1}(x)$ if $n >1$ and let $\delta_x$ denote the least
  $x$-indiscernible in $L[x] = M_0(x)$ if $n = 1$. Then we have that
  according to Lemma \ref{lem:KS}, there is a real $x$ such that for
  all reals $y \geq_T x$,
  \[ M_{n-1}(y)|\delta_y \vDash \ODdet. \]
  Fix such a real $x$.  

  In the case $n=1$ we have that Theorem \ref{Steel} immediately
  follows from Theorem $7.7$ in \cite{St96}, so assume $n > 1$. 

  Let $(K^c)^{M_{n-1}(x)|\delta_x}$ denote the result of a
  robust-background-extender $K^c$-construction in the sense of
  Chapter $1$ in \cite{Je03} performed inside the model
  $M_{n-1}(x)|\delta_x$. Then we distinguish three cases as follows.

\begin{minipage}{\textwidth}
  \bigskip \textbf{Case 1.} Assume that $(K^c)^{M_{n-1}(x)|\delta_x}$
  has no Woodin cardinals and is fully iterable inside
  $M_{n-1}(x) | \delta_x$ via the iteration strategy $\Sigma$ which is
  guided by $\Q$-structures as in Definition \ref{def:Qstritstr}.
  \bigskip
\end{minipage}
  
In this case we can isolate the core model $K^{M_{n-1}(x)|\delta_x}$
below $\delta_x$ as in Theorem $1.1$ in \cite{JS13}.  Then the core
model $K^{M_{n-1}(x)|\delta_x}$ is absolute for all forcings of size
less than $\delta_x$ over $M_{n-1}(x)|\delta_x$ and moreover
$K^{M_{n-1}(x)|\delta_x}$ satisfies weak covering (building on work in 
\cite{MSchSt97} and \cite{MSch95}). That means we have that
$M_{n-1}(x)|\delta_x \vDash \text{``}(\alpha^+)^K = \alpha^+\text{''}$
for all singular cardinals $\alpha$.

Let $\alpha = \aleph_\omega^{M_{n-1}(x)}$. Then $\alpha$ is singular
in $M_{n-1}(x)$, so we have in particular that
\[ M_{n-1}(x)|\delta_x \vDash \text{``}(\alpha^+)^K =
\alpha^+\text{''}. \]
Moreover we have that $\alpha$ is a cutpoint of $M_{n-1}(x)$. So let
$z \in \BS$ be generic over $M_{n-1}(x)$ for $\Col(\omega,
\alpha)$. Then for $y = x \oplus z$ we have that
\[ M_{n-1}(x)[z] = M_{n-1}(y), \]
where we construe $M^\#_{n-1}(x)[z]$ as a $y$-mouse and as a $y$-mouse
$M^\#_{n-1}(x)[z]$ is sound and $\rho_\omega(M^\#_{n-1}(x)[z]) = y$
(see \cite{SchSt09} for the fine structural details).  Moreover we
have that
\[ M_{n-1}(y)|\delta_y \vDash \ODdet, \]
since $y \geq_T x$.  This implies that
\[ M_{n-1}(x)[z]|\delta_y \vDash \ODdet. \]
Now work in the model $M_{n-1}(x)[z]|\delta_y$. Then we have that
$\ODdet$ implies that $\omega_1$ is measurable in $\HOD$ as in
Corollary \ref{cor:w1}.

Since $K \subseteq \HOD$ and we have that $\omega_1 = (\alpha^+)^K$,
it follows that $\omega_1 = (\alpha^+)^{\HOD}$. But $\HOD \vDash \AC$,
so in particular in $\HOD$ all measurable cardinals are
inaccessible. This is a contradiction.

\begin{minipage}{\textwidth}
  \bigskip \textbf{Case 2.} Assume that there is a Woodin cardinal in
  $(K^c)^{M_{n-1}(x)|\delta_x}$.  \bigskip
\end{minipage}

In this case we aim to show that there exists a proper class inner
model with $n$ Woodin cardinals, which is obtained by performing a
fully backgrounded extender construction inside $M_{n-1}(x)$ on top of
the model
\[ (K^c)^{M_{n-1}(x)|\delta_x} \, | \, \delta, \]
where $\delta$ denotes the largest Woodin cardinal in
$(K^c)^{M_{n-1}(x)|\delta_x}$.

We can assume without loss of generality that there is a largest
Woodin cardinal in the model $(K^c)^{M_{n-1}(x)|\delta_x}$ if it has a
Woodin cardinal, because if there is no largest one, then
$(K^c)^{M_{n-1}(x)|\delta_x}$ already yields a proper class inner
model with $n$ Woodin cardinals by iterating some large enough
extender out of the universe. By the same argument we can in fact
assume that $(K^c)^{M_{n-1}(x)|\delta_x}$ is $(n-1)$-small above
$\delta$.

Let \[ (\M_\xi, \mathcal{N}_\xi \st \xi \in \Ord) \] be the sequence
of models obtained from a fully backgrounded extender construction
above $(K^c)^{M_{n-1}(x)|\delta_x} \, | \, \delta$ inside $M_{n-1}(x)$
as in Definition \ref{def:L[E]constr}, where
\[ \M_{\xi + 1} = \mathcal{C}_\omega(\mathcal{N}_{\xi + 1}) \]
and let
\[ L[E]((K^c)^{M_{n-1}(x)|\delta_x} \, | \, \delta)^{M_{n-1}(x)} \]
denote the resulting model.

\begin{minipage}{\textwidth}
  \bigskip \textbf{Case 2.1.} There is no $\xi \in \Ord$ such that
  $\delta$ is not definably Woodin over the model $\M_{\xi+1}$.
  \bigskip
\end{minipage}

In this case $\delta$ is a Woodin cardinal inside
$L[E]((K^c)^{M_{n-1}(x)|\delta_x} \, | \, \delta)^{M_{n-1}(x)}$ and it
follows by a generalization of Theorem $11.3$ in \cite{MS94} that we
have that
$L[E]((K^c)^{M_{n-1}(x)|\delta_x} \, | \, \delta)^{M_{n-1}(x)}$ is a
proper class inner model with $n$ Woodin cardinals, as desired.

\begin{minipage}{\textwidth}
  \bigskip \textbf{Case 2.2.} There exists a $\xi \in \Ord$ such that
  $\delta$ is not definably Woodin over the model
  $\M_{\xi+1}$. \bigskip
\end{minipage}

Let $\xi$ be the minimal such ordinal. In this case the premouse
$\M_{\xi+1}$ is $(n-1)$-small above $\delta$ (see the proof of Claim
\ref{cl:LEnsmall} in the proof of Lemma \ref{lemma1und2}) and we have
that
\[ \M_{\xi+1} \in M_{n-1}(x)|\delta_x. \]
Consider the coiteration of $\M_{\xi+1}$ and
$(K^c)^{M_{n-1}(x)|\delta_x}$ inside $M_{n-1}(x) | \delta_x$.
  
  \begin{claim}\label{cl:coitsucc}
    The coiteration of $\M_{\xi+1}$ and $(K^c)^{M_{n-1}(x)|\delta_x}$
    inside $M_{n-1}(x) | \delta_x$ is successful.
  \end{claim}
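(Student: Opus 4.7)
\textbf{Proof proposal for Claim \ref{cl:coitsucc}.} The plan is to apply the comparison machinery developed in Section \ref{sec:prepKS}, specifically Corollary \ref{cor:itKS}, inside the ambient model $M_{n-1}(x)\,|\,\delta_x$. The first step is to check that the hypotheses of that corollary are met above the common cutpoint $\delta$. By construction $\M_{\xi+1}$ is the result of a fully backgrounded extender construction performed on top of $(K^c)^{M_{n-1}(x)|\delta_x}\,|\,\delta$, so $\M_{\xi+1}\,|\,\delta = (K^c)^{M_{n-1}(x)|\delta_x}\,|\,\delta$ and the coiteration takes place strictly above $\delta$, which is a cutpoint of both models. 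The reduction at the start of Case 2 gives that $(K^c)^{M_{n-1}(x)|\delta_x}$ is $(n-1)$-small above $\delta$ with no Woodin cardinals above $\delta$, and by the minimality of $\xi$ together with the argument in Claim \ref{cl:LEnsmall}, $\M_{\xi+1}$ is also $(n-1)$-small above $\delta$ and has $\delta$ not definably Woodin over it (so in particular no Woodin cardinals above $\delta$ after the witness for non-Woodinness appears).

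Next I would establish iterability above $\delta$ for both sides inside the model $M_{n-1}(x)\,|\,\delta_x$. For $\M_{\xi+1}$ this follows from the standard §12 of \cite{MS94} transfer: as the output of a fully backgrounded construction in $M_{n-1}(x)$, $\M_{\xi+1}$ is $\omega_1$-iterable above $\delta$ via a strategy induced by the iteration strategy for $M_{n-1}(x)$, and this strategy is the $\Q$-structure strategy, whose $\Q$-structures are computable inside $M_{n-1}(x)\,|\,\delta_x$ using the $M_{n-2}^\#$-operation provided by Lemma \ref{lem:iterability}(1). For $(K^c)^{M_{n-1}(x)|\delta_x}$ above $\delta$, the usual $K^c$-iterability from Chapter 9 of \cite{St96} (or Chapter 1 of \cite{Je03}) combined with $(n-1)$-smallness above $\delta$ reduces the iteration strategy to one guided by $\Q$-structures, and again these $\Q$-structures are located inside $M_{n-1}(x)\,|\,\delta_x$ via the $M_{n-2}^\#$-operation.

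Having verified the hypotheses, Corollary \ref{cor:itKS} applied inside $M_{n-1}(x)$ (and so inside $M_{n-1}(x)\,|\,\delta_x$ modulo the $\Q$-structure considerations above) yields the existence of padded iteration trees $\T$ on $\M_{\xi+1}$ and $\U$ on $(K^c)^{M_{n-1}(x)|\delta_x}$ of successor length $\lambda+1$ such that one final model is an initial segment of the other, so the coiteration terminates successfully.

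The main obstacle I anticipate is the iterability of $(K^c)^{M_{n-1}(x)|\delta_x}$ above $\delta$ in the sense needed for Corollary \ref{cor:itKS}: the $K^c$-construction only guarantees a certain background iterability (typically phrased for countable hulls), and one has to carefully translate this into the existence of the $\Q$-structure iteration strategy above $\delta$ as computed inside $M_{n-1}(x)\,|\,\delta_x$. This is handled by using that $(K^c)^{M_{n-1}(x)|\delta_x}$ is $(n-1)$-small without Woodin cardinals above $\delta$, so any potential failure of branch existence at a limit stage would manifest as a missing $\Q$-structure, and the $M_{n-2}^\#$-operation available in $M_{n-1}(x)\,|\,\delta_x$ (Lemma \ref{lem:iterability}(1)) produces one, by the same argument as in the proof of part (2) of that lemma.
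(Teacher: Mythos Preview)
Your proposal has a genuine gap at exactly the point you flag as the ``main obstacle.'' You want to invoke Corollary \ref{cor:itKS}, which requires one side to be $\omega_1$-iterable above $\delta$ and the other to be $\Pi^1_n$-iterable above $\delta$. You have the former for $\M_{\xi+1}$, but neither for $(K^c)^{M_{n-1}(x)|\delta_x}$. The $K^c$-iterability from Chapter 9 of \cite{St96} (in the version of \cite{Je03}) gives only \emph{countable} iterability: countable elementary substructures of $(K^c)^{M_{n-1}(x)|\delta_x}$ are iterable via re-embedding into the construction. This is not $\Pi^1_n$-iterability, and it is not the $\Q$-structure iteration strategy for the full $(K^c)$---indeed Case~3 of the proof of Theorem \ref{Steel} explicitly contemplates the possibility that $(K^c)^{M_{n-1}(x)|\delta_x}$ is not fully iterable via $\Q$-structures. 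Your proposed fix, that the $M_{n-2}^\#$-operation ``produces'' a $\Q$-structure at any limit stage as in Lemma \ref{lem:iterability}(2), is circular: that lemma takes $\omega_1$-iterability in $V$ as a \emph{hypothesis}, and more to the point there is no reason why $\delta(\T)$ should fail to be Woodin in $M_{n-2}^\#(\M(\T))$.

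The paper handles this by contradiction. Assuming the coiteration fails, the failure must be on the $(K^c)$-side at some limit-length tree $\T$ where $M_{n-2}^\#(\M(\T))$ sees $\delta(\T)$ as Woodin (and hence is not $(n-2)$-small above $\delta(\T)$). One then passes to a countable hull $\bar M$ of $M_{n-1}(x)|\delta_x$ containing $\T$; inside $M_{n-1}(x)|\delta_x$ the countable-hull iterability of $K^c$ furnishes a branch $b$ through the reflected tree $\bar\T$ on $\bar K$. Now compare $\M_b^{\bar\T}$ with $M_{n-2}^\#(\M(\bar\T))$ above $\delta(\bar\T)$: the $\M_b^{\bar\T}$-side must win (either because $\bar K$ has no Woodin above $\bar\delta$, or by a projectum argument if $b$ drops), so a non-dropping iterate of $M_{n-2}^\#(\M(\bar\T))$ sits inside an iterate of $\M_b^{\bar\T}$. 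This forces $\M_b^{\bar\T}$ to be not $(n-1)$-small above $\bar\delta$, and re-embedding into the $(K^c)$-construction then contradicts the standing assumption that $(K^c)^{M_{n-1}(x)|\delta_x}$ is $(n-1)$-small above $\delta$. The countable-hull step and the re-embedding are essential; they are what converts the weak $K^c$-iterability into the conclusion you need, and your proposal omits them.
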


  \begin{proof}
    First of all we have that the coiteration takes place above
    $\delta$ and the premouse $\M_{\xi+1}$ is $\omega_1$-iterable
    above $\delta$ in $V$ by construction (see \cite{MS94}). Therefore
    the proof of Lemma \ref{lem:iterability} $(2)$ yields that in the
    model $M_{n-1}(x) | \delta_x$ we have that $\M_{\xi+1}$ is
    iterable for iteration trees in $H^{M_{n-1}(x)}_{\delta_x}$ which
    are above $\delta$, since $\M_{\xi+1} \in M_{n-1}(x) | \delta_x$
    is $(n-1)$-small above $\delta$ and
    $\rho_\omega(\M_{\xi+1}) \leq \delta$.

    Moreover we have that $(K^c)^{M_{n-1}(x)|\delta_x}$ is countably
    iterable above $\delta$ inside $M_{n-1}(x) | \delta_x$ by
    \cite{Je03} (building on the iterability proof in Chapter $9$ in
    \cite{St96}).

    Assume now toward a contradiction that the coiteration of
    $(K^c)^{M_{n-1}(x)|\delta_x}$ with $\M_{\xi+1}$ inside
    $M_{n-1}(x)|\delta_x$ is not successful. Since as argued above
    $\M_{\xi+1}$ is iterable above $\delta$ inside
    $M_{n-1}(x)|\delta_x$ and the coiteration takes place above
    $\delta$ this means that the coiteration has to fail on the
    $(K^c)^{M_{n-1}(x)|\delta_x}$-side. 

    The premouse $(K^c)^{M_{n-1}(x)|\delta_x}$ is assumed to be
    $(n-1)$-small above $\delta$ and therefore the fact that the
    coiteration of $(K^c)^{M_{n-1}(x)|\delta_x}$ and $\M_{\xi+1}$
    fails on the $(K^c)^{M_{n-1}(x)|\delta_x}$-side, implies that
    there exists an iteration tree $\T$ on
    $(K^c)^{M_{n-1}(x)|\delta_x}$ of limit length such that there is
    no $\Q$-structure $\Q(\T)$ for $\T$ such that
    $\Q(\T) \unlhd M_{n-2}^\#(\M(\T))$ and hence
    \[ M_{n-2}^\#(\M(\T)) \vDash \text{``} \delta(\T) \text{ is
      Woodin''.} \]
    In particular we have that the premouse $M_{n-2}^\#(\M(\T))$
    constructed in the sense of Definition \ref{def:M(N)} is not
    $(n-2)$-small above $\delta(\T)$ since otherwise it would already
    provide a $\Q$-structure $\Q(\T)$ for $\T$ which is $(n-2)$-small
    above $\delta(\T)$.

    Let $\bar{M}$ be the Mostowski collapse of a countable
    substructure of $M_{n-1}(x)|\delta_x$ containing the iteration
    tree $\T$. That means for a large enough natural number $m$ we let
    $\bar{M}, X$ and $\sigma$ be such that
    \[ \bar{M} \overset{\sigma}{\cong} X \prec_{\Sigma_m}
    M_{n-1}(x)|\delta_x, \]
    where \[ \sigma: \bar{M} \rightarrow M_{n-1}(x)|\delta_x \]
    denotes the uncollapse map such that we have a model $\bar{K}$ in
    $\bar{M}$ with
    $\sigma(\bar{K}| \gamma) = (K^c)^{M_{n-1}(x)|\delta_x}|
    \sigma(\gamma)$
    for every ordinal $\gamma < \bar{M} \cap \Ord$, and we have an
    iteration tree $\bar{\T}$ on $\bar{K}$ in $\bar{M}$ with
    $\sigma(\bar{\T}) = \T$. Moreover we let $\bar{\delta} \in
    \bar{M}$ be such that $\sigma(\bar{\delta}) = \delta$. 

    By the iterability proof of Chapter $9$ in \cite{St96} (in the
    version of \cite{Je03}) applied inside the model
    $M_{n-1}(x)|\delta_x$, there exists a cofinal well-founded branch
    $b$ through the iteration tree $\bar{\T}$ on $\bar{K}$ above
    $\bar{\delta}$. Moreover we have that
    \[ M_{n-2}^\#(\M(\bar{\T})) \vDash \text{``} \delta(\bar{\T})
    \text{ is Woodin''} \]
    and $M_{n-2}^\#(\M(\bar{\T}))$ is not $(n-2)$-small above
    $\delta(\bar{\T})$.

    Consider the coiteration of $\M_b^{\bar{\T}}$ with
    $M_{n-2}^\#(\M(\bar{\T}))$ and note that it takes place above
    $\delta(\bar{\T})$. Since $M_{n-2}^\#(\M(\bar{\T}))$ is
    $\omega_1$-iterable above $\delta(\bar{\T})$ and $\M_b^{\bar{\T}}$
    is iterable above $\bar{\delta} < \delta(\bar{\T})$ by the
    iterability proof of Chapter $9$ in \cite{St96} (in the version of
    \cite{Je03}) applied inside $M_{n-1}(x)|\delta_x$, the coiteration
    is successful using Lemma \ref{lem:iterability} $(2)$. We have
    that $\M_b^{\bar{\T}}$ cannot lose the coiteration by the
    following argument. If there is no drop along the branch $b$, then
    $\M_b^{\bar{\T}}$ cannot lose the coiteration, because then there
    is no definable Woodin cardinal in $\M_b^{\bar{\T}}$ above
    $\bar{\delta}$ by elementarity, but at the same time we have that
    \[ M_{n-2}^\#(\M(\bar{\T})) \vDash \text{``} \delta(\bar{\T}) >
    \bar{\delta} \text{ is Woodin''.} \]
    If there is a drop along $b$, then $\M_b^{\bar{\T}}$ also has to
    win the coiteration, because we have that
    $\rho_\omega(\M_b^{\bar{\T}}) < \delta(\bar{\T})$ and
    $\rho_\omega(M_{n-2}^\#(\M(\bar{\T}))) = \delta(\bar{\T})$.

    That means there is an iterate $\mathcal{R}^*$ of
    $\M_b^{\bar{\T}}$ and a non-dropping iterate $\M^*$ of
    $M_{n-2}^\#(\M(\bar{\T}))$ such that $\M^* \unlhd \mathcal{R}^*$.
    We have that $\M^*$ is not $(n-1)$-small above $\bar{\delta}$,
    because $M_{n-2}^\#(\M(\bar{\T}))$ is not $(n-1)$-small above
    $\bar{\delta}$ as argued above and the iteration from
    $M_{n-2}^\#(\M(\bar{\T}))$ to $\M^*$ is non-dropping. Therefore it
    follows that $\mathcal{R}^*$ is not $(n-1)$-small above
    $\bar{\delta}$ and thus $\M_b^{\bar{\T}}$ is not $(n-1)$-small
    above $\bar{\delta}$. By the iterability proof of Chapter $9$ in
    \cite{St96} (in the version of \cite{Je03}) we can re-embed the
    model $\M_b^{\bar{\T}}$ into a model of the
    $(K^c)^{M_{n-1}(x)|\delta_x}$-construction above
    $(K^c)^{M_{n-1}(x)|\delta_x} \, | \, \delta$. This yields that
    $(K^c)^{M_{n-1}(x)|\delta_x}$ is not $n$-small, contradicting our
    assumption that it is $(n-1)$-small above $\delta$.
  \end{proof}

  From Claim \ref{cl:coitsucc} it now follows by universality of
  $(K^c)^{M_{n-1}(x)|\delta_x}$ above $\delta$ (see Theorem $4$ in
  \cite{Je03}) that the $(K^c)^{M_{n-1}(x)|\delta_x}$-side has to win
  the comparison. That means there is an iterate $K^*$ of
  $(K^c)^{M_{n-1}(x)|\delta_x}$ and an iterate $N^*$ of $\M_{\xi+1}$
  which is non-dropping on the main branch such that
  \[ N^* \unlhd K^*. \]
  But this is a contradiction, because we assumed that $\delta$ is not
  definably Woodin over $\M_{\xi+1}$ and at the same time we have that
  \[ (K^c)^{M_{n-1}(x)|\delta_x} \vDash \text{``} \delta \text{ is a
    Woodin cardinal''}. \]
  This finishes the case that there is a Woodin cardinal in
  $(K^c)^{M_{n-1}(x)|\delta_x}$. 

\begin{minipage}{\textwidth}
  \bigskip \textbf{Case 3.} Assume that there is no Woodin cardinal in
  $(K^c)^{M_{n-1}(x)|\delta_x}$ and that the premouse
  $(K^c)^{M_{n-1}(x)|\delta_x}$ is not fully iterable inside
  $M_{n-1}(x)|\delta_x$ via the $\Q$-structure iteration strategy
  $\Sigma$ (see Definition \ref{def:Qstritstr}).\bigskip
\end{minipage}

The failure of the attempt to iterate $(K^c)^{M_{n-1}(x)|\delta_x}$
via the $\Q$-structure iteration strategy $\Sigma$ implies that there
exists an iteration tree $\mathcal{T}$ of limit length on
$(K^c)^{M_{n-1}(x)|\delta_x}$ in $M_{n-1}(x)|\delta_x$ such that there
exists no $\Q$-structure for $\T$ inside the model
$M_{n-1}(x)|\delta_x$.

Let \[ (\M_\xi, \mathcal{N}_\xi \st \xi \in \Ord) \] be the sequence
of models obtained from a fully backgrounded extender construction
above $\M(\T)$ inside $M_{n-1}(x)$ as in Definition
\ref{def:L[E]constr}, where
\[ \M_{\xi + 1} = \mathcal{C}_\omega(\mathcal{N}_{\xi + 1}) \]
and let
\[ L[E](\M(\T))^{M_{n-1}(x)} \]
denote the resulting model.

\begin{minipage}{\textwidth}
  \bigskip \textbf{Case 3.1.} There is no $\xi \in \Ord$ such that
  $\delta(\T)$ is not definably Woodin over the model $\M_{\xi+1}$.
  \bigskip
\end{minipage}

In this case $\delta(\T)$ is a Woodin cardinal inside
$L[E](\M(\T))^{M_{n-1}(x)}$ and it follows as in Case $2.1$ by a
generalization of Theorem $11.3$ in \cite{MS94} that
$L[E](\M(\T))^{M_{n-1}(x)}$ is a proper class inner model with $n$
Woodin cardinals, as desired.

\begin{minipage}{\textwidth}
  \bigskip \textbf{Case 3.2.} There exists a $\xi \in \Ord$ such that
  $\delta(\T)$ is not definably Woodin over the model
  $\M_{\xi+1}$. \bigskip
\end{minipage}

Let $\xi$ be the minimal such ordinal. In this case the premouse
$\M_{\xi+1}$ is $(n-1)$-small above $\delta(\T)$ and we have that
\[ \M_{\xi+1} \in M_{n-1}(x)|\delta_x. \]
But then $\M_{\xi+1} \rhd \M(\T)$ already provides a $\Q$-structure
for $\T$ inside the model $M_{n-1}(x)|\delta_x$ because $\delta(\T)$
is not definably Woodin over $\M_{\xi+1}$. This is a contradiction.
\end{proof}

Note that all results we proved in this section under a lightface
determinacy hypothesis relativize to all $x \in \BS$ if we assume the
analogous boldface determinacy hypothesis. We just decided to present
the results without additional parameters to simplify the notation.

%%% Local Variables:
%%% mode: latex
%%% TeX-master: "Main"
%%% End:

\section{Proving Iterability}
\label{ch:it}

With Theorem \ref{Steel} we found a candidate for $M_n$ in the
previous section, but we still have to show its iterability. We will
in fact not prove that this candidate is iterable, but we will use it
to construct an $\omega_1$-iterable premouse $M_n^\#$ in the case that
$n$ is odd. (Note here already that we will give a different argument
if $n$ is even.)

We do parts of this in a slightly more general context and therefore
introduce the concept of an $n$-suitable premouse in Section
\ref{sec:prep}, which will be a natural candidate for the premouse
$M_n | (\delta_0^+)^{M_n}$, where $\delta_0$ denotes the least Woodin
cardinal in $M_n$. Using $n$-suitable premice we will show inductively
in Sections \ref{sec:odd} and \ref{sec:even} that under a determinacy
hypothesis $M_n^\#$ exists and is $\omega_1$-iterable.

In this section again all results we are going to prove under a
lightface determinacy hypothesis relativize to all $x \in \BS$ under
the analogous boldface determinacy hypothesis.

%%% Local Variables:
%%% mode: latex
%%% TeX-master: "DetSharps"
%%% End:

\subsection{Existence of $n$-suitable Premice}
\label{sec:prep}

After introducing pre-$n$-suitable premice and proving their existence
from the results in the previous section, we aim to show in this
section that pre-$(2n-1)$-suitable premice, which are premice with one
Woodin cardinal which satisfy certain fullness conditions, also
satisfy a weak form of iterability, namely short tree iterability. In
fact we are going to show a slightly stronger form of iterability
which includes that fullness properties are preserved during
non-dropping iterations. This will in particular enable us to perform
certain comparison arguments for $(2n-1)$-small premice and will
therefore help us to conclude $\omega_1$-iterability for some
candidate for $M_{2n-1}^\#$.

Recall that in what follows by ``$M_n^\#$ exists'' we always mean that
``$M_n^\#$ exists and is $\omega_1$-iterable''.

A lot of the results in this section only hold true for premice at the
odd levels of our argument, namely $(2n-1)$-suitable premice. This
results from the periodicity in the projective hierarchy in terms of
the uniformization property (see Theorem $6C.5$ in \cite{Mo09}) and
the periodicity in the correctness of $M_n^\#$ (see Lemmas
\ref{lem:corr} and \ref{lem:oddnotcorr}). This behaviour forces us to
give a different proof for the even levels of our argument in Section
\ref{sec:even}.

We start by introducing pre-$n$-suitable and $n$-suitable premice. Our
definition will generalize the notion of suitability from Definition
$3.4$ in \cite{StW16} to $n > 1$. For technical reasons our notion
slightly differs from $n$-suitability as defined in Definition $5.2$
in \cite{Sa13}.

\begin{definition}\label{def:prensuitable}
  Let $n \geq 1$ and assume that $M_{n-1}^\#(x)$ exists for all
  $x \in \BS$. Then we say a countable premouse $N$ is
  \emph{pre-$n$-suitable} \index{prensuitable@pre-$n$-suitable} iff
  there is an ordinal $\delta < \omega_1^V$ such that
\begin{enumerate}[(1)]
\item
  $N \vDash \text{``} \ZFC^- + \; \delta \text{ is the largest
    cardinal''}$,
  \[ N = M_{n-1}(N|\delta) \, | \, (\delta^+)^{M_{n-1}(N|\delta)}, \]
  %and for every $\gamma < \delta$,
  %\[ M_{n-1}(N|\gamma) \, | \, (\gamma^+)^{M_{n-1}(N|\gamma)} \lhd
  %N, \]
\item $M_{n-1}(N|\delta)$ is a proper class model and
  \[ M_{n-1}(N|\delta) \vDash \text{``} \delta \text{ is Woodin''}, \]
\item for every $\gamma < \delta$, $M_{n-1}(N | \gamma)$ is a set, or
  \[ M_{n-1}(N|\gamma) \nvDash \text{``} \gamma \text{ is
    Woodin''}, \] and
\item for every $\eta < \delta$,
  $M_{n-1}(N|\delta) \vDash \text{``} N|\delta \text{ is } \eta
  \text{-iterable''}$.
\end{enumerate}
\end{definition}

Recall the definition of the premouse $M_{n-1}(N|\delta)$ from
Definition \ref{def:M(N)}.  If $N$ is a pre-$n$-suitable premouse, we
denote the unique ordinal $\delta$ from Definition
\ref{def:prensuitable} by $\delta_N$, analogous to the notation fixed
in Section \ref{sec:introKS}.

Whenever we assume that some premouse $N$ is pre-$n$-suitable for some
$n \geq 1$, we in fact tacitly assume in addition that the premouse
$M_{n-1}^\#(x)$ exists for all $x \in \BS$ (or at least that the
premouse $M_{n-1}^\#(N|\delta)$ exists).

\begin{remark}
  Clearly, if it exists, $M_{n}|(\delta^{+})^{M_{n}}$ is a
  pre-$n$-suitable premouse for $n \geq 1$, whenever $\delta$ denotes
  the least Woodin cardinal in $M_{n}$.
\end{remark}

\begin{remark}
  We have that for $n \geq 1$, if $N$ is a pre-$n$-suitable premouse,
  then $N$ is $n$-small.
\end{remark}

We first show that the proper class inner model with $n$ Woodin
cardinals we constructed in the proof of Theorem \ref{Steel} yields a
pre-$n$-suitable premouse, if we cut it off at the successor of its
least Woodin cardinal and minimize it.

\begin{lemma}\label{lem:prensuitable}
  Let $n \geq 1$. Assume that $M_{n-1}^\#(x)$ exists for all
  $x \in {}^\omega\omega$ and that all $\Sigma^1_{n+1}$-definable sets
  of reals are determined. Then there exists a pre-$n$-suitable
  premouse.
\end{lemma}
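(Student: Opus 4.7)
The plan is to invoke Theorem \ref{Steel} to obtain a proper class inner model with $n$ Woodin cardinals and then, by minimization followed by passing to a countable hull, to extract a pre-$n$-suitable premouse.

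First, by Theorem \ref{Steel}, let $W$ be a proper class inner model with $n$ Woodin cardinals and let $\delta_0$ be its least Woodin. I would replace $W$ by $W^{*} := M_{n-1}(W \mid \delta_0)$, constructed in the sense of Definition \ref{def:M(N)} over the premouse $W \mid \delta_0$. A comparison argument above the common cutpoint $\delta_0$, together with Lemma \ref{lem:iterability}(3) applied inside an appropriate background universe, shows that $M_{n-1}^{\#}(W \mid \delta_0) \unlhd W$, so the construction does not break down; iterating the top extender of $M_{n-1}^{\#}(W \mid \delta_0)$ out of the universe then yields a proper class $W^{*}$ carrying $n-1$ Woodin cardinals above $\delta_0$. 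Since $W^{*} \mid \delta_0 = W \mid \delta_0$, every subset of $\delta_0$ in $W^{*}$ also lies in $W$, and the extenders of $W$ witnessing Woodinness of $\delta_0$ have support below $\delta_0$ and so lie in the common initial segment, $\delta_0$ remains Woodin in $W^{*}$. I would then let
\[
\delta \; := \; \min\bigl\{\gamma \leq \delta_0 : M_{n-1}(W^{*} \mid \gamma) \text{ is proper class and } M_{n-1}(W^{*} \mid \gamma) \vDash \gamma \text{ is Woodin}\bigr\};
\]
this minimum exists since $\gamma = \delta_0$ belongs to the set, and after replacing $W^{*}$ by $M_{n-1}(W^{*} \mid \delta)$ I may assume $W^{*} = M_{n-1}(W^{*} \mid \delta)$, so that clauses (2) and (3) of Definition \ref{def:prensuitable} are in place at $\delta$.

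Next, I would pass to a countable transitive collapse. Pick a sufficiently large regular cardinal $\theta$ and a countable elementary substructure $X \prec H_\theta$ containing $\delta$ and $W^{*} \mid (\delta^{+})^{W^{*}}$; let $\pi : \bar X \to X$ be the inverse of the transitive collapse, set $\bar\delta := \pi^{-1}(\delta)$, and $N := \pi^{-1}\bigl(W^{*} \mid (\delta^{+})^{W^{*}}\bigr)$. Then $N$ is countable and $\bar\delta < \omega_1^V$. By the elementarity of $\pi$ together with the fact that clauses (1)--(3) of Definition \ref{def:prensuitable} are expressible by statements absolute between the relevant models (Woodinness of $\bar\delta$ in $M_{n-1}(N \mid \bar\delta)$ is witnessed internally by extenders on the sequence of $N \mid \bar\delta$, which are preserved by the transitive collapse), $N$ will satisfy clauses (1), (2), and (3).

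The main obstacle is the iterability clause (4): for every $\eta < \bar\delta$, $M_{n-1}(N \mid \bar\delta) \vDash$ ``$N \mid \bar\delta$ is $\eta$-iterable''. In $V$, $N \mid \bar\delta$ is countable and inherits an $\omega_1$-iteration strategy $\Sigma$ from $W^{*}$ via the standard background extender arguments (cf.~\S 12 of \cite{MS94}). Since $N \mid \bar\delta$ is $(n-1)$-small below $\bar\delta$ with no definable Woodin cardinals in the sense of Definition \ref{def:nodefWdns}, $\Sigma$ is guided by $\Q$-structures as in Definition \ref{def:Qstritstr}. Because $\bar\delta$ is Woodin in $M_{n-1}(N \mid \bar\delta)$ and the latter model carries $n-1$ further Woodin cardinals above $\bar\delta$, the $\Q$-structures required for trees on $N \mid \bar\delta$ of length $< \bar\delta$ are $(n-2)$-small above the relevant $\delta(\T)$ and, by a relativization of Lemma \ref{lem:iterability}(1)--(2) inside $M_{n-1}(N \mid \bar\delta)$, lie in $M_{n-1}(N \mid \bar\delta) \mid \bar\delta$. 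Consequently $\Sigma$ restricted to trees in $H_\eta^{M_{n-1}(N \mid \bar\delta)}$ is computable inside $M_{n-1}(N \mid \bar\delta)$, yielding the desired $\eta$-iterability and completing the verification that $N$ is pre-$n$-suitable.
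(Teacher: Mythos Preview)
Your overall strategy is right, but there is a genuine gap in how you obtain the iterability needed for clause~(4), and a related issue with your verification of clauses~(1)--(3).

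You invoke Theorem~\ref{Steel} only through its statement, treating $W$ as a black-box proper class model with $n$ Woodin cardinals. But the paper explicitly warns (in the paragraph after Theorem~\ref{Steel}) that no iterability whatsoever is claimed for this model. Your assertion that ``$N\mid\bar\delta$ \ldots inherits an $\omega_1$-iteration strategy $\Sigma$ from $W^{*}$ via the standard background extender arguments (cf.\ \S12 of \cite{MS94})'' is therefore not justified: the \S12 arguments let an $L[E]$-construction inherit iterability from an already iterable background universe, and you have not established that $W^{*}$ (or $W\mid\delta_0$) is iterable in $V$. The paper's proof handles this by going back into the \emph{proof} of Theorem~\ref{Steel}: the model $W$ is produced either as $(K^c)^{M_{n-1}(x)\mid\delta_x}$ or as a fully backgrounded $L[E]$-construction over an initial segment of it, inside $M_{n-1}(x)$; hence $N'\mid\delta'$ is countably iterable inside $M_{n-1}(x)$ by the realization argument of Chapter~9 of \cite{St96} (in the form of \cite{Je03}). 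That is the source of the iterability, and without it clause~(4) does not follow.

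A second, related point: the model $W$ from the proof of Theorem~\ref{Steel} already has $\delta_0<\delta_x<\omega_1^V$, so $W\mid(\delta_0^{+})^W$ is countable in $V$ to begin with---your passage to a countable elementary hull of $H_\theta$ is unnecessary. More importantly, it creates a problem you do not address: clauses~(1)--(3) refer to $M_{n-1}(\,\cdot\,)$ as computed in $V$, which is defined via $\omega_1$-iterability in $V$ and is not first-order over $H_\theta$. Saying these clauses are ``expressible by statements absolute between the relevant models'' is not enough; to check, e.g., that $N = M_{n-1}(N\mid\bar\delta)\mid(\bar\delta^{+})$ in $V$ you must compare $N$ above $\bar\delta$ with $M_{n-1}^{\#}(N\mid\bar\delta)$, and that again requires the iterability you have not established. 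The paper avoids this by working directly with the (already countable) model from the construction and then minimizing to obtain clause~(3).
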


\begin{proof}
  Let $W$ be the model constructed in Cases $2$ and $3$ in the proof
  of Theorem \ref{Steel}. Cutting off at the successor of the bottom
  Woodin cardinal $\delta$ yields a premouse $N = W | (\delta^+)^W$
  which satisfies conditions $(1)$ and $(2)$ in the definition of
  pre-$n$-suitability, in the case that the premouse
  $(K^c)^{M_{n-1}(x)|\delta_x}$ from the proof of Theorem \ref{Steel}
  is $(n-1)$-small above $\delta$. Otherwise we can easily consider an
  initial segment $N$ of $(K^c)^{M_{n-1}(x)|\delta_x}$ which satisfies
  conditions $(1)$ and $(2)$. Let $N^\prime$ be the minimal initial
  segment of $N$ which satisfies conditions $(1)$ and $(2)$ and note
  that we then have that $N^\prime$ satisfies condition $(3)$.

  This premouse $N^\prime|\delta^\prime$ is countably iterable inside
  $M_{n-1}(x)$ by Corollary $2$ in \cite{Je03} (using the iterability
  proof from Chapter $9$ in \cite{St96}), where $x$ is a real as in
  the proof of Theorem \ref{Steel} such that the model $W$ as above is
  constructed inside the model $M_{n-1}(x)$ and $\delta^\prime$
  denotes the largest cardinal in $N^\prime$. The $\Q$-structures for
  iteration trees $\T$ on $N^\prime|\delta^\prime$ are $(n-1)$-small
  above the common part model and are therefore contained in the model
  $M_{n-1}(N^\prime|\delta^\prime)$ by arguments we already gave
  several times before. Thus it follows that $N^\prime|\delta^\prime$
  is $\eta$-iterable inside $M_{n-1}(N^\prime|\delta^\prime)$ for all
  $\eta < \delta^\prime$. Therefore we have that condition $(4)$ holds
  as well for $N^\prime$.
\end{proof}

We can show that the following weak form of condensation holds for
pre-$n$-suitable premice.

\begin{lemma}[Weak Condensation Lemma]\label{lem:weakcond}
  Let $N$ be a pre-$n$-suitable premouse for some $n \geq 1$ and let
  $\delta_N$ denote the largest cardinal in $N$. Let $\gamma$ be a large
  enough countable ordinal in $V$ and let $H$ be the Mostowski
  collapse of $Hull_m^{M_{n-1}(N|\delta_N)|\gamma}(\{ \delta_N \})$ for
  some large enough natural number $m$. Then
  \[ H \lhd M_{n-1}(N|\delta_N). \]
\end{lemma}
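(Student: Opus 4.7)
The plan is to apply a standard condensation argument to the uncollapse embedding $\sigma : H \to M_{n-1}(N|\delta_N)|\gamma$. Since $H$ is the Mostowski collapse of a Skolem hull generated by a single parameter, $H$ is $m$-sound and $\sigma$ is $\Sigma_m$-elementary, where $m$ may be taken as large as desired. Writing $\bar{\delta} = \sigma^{-1}(\delta_N)$, by elementarity $\bar{\delta}$ is the largest cardinal in $H$, and $H$ inherits from $M_{n-1}(N|\delta_N)|\gamma$ the relevant local structural properties: it satisfies $\ZFC^-$, sees $\bar{\delta}$ as Woodin (in the sense appropriate to a pre-$n$-suitable structure), and is $(n-1)$-small above $\bar{\delta}$.

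First I would establish the iterability of $H$. By pre-$n$-suitability of $N$ together with Definition \ref{def:M(N)}, the premouse $M_{n-1}(N|\delta_N)$ is $\omega_1$-iterable, and hence so is its initial segment $M_{n-1}(N|\delta_N)|\gamma$. Copying any putative iteration tree on $H$ through $\sigma$ in the usual way yields an iteration tree on $M_{n-1}(N|\delta_N)|\gamma$, so the pullback of the $\omega_1$-iteration strategy for $M_{n-1}(N|\delta_N)|\gamma$ witnesses $\omega_1$-iterability of $H$.

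Next I would coiterate $H$ against $M_{n-1}(N|\delta_N)$. This coiteration succeeds by (an initial-segment version of) Lemma \ref{lem:iterability}, since both sides are $\omega_1$-iterable and satisfy the relevant $(n-1)$-smallness. Let $\T$ on $H$ and $\U$ on $M_{n-1}(N|\delta_N)$ be the resulting trees. A Dodd-Jensen argument using $\sigma$ as an almost canonical embedding then forces $\T$ to be trivial and non-dropping: otherwise composing $\sigma$ with the iteration embedding along $\T$ would produce an embedding of $H$ into $M_{n-1}(N|\delta_N)|\gamma$ that contradicts either the Dodd-Jensen Lemma or the soundness of $H$. Hence $H$ is an initial segment of the last model of $\U$, and the soundness of $H$ with $\rho_\omega(H) \leq \bar{\delta}$ together with the fact that $\bar{\delta}$ is a cardinal in that last model forces $\U$ itself to be trivial up to the level of $H$, yielding $H \lhd M_{n-1}(N|\delta_N)$.

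The main obstacle will be the fine-structural bookkeeping in the Dodd-Jensen step: one must verify that taking $m$ sufficiently large actually transfers all the fine-structural data ($\rho_\omega$, standard parameters, solidity witnesses) that Dodd-Jensen requires, and rule out the protomouse alternative that appears in the strongest abstract form of the Condensation Lemma. Because the conclusion sought here is the weak one --- just $H \lhd M_{n-1}(N|\delta_N)$ with no ultrapower alternative to consider --- the direct comparison-plus-Dodd-Jensen argument sketched above should be cleaner than invoking the full-strength condensation lemma, and the argument should go through by standard techniques from \cite{MS94} adapted to the proper-class iterable setting of $M_{n-1}(N|\delta_N)$.
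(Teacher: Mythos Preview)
There is a genuine gap in your iterability claim. You assert that ``by pre-$n$-suitability of $N$ together with Definition \ref{def:M(N)}, the premouse $M_{n-1}(N|\delta_N)$ is $\omega_1$-iterable,'' and then pull back an iteration strategy for $H$ via the copy construction through $\sigma$. But Definition \ref{def:M(N)} only requires $M_{n-1}(N|\delta_N)$ to be $\omega_1$-iterable \emph{above} $\delta_N$; nothing in the definition of pre-$n$-suitability gives any iterability of $N|\delta_N$ in $V$. Indeed, the entire point of this part of the paper is to eventually establish such iterability --- if it were already available, the Weak Condensation Lemma would be routine. Since $\sigma$ has small critical point (the hull is generated by the single parameter $\delta_N$), the copied tree on $M_{n-1}(N|\delta_N)|\gamma$ would live below $\delta_N$, exactly where no strategy is known. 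Your coiteration of $H$ with $M_{n-1}(N|\delta_N)$ fails for the same reason: the first disagreement is below $\delta_N$, and neither side is known to be iterable there.

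The paper's proof gets around this by a two-step reduction that uses condition (4) of pre-$n$-suitability --- the \emph{internal} $\eta$-iterability of $N|\delta_N$ inside $M_{n-1}(N|\delta_N)$ for each $\eta<\delta_N$ --- in place of any iterability in $V$. First one takes a larger hull $H'$ generated by $\alpha \cup \{\delta_N\}$ with $\alpha = X \cap \delta_N$; then $H'$ is $\omega_1$-iterable \emph{above $\alpha$} (since the copy of a tree above $\alpha$ lands above $\delta_N$, where we do have a strategy), and one compares $H'$ with a suitable $N||\xi$ \emph{inside} $M_{n-1}(N|\delta_N)$, using condition (4) for the $N$-side and letting the $N$-side supply $\Q$-structures for the $H'$-side. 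This yields $H' \lhd N|\delta_N$. Finally, condition (4) provides enough iterability to run the usual condensation lemma for proper initial segments of $N|\delta_N$, in particular for $H'$, giving $H \unlhd H' \lhd N|\delta_N$. The essential idea you are missing is that condition (4) is the only available substitute for iterability below $\delta_N$, and the argument has to be arranged so that every comparison can be carried out internally to $M_{n-1}(N|\delta_N)$ using it.
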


\begin{proof}
  First consider
  $X = Hull_m^{M_{n-1}(N|\delta_N)|\gamma}(\alpha \cup \{ \delta_N
  \})$ with $X \cap \delta_N = \alpha$ and let $H^\prime$ be the
  Mostowski collapse of $X$. Then $H^\prime$ is $\omega_1$-iterable in
  $V$ above $\alpha$ as $M_{n-1}(N|\delta_N)$ is $\omega_1$-iterable
  in $V$ above $\delta_N$. Moreover condition $(4)$ in the definition
  of pre-$n$-suitability implies that $N|\delta_N$ is $\eta$-iterable
  inside $M_{n-1}(N|\delta_N)$ for every $\eta < \delta_N$.

  We want to show that $H^\prime \lhd N|\delta_N$, so assume this is
  not the case. As $H^\prime \in M_{n-1}(N|\delta_N)$ this yields that
  there is a $\xi < \delta_N$ such that over $N||\xi$ there is an
  $r\Sigma_k$-definable subset of $\alpha$ for some $k<\omega$
  witnessing $\rho_\omega(N||\xi) \leq \alpha$ which is not definable
  over $H^\prime$. Consider the coiteration of the premice $H^\prime$
  and $N||\xi$ inside $M_{n-1}(N|\delta_N)$ which takes place above
  $\alpha$ and let $\T$ and $\U$ be the corresponding iteration trees
  on $H^\prime$ and $N||\xi$ according to the $\Q$-structure iteration
  strategy. Assume towards a contradiction that the comparison does
  not terminate successfully. As $N||\xi$ is iterable inside
  $M_{n-1}(N|\delta_N)$ this means that $\T$ is of limit length and
  there is no cofinal well-founded branch through $\T$ according to
  the $\Q$-structure iteration strategy. As $H^\prime$ is
  $\omega_1$-iterable in $V$, there is such a branch $b$ in
  $V$. Moreover the $N||\xi$-side provides $\Q$-structures for the
  $H^\prime$-side of the coiteration inside
  $M_{n-1}(N|\delta_N)$. Hence the branch $b$ is in fact in
  $M_{n-1}(N|\delta_N)$ and the coiteration terminates
  successfully. So there are iterates $H^*$ of $H^\prime$ and $N^*$ of
  $N||\xi$ such that $H^* \unlhd N^*$. In fact,
  $H^\prime \unlhd N||\xi \lhd N|\delta_N$ since the coiteration takes
  place above $\alpha$, $\rho_\omega(H^\prime) \leq \alpha$,
  $\rho_\omega(N||\xi) \leq \alpha$, and both premice are sound above
  $\alpha$.

  Now let $H$ be the Mostowski collapse of
  $Hull_m^{M_{n-1}(N|\delta_N)|\gamma}(\{ \delta_N \})$. Then $H$ is
  equal to the Mostowski collapse of
  $Hull_m^{H^\prime}(\{ \delta_N \})$. Condition $(4)$ in the
  definition of pre-$n$-suitability suffices to prove the usual
  condensation lemma (see for example Theorem $5.1$ in \cite{St10})
  for proper initial segments of $N|\delta_N$, so in particular for
  $H^\prime$. Therefore it follows that
  $H \unlhd H^\prime \lhd N|\delta_N$, as desired.
\end{proof}

Analogous to Definitions $3.6$ and $3.9$ in \cite{StW16} we define a
notion of short tree iterability for pre-$n$-suitable
premice. Informally a pre-$n$-suitable premouse is short tree iterable
if it is iterable with respect to iteration trees for which there are
$\Q$-structures (see Definition \ref{def:Qstructure}) which are not
too complicated. For this definition we again tacitly assume that
$M_{n-1}^\#(x)$ exists for all reals $x$.

\begin{definition}\label{def:short}
  Let $\T$ be a normal iteration tree of length $< \omega_1^V$ on a
  pre-$n$-suitable premouse $N$ for some $n \geq 1$ which lives below
  $\delta^N$. We say $\T$ is \emph{short} \index{short} iff for all
  limit ordinals $\lambda < \lh(\T)$ the $\Q$-structure
  $\Q(\T \upharpoonright \lambda)$ exists, is $(n-1)$-small above
  $\delta(\T \upharpoonright \lambda)$ and we have that
  \[\Q(\T \upharpoonright \lambda) \unlhd \M_\lambda^\T,\]
  and if $\T$ has limit length we in addition have that $\Q(\T)$
  exists and
  \[\Q(\T) \unlhd M_{n-1}(\M(\T)).\]
  Moreover we say $\T$ is \emph{maximal} \index{maximal} iff $\T$ is
  not short.
\end{definition}

The premouse $M_{n-1}(\M(\T))$ in Definition \ref{def:short} is
defined as in Definition \ref{def:M(N)}.

\begin{definition}\label{def:sti}
  Let $N$ be a pre-$n$-suitable premouse for some $n \geq 1$. We say
  $N$ is \emph{short tree iterable} \index{short tree iterable} iff
  whenever $\T$ is a short tree on $N$,
  \begin{enumerate}[$(i)$]
  \item if $\T$ has a last model, then every putative\footnote{Recall
      that we say an iteration tree $\U$ is a \emph{putative iteration
        tree} if $\U$ satisfies all properties of an iteration tree,
      but we allow the last model of $\U$ to be ill-founded, in case
      $\U$ has a last model.}  iteration tree $\U$ extending $\T$ such
    that $\lh(\U) = \lh(\T) +1$ has a well-founded last model, and
  \item if $\T$ has limit length, then there exists a unique cofinal
    well-founded branch $b$ through $\T$ such that
    \[ \Q(b,\T) = \Q(\T). \]
  \end{enumerate}
\end{definition}

\begin{remark}
  At this point in contrast to the notion of short tree iterability
  for $1$-suitable premice in \cite{StW16} we do not require the
  iterate of a pre-$n$-suitable premouse via a short tree to be
  pre-$n$-suitable again. The reason for this is that in the general
  case for $n>1$ it is not obvious that this property holds assuming
  only our notion of short tree iterability as defined above. We will
  be able to prove later in Lemma \ref{lem:nsuitnew} that this
  property in fact does hold true.
\end{remark}

Because of the periodicity in the projective hierarchy (see also
\cite{St95} for the periodicity in the definition of
$\Pi^1_n$-iterability) the proof of the following lemma only works for
odd levels of suitability.

\begin{lemma}\label{lem:defstiodd}
  Let $n \geq 0$ and assume that $M_{2n}^\#(x)$ exists for all
  $x \in \BS$. Let $N$ be a pre-$(2n+1)$-suitable premouse. Then the
  statement ``$N$ is short tree iterable'' as in Definition
  \ref{def:sti} is $\Pi^1_{2n+2}$-definable uniformly in any code for
  the countable premouse $N$.
\end{lemma}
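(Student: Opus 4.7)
The plan is to express ``$N$ is short tree iterable'' by replacing every occurrence of $\omega_1$-iterability of $\Q$-structures with $\Pi^1_{2n+1}$-iterability, following the strategy Steel uses in Lemma $1.7$ of \cite{St95}. By Definition \ref{def:short}, every $\Q$-structure arising in a short tree on a pre-$(2n+1)$-suitable premouse is $(2n)$-small above the associated $\delta(\T)$. Under our hypothesis that $M_{2n}^\#(x)$ exists for all $x \in \BS$, Lemma \ref{lem:steel2.2} shows that for such $(2n)$-small $\Q$-structures $\omega_1$-iterability above $\delta(\T)$ can be interchanged with $\Pi^1_{2n+1}$-iterability above $\delta(\T)$ for purposes of comparison, and by Lemma $1.7$ of \cite{St95} the latter is a $\Pi^1_{2n+1}$ condition uniformly in codes. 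Hence ``$\Q$ is a $(2n)$-small $\Pi^1_{2n+1}$-iterable $\Q$-structure for $\T$ with $\Q \unlhd M_{2n}(\M(\T))$'' is a $\Pi^1_{2n+1}$ predicate in $(\Q, \T)$.

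I would then reformulate short tree iterability as follows. Call a countable iteration tree $\T$ on $N$ living below $\delta^N$ \emph{run via the short tree strategy} if for every limit $\lambda < \lh(\T)$ the branch $[0,\lambda)_\T$ used in $\T$ yields $\Q([0,\lambda)_\T, \T \upharpoonright \lambda)$ as a $(2n)$-small $\Pi^1_{2n+1}$-iterable $\Q$-structure for $\T \upharpoonright \lambda$ that is an initial segment of $\M_\lambda^\T$; by the preceding paragraph this is a $\Pi^1_{2n+1}$ condition on $\T$. Using automatic uniqueness of such $\Q$-structures (by comparison of $(2n)$-small candidates, cf.~Corollary \ref{cor:itKS}) and of the branch producing them (Lemma \ref{lem:Qstrunique}), ``$N$ is short tree iterable'' becomes equivalent to: for every countable $\T$ run via the short tree strategy,
\begin{enumerate}[(a)]
\item if $\T$ has a last model, every one-step putative extension of $\T$ has a well-founded last model (a $\Pi^1_1$ condition in $\T$); and
\item if $\T$ has limit length, then for every candidate $\Q$ -- i.e., every $(2n)$-small $\Pi^1_{2n+1}$-iterable $\Q$-structure for $\T$ with $\Q \unlhd M_{2n}(\M(\T))$ -- there is a cofinal well-founded branch $b$ through $\T$ such that $\Q(b, \T) = \Q$.
\end{enumerate}

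The complexity count is then immediate. In clause (b), the candidacy of $\Q$ is $\Pi^1_{2n+1}$ in $(\Q, \T)$ while the existence of a matching cofinal well-founded branch is $\Sigma^1_1$ in $(\Q, \T)$, so the implication is $\Sigma^1_{2n+1}$ and the universal quantifier over $\Q$ makes clause (b) a $\Pi^1_{2n+2}$ statement in $\T$. Conjoining with the $\Pi^1_1$ clause (a) keeps this $\Pi^1_{2n+2}$, and taking the implication with the $\Pi^1_{2n+1}$ antecedent ``$\T$ is run via the short tree strategy'' yields again $\Pi^1_{2n+2}$, which is preserved under the outer universal quantification over countable $\T$.

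The main obstacle is justifying the equivalence of this reformulation with Definition \ref{def:sti}: given any short tree $\T$ in the original sense one must verify that $\T$ is run via the short tree strategy and that the short $\Q$-structure $\Q(\T)$ is the unique candidate in (b), so the branch supplied by (b) is precisely the one required by Definition \ref{def:sti}$(ii)$; conversely, any tree not short in the original sense either fails to be run via the strategy or admits no candidate, so nothing needs to be verified. The restriction to odd levels $2n+1$ is critical in the very first step: the use of Lemma \ref{lem:steel2.2} to replace $\omega_1$-iterability by $\Pi^1_{2n+1}$-iterability depends on the correctness periodicity tied to $M_{2n}^\#$, and at even levels this replacement shifts by one pointclass, which is why the analogous result is treated separately in Section \ref{sec:even}.
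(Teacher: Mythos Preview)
Your approach is essentially the paper's: rewrite short tree iterability using $\Pi^1_{2n+1}$-iterable $\Q$-structures in place of $\omega_1$-iterable ones, then count complexity. The paper's formulation is nearly identical to yours---it quantifies universally over $\T$ and over a sequence $(\Q_\lambda)$ of $\Pi^1_{2n+1}$-iterable, $2n$-small, solid $\Q$-structures, and concludes with the $\Sigma^1_1$ existence of a branch; the resulting implication is $\Sigma^1_{2n+1}$, and the outer universals make it $\Pi^1_{2n+2}$.

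There is, however, a real gap in your justification. You invoke Lemma~\ref{lem:steel2.2} and Corollary~\ref{cor:itKS} to interchange $\omega_1$-iterability with $\Pi^1_{2n+1}$-iterability and to get uniqueness of candidates, but both of those results require \emph{both} premice in the comparison to be $(2n)$-small above the relevant cutpoint. The crucial comparison here is between a $\Pi^1_{2n+1}$-iterable candidate $\Q$ and $M_{2n}^\#(\M(\T))$, and the latter is \emph{not} $2n$-small above $\delta(\T)$. The lemma you actually need is Lemma~\ref{lem:nofakeeven}: it handles precisely the asymmetric situation where $M$ is $(n-1)$-small and $\Pi^1_n$-iterable while $N$ is $\omega_1$-iterable and \emph{not} $(n-1)$-small, and it is stated only for \emph{odd} $n$. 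This, not Lemma~\ref{lem:steel2.2} (which holds for all $n\geq 2$), is where the restriction to levels $2n+1$ enters---so your closing paragraph misattributes the source of the parity restriction. Relatedly, the clause ``$\Q \unlhd M_{2n}(\M(\T))$'' in your definition of \emph{candidate} is not $\Pi^1_{2n+1}$ on its face (it refers to a proper-class model built via an $\omega_1$-iterability notion); the paper simply omits it, since Lemma~\ref{lem:nofakeeven} makes it automatic from the other clauses. Drop that clause and replace your appeal to Lemma~\ref{lem:steel2.2}/Corollary~\ref{cor:itKS} by Lemma~\ref{lem:nofakeeven}, and your argument goes through exactly as the paper's does.
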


\begin{proof}
  The statement ``$N$ is short tree iterable'' can be phrased as
  follows. We first consider trees of limit length.
  \begin{align*}
    \forall \T & \text{ tree on } N \text{ of limit length } \forall \, (\Q_\lambda \st
                 \lambda \leq \lh(\T) \text{ limit ordinal}),\\
               & \text{if for all limit ordinals } \lambda \leq 
                 \lh(\T), \\
               & \; \; \; \; \;\; \; \Q_\lambda \text{ is }
                 \Pi^1_{2n+1}\text{-iterable above } \delta(\T
                 \upharpoonright \lambda), 2n\text{-small 
                 above } \delta(\T \upharpoonright \lambda), \\
               & \; \; \; \; \;\; \; \text{solid above } \delta(\T
                 \upharpoonright \lambda) \text{ and a } 
                 \Q\text{-structure for } \T \upharpoonright \lambda,
                 \text{ and}\\
               & \; \; \; \text{if for all limit ordinals } \lambda <
                 \lh(\T) \text{ we have } \Q_\lambda \unlhd \M_\lambda^\T, \text{
                 then}\\  
               & \; \; \; \; \; \exists b \text{ cofinal branch through } \T \text{ such
                 that } \Q_{\lh(\T)} \unlhd \M_b^\T. 
  \end{align*}
  This statement is $\Pi^1_{2n+2}$-definable uniformly in any code for
  $N$ since $\Pi^1_{2n+1}$-iterability above
  $\delta(\T \upharpoonright \lambda)$ for $\Q_\lambda$ is
  $\Pi^1_{2n+1}$-definable uniformly in any code for $\Q_\lambda$. For
  trees of successor length we get a similar statement as follows.
  \begin{align*}
    \forall \T & \text{ putative tree on } N \text{ of successor length }
                 \forall \, (\Q_\lambda \st 
                 \lambda < \lh(\T) \text{ limit ordinal}),\\
               & \text{if for all limit ordinals } \lambda <
                 \lh(\T), \\
               & \; \; \; \; \;\; \; \Q_\lambda \text{ is }
                 \Pi^1_{2n+1}\text{-iterable above } \delta(\T
                 \upharpoonright \lambda), 2n\text{-small 
                 above } \delta(\T \upharpoonright \lambda), \\
               & \; \; \; \; \;\; \; \text{solid above } \delta(\T
                 \upharpoonright \lambda) \text{ and a } 
                 \Q\text{-structure for } \T \upharpoonright \lambda,
                 \text{ and}\\
               & \; \; \; \text{if for all limit ordinals } \lambda <
                 \lh(\T) \text{ we have } \Q_\lambda \unlhd \M_\lambda^\T, \text{
                 then}\\  
               & \; \; \; \; \; \text{the last model of } \T \text{ is well-founded.}
  \end{align*}
  As above this statement is also $\Pi^1_{2n+2}$-definable uniformly
  in any code for $N$. Moreover the conjunction of these two
  statements is equivalent to the statement ``$N$ is short tree
  iterable'', because the relevant $\Q$-structures $\Q_\lambda$ for
  limit ordinals $\lambda \leq \lh(\T)$ are $2n$-small above
  $\delta(\T \upharpoonright \lambda)$ and thus Lemma
  \ref{lem:nofakeeven} implies that for them it is enough to demand
  $\Pi^1_{2n+1}$-iterability above
  $\delta(\T \upharpoonright \lambda)$ to identify them as a
  $\Q$-structure for $\T \upharpoonright \lambda$ since we assumed
  that $M_{2n}^\#(x)$ exists for all $x \in \BS$.
\end{proof}

From this we can obtain the following corollary using Lemma
\ref{lem:corr}.

\begin{cor}\label{cor:absoluteness}
  Let $n \geq 0$ and assume that $M_{2n}^\#(x)$ exists for all
  $x \in \BS$. If $N$ is a pre-$(2n+1)$-suitable premouse, then $N$ is
  short tree iterable iff $N$ is short tree iterable inside the model
  $M_{2n}(N|\delta_N)^{\Col(\omega, \delta_N)}$, where $\delta_N$
  again denotes the largest cardinal in $N$.
\end{cor}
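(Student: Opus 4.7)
The plan is to combine Lemma \ref{lem:defstiodd}, which provides a $\Pi^1_{2n+2}$ definition of short tree iterability, with Lemma \ref{lem:corr}(1), giving $\Sigma^1_{2n+2}$-correctness for even-indexed $M$-models, so as to obtain the desired absoluteness between $V$ and $M_{2n}(N|\delta_N)^{\Col(\omega,\delta_N)}$.

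First I would set up the model. By condition $(2)$ of pre-$(2n+1)$-suitability in Definition \ref{def:prensuitable}, $\delta_N$ is Woodin in $M_{2n}(N|\delta_N)$, so $M_{2n}(N|\delta_N)$ has $2n+1$ Woodin cardinals (with $\delta_N$ least, and $2n$ further Woodins above $\delta_N$ coming from the $M_{2n}$-construction over $N|\delta_N$ in the sense of Definition \ref{def:M(N)}). Forcing with $\Col(\omega,\delta_N)$ makes $\delta_N$ countable in the extension, so $N|\delta_N$ is coded by a real $z$ in $M_{2n}(N|\delta_N)[g]$. Construed as a $z$-premouse, $M_{2n}(N|\delta_N)[g]$ has $2n$ Woodin cardinals (the ones originally above $\delta_N$) and is $\omega_1$-iterable via an iteration strategy induced from $M_{2n}(N|\delta_N)$, along the lines of the first paragraph of the proof of Lemma \ref{lem:corr}. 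By the explicit generalization of Lemma \ref{lem:corr}(1) to arbitrary $2n$-iterable $z$-premice with $2n$ Woodin cardinals — remarked at the start of that proof — and applied here with the even index $2n$, we conclude that $M_{2n}(N|\delta_N)[g]$ is $\Sigma^1_{2n+2}$-correct in $V$.

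Next I would invoke Lemma \ref{lem:defstiodd}: ``$N$ is short tree iterable'' is expressed by a $\Pi^1_{2n+2}$-formula $\psi$ in a real parameter, such that $\psi$ holds of any real coding a pre-$(2n+1)$-suitable premouse $M$ iff $M$ is short tree iterable. In $V$, where $N$ is countable, $\psi$ evaluated on any real code for $N$ has its intended meaning. In $M_{2n}(N|\delta_N)[g]$ the premouse $N$ is not countable (it has height $(\delta_N^+)^{M_{2n}(N|\delta_N)}$), but $N$ is $\Sigma_1$-definable from $N|\delta_N$ inside $M_{2n}(N|\delta_N)[g]$ as the appropriate initial segment of $M_{2n}(N|\delta_N)$. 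I would interpret ``$N$ is short tree iterable inside $M_{2n}(N|\delta_N)[g]$'' by passing to a further generic extension via $\Col(\omega, (\delta_N^+)^{M_{2n}(N|\delta_N)})$, in which $N$ has a real code; by homogeneity of the collapse, the truth value of $\psi$ on such a code is independent of the generic chosen. Since $\Sigma^1_{2n+2}$-correctness is preserved through this further collapse (the $2n$ Woodin cardinals above $\delta_N$ remain Woodin), $\psi$ holds on the real code inside the further extension iff it holds in $V$, which yields the claimed equivalence.

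The main obstacle is precisely that $N$ is uncountable in $M_{2n}(N|\delta_N)[g]$, so the $\Pi^1_{2n+2}$-formula furnished by Lemma \ref{lem:defstiodd} cannot be applied directly to a real code for $N$ inside the generic extension. Passing to a further collapse extension and exploiting homogeneity — together with the observation that $\Sigma^1_{2n+2}$-correctness persists through the additional forcing — resolves this technicality.
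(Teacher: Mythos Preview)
Your approach is essentially the paper's: invoke Lemma \ref{lem:defstiodd} for the $\Pi^1_{2n+2}$ definability of short tree iterability, and then the $\SIGMA^1_{2n+2}$-correctness of $M_{2n}(N|\delta_N)^{\Col(\omega,\delta_N)}$ coming from Lemma \ref{lem:corr}. The paper's proof is two sentences and does not explicitly discuss the countability of $N$ in the extension; your further-collapse argument is one reasonable way to make that step precise, though the paper treats it as routine.
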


\begin{proof}
  Let $N$ be an arbitrary pre-$(2n+1)$-suitable premouse. By Lemma
  \ref{lem:defstiodd} we have that short tree iterability for $N$ is a
  $\Pi^1_{2n+2}$-definable statement uniformly in any code for
  $N$. Therefore we have by Lemma \ref{lem:corr} that $N$ is short
  tree iterable inside the model
  $M_{2n}(N|\delta_N)^{\Col(\omega, \delta_N)}$ iff $N$ is short tree
  iterable in $V$, because the model
  $M_{2n}(N|\delta_N)^{\Col(\omega, \delta_N)}$ is
  $\SIGMA^1_{2n+2}$-correct in $V$.
\end{proof}

In what follows we aim to show that every pre-$(2n+1)$-suitable
premouse $N$ is short tree iterable. In fact we are going to show a
stronger form of iterability for pre-$(2n+1)$-suitable premice,
including for example fullness-preservation for short trees. This
means that for non-dropping short trees $\T$ on $N$ of length
$\lambda +1$ for some ordinal $\lambda < \omega_1^V$ the resulting
model of the iteration $\M_\lambda^\T$ is again
pre-$(2n+1)$-suitable. Here we mean by ``non-dropping'' that the tree
$\T$ does not drop on the main branch $[0,\lambda]_T$. If this
property holds for a pre-$(2n+1)$-suitable premouse $N$ we say that
\emph{$N$ has a fullness preserving iteration strategy for short
  trees}. \index{fullness preserving iteration strategy for short
  trees} Moreover we also want to show some form of iterability
including fullness-preservation for maximal trees on $N$. Premice
which satify all these kinds of iterability we will call
$(2n+1)$-suitable.

The exact form of iterability we are aiming for is introduced in the
following definition.

\begin{definition}\label{def:nsuitnew}
  Assume that $M_{n-1}^\#(x)$ exists for all $x \in \BS$ and let $N$
  be a pre-$n$-suitable premouse for some $n \geq 1$. Then we say that
  the premouse $N$ is \emph{$n$-suitable}
  \index{suitablen@$n$-suitable} iff
  \begin{enumerate}[$(i)$]
  \item $N$ is short tree iterable and whenever $\T$ is a short tree on $N$
  of length $\lambda + 1$ for some ordinal $\lambda < \omega_1^V$
  which is non-dropping on the main branch $[0, \lambda]_T$, then the
  final model $\M_\lambda^\T$ is pre-$n$-suitable, and
\item whenever $\T$ is a maximal iteration tree on $N$ of length
  $\lambda$ for some limit ordinal $\lambda < \omega_1^V$ according to
  the $\Q$-structure iteration strategy, then there exists a cofinal
  well-founded branch $b$ through $\T$ such that $b$ is non-dropping
  and the model $\M_b^\T$ is pre-$n$-suitable. In fact we have in this
  case that
  \[ \M_b^\T = M_{n-1}(\M(\T))|(\delta(\T)^+)^{M_{n-1}(\M(\T))}. \]
  \end{enumerate}
\end{definition}

Now we are ready to prove that every pre-$(2n+1)$-suitable premouse is
in fact already $(2n+1)$-suitable, using the iterability we build into
condition $(4)$ of Definition \ref{def:prensuitable} in form of the
Weak Condensation Lemma (see Lemma \ref{lem:weakcond}).

\begin{lemma}\label{lem:nsuitnew}
  Let $n \geq 0$ and assume that $M_{2n}^\#(x)$ exists for all
  $x \in \BS$. Let $N$ be an arbitrary pre-$(2n+1)$-suitable premouse.
  Then $N$ is $(2n+1)$-suitable.
\end{lemma}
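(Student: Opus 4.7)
The plan is to verify separately short tree iterability together with fullness-preservation (Definition \ref{def:nsuitnew}$(i)$) and the behavior on maximal trees (Definition \ref{def:nsuitnew}$(ii)$). Three tools drive the argument: condition $(4)$ in the definition of pre-$(2n+1)$-suitability, which produces an internal iteration strategy for $N|\delta_N$ inside $M_{2n}(N|\delta_N)$; the Weak Condensation Lemma (Lemma \ref{lem:weakcond}); and Corollary \ref{cor:absoluteness}, which transfers short tree iterability between $V$ and $M_{2n}(N|\delta_N)^{\Col(\omega, \delta_N)}$ via Lemma \ref{lem:corr}.

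First I would establish short tree iterability. Let $\T$ be a short tree on $N$, so in particular a tree living below $\delta_N$ and hence on $N|\delta_N$. Working inside $W := M_{2n}(N|\delta_N)^{\Col(\omega,\delta_N)}$, condition $(4)$ of Definition \ref{def:prensuitable} lifts to full iterability of $N|\delta_N$ in $W$ for countable trees. Since $\T$ is short, every relevant $\Q$-structure is $(2n)$-small above its common part model and is isolable as an initial segment of $M_{2n}(\M(\cdot))$ in the sense of Definition \ref{def:M(N)}; such models live inside $W$ by Lemma \ref{lem:iterability}$(1)$ applied inside $M_{2n}(N|\delta_N)$. Hence the $\Q$-structure iteration strategy works for $\T$ inside $W$, and Corollary \ref{cor:absoluteness} transfers short tree iterability back to $V$. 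For fullness-preservation, given a short tree $\T$ of length $\lambda+1$ with no drop on $[0,\lambda]_T$, let $i: N \to \M_\lambda^\T$ be the iteration embedding and set $\delta^* = i(\delta_N)$. Conditions $(1)$ and $(3)$ of pre-$(2n+1)$-suitability for $\M_\lambda^\T$ follow by elementarity. For conditions $(2)$ and $(4)$, I would copy $\T$ up to the class model $M_{2n}(N|\delta_N)$ and compare the resulting iterate against $M_{2n}(\M_\lambda^\T|\delta^*)$ using Lemma \ref{lem:iterability}$(4)$ inside the generic extension, then conclude via Weak Condensation that they agree at the relevant levels, so $\M_\lambda^\T = M_{2n}(\M_\lambda^\T|\delta^*)|(\delta^{*,+})^{M_{2n}(\M_\lambda^\T|\delta^*)}$ and $\delta^*$ is Woodin there.

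The main obstacle is condition $(ii)$, the behavior on maximal trees. For a maximal tree $\T$ of limit length on $N$ according to the $\Q$-structure iteration strategy, the proposed last model is
\[ P \;:=\; M_{2n}(\M(\T))\,|\,(\delta(\T)^{+})^{M_{2n}(\M(\T))}. \]
Maximality of $\T$ forces $\delta(\T)$ to be Woodin in $M_{2n}(\M(\T))$: otherwise the $L[E]$-construction of Definition \ref{def:M(N)} above $\M(\T)$ would reach a proper initial segment witnessing non-Woodinness of $\delta(\T)$, which would furnish a $(2n)$-small $\Q$-structure $\unlhd M_{2n}(\M(\T))$, contradicting maximality. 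Hence $P$ is pre-$(2n+1)$-suitable. To produce the non-dropping cofinal well-founded branch $b$ with $\M_b^\T = P$, I would take a countable $\Sigma_m$-hull $\bar P$ of $M_{2n}(\M(\T))|\gamma$ for large enough $m$ and $\gamma$, containing codes for $\T$, $P$, and $\delta(\T)$; the Weak Condensation Lemma applied to $P$ gives that $\bar P$ is an initial segment of $M_{2n}(\overline{\M(\T)})$ carrying a corresponding countable tree $\bar\T$. Inside $\bar P$ one locates a non-dropping cofinal well-founded branch $\bar b$ through $\bar\T$ realizing $\bar P$ as the last model. A $\Sigma^1_1$-absoluteness argument inside $M_{2n}(N|\delta_N)^{\Col(\omega,\delta_N)}$, invoking Lemma \ref{lem:corr}, then produces the desired branch $b$ through $\T$ in $V$; the Branch Uniqueness Theorem via Lemma \ref{lem:Qstrunique}, together with $P$ serving as $\Q$-structure for $b$, yields both non-dropping and the identification $\M_b^\T = P$.
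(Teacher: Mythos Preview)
Your plan has a genuine gap in how condition $(4)$ of Definition~\ref{def:prensuitable} is invoked. That condition only says that for each $\eta<\delta_N$ the model $M_{2n}(N|\delta_N)$ thinks $N|\delta_N$ is $\eta$-iterable; it does \emph{not} give an $\omega_1$-iteration strategy for $N|\delta_N$ inside $W=M_{2n}(N|\delta_N)^{\Col(\omega,\delta_N)}$. After the collapse, $\omega_1^W=(\delta_N^{+})^{M_{2n}(N|\delta_N)}$, so countable trees in $W$ can have length well beyond $\delta_N$, and the original short or maximal tree $\T$ you start with in $V$ (of length $<\omega_1^V$) typically has length $\geq\delta_N$ and is not even an element of $W$. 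Hence appealing to Corollary~\ref{cor:absoluteness} only shifts the problem; you still have no source of branches in $W$ for trees longer than $\delta_N$. The same defect undermines your maximal-tree argument: you assert that ``inside $\bar P$ one locates a non-dropping cofinal well-founded branch $\bar b$,'' but you never say where this branch comes from, and in addition $\T$ is a tree on $N$, which is not naturally an element of $M_{2n}(\M(\T))$, so taking a hull of the latter containing $\T$ is problematic.

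The paper's proof closes this gap by a reflection step that you are missing. One packages the putative failure of $(2n+1)$-suitability as a single $\Sigma^1_{2n+2}$ statement $\phi(N)$ (with three disjuncts, corresponding to failure of short tree iterability, failure of fullness preservation along a short non-dropping tree, and failure along a maximal tree). By Lemma~\ref{lem:corr} this is forced over $W=M_{2n}(N|\delta_N)$ at $\Col(\omega,\delta_N)$. Now take the transitive collapse $\bar W$ of $Hull^{W|\theta}_m(\{\delta_N\})$; by Weak Condensation $\bar W\lhd W$, and in fact $\bar W\lhd N|\omega_1^N$, so $\bar W$ has height $<\omega_1^N<\delta_N$ and is countable in $W$. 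Picking $g\in W$ generic over $\bar W$ yields a witnessing tree $\bar\T$ on $\bar N=\bar W|(\bar\delta^{+})^{\bar W}\lhd N|\omega_1^N$ with $\lh(\bar\T)<\delta_N$ and $\bar\T\in W$. \emph{Now} condition $(4)$ applies: it produces a cofinal well-founded branch $\bar b$ through $\bar\T$ (viewed as a tree on $N|\delta_N$, or lifted to $\T^*$ on $\bar W$) inside $W$. The contradiction is then obtained by comparing $\M_{\bar b}^{\T^*}$ with the purported $\Q$-structure or witness $\bar{\mathcal P}$ inside $W$ via Lemma~2.2 of \cite{St95}, using that $\bar W[g]$ is $\SIGMA^1_{2n+1}$-correct so the $\Pi^1_{2n+1}$-iterability of these witnesses transfers. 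The reflection to $\bar W$ is the essential move: it is what shrinks the bad tree to length $<\delta_N$ and places it on a proper initial segment of $N|\delta_N$, which is exactly the regime where condition $(4)$ has content.
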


\begin{proof}
  Let $N$ be an arbitrary pre-$(2n+1)$-suitable premouse and let
  $W = M_{2n}(N|\delta_N)$ be a premouse in the sense of Definition
  \ref{def:M(N)}, where $\delta_N$ as usual denotes the largest
  cardinal in $N$. That means in particular that
  $N = W | (\delta_N^+)^W$.
  
  We want to show that $N$ is $(2n+1)$-suitable. So we assume toward a
  contradiction that this is not the case. Say this is witnessed by an
  iteration tree $\T$ on $N$. 

  We want to reflect this statement down to a countable
  hull. Therefore let $m$ be a large enough natural number, let
  $\theta$ be a large enough ordinal such that in particular
  \[ W | \theta \prec_{\Sigma_m} W \] and
  \[ W | \theta \vDash \ZFC^-, \] where $\ZFC^-$ denotes $\ZFC$
  without the Power Set Axiom, and let
  \[\bar{W} \overset{\sigma}{\cong} Hull^{W|\theta}_m(\{ \delta_N \})
  \prec W | \theta, \]
  where $\bar{W}$ is the Mostowski collapse of
  $Hull^{W|\theta}_m(\{ \delta_N \})$ and
  \[ \sigma: \bar{W} \rightarrow Hull^{W|\theta}_m(\{ \delta_N \}) \]
  denotes the uncollapse map such that $\delta_N \in \ran(\sigma)$ and
  $\sigma(\bar{\delta}) = \delta_N$ for some ordinal
  $\bar{\delta}$ in $\bar{W}$. Then we have that $\bar{W}$ is sound,
  $\rho_{m+1}(\bar{W}) = \omega$, and the Weak Condensation Lemma
  \ref{lem:weakcond} yields that
  \[ \bar{W} \lhd W. \]

\begin{minipage}{\textwidth}
      \bigskip \textbf{Case 1.}  $\T$ is short and witnesses that $N$
      is not short tree iterable. \bigskip
    \end{minipage}  

    For simplicity assume in this case that $\T$ has limit length
    since the other case is easier. Then $\T$ witnesses that the
    following statement $\phi_1(N)$ holds in $V$.

    \begin{align*}
      \phi_1(N) \equiv \; & \exists \, \T \text{ tree on } N \text{ of
                            length } \lambda \text{ for
                            some limit ordinal } \lambda <
                            \omega_1^V \\
                          & \exists \, (\Q_\gamma \st 
                            \gamma \leq \lambda \text{ limit ordinal}), \text{
                            such that for all limit ordinals } \gamma \leq 
                            \lambda, \\
                          & \; \; \; \; \;\; \; \Q_\gamma \text{ is }
                            \Pi^1_{2n+1}\text{-iterable above } \delta(\T
                            \upharpoonright \gamma), 2n\text{-small 
                            above } \delta(\T \upharpoonright \gamma), \\
                          & \; \; \; \; \;\; \; \text{solid above } \delta(\T
                            \upharpoonright \gamma) \text{ and a } 
                            \Q\text{-structure for } \T \upharpoonright \gamma,
                            \text{ and}\\
                          & \; \; \; \text{for all limit ordinals } \gamma <
                            \lambda \text{ we have } \Q_\gamma \unlhd
                            \M_\gamma^\T, \text{ but}\\  
                          & \; \; \; \text{there exists no cofinal branch
                            } b \text{ through } \T \text{ such
                            that } \Q_{\lambda} \unlhd \M_b^\T. 
  \end{align*}

  We have that $\phi_1(N)$ is $\Sigma^1_{2n+2}$-definable uniformly in
  any code for $N$ as in the proof of Lemma \ref{lem:defstiodd}. See
  also the proof of Lemma \ref{lem:defstiodd} for the case that $\T$
  has successor length.

\begin{minipage}{\textwidth}
  \bigskip \textbf{Case 2.} $\T$ is a short tree on $N$ of length
  $\lambda +1$ for some ordinal $\lambda < \omega_1^V$ which is
  non-dropping on the main branch such that the final model
  $\M_\lambda^\T$ is not pre-$(2n+1)$-suitable. \bigskip
    \end{minipage}  

Assume that
\[ M_{2n}(\M_\lambda^\T|\delta_{\M_\lambda^\T}) \nvDash \text{``}
\delta_{\M_\lambda^\T} \text{ is Woodin'',} \]
where $\delta_{\M_\lambda^\T}$ denotes the largest cardinal in
$\M_\lambda^\T$. This means that we have
$\delta_{\M_\lambda^\T} = i_{0\lambda}^\T(\delta_N)$, where
$i_{0\lambda}^\T : N \rightarrow \M_\lambda^\T$ denotes the iteration
embedding, which exists since the iteration tree $\T$ is assumed to be
non-dropping on the main branch.

Then $\T$ witnesses that the following statement $\phi_2(N)$ holds
true in $V$.
\begin{align*}
  \phi_2(N) \equiv \; & \exists \, \T \text{ tree on } N \text{ of
                        length } \lambda + 1 \text{ for some } \lambda <
                        \omega_1^V \text{ such
                        that }\\ 
                      &\;  \T \text{ is non-dropping along } [0,\lambda]_T \text{ and } \\
                      & \; \forall \gamma < \lh(\T) \text{ limit } \exists \Q
                        \unlhd \M_\gamma^\T 
                        \text{ such that } \\  
                      & \;  \; \; \Q \text{ is } \Pi^1_{2n+1}\text{-iterable
                        above } \delta(\T \upharpoonright \gamma),  2n
                        \text{-small above } \delta(\T \upharpoonright \gamma),\\
                      & \;\;\; \text{solid above } \delta(\T \upharpoonright \gamma),
                        \text{ and a }\Q \text{-structure for } \T
                        \upharpoonright \gamma \text{, and }\\ 
                      & \; \exists \mathcal{P} \rhd \M_\lambda^\T |
                        \delta_{\M_\lambda^\T} \text{ such that } 
                        \mathcal{P} \text{ is } \Pi^1_{2n+1}\text{-iterable
                        above } i_{0\lambda}^\T(\delta_N),  \\
                      & \;\;\;  2n\text{-small }\text{above } i_{0\lambda}^\T(\delta_N), 
                        i_{0\lambda}^\T(\delta_N)\text{-sound, and }\\
                      & \;\;\; \; \; 
                        i_{0\lambda}^\T(\delta_N) \text{ is not definably 
                        Woodin over } \mathcal{P},
\end{align*}
where $\delta_N$ as above denotes the largest cardinal in $N$. Recall
Definition \ref{def:notdefWdn} for the notion of a definable Woodin
cardinal.

We have that $\phi_2(N)$ is $\Sigma^1_{2n+2}$-definable uniformly in
any code for $N$.

\begin{minipage}{\textwidth}
  \bigskip \textbf{Case 3.} $\T$ is a maximal tree on $N$ of length
  $\lambda$ for some limit ordinal $\lambda < \omega_1^V$ such that
  there is no cofinal well-founded branch $b$ through $\T$ or for
  every such branch $b$ the premouse $\M_b^\T$ is not
  pre-$(2n+1)$-suitable. \bigskip
    \end{minipage}  

    As $\T$ is maximal, we have that every such branch $b$ is
    non-dropping and in the case that for every such branch $b$ the
    premouse $\M_b^\T$ is not pre-$(2n+1)$-suitable, assume that we
    have
    \[ M_{2n}(\M_b^\T|\delta_{\M_b^\T}) \nvDash \text{``}
    \delta_{\M_b^\T} \text{ is Woodin'',} \]
    where $\delta_{\M_b^\T}$ denotes the largest cardinal in
    $\M_b^\T$. Then the iteration tree $\T$ witnesses that the
    following statement $\phi_3(N)$ holds true in $V$.
\begin{align*}
  \phi_3(N) \equiv \; & \exists \, \T \text{ tree on } N \text{ of
               length } \lambda \text{ for some limit ordinal } \lambda <
               \omega_1^V \text{ such
               that }\\ 
             & \; \forall \gamma < \lambda \text{ limit }\; \exists \Q \unlhd
               \M_\gamma^\T \text{ such that } \\  
             & \;  \; \; \Q \text{ is } \Pi^1_{2n+1}\text{-iterable
               above } \delta(\T \upharpoonright \gamma),  2n
               \text{-small above } \delta(\T \upharpoonright \gamma),\\
             & \;\;\; \text{solid above } \delta(\T \upharpoonright \gamma),
               \text{ and a }\Q \text{-structure for } \T
               \upharpoonright \gamma \text{, and }\\
             & \; \exists \mathcal{P} \rhd \M(\T)
               \text{ such that } \mathcal{P} 
               \text{ is } \Pi^1_{2n+1}\text{-iterable above }
               \delta(\T),  \\
             & \; \; \; \rho_\omega(\mathcal{P}) \leq \delta(\T), 
               \mathcal{P} \text{ is } 2n\text{-small above }
               \delta(\T), \delta(\T)\text{-sound, and } \\ 
             & \;\;\; \mathcal{P} \vDash \text{``} 
               \delta(\T) \text{ is 
               the largest cardinal + } \delta(\T) \text{ is Woodin'',} \\ 
             & \;\;\; \text{ and there is no branch } b \text{ through } \T \text{
               such that } \mathcal{P} \unlhd \M_b^\T.
\end{align*}

We have that $\phi_3(N)$ is $\Sigma^1_{2n+2}$-definable uniformly in
any code for $N$.

Now we consider all three cases together again and let
\[ \phi(N) = \phi_1(N) \vee \phi_2(N) \vee \phi_3(N). \]
As argued in the individual cases the iteration tree $\T$ witnesses
that $\phi(N)$ holds in $V$ (still assuming for simplicity that $\T$
has limit length if $\T$ is as in Case $1$).

Since $\phi$ is a $\SIGMA^1_{2n+2}$-definable statement and
$W = M_{2n}(N|\delta_N)$, we have by Lemma \ref{lem:corr} that
\[ \forces{W}{\Col(\omega, \delta_N)} \phi(N). \]
So since we picked $m$ and $\theta$ large enough we have that
\[ \bar{W}[g] \vDash \phi(\bar{N}), \]
if we let $\bar{N} \in \bar{W}$ be such that $\sigma(\bar{N}) = N$ and
if $g$ is $\Col(\omega, \bar{\delta})$-generic over $\bar{W}$. Let
$\bar{\T}$ be a tree on $\bar{N}$ in $\bar{W}[g]$ witnessing that
$\phi(\bar{N})$ holds. Since $\bar{N}$ is countable in $W$, we can
pick $g \in W$ and then have that $\bar{\T} \in W$.

We have that $\bar{W}[g]$ is $\SIGMA^1_{2n+1}$-correct in $V$ using
Lemma \ref{lem:ctblcorr}, because it is a countable model with $2n$ Woodin
cardinals. Since $\bar{\T}$ witnesses the statement $\phi(\bar{N})$ in
$\bar{W}[g]$, it follows that $\bar{\T}$ also witnesses
$\phi(\bar{N})$ in $V$, because $\phi(\bar{N})$ is
$\Sigma^1_{2n+2}$-definable in any code for $\bar{N}$. The
$\Q$-structures for $\bar{\T}$ in $\bar{W}[g]$ in the statement
$\phi(\bar{N})$ are $\Pi^1_{2n+1}$-iterable above
$\delta(\bar{\T} \upharpoonright \gamma)$ and $2n$-small above
$\delta(\bar{\T} \upharpoonright \gamma)$ for limit ordinals
$\gamma < \lh(\bar{\T})$ (and also for $\gamma = \lh(\bar{\T})$ if
$\bar{\T}$ witnesses that $\phi_1(\bar{N})$ holds in $\bar{W}[g]$).

Since this amount of iterability suffices to witness $\Q$-structures
using Lemma \ref{lem:nofakeeven} and since as mentioned above
$\bar{W}[g]$ is $\SIGMA^1_{2n+1}$-correct in $V$, the $\Q$-structures
for $\bar{\T}$ in $\bar{W}[g]$ are also $\Q$-structures for $\bar{\T}$
inside $V$. Since $W = M_{2n}(N|\delta_N)$ is also
$\SIGMA^1_{2n+1}$-correct in $V$ using Lemma \ref{lem:corr} and
$\bar{N}$ and $\bar{\T}$ are countable in $W$, it follows that the
$\Q$-structures for $\bar{\T}$ in $\bar{W}[g]$ (which are
$\Pi^1_{2n+1}$-iterable above
$\delta(\bar{\T} \upharpoonright \gamma)$ for $\gamma$ as above) are
also $\Q$-structures for $\bar{\T}$ inside $W$. Therefore the branches
choosen in the tree $\bar{\T}$ on $\bar{N}$ inside $\bar{W}[g]$ are
the same branches as the $\Q$-structure iteration strategy $\Sigma$ as
in Definition \ref{def:Qstritstr} would choose inside the model $W$
when iterating the premouse $\bar{W}$. That means if $\T^*$ is the
tree on $\bar{W}$ obtained by considering $\bar{\T}$ as a tree on
$\bar{W} \rhd \bar{N}$, then the iteration strategy $\Sigma$ picks the
same branches for the tree $\T^*$ as it does for the tree $\bar{\T}$.

Now we again distinguish three cases as before. 

\begin{minipage}{\textwidth}
  \bigskip \textbf{Case 1.} $\bar{\T}$ witnesses that
  $\phi_1(\bar{N})$ holds in $\bar{W}[g]$. \bigskip
    \end{minipage}  

    By the argument we gave above, $\bar{\T}$ is a short tree on
    $\bar{N}$ in $W$.

    Let $\bar{b}$ denote the cofinal branch through $\T^*$ which
    exists inside $W$ and is defined as follows. We have that a branch
    through the iteration tree $\T^*$ can be considered as a branch
    through $\bar{\T}$, where the latter is a tree on $\bar{N}$, and
    vice versa. Since by the Weak Condensation Lemma
    \ref{lem:weakcond} we have that
    \[ \bar{N} = \bar{W}|(\bar{\delta}^+)^{\bar{W}} \lhd W |
    \omega_1^W = M_{2n}(N|\delta_N) | \omega_1^{M_{2n}(N|\delta_N)} =
    N | \omega_1^N, \]
    there exists a cofinal well-founded branch $\bar{b} \in W$ through
    $\bar{\T}$ by property $(4)$ in the definition of
    pre-$n$-suitability (see Definition \ref{def:prensuitable}). We
    also consider this branch $\bar{b}$ as a branch through $\T^*$.

    Assume first that there is a drop along the branch $\bar{b}$. Then
    there exists a $\Q$-structure $\Q(\bar{\T}) \unlhd
    \M_{\bar{b}}^{\bar{\T}}$. Consider the statement
    \begin{eqnarray*}
      \psi(\bar{\T}, \Q(\bar{\T})) \equiv &\text{``there is a cofinal
                                            branch } b \text{ through } \bar{\T} \text{ such
                                            that } \\ 
                                          & \Q(\bar{\T}) \unlhd
                                            \M_{b}^{\bar{\T}}\text{''.}   
    \end{eqnarray*}
    This statement $\psi(\bar{\T}, \Q(\bar{\T}))$ is
    $\Sigma^1_1$-definable uniformly in any code for the parameters
    $\bar{\T}$ and $\Q(\bar{\T})$ and holds in the model $W$ as
    witnessed by the branch $\bar{b}$. Now a $\SIGMA^1_1$-absoluteness
    argument as the one given in the proof of Lemma
    \ref{lem:iterability} yields that this statement
    $\psi(\bar{\T}, \Q(\bar{\T}))$ also holds in $\bar{W}[g]$, which
    contradicts the fact that $\bar{\T}$ witnesses in $\bar{W}[g]$
    that $\bar{N}$ is not short tree iterable.

    Therefore we can assume that $\bar{b}$ does not drop.

    Since $\bar{\T}$ witnesses that $\phi_1(\bar{N})$ holds in $\bar{W}[g]$,
    we have that there exists a $\Q$-structure $\Q_\lambda$ for
    $\bar{\T}$ as in $\phi_1(\bar{N})$. In particular $\Q_\lambda$ is
    $2n$-small above $\delta(\T)$ and $\Pi^1_{2n+1}$-iterable above
    $\delta(\T)$ in $\bar{W}[g]$. 

    \begin{minipage}{\textwidth}
      \bigskip \textbf{Case 1.1.}
      $\delta(\bar{\T}) = i_{\bar{b}}^{\T^*}(\bar{\delta})$.  \bigskip
    \end{minipage}

    Consider the comparison of $\Q_\lambda$ with $\M_{\bar{b}}^{\T^*}$
    inside $W$.

    This comparison takes place above
    $i_{\bar{b}}^{\T^*}(\bar{\delta}) = \delta(\bar{\T})$ and the
    premouse $\M_{\bar{b}}^{\T^*}$ is $(\omega_1+1)$-iterable above
    $i_{\bar{b}}^{\T^*}(\bar{\delta})$ in $W$ using property $(4)$ in
    Definition \ref{def:prensuitable} because
    $(\omega_1+1)^W < \delta^N$ and $\T^*$ is an iteration tree on
    $\bar{W} \, \lhd \, W | \omega_1^W = N | \omega_1^N$ (using the
    Weak Condensation Lemma \ref{lem:weakcond} again).  Furthermore we
    have that $\bar{W}$ is $2n$-small above $\bar{\delta}$ and
    therefore $\M_{\bar{b}}^{\T^*}$ is $2n$-small above
    $i_{\bar{b}}^{\T^*}(\bar{\delta})$.

    Moreover $\Q_\lambda$ is $\Pi^1_{2n+1}$-iterable above
    $\delta(\T)$ in $\bar{W}[g]$ thus by $\SIGMA^1_{2n+1}$-correctness
    also inside $W$. The statement $\phi_1(\bar{N})$ yields that
    $\Q_\lambda$ is $2n$-small above
    $i_{\bar{b}}^{\T^*}(\bar{\delta})$.  We have that
    $\bar{W} \, \lhd \, W$ is sound by construction and thus the
    non-dropping iterate $\M_{\bar{b}}^{\T^*}$ is sound above
    $i_{\bar{b}}^{\T^*}(\bar{\delta})$. Moreover we have that
    $\rho_\omega(\M_{\bar{b}}^{\T^*}) \leq
    i_{\bar{b}}^{\T^*}(\bar{\delta})$ as
    $\rho_\omega(\M_{\bar{b}}^{\T^*}) = \rho_\omega(\bar{W}) =
    \omega$. In addition $\Q_\lambda$ is also sound above
    $i_{\bar{b}}^{\T^*}(\bar{\delta})$ and we have that
    $\rho_\omega(\Q_\lambda) \leq \delta(\bar{\T}) =
    i_{\bar{b}}^{\T^*}(\bar{\delta})$.  Hence Lemma $2.2$ in
    \cite{St95} (see the discussion before Lemma \ref{lem:steel2.2})
    implies that
    \[ \Q_\lambda \lhd \M_{\bar{b}}^{\T^*} \text{ or }
    \M_{\bar{b}}^{\T^*} \unlhd \Q_\lambda. \]

    So we again distinguish two different cases. 

    \begin{minipage}{\textwidth}
      \bigskip \textbf{Case 1.1.1.}
     $\Q_\lambda \lhd \M_{\bar{b}}^{\T^*}.$  \bigskip
    \end{minipage}

    By assumption $\bar{\delta}$ is a Woodin cardinal in $\bar{W}$,
    because $N$ is pre-$(2n+1)$-suitable and thus $\delta_N$ is a
    Woodin cardinal in $W$. Therefore we have by elementarity that
    \[ \M_{\bar{b}}^{\T^*} \vDash \text{``}
    i_{\bar{b}}^{\T^*}(\bar{\delta}) \text{ is Woodin''.} \]
    But since $\Q_\lambda$ is a $\Q$-structure for $\T$, we have that
    $\delta(\T) = i_{\bar{b}}^{\T^*}(\bar{\delta})$ is not definably
    Woodin over $\Q_\lambda$. This contradicts
    $\Q_\lambda \lhd \M_{\bar{b}}^{\T^*}.$

    \begin{minipage}{\textwidth}
      \bigskip \textbf{Case 1.1.2.}
      $\M_{\bar{b}}^{\T^*} \unlhd \Q_\lambda.$  \bigskip
    \end{minipage}

    In this case we have that
    \[ \Q_\lambda \cap \Ord < \bar{W} \cap \Ord \leq
    \M_{\bar{b}}^{\T^*} \cap \Ord \leq \Q_\lambda \cap \Ord, \]
    where the first inequality holds true since
    $\Q_\lambda \in \bar{W}[g]$. This contradiction finishes Case
    $1.1$.

    \begin{minipage}{\textwidth}
      \bigskip \textbf{Case 1.2.}
      $\delta(\bar{\T}) < i_{\bar{b}}^{\T^*}(\bar{\delta})$.
      \bigskip
    \end{minipage}

    In this case we have that
    \[ \M_{\bar{b}}^{\T^*} \vDash \text{``} \delta(\bar{\T}) \text{ is
      not Woodin''}, \]
    because otherwise $\M_{\bar{b}}^{\T^*}$ would not be
    $(2n+1)$-small. This implies that
    $\Q_\lambda = \Q(\bar{\T}) \lhd \M_{\bar{b}}^{\T^*}$ and therefore
    we have that
    \[ \Q(\bar{\T}) \unlhd \M_{\bar{b}}^{\bar{\T}}. \]

    Now we can again consider the statement
    \begin{eqnarray*}
      \psi(\bar{\T}, \Q(\bar{\T})) \equiv &\text{``there is a cofinal
                                            branch } b \text{ through } \bar{\T} \text{ such
                                            that } \\ 
                                          & \Q(\bar{\T}) \unlhd
                                            \M_{b}^{\bar{\T}}\text{''.}   
    \end{eqnarray*}
    Again $\psi(\bar{\T}, \Q(\bar{\T}))$ holds in the model $W$ as
    witnessed by the branch $\bar{b}$. By an absoluteness argument as
    above we have that it also holds in $\bar{W}[g]$, which
    contradicts the fact that $\bar{\T}$ witnesses in $\bar{W}[g]$
    that $\bar{N}$ is not short tree iterable.

\begin{minipage}{\textwidth}
  \bigskip \textbf{Case 2.} $\bar{\T}$ witnesses that
  $\phi_2(\bar{N})$ holds in $\bar{W}[g]$. \bigskip
    \end{minipage}  

    In this case $\bar{\T}$ is a tree of length $\bar{\lambda} +1$ for
    some ordinal $\bar{\lambda}$. 

    Since $\phi_2(\bar{N})$ holds true in $\bar{W}[g]$, there exists a
    model
    $\bar{\mathcal{P}} \unrhd
    \M_{\bar{\lambda}}^{\T^*}|i_{0\bar{\lambda}}^{\T^*}(\bar{\delta})$,
    which is $2n$-small above
    $i_{0\bar{\lambda}}^{\T^*}(\bar{\delta})$, sound above
    $i_{0\bar{\lambda}}^{\T^*}(\bar{\delta})$ and
    $\Pi^1_{2n+1}$-iterable above
    $i_{0\bar{\lambda}}^{\T^*}(\bar{\delta})$ in
    $\bar{W}[g]$. Moreover $i_{0\bar{\lambda}}^{\T^*}(\bar{\delta})$
    is not definably Woodin over $\bar{\mathcal{P}}$ and we have that
    $\rho_\omega(\bar{\mathcal{P}}) \leq
    i_{0\bar{\lambda}}^{\T^*}(\bar{\delta})$.

    Consider the comparison of $\bar{\mathcal{P}}$ with
    $\M_{\bar{\lambda}}^{\T^*}$ inside the model $W$. The comparison
    takes place above $i_{0\bar{\lambda}}^{\T^*}(\bar{\delta})$ and we
    have that $\M_{\bar{\lambda}}^{\T^*}$ is $2n$-small above
    $i_{0\bar{\lambda}}^{\T^*}(\bar{\delta})$ because $W$ is
    $2n$-small above $\delta_N$. The premouse
    $\M_{\bar{\lambda}}^{\T^*}$ is $(\omega_1+1)$-iterable above
    $i_{0\bar{\lambda}}^{\T^*}(\bar{\delta})$ in $W$ by the same
    argument we gave above in Case $1$ using property $(4)$ in
    Definition \ref{def:prensuitable}. Therefore the coiteration is
    successful using Lemma $2.2$ in \cite{St95} by the following
    argument.

    We have that $\bar{\mathcal{P}}$ is $\Pi^1_{2n+1}$-iterable inside
    the model $\bar{W}[g]$ and thus by $\SIGMA^1_{2n+1}$-correctness
    also inside $W$. The statement $\phi_2(\bar{N})$ yields that
    $\bar{\mathcal{P}}$ is also $2n$-small above
    $i_{0\bar{\lambda}}^{\T^*}(\bar{\delta})$.  We have that $\bar{W}$
    is sound by construction and thus the non-dropping iterate
    $\M_{\bar{\lambda}}^{\T^*}$ is sound above
    $i_{0\bar{\lambda}}^{\T^*}(\bar{\delta})$. Moreover we have that
    $\rho_\omega(\M_{\bar{\lambda}}^{\T^*}) \leq
    i_{0\bar{\lambda}}^{\T^*}(\bar{\delta})$.  In addition
    $\bar{\mathcal{P}}$ is also sound above
    $i_{0\bar{\lambda}}^{\T^*}(\bar{\delta})$ and we have that
    $\rho_\omega(\bar{\mathcal{P}}) \leq
    i_{0\bar{\lambda}}^{\T^*}(\bar{\delta})$ because of
    $\phi_2(\bar{N})$. Hence Lemma $2.2$ in \cite{St95} implies that
    \[ \bar{\mathcal{P}} \lhd \M_{\bar{\lambda}}^{\T^*} \text{ or }
    \M_{\bar{\lambda}}^{\T^*} \unlhd \bar{\mathcal{P}}. \]

    So we consider two different cases. 

    \begin{minipage}{\textwidth}
      \bigskip \textbf{Case 2.1.}
     $\bar{\mathcal{P}} \lhd \M_{\bar{\lambda}}^{\T^*}.$  \bigskip
    \end{minipage}

    By assumption $\bar{\delta}$ is a Woodin cardinal in $\bar{W}$,
    because $N$ is pre-$(2n+1)$-suitable and thus $\delta_N$ is a
    Woodin cardinal in $W$. Therefore we have by elementarity that
    \[ \M_{\bar{\lambda}}^{\T^*} \vDash \text{``}
    i_{0\bar{\lambda}}^{\T^*}(\bar{\delta}) \text{ is Woodin''.} \]
    Moreover we have by the statement $\phi_2(\bar{N})$ that
    $i_{0\bar{\lambda}}^{\T^*}(\bar{\delta})$ is not definably Woodin
    over $\bar{\mathcal{P}}$. This is a contradiction to
    $\bar{\mathcal{P}} \lhd \M_{\bar{\lambda}}^{\T^*}.$

    \begin{minipage}{\textwidth}
      \bigskip \textbf{Case 2.2.}
     $ \M_{\bar{\lambda}}^{\T^*} \unlhd \bar{\mathcal{P}}.$  \bigskip
    \end{minipage}

    In this case we have that 
    \[ \bar{\mathcal{P}} \cap \Ord < \bar{W} \cap \Ord \leq
    \M_{\bar{\lambda}}^{\T^*} \cap \Ord \leq \bar{\mathcal{P}} \cap
    \Ord, \]
    where the first inequality holds since
    $\bar{\mathcal{P}} \in \bar{W}[g]$. This is a contradiction.

    Therefore we proved that
    \[ M_{2n}(\M_\lambda^\T|\delta_{\M_\lambda^\T}) \vDash \text{``}
    \delta_{\M_\lambda^\T} \text{ is Woodin'',} \]
    if $\T$ is as in Case $2$ above, that means if $\T$ is a short
    iteration tree on $N$ of length $\lambda +1$ which is non-dropping
    on the main branch.

    This shows that there is an ordinal $\delta < \omega_1^V$ such
    that properties $(1)$ and $(2)$ in Definition
    \ref{def:prensuitable} hold for the premouse $\M_\lambda^\T$. That
    property $(3)$ holds for $\M_\lambda^\T$ follows from property
    $(3)$ for the pre-$(2n+1)$-suitable premouse $N$ by a similar
    argument and property $(4)$ follows from the corresponding
    property for $N$ as well. Thus $\M_\lambda^\T$ is
    pre-$(2n+1)$-suitable, as desired.

    \begin{minipage}{\textwidth}
      \bigskip \textbf{Case 3.} $\bar{\T}$ witnesses that
      $\phi_3(\bar{N})$ holds in $\bar{W}[g]$. \bigskip
    \end{minipage}

    Let $\bar{\mathcal{P}} \rhd \M(\bar{\T})$ witness that
    $\phi_3(\bar{N})$ holds inside $\bar{W}[g]$. We have that inside
    $W$ there exists a cofinal well-founded branch $\bar{b}$ through
    the iteration tree $\T^*$ by property $(4)$ in the definition of
    pre-$(2n+1)$-suitability for $N$ as above in Case $1$.

    \begin{minipage}{\textwidth}
      \bigskip \textbf{Case 3.1.}
      There is a drop along $\bar{b}$. \bigskip
    \end{minipage}

    Then we have as in Case $1$ that there exists a $\Q$-structure
    $\Q(\bar{\T}) \unlhd \M_{\bar{b}}^{\bar{\T}}$ for
    $\bar{\T}$. Consider the statement
    \begin{eqnarray*}
      \psi(\bar{\T}, \Q(\bar{\T})) \equiv &\text{``there is a cofinal
                                            branch } b \text{ through } \bar{\T} \text{ such
                                            that } \\ 
                                          & \Q(\bar{\T}) \unlhd
                                            \M_{b}^{\bar{\T}}\text{''.}   
    \end{eqnarray*}
    This statement $\psi(\bar{\T}, \Q(\bar{\T}))$ is
    $\Sigma^1_1$-definable from the parameters $\bar{\T}$ and
    $\Q(\bar{\T})$ and holds in the model $W$ as witnessed by the
    branch $\bar{b}$. By an absoluteness argument as above it follows
    that it also holds in $\bar{W}[g]$, which contradicts the fact
    that $\bar{\T}$ witnesses in $\bar{W}[g]$ that $\phi_3(\bar{N})$
    holds.

    \begin{minipage}{\textwidth}
      \bigskip \textbf{Case 3.2.}  There is no drop along
      $\bar{b}$. \bigskip
    \end{minipage}

    Then we can consider the coiteration of $\bar{\mathcal{P}}$ and
    $\M_{\bar{b}}^{\T^*}$ inside the model $W$. We have that both
    premice are $2n$-small above $\delta(\bar{\T})$. Moreover this
    coiteration takes place above $\delta(\bar{\T})$ since we have
    that $\bar{\mathcal{P}} \rhd \M(\bar{\T})$. Therefore the
    coiteration is successful inside $W$ using Lemma $2.2$ in
    \cite{St95} by the same argument as the one we gave in Cases $1$
    and $2$, because in $W$ we have that $\bar{\mathcal{P}}$ is
    $\Pi^1_{2n+1}$-iterable above $\delta(\bar{\T})$ and
    $\M_{\bar{b}}^{\T^*}$ is $(\omega_1+1)$-iterable above
    $\delta(\bar{\T})$ in $W$ using property $(4)$ in Definition
    \ref{def:prensuitable}. That means we have that
    \[ \M_{\bar{b}}^{\T^*} \unlhd \bar{\mathcal{P}} \text{ or }
    \bar{\mathcal{P}} \unlhd \M_{\bar{b}}^{\T^*}. \]

    \begin{minipage}{\textwidth}
      \bigskip \textbf{Case 3.2.1.}
      $\M_{\bar{b}}^{\T^*} \unlhd \bar{\mathcal{P}}.$ \bigskip
    \end{minipage}

    In this case we have that
    \[ \bar{\mathcal{P}} \cap \Ord < \bar{W} \cap \Ord \leq
    \M_{\bar{b}}^{\T^*} \cap \Ord \leq \bar{\mathcal{P}} \cap \Ord, \]
    where the first inequality holds true since
    $\bar{\mathcal{P}} \in \bar{W}[g]$. This is a contradiction.

    \begin{minipage}{\textwidth}
      \bigskip \textbf{Case 3.2.2.}
      $\bar{\mathcal{P}} \unlhd \M_{\bar{b}}^{\T^*}.$ \bigskip
    \end{minipage}

    Then we have that in fact
    \[ \bar{\mathcal{P}} \unlhd \M_{\bar{b}}^{\bar{\T}}, \]
    because $\delta(\bar{\T})$ is the largest cardinal in
    $\bar{\mathcal{P}}$. This contradicts $\phi_3(\bar{N})$.

    Therefore it follows for an iteration tree $\T$ as in Case $3$
    that there exists a cofinal well-founded branch through $\T$ and
    if there exists a non-dropping such branch $b$, then the premouse
    $\M_b^\T$ is pre-$(2n+1)$-suitable as in the argument at the end
    of Case $2$ above.

    Now the argument we just gave for Case $3$ shows that in this case
    we have that in fact
    \[ \M_b^\T = M_{2n}(\M(\T))|(\delta(\T)^+)^{M_{2n}(\M(\T))}. \]
\end{proof}

\subsection{Correctness for $n$-suitable Premice}
\label{sec:corrnsuit}

In the following lemmas we prove some correctness results for suitable
premice in the sense of Definition \ref{def:nsuitnew}. We are stating
these lemmas only for the levels $(2n+1)$ at which we proved that
there exists a $(2n+1)$-suitable premouse (see Lemma
\ref{lem:nsuitnew}).

\begin{lemma}
\label{lem:colcorr}
Let $n \geq 0$ and $z \in \BS$. Assume that $M_{2n}^\#(x)$ exists for
all $x \in \BS$ and let $N$ be a $(2n+1)$-suitable $z$-premouse. Let
$\varphi$ be an arbitrary $\Sigma^1_{2n+3}$-formula and let
$a \in N \cap \BS$ be arbitrary. Then we have
\[ \varphi(a) \; \leftrightarrow \; \; \forces{N}{\Col(\omega,
  \delta_N)} \varphi(a), \]
where $\delta_N$ as usual denotes the largest cardinal in $N$.
\end{lemma}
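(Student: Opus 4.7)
The plan is to adapt the proof of Lemma \ref{lem:corr}$(2)$ for odd $n$ to the context of $(2n+1)$-suitable premice, writing $\varphi(a) \equiv \exists x \, \forall y \, \psi(x,y,a)$ with $\psi$ a $\Sigma^1_{2n+1}$-formula. The key resource is that by pre-$(2n+1)$-suitability the proper class model $W = M_{2n}(N|\delta_N)$ exists, is $\omega_1$-iterable above $\delta_N$, and in addition to the Woodin cardinal $\delta_N$ contains the $2n$ further Woodin cardinals coming from the $M_{2n}^\#$-structure above $\delta_N$. Both implications are proved by transferring the statement to a $\Col(\omega,\delta_N)$-extension of an iterate of $W$ which, after suitable re-interpretation, becomes a model of the form $M_{2n}(y)$ for some real $y$, to which Lemma \ref{lem:corr}$(1)$ applies.

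For the downward direction, I would fix $\bar x \in V$ with $\forall y \, \psi(\bar x, y, a)$ and apply the genericity iteration of \cite{Ne95}, Corollary $1.8$, to $N$. The crucial point is that this iteration can be carried out using only the fullness-preserving iterability of $N$ for short and maximal trees built into Definition \ref{def:nsuitnew}: at each limit stage of the genericity iteration the tree is either short (short-tree iterability provides the cofinal branch) or maximal (condition $(ii)$ of Definition \ref{def:nsuitnew} supplies a non-dropping cofinal branch whose limit is pre-$(2n+1)$-suitable). By Lemma \ref{lem:nsuitnew} the final iterate $N^*$ is $(2n+1)$-suitable, and the iteration embedding $i \colon N \to N^*$ makes $\bar x$ generic over $N^*$ for $\Col(\omega,i(\delta_N))$; pick $g \in V$ with $\bar x \in N^*[g]$. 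Now $M_{2n}(N^*|i(\delta_N))[g]$, construed as a $y$-premouse for a real $y \geq_T z$ coding $(N^*|i(\delta_N)) \oplus g$, equals $M_{2n}(y)$ (cf.\ \cite{SchSt09}), and by Lemma \ref{lem:corr}$(1)$ it is $\Sigma^1_{2n+2}$-correct in $V$. Since $\forall y' \, \psi(\bar x, y', a)$ is $\Pi^1_{2n+2}$ and true in $V$, it holds in $M_{2n}(y)$ and hence in $N^*[g]$ (which has the same reals). Homogeneity of $\Col(\omega, i(\delta_N))$ then yields $\forces{N^*}{\Col(\omega,i(\delta_N))} \varphi(a)$, and elementarity of $i$ gives $\forces{N}{\Col(\omega,\delta_N)}\varphi(a)$.

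For the upward direction, assume $\forces{N}{\Col(\omega,\delta_N)} \varphi(a)$. Since $N$ is countable, pick $g \in V$ that is $\Col(\omega,\delta_N)$-generic over $N$, together with $\bar x \in N[g]$ such that $N[g] \vDash \forall y \, \psi(\bar x, y, a)$. Construing $M_{2n}(N|\delta_N)[g]$ as $M_{2n}(y)$ for a real $y \geq_T z$ coding $(N|\delta_N) \oplus g$, Lemma \ref{lem:corr}$(1)$ again yields $\Sigma^1_{2n+2}$-correctness in $V$. Since $N[g]$ and $M_{2n}(N|\delta_N)[g]$ have the same reals (the $M_{2n}^\#$-part above $\delta_N$ projects to $\delta_N$ and so adds no new reals), $\forall y' \, \psi(\bar x, y', a)$ persists to $M_{2n}(y)$, hence to $V$, and $\varphi(a)$ holds in $V$.

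The main obstacle is the downward direction, where the mere short-tree iterability of $N$ — rather than full $\omega_1$-iterability as enjoyed by $M_n^\#$ in Lemma \ref{lem:corr} — forces us to run Neeman's genericity iteration through possibly maximal limit stages. It is precisely the fullness-preserving branch condition packaged into $(2n+1)$-suitability, namely Definition \ref{def:nsuitnew}$(ii)$ together with the preservation result of Lemma \ref{lem:nsuitnew}, that allows the iteration to be continued and that guarantees the final iterate $N^*$ retains the pre-$(2n+1)$-suitable structure needed so that $M_{2n}(N^*|i(\delta_N))$ exists as a proper-class $\omega_1$-iterable model and the correctness transfer of Lemma \ref{lem:corr}$(1)$ applies.
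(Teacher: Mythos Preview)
Your proposal is correct and follows essentially the same approach as the paper's proof: both directions reduce to the $\Sigma^1_{2n+2}$-correctness of $M_{2n}(N^*|\delta_{N^*})[g]$ via Lemma~\ref{lem:corr}(1), with the downward direction requiring Neeman's genericity iteration (Corollary~1.8 of \cite{Ne95}) applied to the $(2n+1)$-suitable $N$. Your added discussion of how the fullness-preserving iterability in Definition~\ref{def:nsuitnew} handles both short and maximal limit stages of the genericity iteration is a useful elaboration of what the paper compresses into the phrase ``we have enough iterability to apply Corollary~1.8 from \cite{Ne95}.''
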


Here we again write $a$ for the standard name $\check{a}$ for a real
$a \in N$.

\begin{proof}
  Let $n \geq 0$ and $z \in \BS$ be arbitrary and let $N$ be a
  $(2n+1)$-suitable $z$-premouse. Let $\varphi(a)$ be a
  $\Sigma^1_{2n+3}$-formula for a parameter $a \in N \cap \BS$. That
  means
  \[ \varphi(a) \equiv \exists x \forall y \, \psi(x,y,a) \]
  for a $\Sigma^1_{2n+1}$-formula $\psi(x,y,a)$. We first want to
  prove the downward implication, that means we want to prove that if
  $\varphi(a)$ holds in $V$, then
  \[ \forces{N}{\Col(\omega, \delta_N)} \varphi(a). \]

  Let $x^* \in V$ be a witness for the fact that $\varphi(a)$ holds in
  $V$. That means $x^*$ is a real such that
  \[ V \vDash \forall y \, \psi(x^*,y,a). \] Use Corollary $1.8$ from
  \cite{Ne95} to make the real $x^*$ generic over an iterate of $N$
  for the collapse of the image of $\delta_N$.  Since $N$ is
  $(2n+1)$-suitable, we have enough iterability to apply Corollary
  $1.8$ from \cite{Ne95}, so there exists a non-dropping iterate $N^*$
  of $N$ such that $N^*$ is $2n$-iterable\footnote{see Definition
    $1.1$ in \cite{Ne95}} and whenever $g$ is
  $\Col(\omega, \delta_{N^*})$-generic over $N^*$, then
  $x^* \in N^*[g]$. Moreover let $i: N \rightarrow N^*$ denote the
  corresponding iteration embedding. Since $N$ is $(2n+1)$-suitable
  and the iteration from $N$ to $N^*$ is non-dropping, we have that
  $N^*$ is pre-$(2n+1)$-suitable.

  Let $g \in V$ be $\Col(\omega, \delta_{N^*})$-generic over $N^*$.
  Since we have that
  $N^* = M_{2n}(N^*|\delta_{N^*}) |
  (\delta_{N^*}^+)^{M_{2n}(N^*|\delta_{N^*})}$,
  it follows that $g$ is also $\Col(\omega, \delta_{N^*})$-generic
  over the proper class model $M_{2n}(N^*|\delta_{N^*})$. Moreover we
  can construe the model $M_{2n}(N^*|\delta_{N^*})[g]$ as a
  $y$-premouse for some real $y$ (in fact $y = z \oplus x^*$, see for
  example \cite{SchSt09} for the fine structural details) which has
  $2n$ Woodin cardinals. This yields by Lemma \ref{lem:corr} that the
  premouse $M_{2n}(N^*|\delta_{N^*})[g]$ is $\SIGMA^1_{2n+2}$-correct
  in $V$, because $M_{2n}(N^*|\delta_{N^*})[g]$ construed as a
  $y$-premouse is $\omega_1$-iterable above $\delta_{N^*}$. Thus we
  have that
  \[ M_{2n}(N^*|\delta_{N^*})[g] \vDash \forall y \, \psi(x^*,y,a) \]
  as $x^* \in M_{2n}(N^*|\delta_{N^*})[g]$. This in fact can be
  obtained using only $\SIGMA^1_{2n+1}$-correctness of
  $M_{2n}(N^*|\delta_{N^*})[g]$ and downward absoluteness.

  Since the premice $M_{2n}(N^*|\delta_{N^*})[g]$ and $N^*[g]$
  agree on their reals it follows that
  \[ N^*[g] \vDash \forall y \, \psi(x^*,y,a). \]
  By homogeneity of the forcing $\Col(\omega, \delta_{N^*})$ we now
  obtain that
  \[ \forces{N^*}{\Col(\omega, \delta_{N^*})} \exists x \forall y \,
  \psi(x,y,a). \]
  Since $a$ is a real in $N$ it follows by elementarity that
  \[ \forces{N}{\Col(\omega, \delta_{N})} \exists x \forall y \,
  \psi(x,y,a), \] as desired.

  For the upward implication let
  $\varphi(a) \equiv \exists x \forall y \, \psi(x,y,a)$ again be a
  $\Sigma^1_{2n+3}$-formula for a real $a$ in $N$ and a
  $\Sigma^1_{2n+1}$-formula $\psi(x,y,a)$ and assume that we have
  \[ \forces{N}{\Col(\omega, \delta_{N})} \exists x \forall y \,
  \psi(x,y,a). \]
  Let $g$ be $\Col(\omega, \delta_N)$-generic over the premouse
  $M_{2n}(N|\delta_N)$ and pick a real $x^* \in M_{2n}(N|\delta_N)[g]$
  such that
  \[ M_{2n}(N|\delta_N)[g] \vDash \forall y \, \psi(x^*,y,a). \]
  Since $M_{2n}(N|\delta_N)|(\delta_N^+)^{M_{2n}(N|\delta_N)} = N$ is
  countable in $V$, we can pick $g \in V$ and then get that
  $x^* \in V$. As above we can consider $M_{2n}(N|\delta_N)[g]$ as an
  $\omega_1$-iterable $y$-premouse for some real $y$ and therefore we
  have by Lemma \ref{lem:corr} again that
  \[ M_{2n}(N|\delta_N)[g] \prec_{\SIGMA^1_{2n+2}} V. \]
  Hence we have that
  \[ V \vDash \exists x \forall y \, \psi(x,y,a), \]
  witnessed by the real $x^*$, because
  ``$\forall y \, \psi(x^*,y,a)$'' is a $\Pi^1_{2n+2}$-formula.
\end{proof}

\begin{lemma}\label{lem:nsuitclosed}
  Let $n \geq 0$ and assume that $M_{2n}^\#(x)$ exists for all
  $x \in \BS$.  Let $N$ be a $(2n+1)$-suitable $z$-premouse for some
  $z \in \BS$.  Then $N|\delta_N$ is closed under the operation
  \[ A \mapsto M_{2n}^\#(A). \]
\end{lemma}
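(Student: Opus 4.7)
The plan is to generalize the argument of Lemma \ref{lem:iterability} $(1)$, where a fully backgrounded extender construction inside the active premouse $M_{n-1}^\#(x)$ yielded $M_{n-2}^\#(a)$ for each $a \in H_{\delta_x}^{M_{n-1}^\#(x)}$. Here the role of $M_{n-1}^\#(x)$ will be played by $W^\# := M_{2n}^\#(N | \delta_N)$, the active premouse whose top-measure-iterate is the proper class premouse $W := M_{2n}(N | \delta_N)$ witnessing pre-$(2n+1)$-suitability of $N$. Let $\kappa$ denote the critical point of the top extender of $W^\#$, so that $\kappa$ lies strictly above all Woodin cardinals of $W^\#$. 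By property $(2)$ of Definition \ref{def:prensuitable} the cardinal $\delta_N$ is Woodin in $W$, and hence also in $W^\#$ since $\delta_N < \kappa$; combined with the $2n$ Woodin cardinals above $\delta_N$ which are built into the definition of $M_{2n}^\#(N | \delta_N)$, this yields that $W^\# | \kappa$ contains exactly $2n + 1$ Woodin cardinals, all of them $\geq \delta_N$ and $< \kappa$.

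Fix an arbitrary $A \in N | \delta_N = W^\# | \delta_N$, and perform the fully backgrounded extender construction $L[E](A)^{W^\# | \kappa}$ above $A$ inside $W^\# | \kappa$, in the sense of \cite{MS94} with the smallness hypothesis weakened as in Definition \ref{def:L[E]constr}. A generalization of Theorem $11.3$ of \cite{MS94} --- exactly as applied in the proof of Lemma \ref{lem:iterability} $(1)$ --- shows that the resulting premouse has $2n + 1$ Woodin cardinals above $A$, so in particular it is not $2n$-small above $A$; and as in §$12$ of \cite{MS94}, $L[E](A)^{W^\# | \kappa}$ inherits $\omega_1$-iterability in $V$ from the $\omega_1$-iterable premouse $W^\#$. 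By the uniqueness of the minimal sound, $\omega_1$-iterable premouse above $A$ which is not $2n$-small above $A$, I would then conclude that $M_{2n}^\#(A)$ exists and is an initial segment of $L[E](A)^{W^\# | \kappa}$; the standard comparison needed to identify it --- between $L[E](A)^{W^\# | \kappa}$ and $M_{2n}^\#(\bar A)$ for $\bar A$ any real coding the countable-in-$V$ set $A$ (which exists by hypothesis of the lemma, after the usual fine-structural reorganization as in \cite{SchSt09}) --- runs exactly as at the end of the proof of Lemma \ref{lem:iterability} $(1)$.

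It remains to see that $M_{2n}^\#(A) \in N | \delta_N$. Since $M_{2n}^\#(A)$ is sound with $\rho_\omega(M_{2n}^\#(A)) \leq A \cap \Ord$, its cardinality inside $W^\#$ is at most $\max(\omega, |A|^{W^\#})$, which is strictly below $\delta_N$ because $A \in W^\# | \delta_N$ and $\delta_N$ is a cardinal of $W^\#$. As $\delta_N$ is Woodin and therefore strongly inaccessible in $W^\#$, the ordinal height of $M_{2n}^\#(A)$ is strictly below $\delta_N$, and hence $M_{2n}^\#(A) \in W^\# | \delta_N = N | \delta_N$, as required. The main technical point is the invocation of the generalization of Theorem $11.3$ of \cite{MS94} to produce all $2n + 1$ Woodin cardinals inside $L[E](A)^{W^\# | \kappa}$; the rest --- iterability inheritance, the comparison identifying $M_{2n}^\#(A)$, and the cardinality bound coming from inaccessibility of $\delta_N$ in $W^\#$ --- is a routine replay of the set-sized case worked out in Lemma \ref{lem:iterability} $(1)$.
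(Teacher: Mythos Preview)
There is a genuine gap: you assert that $W^\# = M_{2n}^\#(N|\delta_N)$ is $\omega_1$-iterable in $V$, but by Definition \ref{def:M(N)} it is only $\omega_1$-iterable \emph{above} $\delta_N$. The iterability of $W^\#$ below $\delta_N$ --- that is, the iterability of $N|\delta_N$ itself --- is precisely what is not available at this stage. All that $(2n+1)$-suitability provides (via Lemma \ref{lem:nsuitnew}) is short tree iterability together with the fullness-preserving branch property of Definition \ref{def:nsuitnew}, not full $\omega_1$-iterability. Consequently your appeal to §$12$ of \cite{MS94} breaks down: lifting a tree on $L[E](A)^{W^\#|\kappa}$ to $W^\#$ produces a tree that in general lives below $\delta_N$ (since $A \in W^\#|\delta_N$, and the background extenders witnessing Woodinness of $\delta_N$ have critical points below $\delta_N$), and there you have no iteration strategy for $W^\#$ to invoke. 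Without $\omega_1$-iterability of $L[E](A)^{W^\#|\kappa}$ in $V$, you cannot carry out the comparison identifying its minimal non-$2n$-small initial segment with the true $M_{2n}^\#(A)$, and the argument collapses. Note that if $N|\delta_N$ were known to be fully $\omega_1$-iterable, the main theorem of the paper at this level would be essentially immediate --- so this is not a minor technicality.

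The paper's proof confronts exactly this difficulty. Rather than work inside $W^\#$, it takes the genuine $M_{2n}^\#(N|\xi)$ in $V$ --- which exists by hypothesis since $N|\xi$ is countable, and which \emph{is} $\omega_1$-iterable above $\xi$ --- and compares it directly with $N|\delta_N$ inside $M_{2n}^\#(x)$ for a real $x$ coding both. The $M_{2n}^\#(N|\xi)$-side supplies $\Q$-structures, so the tree $\U$ on $N$ stays short, and the short tree iterability of $N$ (transferred into $M_{2n}^\#(x)$ by $\SIGMA^1_{2n+2}$-correctness, Lemma \ref{lem:corr}) suffices for that side of the coiteration. The crucial extra ingredient is then fullness preservation: if the $M_{2n}^\#(N|\xi)$-side were to win, the non-dropping iterate $N^*$ of $N$ would again be pre-$(2n+1)$-suitable, so $\delta_{N^*}$ would still be Woodin in $M_{2n}^\#(N^*|\delta_{N^*})$, contradicting the $\Q$-structure supplied by the other side. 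This is the content of Claim \ref{cl:M2ncannotwin}, and it is the idea your proposal never invokes.
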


\begin{proof} It is enough to consider sets $A$ of the form $N | \xi$
  for some ordinal $\xi < \delta_N$ for the following reason. Let
  $A \in N|\delta_N$ be arbitrary. Then there exists an ordinal
  $\xi < \delta_N$ such that $A \in N | \xi$. Assume first that the
  ordinal $\xi$ is not overlapped by an extender on the $N$-sequence,
  that means there is no extender $E$ on the $N$-sequence such that
  $\crit(E) \leq \xi < \lh(E)$. We will consider the case that $\xi$
  is overlapped by an extender on the $N$-sequence later. Moreover
  assume that we already proved that
  \[ M_{2n}^\#(N|\xi) \lhd N | \delta_N. \] Then we also have that
  $M_{2n}^\#(A) \in N | \delta_N$ by the following argument. Consider
  the model $M_{2n}(N | \xi)$. Let $L[E](A)^{M_{2n}(N|\xi)}$ be as in
  Definition \ref{def:L[E]constr}. Then add the top measure of the
  active premouse $M_{2n}^\#(N | \xi)$ (intersected with
  $L[E](A)^{M_{2n}(N|\xi)}$) to an initial segment of
  $L[E](A)^{M_{2n}(N|\xi)}$ as described in Section $2$ of
  \cite{FNS10}. The main result in \cite{FNS10} yields that the model
  we obtain from this construction is again $\omega_1$-iterable and
  not $2n$-small. Thus it follows that the $\omega_1$-iterable
  premouse $M_{2n}^\#(A)$ exists inside $N | \delta_N$ as
  $M_{2n}^\#(N|\xi) \lhd N | \delta_N$.

  So let $\xi < \delta_N$ be an ordinal and assume as above first that
  $\xi$ is not overlapped by an extender on the $N$-sequence. Then we
  consider the premouse $M_{2n}^\#(N | \xi)$, which exists in $V$ by
  assumption and is not $2n$-small above $\xi$ because $\xi$ is
  countable in $V$. In this case we are left with showing that
  \[ M_{2n}^\#(N|\xi) \lhd N|\delta_N. \]
  Let $x$ be a real in $V$ which codes the countable premice
  $M_{2n}^\#(N|\xi)$ and $N|\delta_N$. Work inside the model
  $M_{2n}^\#(x)$ and coiterate $M_{2n}^\#(N|\xi)$ with
  $N|\delta_N$. We have that $N | \delta_N$ is short tree iterable
  inside $M_{2n}^\#(x)$, because for a pre-$(2n+1)$-suitable premouse
  short tree iterability is a $\Pi^1_{2n+2}$-definable statement by
  Lemma \ref{lem:defstiodd} and the model $M_{2n}^\#(x)$ is
  $\SIGMA^1_{2n+2}$-correct in $V$ by Lemma \ref{lem:corr}. In fact
  Lemma \ref{lem:corr} implies that $N$ has an iteration strategy
  which is fullness preserving in the sense of Definition
  \ref{def:nsuitnew} inside the model $M_{2n}^\#(x)$ by the proof of
  Lemma \ref{lem:nsuitnew}.

  Note that the coiteration takes place above $N|\xi$ and that
  $M_{2n}^\#(N|\xi)$ is $\omega_1$-iterable above $N|\xi$ in $V$ by
  definition. Therefore Lemma \ref{lem:iterability} $(2)$ implies that
  the comparison inside $M_{2n}^\#(x)$ cannot fail on this side of the
  coiteration. Say that the coiteration yields an iteration tree $\T$
  on $M_{2n}^\#(N|\xi)$ and an iteration tree $\U$ on $N|\delta_N$.

  We have that $\U$ is a short tree on $N|\delta_N$, because the
  $M_{2n}^\#(N|\xi)$-side of the coiteration provides
  $\Q$-structures. So the coiteration terminates successfully and
  there is an iterate $M^*$ of $M_{2n}^\#(N|\xi)$ via $\T$ and an
  iterate $R$ of $N|\delta_N$ via $\U$.

  \begin{claim}\label{cl:M2ncannotwin}
    The $M_{2n}^\#(N|\xi)$-side cannot win the comparison against
    $N|\delta_N$.
  \end{claim}

\begin{proof}
  Assume toward a contradiction that the $M_{2n}^\#(N|\xi)$-side wins
  the comparison. That means there is no drop on the main branch on
  the $N|\delta_N$-side of the coiteration and
  \[ M^* \unrhd R. \] We can consider $\U$ as a tree on the premouse
  $N \rhd N | \delta_N$ with final model $N^*$ such that there is an
  ordinal $\delta^*$ with
  \[ N^*|\delta^* = R. \]

  Then $\U$ is a short tree of length $\lambda + 1$ for some ordinal
  $\lambda$ and as there is no drop on the main branch on the
  $N|\delta_N$-side of the coiteration, we have that
  \[ \delta^* = i_{0\lambda}^\U(\delta_N), \]
  where $i_{0\lambda}^\U: N \rightarrow N^*$ denotes the corresponding
  iteration embedding. Moreover recall that $N$ is $(2n+1)$-suitable
  and so $\U$ is obtained using the iteration strategy for $N$ which
  is fullness preserving for short trees which do not drop on their
  main branch in the sense of Definition \ref{def:nsuitnew}. Therefore
  we have that $M_{2n}^\#(N^*|\delta^*)$ is not $2n$-small above
  $\delta^*$ and
  \[ M_{2n}^\#(N^*|\delta^*) \vDash \text{``} \delta^* \text{ is
    Woodin''}. \]

  We have for the other side of the coiteration that
  $\rho_\omega(M^*) < \delta^*$, because
  $\rho_\omega(M_{2n}^\#(N|\xi)) \leq \xi < \delta^*$. Let $\Q$ be the
  least initial segment of $M^*$ such that $\delta^*$ is not definably
  Woodin over $\Q$. Recall that this means that $\Q \unlhd M^*$ is
  such that
  \[ \Q \vDash \text{``} \delta^* \text{ is Woodin'',} \] but if
  $\Q = J_\alpha^{M^*}$ for some $\alpha < M^* \cap \Ord$ then
  \[ J_{\alpha + 1}^{M^*} \vDash \text{``} \delta^* \text{ is not
    Woodin''}, \]
  and if $\Q = M^*$ then $\rho_\omega(\Q) < \delta^*$ or there exists
  an $m < \omega$ and an $r\Sigma_m$-definable set
  $A \subset \delta^*$ such that there is no $\kappa < \delta^*$ such
  that $\kappa$ is strong up to $\delta^*$ with respect to $A$ as
  witnessed by extenders on the sequence of $\Q$. In the latter case
  we have that in particular $\rho_\omega(\Q) \leq \delta^*$. Moreover
  $\Q$ is $2n$-small above $\delta^*$, because we have $\Q \unlhd M^*$
  and
  \[ \Q \vDash \text{``} \delta^* \text{ is Woodin''}. \]
  Furthermore $\Q$ is $\omega_1$-iterable above $\delta^*$.

  By construction we have that the premouse $M_{2n}^\#(N^*|\delta^*)$
  is $\omega_1$-iterable above $\delta^*$. Consider the
  coiteration of $\Q$ and $M_{2n}^\#(N^*|\delta^*)$ inside the model
  $M_{2n}^\#(y)$, where $y$ is a real coding the countable premice
  $\Q$ and $M_{2n}^\#(N^* | \delta^*)$. Since
  \[ N^*|\delta^* = R \unlhd M^* \] this coiteration takes place above
  $\delta^* \geq \delta(\U)$. We have that
  $\rho_\omega(\Q) \leq \delta^*$ and
  \[ \rho_\omega(M_{2n}^\#(N^*|\delta^*)) \leq \delta^*. \]
  Moreover $\Q$ and $M_{2n}^\#(N^*|\delta^*)$ are both sound above
  $\delta^*$. Thus the comparison is successful by Lemma
  \ref{lem:iterability} and we have that
  \[ \Q \unlhd M_{2n}^\#(N^*|\delta^*) \text{ or }
  M_{2n}^\#(N^*|\delta^*) \unlhd \Q. \]
  The premouse $M_{2n}^\#(N^*|\delta^*)$ is not $2n$-small above
  $\delta^*$ because $N^*$ is pre-$(2n+1)$-suitable. Since $\Q$ is
  $2n$-small above $\delta^*$ we have in fact that
  \[ \Q \lhd M_{2n}^\#(N^*|\delta^*). \]
  But this implies by our choice of the premouse $\Q$ that $\delta^*$
  is not Woodin in $M_{2n}^\#(N^*|\delta^*)$, which is a contradiction
  because by fullness preservation we have that $\delta^*$ is a Woodin
  cardinal in $M_{2n}^\#(N^*|\delta^*)$.

  Therefore we have that the $M_{2n}^\#(N|\xi)$-side cannot win the
  comparison against $N|\delta_N$.
  \end{proof}

  \begin{claim}\label{cl:firstsidedoesnotmove}
    The $M_{2n}^\#(N|\xi)$-side does not move in the coiteration with
    $N | \delta_N$. 
  \end{claim}
  
  \begin{proof}
    Assume toward a contradiction that the $M_{2n}^\#(N|\xi)$-side
    moves in the coiteration. As the coiteration takes place above
    $\xi$ this means that it has to drop on the
    $M_{2n}^\#(N|\xi)$-side. Then $M^* \unrhd R$ and there is no drop
    on the main branch on the $N|\delta_N$-side of the coiteration,
    i.e. the $M_{2n}^\#(N|\xi)$-side wins the comparison. This
    contradicts Claim \ref{cl:M2ncannotwin}.
  \end{proof}

  \begin{claim}\label{cl:secondsidedoesnotmove}
    The $N | \delta_N$-side does not move in the coiteration with
    $M_{2n}^\#(N|\xi)$.
  \end{claim}
  \begin{proof}
    Assume toward a contradiction that the $N|\delta_N$-side moves in
    this coiteration. Since the coiteration takes place above $\xi$
    this means that there is an ordinal $\gamma > \xi$ with
    $\gamma \leq M_{2n}^\#(N | \xi) \cap \Ord$ such that there is an
    extender $E_\gamma^N$ indexed at $\gamma$ on the $N$-sequence
    which is used in the coiteration. In particular $\gamma$ is a
    cardinal in the iterate $R$ of $N|\delta_N$. By Claim
    \ref{cl:firstsidedoesnotmove} we have
    $M_{2n}^\#(N | \xi) \unlhd R$, in particular $\gamma$ is a
    cardinal in $M_{2n}^\#(N | \xi)$. In fact,
    $M_{2n}^\#(N | \xi) \lhd R$ by the following argument. If
    $M_{2n}^\#(N | \xi) = R$ there must be a drop in the iteration to
    $R$ since otherwise we obtain a contradiction as in the proof of
    Claim \ref{cl:M2ncannotwin}. But then $R$ cannot be sound,
    contradicting the fact that $M_{2n}^\#(N | \xi)$ is sound. Hence,
    $\gamma$ is a cardinal in both $M_{2n}^\#(N|\xi)$ and $R$ and
    $M_{2n}^\#(N|\xi) \lhd R$. This is a contradiction because
  \[ \rho_\omega(M_{2n}^\#(N|\xi)) \leq \xi < \gamma. \]
  Therefore the $N|\delta_N$-side also does not move in the
  comparison. 
  \end{proof}
  By Claims \ref{cl:M2ncannotwin}, \ref{cl:firstsidedoesnotmove} and
  \ref{cl:secondsidedoesnotmove} we finally have that
  \[ M_{2n}^\#(N|\xi) \lhd N|\delta_N. \]

  We now have to consider the case that $A \in N | \xi$ for an ordinal
  $\xi < \delta_N$ such that $\xi$ is overlapped by an extender $E$ on
  the $N$-sequence. That means there is an extender $E$ on the
  $N$-sequence such that $\crit(E) \leq \xi < \lh(E)$. Let $E$ be the
  least such extender, that means the index of $E$ is minimal among
  all critical points of extenders on the $N$-sequence overlapping
  $\xi$. By the definition of a ``fine extender sequence'' (see
  Definition $2.4$ in \cite{St10}) we have that $A \in \Ult(N; E)$ and
  that the ordinal $\xi$ is no longer overlapped by an extender on the
  $\Ult(N;E)$-sequence. Let $M = \Ult(N;E)$ and consider the premouse
  $M | \xi$, so we have in particular that $A \in M | \xi$. The same
  argument as above for $N | \xi$ replaced by $M | \xi$ proves that
  \[ M_{2n}^\#(M | \xi) \lhd N | \delta_N \]
  and therefore we finally have that $M_{2n}^\#(A) \in N | \delta_N$
  by repeating the argument we already gave at the beginning of this
  proof.
\end{proof}

From Lemma \ref{lem:nsuitclosed} we can obtain the following lemma as
a corollary. 

\begin{lemma}
\label{lem:2n+1suitcorr}
Let $n \geq 0$ and $z \in \BS$. Assume that $M_{2n}^\#(x)$ exists for
all $x \in \BS$ and let $N$ be a $(2n+1)$-suitable $z$-premouse. Then
$N$ is $\SIGMA^1_{2n+2}$-correct in $V$ for real parameters in $N$ and
we write \[ N \prec_{\SIGMA^1_{2n+2}} V. \]
\end{lemma}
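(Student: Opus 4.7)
The plan is to prove, by induction on $k$ for $0 \leq k \leq 2n+2$, that $N \prec_{\SIGMA^1_k} V$ for real parameters in $N$. The two essential ingredients are Lemma \ref{lem:nsuitclosed}, which guarantees that $M_{2n}^\#(a) \in N$ for every $a \in N \cap \BS$, and Lemma \ref{lem:corr}(1), which (since $2n$ is even) says that each $M_{2n}^\#(a)$ is $\SIGMA^1_{2n+2}$-correct in $V$, and therefore correct for every $\Sigma^1_k$- or $\Pi^1_k$-formula with $k \leq 2n+2$. The base case $k=0$ is immediate, since arithmetic formulas are absolute between transitive models sharing their parameters.

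For the inductive step, assume the result for all $k' \leq k$ with $k < 2n+2$, and consider a $\Sigma^1_{k+1}$-formula $\varphi(a) \equiv \exists x\, \theta(x,a)$ with $\theta$ of complexity $\Pi^1_k$ and $a \in N \cap \BS$. For the downward direction, suppose $V \vDash \varphi(a)$. Since $k+1 \leq 2n+2$, Lemma \ref{lem:corr}(1) applied to $M_{2n}^\#(a)$ yields $M_{2n}^\#(a) \vDash \varphi(a)$, and we pick a witness $x^* \in M_{2n}^\#(a) \cap \BS$. By Lemma \ref{lem:nsuitclosed} we have $M_{2n}^\#(a) \in N$, so $x^* \in N$. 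Applying Lemma \ref{lem:corr}(1) once more, now to the $\Pi^1_k$-formula $\theta$, gives $V \vDash \theta(x^*,a)$, and the inductive hypothesis then forces $N \vDash \theta(x^*,a)$, whence $N \vDash \varphi(a)$. For the upward direction, if $N \vDash \varphi(a)$ with a witness $x^* \in N \cap \BS$, the inductive hypothesis applied to $\theta$ immediately produces $V \vDash \theta(x^*,a)$, so $V \vDash \varphi(a)$.

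The scheme runs uniformly all the way up to $\Sigma^1_{2n+2}$ because at every level the matrix $\theta$ has strictly lower complexity than the formula being handled, while the required witness is imported from $M_{2n}^\#(a) \in N$, whose full $\SIGMA^1_{2n+2}$-correctness covers every complexity we encounter. The main subtlety to verify is that the $M_{2n}^\#(a)$ supplied by Lemma \ref{lem:nsuitclosed} inside $N$ really is the $V$-level $M_{2n}^\#(a)$; but this is automatic from the characterization of $M_{2n}^\#(a)$ as the unique $\omega_1$-iterable countable sound non-$2n$-small $a$-premouse, together with the fact that the $\omega_1$-iterability in $V$ of the copy found inside $N$ is built into the conclusion of Lemma \ref{lem:nsuitclosed}.
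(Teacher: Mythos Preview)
Your proof is correct and follows essentially the same approach as the paper: both use Lemma~\ref{lem:nsuitclosed} to place $M_{2n}^\#(a)$ inside $N$ and Lemma~\ref{lem:corr}(1) for its $\SIGMA^1_{2n+2}$-correctness, then run an induction on the projective level. Your level-by-level induction (rather than the paper's two-levels-at-a-time induction on $M_{2n}(N|\delta_N)$) is a mild streamlining that makes the upward direction more immediate, but the substance is the same.
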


\begin{proof}
  Let $a$ be a real in $N$.  By Lemma \ref{lem:nsuitclosed} we have
  that \[ M_{2n}^\#(a) \in N. \] Moreover we have by Lemma
  \ref{lem:corr} that $M_{2n}^\#(a)$ is $\SIGMA^1_{2n+2}$-correct in
  $V$. Therefore it follows that $N$ is $\SIGMA^1_{2n+2}$-correct in
  $V$ by the following inductive argument. 

  Shoenfield's Absoluteness Theorem implies that $M_{2n}(N|\delta_N)$
  is $\SIGMA^1_2$-correct in $V$. Let $k \leq n$ and assume
  inductively that $M_{2n}(N|\delta_N)$ is $\SIGMA^1_{2k}$-correct in
  $V$. Let $\varphi(a)$ be a $\Sigma^1_{2k+2}$-formula for a parameter
  $a \in N \cap \BS$, i.e.
  $\varphi(a) \equiv \exists x \forall y \, \psi(x,y,a)$ for a
  $\Sigma^1_{2k}$-formula $\psi(x,y,a)$. First assume that
  $\varphi(a)$ holds in $V$. As $M_{2n}^\#(a)$ is
  $\SIGMA^1_{2k+2}$-correct in $V$, we have that $\varphi(a)$ holds in
  $M_{2n}^\#(a)$. Let $x \in M_{2n}^\#(a)$ be a real such that
  $M_{2n}^\#(a) \vDash \forall y \, \psi(x,y,a)$. Then in particular
  $V \vDash \forall y \, \psi(x,y,a)$. As $M_{2n}^\#(a) \in N$ we have
  that $x \in M_{2n}(N|\delta_N)$ and by downwards absoluteness
  combined with the inductive hypothesis it follows that
  $M_{2n}(N|\delta_N) \vDash \forall y \, \psi(x,y,a)$ as this
  statement is true in $V$.

  For the other direction assume that $\varphi(a)$ is true in
  $M_{2n}(N|\delta_N)$ and let $x \in M_{2n}(N|\delta_N)$ be a real
  such that $M_{2n}(N|\delta_N) \vDash \forall y \, \psi(x,y,a)$. Let
  $y \in M_{2n}^\#(a \oplus x) \cap \BS$ be arbitrary. Since
  $M_{2n}^\#(a \oplus x) \cap \BS \subset M_{2n}(N|\delta_N) \cap
  \BS$, we have that $M_{2n}(N|\delta_N) \vDash \psi(x,y,a)$. As
  $\psi(x,y,a)$ is a $\Sigma^1_{2k}$-formula, the inductive hypothesis
  implies that $V \vDash \psi(x,y,a)$. Since $M_{2n}^\#(a \oplus x)$
  is $\SIGMA^1_{2n+2}$-correct in $V$, it follows that
  $M_{2n}^\#(a \oplus x) \vDash \psi(x,y,a)$. But
  $y \in M_{2n}^\#(a \oplus x) \cap \BS$ was arbitrary, so
  $M_{2n}^\#(a \oplus x) \vDash \exists x \forall y \,
  \psi(x,y,a)$. Therefore the fact that $M_{2n}^\#(a \oplus x)$ is
  $\SIGMA^1_{2k+2}$-correct in $V$ implies $V \vDash \varphi(a)$.
  
  This inductive argument shows that $M_{2n}(N|\delta_N)$ is
  $\SIGMA^1_{2n+2}$-correct in $V$. But $N$ and $M_{2n}(N|\delta_N)$
  agree on their reals and hence $N$ is $\SIGMA^1_{2n+2}$-correct in
  $V$, as desired.
\end{proof}

Note that we could have also proven Lemma \ref{lem:2n+1suitcorr} by an
argument similar to the one we gave in the proof of Lemma
\ref{lem:colcorr} but with additionally using the uniformization
property as in the proof of Lemma \ref{lem:corr} $(1)$.

%%% Local Variables:
%%% mode: latex
%%% TeX-master: "Main"
%%% End:

\subsection{Outline of the Proof}
\label{sec:outline}

Our main goal for the rest of this paper is to give a proof of the
following theorem.

\begin{thm}\label{cor:newWoodin1}
  Let $n \geq 0$ and assume there is no $\SIGMA^1_{n+2}$-definable
  $\omega_1$-sequence of pairwise distinct reals. Then the
  following are equivalent.
\begin{enumerate}[(1)]
\item $\PI^1_n$ determinacy and $\Pi^1_{n+1}$ determinacy,
\item for all $x \in \BS$, $M_{n-1}^\#(x)$ exists and is
  $\omega_1$-iterable, and $M_n^\#$ exists and is $\omega_1$-iterable,
\item $M_n^\#$ exists and is $\omega_1$-iterable.
\end{enumerate}
\end{thm}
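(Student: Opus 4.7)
The plan is to prove the cyclic implications $(1) \Rightarrow (2) \Rightarrow (3) \Rightarrow (1)$. Of these, $(2) \Rightarrow (3)$ is immediate since $(3)$ is literally the second conjunct of $(2)$, so there will be nothing to verify. I note in advance that the background hypothesis asserting the non-existence of a $\SIGMA^1_{n+2}$-definable $\omega_1$-sequence of pairwise distinct reals will be invoked only in $(1) \Rightarrow (2)$.

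For $(1) \Rightarrow (2)$ I would split $(2)$ into its two conjuncts. The first, that $M_{n-1}^\#(x)$ exists and is $\omega_1$-iterable for every $x \in \BS$, will follow by applying the inductive conclusion of Theorem \ref{cor:bfDetSharps} at level $n-1$ to the hypothesis of $\PI^1_n$ determinacy contained in $(1)$; since this inductive conclusion has already been freed from any auxiliary hypothesis on $\SIGMA^1_{n+1}$-definable $\omega_1$-sequences of reals at the previous stage (as will be established in Section \ref{sec:assumption}), no additional sequence hypothesis is needed at this step. The second conjunct, that $M_n^\#$ exists and is $\omega_1$-iterable, is then precisely the content of Theorem \ref{thm:newWoodin1}, whose three hypotheses---$\PI^1_n$ determinacy, $\Pi^1_{n+1}$ determinacy, and the absence of a $\SIGMA^1_{n+2}$-definable $\omega_1$-sequence of pairwise distinct reals---match exactly $(1)$ together with the background hypothesis of the present theorem.

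For $(3) \Rightarrow (1)$ I would invoke Theorem $2.14$ in \cite{Ne02} (that is, the appropriately relativized form of Theorem \ref{thm:Neeman}): from the existence and $\omega_1$-iterability of $M_n^\#$ one obtains determinacy for every $\game^{(n)}(<\omega^2 - \Pi^1_1)$-definable set of reals, and therefore in particular both $\PI^1_n$ determinacy and $\Pi^1_{n+1}$ determinacy, which is exactly $(1)$. No appeal to the background hypothesis is required here, in accordance with the remark that the equivalence of $(2)$ and $(3)$ and the direction $(2) \Rightarrow (1)$ proved in \cite{Ne02} are unconditional.

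The real obstacle is Theorem \ref{thm:newWoodin1} itself, which will occupy the bulk of the remainder of the paper. It is proven by a simultaneous induction on $n$ split into the odd and even cases (Sections \ref{sec:odd} and \ref{sec:even}): from the determinacy hypotheses one first constructs via Lemma \ref{lem:KS} and the $K^c$-style arguments of Section \ref{ch:WCFromDet} a proper class inner model with $n$ Woodin cardinals, then extracts a pre-$n$-suitable candidate for $M_n^\#$ as in Lemma \ref{lem:prensuitable}, and finally argues that this candidate is $\omega_1$-iterable by means of the theory of $n$-suitable premice developed in Section \ref{sec:prep}, with the $\SIGMA^1_{n+2}$-sequence hypothesis used crucially to reflect short tree iterability. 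Once Theorem \ref{thm:newWoodin1} is in hand, the present theorem is obtained by the routine packaging described above.
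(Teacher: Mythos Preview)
Your proposal is correct and matches the paper's approach: the paper's proof is the one-line ``This follows from Theorems \ref{thm:newWoodinodd} and \ref{thm:newWoodineven} together with Theorem $2.14$ in \cite{Ne02},'' which is exactly your cycle $(1)\Rightarrow(2)\Rightarrow(3)\Rightarrow(1)$ with Theorem \ref{thm:newWoodin1} supplying the hard direction and Neeman's theorem the converse. One small correction to your final paragraph: the $\SIGMA^1_{n+2}$-sequence hypothesis is not used to reflect short tree iterability but rather to verify that the auxiliary models $M_z$ (resp.\ $M_x$) built in Sections \ref{sec:odd} and \ref{sec:even} satisfy $\ZFC$ (Claims \ref{cl:MzZFCnew} and \ref{cl:MxZFCEven}), by ruling out an uncountable definable sequence of reals arising from a failure of power set.
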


For $n=0$ this is due to L. Harrington (see \cite{Ha78}) and
D. A. Martin (see \cite{Ma70}). 

The results that $(3)$ implies $(2)$ and that $(2)$ implies $(1)$ for
$\omega_1$-iterable premice $M_n^\#$ are due to the third author for
odd $n$ (unpublished) and due to Neeman for even $n>0$ (see Theorem
$2.14$ in \cite{Ne02}), building on work of Martin and Steel (see
\cite{MaSt89}). Moreover the results that $(3)$ implies $(2)$ and that
$(2)$ implies $(1)$ hold without the background hypothesis that every
$\SIGMA^1_{n+2}$-definable sequence of pairwise distinct reals is
countable.

We will focus on the proof of the following theorem, which is the
implication ``$(1) \Rightarrow (3)$'' in Theorem \ref{cor:newWoodin1}
and due to the third author.

\begin{thm}\label{thm:newWoodin1}
  Let $n \geq 1$ and assume there is no $\SIGMA^1_{n+2}$-definable
  $\omega_1$-sequence of pairwise distinct reals. Moreover assume
  that $\PI^1_n$ determinacy and $\Pi^1_{n+1}$ determinacy hold. Then
  $M_n^\#$ exists and is $\omega_1$-iterable.
\end{thm}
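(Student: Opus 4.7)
The plan is to induct on $n$, with base case $n=1$ being the theorem of Harrington (for $n=0$) combined with the Martin--Steel machinery already used in Section \ref{ch:WCFromDet}. For the inductive step, I assume Theorem \ref{thm:newWoodin1} holds at all levels below $n$, so that by Corollary \ref{cor:newWoodin1rel} applied inductively, $\PI^1_n$ determinacy yields $M_{n-1}^\#(x)$ for every real $x$. The argument then splits into an odd and even case, reflecting the periodicity phenomenon underlying Lemmas \ref{lem:corr} and \ref{lem:oddnotcorr}; this is precisely the division into Sections \ref{sec:odd} and \ref{sec:even}.

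For odd $n$, I would first apply Lemma \ref{lem:prensuitable} (whose hypothesis is delivered by Theorem \ref{Steel}) to produce a pre-$n$-suitable premouse $N$, and then invoke Lemma \ref{lem:nsuitnew} to upgrade $N$ to an $n$-suitable premouse, so that $N$ has a fullness-preserving iteration strategy for short and maximal trees. By Lemmas \ref{lem:nsuitclosed} and \ref{lem:2n+1suitcorr}, $N$ is $\SIGMA^1_{n+1}$-correct in $V$ and its cutoff $N|\delta_N$ is closed under $A \mapsto M_{n-1}^\#(A)$. The candidate for $M_n^\#$ is produced by placing the appropriate top measure above (an iterate of) $N$, using the fully backgrounded construction inside the inner model from Theorem \ref{Steel} as in the proof of Lemma \ref{lem:prensuitable}. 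To establish $\omega_1$-iterability of this candidate, I would argue by contradiction: a failure would yield an iteration tree $\mathcal T$ of countable length for which the $\Q$-structure iteration strategy chooses ``wrong'' branches, and by Lemma \ref{lem:defstiodd} the statement identifying the correct branch is $\SIGMA^1_{n+2}$ uniformly. Varying $\mathcal T$ over a putative $\omega_1$-sequence of bad trees then produces a $\SIGMA^1_{n+2}$-definable $\omega_1$-sequence of pairwise distinct reals (coding the branches or distinct iterates), contradicting the hypothesis.

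For even $n$, the same template fails because the $\Pi^1_{n+1}$-uniformization property is only available at odd levels and the correctness of Lemma \ref{lem:corr}(1) gives $\SIGMA^1_{n+2}$-correctness of $M_{n-1}^\#$ without the one-level-higher boost needed above. Here I would use Theorem \ref{thm:w2inacc}, which gives for a cone of reals $x$ that $\omega_2^{M_{n-1}(x)}$ is strongly inaccessible in $\HOD^{M_{n-1}(x)|\delta_x}$, as an analogue of the Kechris--Solovay measurability of $\omega_1$. Combined with weak covering for the core model built inside this $\HOD$ and with the sequence hypothesis (which bounds the number of candidate iterates one must consider), one derives the iterability of the candidate $M_n^\#$ produced by the $K^c$-style construction inside $M_{n-1}(x)|\delta_x$. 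The main obstacle in both cases is converting the partial iterability arising from $n$-suitability (short trees plus cofinal well-founded branches for maximal trees) into genuine $\omega_1$-iterability of a premouse with a top measure; the hypothesis on $\SIGMA^1_{n+2}$-definable $\omega_1$-sequences is exactly the tool that precludes the pathologies (an $\omega_1$-sequence of bad branches, or an $\omega_1$-sequence of distinct suitable iterates) which would otherwise block this passage.
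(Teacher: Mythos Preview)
Your proposal has the right high-level split (odd vs.\ even) and correctly identifies several ingredients (suitable premice, Theorem~\ref{thm:w2inacc}, weak covering), but the actual mechanism by which the paper uses the no-$\SIGMA^1_{n+2}$-sequence hypothesis is different from what you describe, and the missing pieces are not cosmetic. In both parities the paper builds, level by level, a transitive model $M_z$ (resp.\ $M_x$) of height $\omega_1^V$ closed under $a\mapsto M_{n-1}^\#(a)$, and the no-sequence hypothesis is used \emph{only} to show that this model satisfies the Power Set Axiom (Claims~\ref{cl:MzZFCnew} and~\ref{cl:MxZFCEven}): a failure of Power Set yields an $\omega_1$-sequence of new subsets of some countable $\gamma$, and since each successor level of the construction is $\Pi^1_{n+1}$-definable, the sequence is $\SIGMA^1_{n+2}$. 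Your plan to extract such a sequence directly from a failure of iterability (``an $\omega_1$-sequence of bad branches/trees'') or to ``bound the number of candidate iterates'' is not how the argument runs, and it is unclear how one would make that work: a single countable bad tree does not obviously give $\omega_1$ distinct reals in a $\SIGMA^1_{n+2}$ way.

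For odd $n$ you are missing the core device: the Determinacy Transfer Theorem (Theorem~\ref{thm:transferthm}) is applied to the sets $Q^\phi_m$ encoding the theory of $M_{2n-2}(N_x|\delta_{N_x})$ with indiscernibles, where $N_x$ is the simultaneous coiterate of all $(2n-1)$-suitable premice coded below $x$. Turing determinacy of these sets stabilizes the theory on a cone, and only then does one run $K^c$ inside the ZFC model $M_z$ to produce $(M_{2n-1}^\#)^{M_z}$; the stabilization is what makes $(M_{2n-1}^\#)^{M_z}$ independent of $z$, which in turn yields $\omega_1$-iterability in $V$ by moving to a larger $z$. For even $n$, since the transfer/stabilization route is unavailable, the paper instead builds $M_x$ to be $\SIGMA^1_{2n+2}$-correct by hand, inserting at odd successor levels the values of the $\Pi^1_{2n+1}$-uniformizing function for a universal set; one then runs $K^c$ inside $M_x$, uses weak covering to find a singular $\gamma$ with $(\gamma^+)^{K^{M_x}}=(\gamma^+)^{M_x}$, and performs a five-step almost-disjoint coding over a lower part model $\Lpoddn(\,\cdot\,)$ to produce a real $z$ with $(\gamma^+)^{M_x}=\omega_2^{\Lpoddn(z)}$ and $K^{M_x}|(\gamma^+)^{M_x}\in\Lpoddn(z)$. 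Only then does Theorem~\ref{thm:varw2inacc} (inaccessibility of $\omega_2$ in $\HOD^{\Lpoddn(z)}$) contradict the computation that $\omega_2^{\Lpoddn(z)}$ is a successor cardinal in $(K^c)^{\Lpoddn(z)}$. Your sketch omits the correct-model construction, the coding, and the passage to lower part models, all of which are essential.
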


\begin{remark}
  Let $n \geq 0$. Then we say that there exists a
  \emph{$\SIGMA^1_{n+2}$-definable $\omega_1$-sequence of pairwise
    distinct
    reals}\index{SIGMA1ndefinableomega1sequence@$\SIGMA^1_{n+2}$-definable
    $\omega_1$-sequence of pairwise distinct reals} iff there exists a
  well-order $\leq^*$ of ordertype $\omega_1$ for reals such that if
  we let $X_{\leq^*} = field(\leq^*)$, that means if we have for all
  $y \in \BS$ that
  \[y \in X_{\leq^*} \; \Leftrightarrow \; \exists x \; (x \leq^* y
  \vee y \leq^* x), \]
  then there exists a $\SIGMA^1_{n+2}$-definable relation $R$ such
  that we have for all $x,y \in \BS$,
  \[ R(x,y) \; \Leftrightarrow \; x,y \in X_{\leq^*} \; \wedge \; x
    \leq^* y. \]
\iffalse
  and there exists a $\SIGMA^1_{n+2}$-definable relation
  $R_2$ such that we have for all $x,w \in \BS$, \[ R_2(x,w) \;
  \Leftrightarrow \; w \in \WO \, \wedge \, x \text{ is the } ||w||-\text{th
  element according to } \leq^*, \]
  where $\WO = \{ w \in \BS \st w \text{ codes a well-ordering on }
  \mathbb{N} \}$.
  \fi
\iffalse  
  Moreover, we say that there exists a
  \emph{$\SIGMA^1_{n+2}$-definable good $\omega_1$-sequence of
    pairwise distinct
    reals}\index{SIGMA1ndefinablegoodomega1sequence@$\SIGMA^1_{n+2}$-definable
    good $\omega_1$-sequence of pairwise distinct reals} iff there
  exists a well-order $\leq^*$ of ordertype $\omega_1$ for real as
  above and in addition there is a $\SIGMA^1_{n+2}$-definable relation
  $R$ such that we have for all $x,y\in \BS$,
  \[ R(x,y) \; \Leftrightarrow \; y \in X_{\leq^*} \wedge \{(x)_n \st
    n < \omega\} = \{z \in X_{\leq^*} \st z \leq^* y\}. \] I.e. iff
  $x$ codes the set of all predecessors of $y$ according to
  $\leq^*$.\fi
\end{remark}

\iffalse
In fact it is not necessary to demand the existence of $R_1$ above as
it follows from the existence of $R_2$. We decided to explicitly
phrase it here to be able to emphasize later what exactly is needed in
a specific argument.
\fi

We will first show in Section \ref{sec:assumption} how boldface
determinacy for a level $\PI^1_{n+1}$ of the projective hierarchy can
be used to prove that every sequence of pairwise distinct reals
which is $\SIGMA^1_{n+2}$-definable is in fact countable.

This will enable us to conclude Theorem \ref{cor:bfDetSharps} from
Theorem \ref{thm:newWoodin1}. The odd levels in the inductive proof of
Theorem \ref{thm:newWoodin1} will finally be proven in Section
\ref{sec:odd} and the even levels in Section \ref{sec:even}. As
mentioned before the proof for the odd and the even levels of the
projective hierarchy will be different because of the periodicity in
the projective hierarchy in terms of uniformization and correctness.

%%% Local Variables:
%%% mode: latex
%%% TeX-master: "Main"
%%% End:

\subsection{Making use of the Right Hypothesis}
\label{sec:assumption}

In this section we will provide one ingredient for the proof of
Theorem \ref{cor:bfDetSharps}. We will see that the results from
Sections \ref{sec:odd} and \ref{sec:even} can actually be obtained
from boldface determinacy at the right level of the projective
hierarchy. The following lemma makes this precise. The result
essentially goes back to Mansfield \cite{Man75}; the proof we give is
from \cite{Ke78} (see also \cite[Theorem 25.39]{Jec03}).

\begin{lemma} 
\label{lem:bfDetSeq}
Let $n \geq 0$. Then $\PI^1_{n+1}$ determinacy implies that every
$\SIGMA^1_{n+2}$-definable sequence of pairwise distinct reals is
countable.
\end{lemma}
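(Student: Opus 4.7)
The plan is to argue by contradiction. Suppose there is a $\SIGMA^1_{n+2}$-definable $\omega_1$-sequence of pairwise distinct reals; equivalently, by the remark immediately preceding the lemma, there is a $\SIGMA^1_{n+2}$ well-ordering $\leq^*$ whose field $A = X_{\leq^*} \subseteq \BS$ has order-type $\omega_1$. In particular, $A$ is itself $\SIGMA^1_{n+2}$-definable (from the same parameters used to define $R$), and $|A| = \aleph_1 > \aleph_0$.

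The first step is to apply a perfect set theorem. Under $\PI^1_{n+1}$ determinacy, every uncountable $\SIGMA^1_{n+2}$ set of reals contains a non-empty perfect subset; this is Davis's theorem for $\SIGMA^1_2$ (from $\PI^1_1$ determinacy) shifted up one level, using the game-quantifier analysis as in Moschovakis. Applying this to $A$ yields a perfect set $P \subseteq A$, and the restriction of $\leq^*$ to $P \times P$ is a $\SIGMA^1_{n+2}$ well-ordering of $P$. Since $P$ is perfect, a standard Cantor-scheme construction produces a homeomorphism $h \colon \BS \to P$, and pulling back $\leq^*$ along $h$ yields a $\SIGMA^1_{n+2}$ well-ordering $\leq'$ of all of $\BS$, viewed as a subset of $\BS \times \BS$.

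The contradiction now comes from a regularity property. Under $\PI^1_{n+1}$ determinacy, every $\SIGMA^1_{n+2}$ subset of $\BS \times \BS$ has the Baire property; this follows from Banach-Mazur games, propagated to the boldface pointclass one level above the assumed determinacy. However, by a classical application of the Kuratowski-Ulam theorem, no well-ordering of $\BS$, considered as a subset of $\BS \times \BS$, can have the Baire property: if it did, then by Kuratowski-Ulam both vertical and horizontal sections would be comeager in a comeager set of reals, which is impossible for a linear order (either cofinal initial segments or tails must be meager on a comeager set). This contradicts the existence of $\leq'$ and completes the proof.

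The main (and only) obstacle is to cite the correct boldface regularity consequences at the pointclass $\SIGMA^1_{n+2}$ from $\PI^1_{n+1}$ determinacy -- namely the perfect set property and the Baire property. Both are well-known in the literature (see e.g.\ Moschovakis \cite{Mo09}, Sections 6E and 6A, or Kechris \cite{Ke78}); one could equally well substitute Lebesgue measurability and use Fubini in place of the Baire property. Note that no genuinely new fine-structural input is required, so the lemma is really a pure descriptive set theory fact.
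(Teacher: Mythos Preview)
Your argument is correct in substance, though note a small imprecision: a perfect set $P$ need not be homeomorphic to $\BS$ (it may be compact), so you should take a continuous injection $h\colon 2^\omega \to P$ rather than a homeomorphism from $\BS$; the Kuratowski--Ulam step then runs on $2^\omega \times 2^\omega$, and the initial segments of the pulled-back order are countable (hence meager) because the ambient well-order has type $\omega_1$.

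Your route genuinely differs from the paper's. The paper uses \emph{only} the perfect set property for $\SIGMA^1_{n+2}$: starting from a perfect $P\subseteq X_{\leq^*}$, it shows that for any perfect $P_0$ and continuous injection $f_0\colon P_0\to P$ one can find a perfect $P_1\subseteq P_0$ and continuous $f_1\colon P_1\to P$ with $f_1(x)<^* f_0(x)$ everywhere; iterating and intersecting (using compactness) yields an infinite $<^*$-descending chain. Your approach instead combines the perfect set property with the Baire property and the classical Sierpi\'nski-style argument that no well-ordering of the reals can have the Baire property. Both are standard; the paper's version is slightly more economical in its hypotheses (as the paper itself remarks, PSP alone suffices), while yours is arguably the more familiar textbook route and avoids the iterated shrinking construction.
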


\begin{remark}
  In fact, instead of $\PI^1_{n+1}$ determinacy we could also just
  assume the weaker hypothesis that $\SIGMA^1_{n+2}$ has the perfect
  set property. The proof only uses this consequence of $\PI^1_{n+1}$
  determinacy.
\end{remark}

\begin{proof}[Proof of Lemma \ref{lem:bfDetSeq}]
  Suppose there is a well-order $\leq^*$ of ordertype $\omega_1$ for
  reals and a $\SIGMA^1_{n+2}$-definable relation $R$ as in the remark
  after Theorem \ref{thm:newWoodin1}. In what follows we identify
  reals with elements of $2^\omega$ and let $X_{\leq^*} =
  field(X)$. By $\PI^1_{n+1}$ determinacy, $\SIGMA^1_{n+2}$ has the
  perfect set property (see \cite[27.14]{Ka03}). Therefore, since
  $X_{\leq^*}$ is uncountable, $X_{\leq^*}$ has a perfect subset
  $P \subseteq X_{\leq^*}$. Now it suffices to show the following
  claim.
  
  \begin{claim}\label{cl:bfDetSeq}
    Suppose $P_0$ is a perfect set and $f_0 \colon P_0 \rightarrow P$
    is a continuous injection. Then there is a perfect set
    $P_1 \subseteq P_0$ and a continuous injection
    $f_1 \colon P_1 \rightarrow P$ such that $f_1(x) <^* f_0(x)$ for
    all $x \in P_1$.
  \end{claim}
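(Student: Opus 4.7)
The plan is to deduce the lemma from the claim by iteration, and then prove the claim itself by combining the perfect set property for $\SIGMA^1_{n+2}$ with classical uniformization machinery.

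For the reduction, starting with $P_0 = P$ and $f_0 = \id_P$, I would recursively apply the claim to produce a nested sequence of perfect sets $P_0 \supseteq P_1 \supseteq P_2 \supseteq \cdots$ together with continuous injections $f_n \colon P_n \to P$ satisfying $f_{n+1}(x) <^* f_n(x)$ for every $x \in P_{n+1}$. Each $P_n$ is closed in the compact space $2^\omega$, so the nested intersection $\bigcap_n P_n$ is nonempty; picking any $x$ in it, $(f_n(x))_{n<\omega}$ is an infinite $<^*$-descending sequence, contradicting the well-foundedness of $\leq^*$.

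For the claim, I would work inside $P_0 \times P_0$ with the $\SIGMA^1_{n+2}$ relation
\[ C \;=\; \bigl\{ (x, y) \in P_0 \times P_0 \,:\, f_0(y) <^* f_0(x) \bigr\} ; \]
if we can produce a continuous injection $\sigma \colon P_1 \to P_0$ on a perfect $P_1 \subseteq P_0$ with $(x, \sigma(x)) \in C$ for all $x \in P_1$, then $f_1 = f_0 \circ \sigma$ is a continuous injection $P_1 \to P$ with $f_1(x) <^* f_0(x)$, because $f_0$ is itself an injection. The crucial arithmetic input is that each vertical $x$-section
\[ C_x \;=\; f_0^{-1}\bigl( \{ z \in P : z <^* f_0(x) \} \bigr) \]
is countable, since the $\leq^*$-initial segment below $f_0(x)$ has order type strictly less than $\omega_1$ and $f_0$ is injective. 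Moreover the first-coordinate projection of $C$ equals $P_0$ minus at most one point (the $f_0$-preimage of the $\leq^*$-least element of $f_0[P_0]$, if any), hence is uncountable.

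The perfect set property for $\SIGMA^1_{n+2}$, which follows from $\PI^1_{n+1}$ determinacy, then yields a perfect set $C' \subseteq C$. Being closed in $(2^\omega)^2$, $C'$ is Borel with countable vertical sections, so the Lusin--Novikov uniformization theorem decomposes
\[ C' \;=\; \bigcup_{n<\omega} \operatorname{graph}(\sigma_n) \]
for Borel partial functions $\sigma_n \colon D_n \to P_0$ with $D_n \subseteq P_0$ Borel. Since $|C'| = 2^{\aleph_0}$ and $|C'| = \sum_n |D_n|$, some $D_n$ is uncountable; the classical analytic perfect set theorem provides a perfect subset of $D_n$, Lusin's theorem on continuity of Borel functions restricts to a further perfect subset on which $\sigma_n$ is continuous, and a standard Cantor--Bendixson-type refinement yields a perfect subset $P_1$ on which $\sigma := \sigma_n$ is a continuous injection. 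Setting $f_1 = f_0 \circ \sigma$ completes the proof of the claim.

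The main obstacle will be the final injectivity step: a priori the continuous function extracted from Lusin--Novikov could be locally constant at values $y_0 \in P_0$ for which $f_0(y_0)$ is $<^*$-dominated by $f_0$ on a perfect subset of $P_0$, and eliminating these requires an iterative Baire-category-style argument to peel off the (at most countably many) such "dominant" values before the Cantor--Bendixson split succeeds. Once this standard technicality is handled, the rest of the argument is completely classical.
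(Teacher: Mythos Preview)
Your reduction of the lemma to the claim (nested perfect sets, compactness, infinite $<^*$-descent) matches the paper exactly.

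For the claim itself your route is genuinely different. The paper fixes in advance two continuous \emph{bijections} $h,h^*\colon P_0\to P$ with $h(x)\neq h^*(x)$ for all $x$, sets $A=\{x:h(x)<^*f_0(x)\}$ and $A^*=\{x:h^*(x)<^*f_0(x)\}$, and shows by a short orbit argument that if both were countable one could pick the $<^*$-least $z\in P$ outside the countable $(f_0,h,h^*)$-orbit of $A\cup A^*$ and derive a contradiction. Hence one of $A,A^*$ has a perfect subset $P_1$, and $f_1=h\restriction P_1$ (or $h^*\restriction P_1$) is automatically a continuous injection with $f_1(x)<^*f_0(x)$. The pre-chosen bijections are what make injectivity free.

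Your approach, by contrast, tries to \emph{extract} an injective $\sigma$ from a perfect $C'\subseteq C=\{(x,y):f_0(y)<^*f_0(x)\}$ via Lusin--Novikov, and here there is a real gap. After restricting to a continuity set you have a continuous $\sigma\colon P'\to P_0$ with graph in $C$, but nothing prevents $\sigma$ from having countable range: the perfect set property only hands you \emph{some} perfect $C'\subseteq C$, and since every horizontal slice $C^y$ is co-countable in $P_0$, a single slice already contains a perfect set, so $C'$ may well sit inside $P_0\times\{y_0\}$. In that situation your ``iterative Baire-category peeling'' of locally constant values terminates at the empty set (any continuous map with countable range exhausts itself this way), and the Cantor--Bendixson split never starts. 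Silver's dichotomy does not save you either: it merely confirms that $\sigma$ is constant on a perfect set, yielding $f_0(y_0)<^*f_0(x)$ on that set for one fixed $y_0$, not an injective $f_1$. The gap is repairable---for instance by arranging $\pi_2[C']$ to be uncountable, or simply by adopting the paper's bijection trick---but what you have written does not do it, and calling it a ``standard technicality'' understates the issue.
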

  
  Using this claim, we can define a sequence $(P_n \st n < \omega)$ of
  perfect sets $P_n$ together with continuous injections
  $f_n \colon P_n \rightarrow P$ such that for all $n < \omega$,
  $P_{n+1} \subseteq P_n$ and $f_{n+1}(x) <^* f_n(x)$ for all
  $x \in P_{n+1}$. Since $2^\omega$ is a compact space, all $P_n$'s
  are closed, and intersections of finitely many of the $P_n$'s are
  non-empty, $\bigcap_{n<\omega} P_n$ is non-empty. But if we let
  $x \in \bigcap_{n<\omega} P_n$, then $f_{n+1}(x) <^* f_n(x)$ for all
  $n<\omega$, a contradiction.

  \begin{proof}[Proof of Claim \ref{cl:bfDetSeq}]
    Let $h \colon P_0 \rightarrow P$ and
    $h^* \colon P_0 \rightarrow P$ be two continuous bijections such
    that $h(x) \neq h^*(x)$ for all $x \in P_0$. Such bijections can
    easily be obtained by considering homeomorphisms
    $g_0 \colon P_0 \rightarrow 2^{\omega}$ and
    $g \colon P \rightarrow 2^{\omega}$. Then we can define
    $h = g^{-1} \circ g_0$ and $h^*= g^{-1} \circ \pi \circ g_0$,
    where $\pi \colon 2^{\omega} \rightarrow 2^{\omega}$ is the
    homeomorphism flipping $0$ and $1$ in each coordinate.

    Let \[ A = \{ x \in P_0 \st f_0(x) >^* h(x)\} \] and \[ A^* = \{ x
      \in P_0 \st f_0(x) >^* h^*(x)\}. \]

    \begin{subclaim}
      $A$ or $A^*$ has a perfect subset.
    \end{subclaim}
    \begin{proof}
      Assume not. Then, since $A$ and $A^*$ are
      $\Sigma^1_{2n+2}$-definable, the perfect set property implies
      that $A \cup A^*$ is countable. Let $\mathcal{O}$ be the orbit
      of $A \cup A^*$ under $f_0$, $h$, and $h^*$, i.e. $\mathcal{O}$
      is the smallest set $Q$ such that $A \cup A^* \subseteq Q$ and
      for all $x \in Q$,
      $f_0(x), f_0^{-1}(x), h(x), h^{-1}(x), h^*(x), (h^*)^{-1}(x) \in
      Q$, whenever these maps are defined on $x$.

      Then $\mathcal{O}$ is countable, so we can let $z$ be the
      $<^*$-minimal element in $P \setminus \mathcal{O}$. Let
      $x, x^* \in P_0$ be such that $h(x) = z = h^*(x^*)$. Then
      $x \neq x^*$ and $x, x^* \notin \mathcal{O}$. This implies that
      $f_0(x) \neq f_0(x^*)$ and
      $f_0(x), f_0(x^*) \notin \mathcal{O}$. By minimality of $z$, we
      have $z \leq^* f_0(x)$ and $z \leq^* f_0(x^*)$. But since
      $f_0(x) \neq f_0(x^*)$, this yields that $h(x) = z <^* f_0(x)$
      or $h^*(x) = z <^* f_0(x^*)$. This means, $x \in A$ or
      $x \in A^*$, contradicting the fact that
      $x, x^* \notin \mathcal{O}$.
    \end{proof}

    Suppose $A$ has a perfect subset $P_A$. Then
    $P_1 = P_A \subseteq P_0$ and $f_1 = h \upharpoonright P_A$ are as
    desired. Analogously, suppose $B$ has a perfect subset $P_B$. Then
    $P_1 = P_B \subseteq P_0$ and $f_1 = h^* \upharpoonright P_B$ are
    as desired.
  \end{proof}
\end{proof}

%%% Local Variables:
%%% mode: latex
%%% TeX-master: "Main"
%%% End:

\subsection{$M_{2n-1}^\#(x)$ from Boldface $\Pi^1_{2n}$ Determinacy} 
\label{sec:odd}

The goal of this section is to prove Theorem \ref{thm:newWoodin1} in
the case that $n$ is odd. The proof which is presented in the
following argument only works if $n$ is odd because of the periodicity
in the projective hierarchy in terms of uniformization (see Theorem
$6C.5$ in \cite{Mo09}). The even levels in the statement of Theorem
\ref{thm:newWoodin1} have to be treated differently (see Section
\ref{sec:even}). So we are going to prove the following theorem.

\begin{thm}
\label{thm:newWoodinodd}
Let $n \geq 1$ and assume that every $\PI^1_{2n-1}$-definable set of
reals and every $\Pi^1_{2n}$-definable set of reals is
determined. Moreover assume that there is no
$\SIGMA^1_{2n+1}$-definable $\omega_1$-sequence of pairwise distinct
reals. Then the premouse $M_{2n-1}^\#$ exists and is
$\omega_1$-iterable.
\end{thm}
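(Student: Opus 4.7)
My plan builds on the setup developed in this section. By the outer induction on $n$ the boldface hypothesis of $\PI^1_{2n-1}$ determinacy gives that $M_{2n-2}^\#(x)$ exists and is $\omega_1$-iterable for every real $x$. Together with (boldface) $\PI^1_{2n}$ determinacy, which entails $\SIGMA^1_{2n}$ determinacy, Lemma \ref{lem:prensuitable} produces a pre-$(2n-1)$-suitable premouse $N$, which by Lemma \ref{lem:nsuitnew} is actually $(2n-1)$-suitable and therefore carries a fullness-preserving short-tree iteration strategy $\Sigma$ together with well-founded maximal-tree branches as in Definition \ref{def:nsuitnew}. The proper class model $W = M_{2n-2}(N \vert \delta_N)$ has $2n-1$ Woodin cardinals, and my candidate for $M_{2n-1}^\#$ is a countable active premouse $M^\#$ obtained by a $K^c$-style construction inside $W$ above its largest Woodin, topped by an extender coming from the measurable cardinals above; the canonical choice of $M^\#$ is picked out by the uniformization property of $\PI^1_{2n-1}$ (available via Moschovakis' Second Periodicity because the needed determinacy holds at the previous inductive level), which is precisely the periodicity phenomenon that forces a separate argument in Section \ref{sec:even} at the even levels.

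For the $\omega_1$-iterability of $M^\#$ I plan to argue by contradiction. Assuming $M^\#$ is not $\omega_1$-iterable, let $\T$ be a countable iteration tree on $M^\#$ witnessing the failure: at some limit stage the unique $\Q$-structure-determined cofinal branch is missing in $V$. Using clause $(ii)$ of Definition \ref{def:nsuitnew} for the $(2n-1)$-suitable $N$, which pins down a maximal-tree branch model as $M_{2n-2}(\M(\T)) \vert (\delta(\T)^+)^{M_{2n-2}(\M(\T))}$, together with Lemma \ref{lem:nsuitclosed} (closure of $N \vert \delta_N$ under $a \mapsto M_{2n-2}^\#(a)$) and Lemma \ref{lem:2n+1suitcorr} ($\SIGMA^1_{2n+2}$-correctness of $(2n-1)$-suitable premice), I would assign to each countable ordinal $\alpha$ a canonical real $r_\alpha$ coding an attempt of length $\alpha$ at reconstructing the missing branch of $\T$ together with a witness to that attempt's failure. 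The $\Pi^1_{2n}$-definability of short-tree iterability (Lemma \ref{lem:defstiodd}) then bounds the complexity of the map $\alpha \mapsto r_\alpha$ at $\SIGMA^1_{2n+1}$ uniformly in a code for $N$, while the assumed non-iterability of $M^\#$ forces the $r_\alpha$'s to be pairwise distinct across $\alpha < \omega_1$. This produces a $\SIGMA^1_{2n+1}$-definable $\omega_1$-sequence of pairwise distinct reals, directly contradicting the hypothesis of the theorem.

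The main obstacle I anticipate is the simultaneous verification that $(r_\alpha \colon \alpha < \omega_1)$ is both genuinely pairwise distinct and $\SIGMA^1_{2n+1}$-definable uniformly in $\alpha$. The complexity count relies on uniform absorption of the relevant $\Q$-structures into $M_{2n-2}^\#(\M(\T \upharpoonright \lambda))$ across all limit levels $\lambda < \lh(\T)$, collapsing branch selection into a single existential real quantifier over a $(2n-2)$-mouse operation. Distinctness requires showing that the attempted branch constructions at different stages $\alpha$ genuinely diverge; this is where the absence of $M_{2n-1}^\#$ is crucial, since any putative $\omega_1$-iterable non-$(2n-1)$-small mouse absorbing the sequence would itself have to be $M_{2n-1}^\#$, producing the very object we are assuming does not exist. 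Once both the definability and distinctness are verified, the contradiction shows that the candidate $M^\#$ is $\omega_1$-iterable, and by the minimality built into its construction, together with the fact that it is active with exactly $2n-1$ Woodins below its top extender, $M^\# = M_{2n-1}^\#$, completing the proof of Theorem \ref{thm:newWoodinodd}.
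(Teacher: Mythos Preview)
Your proposal has a genuine gap at its central step, and it also misses the main engine of the paper's argument.

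First, the sequence $(r_\alpha : \alpha < \omega_1)$ is not well-defined as you describe it. You fix a single countable tree $\T$ witnessing the failure of iterability of your candidate $M^\#$; but a countable tree has only countably many nodes, countably many limit levels, and (if any) countably many cofinal branches. There is no coherent notion of an ``attempt of length $\alpha$ at reconstructing the missing branch'' for arbitrary $\alpha < \omega_1$ that would produce pairwise distinct reals. The failure of iterability is a single existential fact about one countable object, not a process that unfolds along $\omega_1$. Your distinctness argument (``absence of $M_{2n-1}^\#$ forces divergence'') is not an argument at all: it names the conclusion you want without indicating any mechanism. So the contradiction with the no-$\SIGMA^1_{2n+1}$-sequence hypothesis simply does not materialize.

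Second, you are missing the key tool that makes the odd case work: the Determinacy Transfer Theorem (Theorem~\ref{thm:transferthm} and its Corollary~\ref{cor:transferthm}). The paper's actual route is quite different from yours. One simultaneously coiterates all $(2n-1)$-suitable premice coded below a real $x$ to a common model $N_x$, and then shows that each set $Q^\phi_m = \{x : M_{2n-2}(N_x|\delta_{N_x}) \models \phi(E,\gamma_1,\dots,\gamma_m)\}$ is determined, precisely by reducing it to a $\game^{(2n-1)}(<\omega^2-\Pi^1_1)$ set. Turing determinacy then stabilizes the theory $T = Th^{M_{2n-2}(N_x|\delta_{N_x})}$ on a cone. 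Next, for each sufficiently large real $z$ one builds a transitive model $M_z$ of height $\omega_1^V$ by closing under $a \mapsto M_{2n-2}^\#(a)$; the no-$\SIGMA^1_{2n+1}$-sequence hypothesis is used here, and only here, to show $M_z \models \ZFC$ (otherwise a failure of power set yields the forbidden sequence). Inside $M_z$ one runs $K^c$ and compares it with the suitable premouse to see that $(K^c)^{M_z}$ is not $(2n-1)$-small, so $(M_{2n-1}^\#)^{M_z}$ exists. The cone-stabilization of $T$ is then used to show that $(M_{2n-1}^\#)^{M_z}$ is independent of $z$, and one verifies $\omega_1$-iterability in $V$ by absorbing any given countable tree into some $M_z$. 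Your proposal touches none of this structure: there is no transfer theorem, no cone stabilization, no family of $\ZFC$ models $M_z$, and the no-sequence hypothesis is invoked for the wrong purpose. The $K^c$ construction you propose inside $W = M_{2n-2}(N|\delta_N)$ is also problematic on its own, since $W$ is not known to model enough of $\ZFC$ for the $K^c$ machinery to produce a non-$(2n-1)$-small model; the paper's $M_z$ is designed exactly to provide such a background universe.
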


The proof of Theorem \ref{thm:newWoodinodd} uses the following
Determinacy Transfer Theorem, which is due to A. S. Kechris and the
third author (see \cite{KW08}). In the version as stated below it
follows from \cite{KW08} using \cite{Ne95} and that Theorem
\ref{cor:newWoodin1} holds below $2n-1$ inductively.

\begin{thm}[Determinacy Transfer Theorem]
  \label{thm:transferthm}
  Let $n \geq 1$. Assume determinacy for every $\PI^1_{2n-1}$- and
  every $\Pi^1_{2n}$-definable set of reals. Then we have determinacy
  for all \emph{$\game^{(2n-1)} ( <\omega^2 - \Pi^1_1)$}-definable
  sets of reals.\footnote{See Section $6D$ in
  \cite{Mo09} for a definition and some basic facts about the game
  quantifier ``\,$\game$\,''. We let ``\,$\game^{(n)}$\,'' denote $n$
  successive applications of the game quantifier $\game$. For the
  definition of the difference hierarchy and in particular of the
  pointclass $\alpha-\Pi^1_1$ for an ordinal $\alpha < \omega_1$ see
  for example Section $31$ in \cite{Ka03}.}
\end{thm}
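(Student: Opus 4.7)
The plan is to derive Theorem \ref{thm:transferthm} by combining three ingredients: the inductive form of Theorem \ref{cor:newWoodin1} at levels below $2n-1$, Neeman's mouse-to-determinacy transfer from \cite{Ne95}, and the step-by-step transfer argument of Kechris--Woodin in \cite{KW08}, which handles exactly the odd-index case that $2n-1$ always falls in.

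First I would use the inductive hypothesis: Corollary \ref{cor:newWoodin1rel} applied at $n \mapsto 2n-2$ yields that our boldface hypothesis $\PI^1_{2n-1}$ determinacy is equivalent to the existence and $\omega_1$-iterability of $M_{2n-2}^\#(x)$ for every $x \in \BS$. Feeding these mice into Theorem $2.5$ of \cite{Ne95}, which is the parameter-relativized form of the mouse-to-determinacy direction already recorded as Theorem \ref{thm:Neeman}, then yields that every $\game^{(2n-2)}(<\omega^2-\Pi^1_1)$-definable set of reals is determined. This supplies the ``base layer'' of game-quantifier determinacy that is required as input to the Kechris--Woodin transfer.

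Combining this base layer with the remaining hypothesis of $\Pi^1_{2n}$ determinacy, the step-by-step transfer argument in \cite{KW08} lifts the game-quantifier determinacy one index higher, from $\game^{(2n-2)}$ to $\game^{(2n-1)}$. That $2n-1$ is odd is essential here, since the odd-level step is precisely the content of Theorem $1.10$ of \cite{KW08}; the even-level transfer is instead handled in this paper separately in Section \ref{sec:Applications} and relies on Theorem \ref{cor:bfDetSharps} itself rather than on an inductive hypothesis at a lower level.

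The main subtlety, and the principal obstacle to merely quoting Theorem $1.10$ of \cite{KW08}, is that the Kechris--Woodin statement is formulated with a fully boldface $\PI^1_{n+1}$ hypothesis, whereas here we have the weaker mixed input of boldface $\PI^1_{2n-1}$ together with lightface $\Pi^1_{2n}$ determinacy. The plan is to circumvent this by observing that the parameter-relativized mouse existence ``$M_{2n-2}^\#(x)$ exists and is $\omega_1$-iterable for every real $x$'' already encodes the full boldface strength needed at the lower level, so that the Kechris--Woodin step only needs to consume the lightface $\Pi^1_{2n}$ determinacy at the top level to deliver the transfer to $\game^{(2n-1)}(<\omega^2-\Pi^1_1)$ determinacy as stated.
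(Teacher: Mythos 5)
Your proposal follows essentially the same route as the paper: inductively obtain $M_{2n-2}^\#(x)$ for all reals $x$ from $\PI^1_{2n-1}$ determinacy, apply Theorem $2.5$ of \cite{Ne95} to get $\game^{(2n-2)}(<\omega^2-\Pi^1_1)$ determinacy, and then invoke the lightface form of Theorem $1.10$ of \cite{KW08}. The only step you leave implicit is that the Kechris--Woodin transfer takes $\Delta^1_{2n}$ determinacy as its input at the top level, so one also needs Martin's theorem that $\Det(\Delta^1_{2n})$ is equivalent to $\Det(\Pi^1_{2n})$ (Theorem $5.1$ in \cite{KS85}) in order to consume the hypothesis exactly as stated.
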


\begin{proof}
  The lightface version of Theorem $1.10$ in \cite{KW08} (see p. $369$
  in \cite{KW08}) gives that for all $n \geq 1$,
  \[ \Det(\game^{(2n-2)}(<\omega^2 - \PI^1_1)) \rightarrow
  [\Det(\Delta^1_{2n}) \leftrightarrow \Det(\game^{(2n-1)}(<\omega^2 -
  \Pi^1_1))]. \]
  As in the addendum §5 in \cite{KW08} we need to argue that
  $\PI^1_{2n-1}$ determinacy implies
  $\game^{(2n-2)}(<\omega^2 - \PI^1_1)$ determinacy to obtain that it
  implies the right-hand side of the implication, i.e.
  ``$\Det(\Delta^1_{2n}) \leftrightarrow \Det(\game^{(2n-1)}(<\omega^2
  - \Pi^1_1))$''.

  Recall that we assume inductively that Theorem \ref{cor:newWoodin1}
  holds for all $m < 2n-1$. Together with Lemma \ref{lem:bfDetSeq}
  this implies that the premouse $M_{2n-2}^\#(x)$ exists and is
  $\omega_1$-iterable for all $x \in \BS$ from $\PI^1_{2n-1}$
  determinacy. By Theorem $2.5$ in \cite{Ne95} this yields
  $\game^{(2n-2)}(<\omega^2 - \PI^1_1)$ determinacy. Therefore we have
  that
  \[ \Det(\PI^1_{2n-1}) \rightarrow [\Det(\Delta^1_{2n})
  \leftrightarrow \Det(\game^{(2n-1)}(<\omega^2 - \Pi^1_1))]. \]
  
  We have
  \[ \Det(\Delta^1_{2n}) \leftrightarrow \Det(\Pi^1_{2n}) \]
  by Theorem $5.1$ in \cite{KS85} which is due to Martin (see
  \cite{Ma73}). So in particular $\PI^1_{2n-1}$ determinacy and
  $\Pi^1_{2n}$ determinacy together imply that
  $\game^{(2n-1)}(<\omega^2 - \Pi^1_1)$ determinacy holds.
\end{proof}

Martin proves in \cite{Ma08} that under the assumption that $x^\#$
exists for every real $x$, \[ A \in \game( <\omega^2 - \Pi^1_1) \]
iff there is a formula $\phi$ such that for all $x \in \BS$
$$ x \in A \text{ iff } L[x] \vDash \phi[x, \gamma_1, \dots,
\gamma_k], $$
where $\gamma_1, \dots, \gamma_k$ are Silver indiscernibles for
$x$. In the light of this result (see also Definition $2.7$ in
\cite{Ne02} for the general case) we can obtain the following
corollary of the Determinacy Transfer Theorem \ref{thm:transferthm}.

\begin{cor}
\label{cor:transferthm}
Let $n \geq 1$. Assume that $\PI^1_{2n-1}$ determinacy and
$\Pi^1_{2n}$ determinacy hold. Suppose $Q$ is a set of reals such that
there is an $m < \omega$ and a formula $\phi$ such that for all
$x \in \BS$
$$ x \in Q \text{ iff } M_{2n-2}(x) \vDash \phi(x,E,\gamma_1, \dots,
\gamma_m), $$
where $E$ is the extender sequence of $M_{2n-2}(x)$ and
$\gamma_1, \dots, \gamma_m$ are the first $m$ indiscernibles of
$M_{2n-2}(x)$. Then $Q$ is determined.
\end{cor}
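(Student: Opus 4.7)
The plan is to derive the corollary directly from Theorem \ref{thm:transferthm} combined with the characterization of the pointclass $\game^{(2n-1)}(<\omega^2 - \Pi^1_1)$ in terms of satisfaction in $M_{2n-2}(x)$ with indiscernible parameters, which is exactly what Martin establishes in \cite{Ma08} for $n=1$ and what Neeman (Definition $2.7$ in \cite{Ne02}) formulates and uses for the general case.

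First I would observe that the inductive hypothesis (Theorem \ref{cor:newWoodin1} at level $2n-2$), together with Lemma \ref{lem:bfDetSeq}, ensures that $\PI^1_{2n-1}$ determinacy implies that $M_{2n-2}^\#(x)$ exists and is $\omega_1$-iterable for every $x \in \BS$. In particular the premouse $M_{2n-2}(x)$ together with its extender sequence $E$ and its sequence of Silver-style indiscernibles $\gamma_1,\dots,\gamma_m$ is a well-defined, canonical object, so that the defining condition
\[ x \in Q \;\;\Longleftrightarrow\;\; M_{2n-2}(x) \vDash \phi(x,E,\gamma_1,\dots,\gamma_m) \]
makes sense uniformly in $x$.

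Next I would invoke the characterization of $\game^{(2n-1)}(<\omega^2 - \Pi^1_1)$ sets. Martin's result recalled in the excerpt shows that for $n=1$ a set $A$ of reals is in $\game(<\omega^2 - \Pi^1_1)$ if and only if there is a formula $\phi$ such that $x \in A$ iff $L[x]= M_0(x) \vDash \phi[x,\gamma_1,\dots,\gamma_k]$ for the first $k$ Silver indiscernibles of $L[x]$. Neeman's Definition $2.7$ in \cite{Ne02} generalizes this equivalence to arbitrary levels: under the assumption that $M_{2n-2}^\#(x)$ exists and is $\omega_1$-iterable for every $x$, a set $A$ of reals belongs to $\game^{(2n-1)}(<\omega^2 - \Pi^1_1)$ iff there is an $m<\omega$ and a formula $\phi$ such that $x \in A$ iff $M_{2n-2}(x) \vDash \phi(x,E,\gamma_1,\dots,\gamma_m)$ with $\gamma_1,\dots,\gamma_m$ the first $m$ indiscernibles of $M_{2n-2}(x)$. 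Applying this equivalence to the defining formula of $Q$, I obtain that $Q \in \game^{(2n-1)}(<\omega^2 - \Pi^1_1)$.

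Finally I would apply Theorem \ref{thm:transferthm} to conclude from $\PI^1_{2n-1}$ determinacy and $\Pi^1_{2n}$ determinacy that every set in $\game^{(2n-1)}(<\omega^2 - \Pi^1_1)$ is determined; in particular $Q$ is determined. The main (and essentially the only) obstacle is locating and applying the correct level-by-level characterization of $\game^{(2n-1)}(<\omega^2 - \Pi^1_1)$-sets via satisfaction in $M_{2n-2}(x)$ with indiscernible parameters; since this is exactly the content of Martin's theorem from \cite{Ma08} and its generalization by Neeman in Definition $2.7$ of \cite{Ne02}, and since the hypotheses of those results are met via the inductive hypothesis, the corollary follows with no further argument.
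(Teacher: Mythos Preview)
Your proposal is correct and follows essentially the same route as the paper: note that the inductive hypothesis gives $M_{2n-2}^\#(x)$ for all $x$, invoke the Martin/Neeman characterization (Definition~2.7 in \cite{Ne02}) to place $Q$ in $\game^{(2n-1)}(<\omega^2 - \Pi^1_1)$, and then apply Theorem~\ref{thm:transferthm}. The paper's argument is just a terser version of what you wrote.
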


This follows from the Determinacy Transfer Theorem
\ref{thm:transferthm} as the set $Q$ defined in Corollary
\ref{cor:transferthm} is
$\game^{(2n-1)}(<\omega^2 - \Pi^1_1)$-definable. Moreover note in the
statement of Corollary \ref{cor:transferthm} that $\PI^1_{2n-1}$
determinacy inductively implies that the premouse $M_{2n-2}^\#(x)$
exists for every real $x$.

Now we are ready to prove Theorem \ref{thm:newWoodinodd}.

\begin{proof}[Proof of Theorem \ref{thm:newWoodinodd}]
  Let $n \geq 1$. We assume inductively that Theorem
  \ref{thm:newWoodin1} holds for $2n-2$, that means we assume that
  $\PI^1_{2n-1}$ determinacy implies that $M_{2n-2}^\#(x)$ exists and
  is $\omega_1$-iterable for all reals $x$. These even levels will be
  proven in Theorem \ref{thm:newWoodineven}. Recall that then our
  hypothesis implies by Lemmas \ref{lem:prensuitable} and
  \ref{lem:nsuitnew} that there exists a $(2n-1)$-suitable premouse.

  Let $x \in \BS$ and consider the simultaneous comparison of all
  $(2n-1)$-suitable premice $N$ such that $N$ is coded by a real
  $y_N \leq_T x$, using the iterability stated in Definition
  \ref{def:nsuitnew}. That means analogous to a usual comparison as in
  Theorem $3.14$ in \cite{St10} we iterate away the least disagreement
  that exists between any two of the models we are considering.

  These premice successfully coiterate to a common model since they
  are all $(2n-1)$-suitable and call this common iterate $N_x$. We
  could have performed this simultaneous comparison inside an inner
  model of $V$ of height $\omega_1^V$ which contains $x$ and is closed
  under the operation $a \mapsto M_{2n-2}^\#(a)$ and therefore the
  resulting premouse $N_x$ is countable in $V$.

  Since $N_x$ results from a successful comparison using iterability
  in the sense of Definition \ref{def:nsuitnew}, there either exists a
  $(2n-1)$-suitable premouse $N$ and a non-dropping iteration from $N$
  to $N_x$ via a short iteration tree or there exists a maximal
  iteration tree $\T$ on a $(2n-1)$-suitable premouse such that
  \[ N_x = M_{2n-2}(\M(\T)) \, | \,
  (\delta(\T)^+)^{M_{2n-2}(\M(\T))}. \]

  Let $\delta_{N_x}$ as usual denote the largest cardinal in $N_x$.
  Then we have that in the first case $N_x$ is a $(2n-1)$-suitable
  premouse again by fullness preservation and so in particular the
  model $M_{2n-2}(N_x|\delta_{N_x})$ constructed in the sense of
  Definition \ref{def:M(N)} is a well-defined proper class premouse
  with $2n-1$ Woodin cardinals. In the second case we have by
  maximality of $\T$ that
  \[ M_{2n-2}(N_x|\delta_{N_x}) \vDash \text{``} \delta(\T) \text{ is
    Woodin''}, \]
  with $\delta_{N_x} = \delta(\T)$, and $M_{2n-2}(N_x|\delta_{N_x})$
  is again a well-defined premouse with $2n-1$ Woodin cardinals.

  For each formula $\phi$ and each $m < \omega$ let $Q^\phi_m$ be the
  set of all $x \in \BS$ such that
  \[ M_{2n-2}(N_x|\delta_{N_x}) \vDash \phi(E, \gamma_1, \dots,
  \gamma_m), \]
  where $E$ is the extender sequence of $M_{2n-2}(N_x|\delta_{N_x})$
  and $\gamma_1, \dots, \gamma_m$ are indiscernibles of
  $M_{2n-2}(N_x|\delta_{N_x})$.

  \begin{claim}\label{cl:QmphiDet}
    For all formulas $\phi$ and for all $m < \omega$ the set
    $Q^\phi_m$ is determined.
  \end{claim}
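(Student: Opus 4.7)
The plan is to reduce the claim to Corollary \ref{cor:transferthm} by reexpressing the defining condition of $Q^\phi_m$ as a statement about $M_{2n-2}(x)$ itself, equipped with its extender sequence and indiscernibles. To this end I would first observe that $N_x$ depends only on the Turing degree of $x$: if $y \equiv_T x$, then the collections of $(2n-1)$-suitable premice coded by reals Turing reducible to $x$ and to $y$ coincide, so the simultaneous comparisons terminate in the same model. In particular $Q^\phi_m$ is Turing-invariant, which is exactly the format required for Corollary \ref{cor:transferthm} to apply.

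Next I would show that $N_x|\delta_{N_x}$ is computable from $x$ inside $M_{2n-2}(x)$. The key inputs are that $M_{2n-2}(x)$ is $\SIGMA^1_{2n}$-correct in $V$ (Lemma \ref{lem:corr} (1), since $2n-2$ is even), that $N_x|\delta_{N_x}$ is a $(2n-2)$-small premouse without definable Woodin cardinals, and that every short tree arising in the simultaneous comparison of the $(2n-1)$-suitable premice coded by reals $\leq_T x$ is guided by $\Q$-structures which are $(2n-2)$-small above their common part model; such $\Q$-structures may be located by fully backgrounded $L[E]$-constructions performed inside $M_{2n-2}(x)$, as in the proof of Lemma \ref{lem:iterability}. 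Consequently the whole simultaneous comparison can be carried out inside $M_{2n-2}(x)$, producing a real $y_x \in M_{2n-2}(x)$ which codes $N_x|\delta_{N_x}$ and is definable from $x$ by a formula of bounded complexity relative to the parameters $E$ and the first few indiscernibles of $M_{2n-2}(x)$.

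The main technical step is then to translate the satisfaction statement
\[ M_{2n-2}(N_x|\delta_{N_x}) \vDash \phi(E, \gamma_1, \dots, \gamma_m) \]
into an equivalent statement about $M_{2n-2}(x)$. Here $M_{2n-2}(N_x|\delta_{N_x})$ is constructed as in Definition \ref{def:M(N)} by placing the full backgrounded $M_{2n-2}$-construction above $N_x|\delta_{N_x}$, so its extender sequence above $\delta_{N_x}$ is obtained by an $L[E]$-construction over the coding real $y_x$; this construction can be replayed inside $M_{2n-2}(x)$, where the resulting extenders and their indiscernibles correspond in a canonical, first-order way to those of $M_{2n-2}(x)$ itself. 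Thus $\phi$ and the tuple $(E,\gamma_1,\dots,\gamma_m)$ of $M_{2n-2}(N_x|\delta_{N_x})$ can be rewritten as some $\psi$ and $(x,E,\gamma_1,\dots,\gamma_{m'})$ for $M_{2n-2}(x)$, yielding
\[ x \in Q^\phi_m \;\;\Longleftrightarrow\;\; M_{2n-2}(x) \vDash \psi(x,E,\gamma_1,\dots,\gamma_{m'}). \]
Corollary \ref{cor:transferthm} then immediately gives determinacy of $Q^\phi_m$.

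The hard part will be the faithful translation in the third step: I must verify that the canonical correspondence between indiscernibles and extenders of $M_{2n-2}(N_x|\delta_{N_x})$ and those of $M_{2n-2}(x)$ (via the coded copy $y_x$) is first-order definable, so that an arbitrary formula $\phi$ about the former structure can be matched with a formula $\psi$ about the latter. This amounts to a careful comparison of the two backgrounded constructions, using condensation-type arguments and the fine-structural agreement between premice over $N_x|\delta_{N_x}$ and premice over a real coding $N_x|\delta_{N_x}$ (as in \cite{SchSt09}).
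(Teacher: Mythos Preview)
Your approach is essentially the same as the paper's: reduce to Corollary \ref{cor:transferthm} by computing $N_x$ inside $M_{2n-2}(x)$ and then replacing $M_{2n-2}(N_x|\delta_{N_x})$ by a fully backgrounded $L[E]$-construction over $N_x|\delta_{N_x}$ performed inside $M_{2n-2}(x)$. The one simplification in the paper is that it works directly over the premouse $N_x|\delta_{N_x}$ rather than over a coding real, and it dispatches your anticipated ``hard part'' in one line: the indiscernibles $\gamma_1,\dots,\gamma_m$ of $M_{2n-2}(x)$ are automatically indiscernibles of the constructed model $L[\bar E](N_x|\delta_{N_x})$, and since the latter coiterates with $M_{2n-2}(N_x|\delta_{N_x})$ to the same model, the two have the same theory with indiscernibles --- no fine-structural translation via \cite{SchSt09} is needed.
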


  \begin{proof}
    We aim to reduce determinacy for the set $\Q_m^\phi$ to
    determinacy for a set $Q$ as in Corollary \ref{cor:transferthm}.

    Recall that by Lemma \ref{lem:nsuitnew} we have that a premouse
    $N$ is $(2n-1)$-suitable iff it is pre-$(2n-1)$-suitable, that
    means iff it satisfies the following properties for an ordinal
    $\delta_0$.
    \begin{enumerate}[(1)]
    \item
      $N \vDash \text{``} \ZFC^- + \; \delta_0 \text{ is the largest
        cardinal''}$,
      \[ N = M_{2n-2}(N|\delta_0) \, | \,
      (\delta_0^+)^{M_{2n-2}(N|\delta_0)}, \]
      and for every $\gamma < \delta_0$,
      \[ M_{2n-2}(N|\gamma) \, | \, (\gamma^+)^{M_{2n-2}(N|\gamma)}
      \lhd N, \]
    \item $M_{2n-2}(N|\delta_0)$ is a proper class model and
      \[ M_{2n-2}(N|\delta_0) \vDash \text{``} \delta_0 \text{ is Woodin''}, \]
    \item for every $\gamma < \delta_0$, $M_{2n-2}(N | \gamma)$ is a set,
      or
      \[ M_{2n-2}(N|\gamma) \nvDash \text{``} \gamma \text{ is
        Woodin''}, \] and
    \item for every $\eta < \delta_0$,
      $M_{2n-2}(N|\delta_0) \vDash \text{``} N|\delta_0 \text{ is }
      \eta \text{-iterable''}$.
    \end{enumerate}

    Formally the definition of pre-$(2n-1)$-suitability requires that
    in the background universe the premouse $M_{2n-2}^\#(z)$ exists
    for all $z \in \BS$. But for a premouse $N$ which is coded by a
    real $y_N \leq_T x$, the model $M_{2n-2}(x)$ can compute if $N$ is
    pre-$(2n-1)$-suitable in $V$, i.e. if $N$ satisfies properties
    $(1)$ - $(4)$ above in $V$, by considering a fully backgrounded
    extender construction as in Definition \ref{def:L[E]constr} above
    $N|\delta_0$, where $\delta_0$ denotes the largest cardinal in
    $N$. Therefore we can make sense of these $(2n-1)$-suitable
    premice inside the model $M_{2n-2}(x)$. For the same reason the
    simultaneous comparison of all such $(2n-1)$-suitable premice
    which are coded by a real $y_N \leq_T x$ as introduced above can
    be carried out inside $M_{2n-2}(x)$ and therefore we have that
    $N_x \in M_{2n-2}(x)$.
    
    Now consider the following formula $\psi_\phi$, where $\phi$ as
    above is an arbitrary formula.
   \begin{align*}
      \psi_\phi(x, E, \gamma_1, \dots, \gamma_m) \,
      \equiv \, & \text{``Write } N_x \text{ for the result of the simultaneous}\\
                & \text{comparison of all } (2n-1)\text{-suitable
                  premice } N \\
                & \text{which are coded by a real } y_N \leq_T x
                  \text{ and}\\
                & \delta_{N_x} \text{ for the largest cardinal in }
                  N_x, \text{ then}\\
                & \; \; \; \; \; L[\bar{E}](N_x|\delta_{N_x}) \vDash \phi(E^*, \gamma_1,
                  \dots, \gamma_m), \\
                & \text{where } E^* \text{ denotes the extender
                  sequence} \\ & \text{of }
                                 L[\bar{E}](N_x|\delta_{N_x}),
                                 \text{which in turn is computed from } E
      \\ & \text{via a fully backgrounded
           extender construction}\\ &  \;\;\; \text{over }
                                      N_x|\delta_{N_x} \text{ as
                                      in Definition
                                      \ref{def:L[E]constr}}\text{.''}    
    \end{align*}

    Now let $E$ denote the extender sequence of the $x$-premouse
    $M_{2n-2}(x)$ and let $\gamma_1, \dots, \gamma_m$ denote
    indiscernibles of the model
    $L[M_{2n-2}(x) | \delta] = M_{2n-2}(x)$, where $\delta$ is the
    largest Woodin cardinal in $M_{2n-2}(x)$. Then we have in
    particular that $\gamma_1, \dots, \gamma_m$ are indiscernibles of
    the model $L[\bar{E}](N_x|\delta_{N_x})$, constructed via a fully
    backgrounded extender construction inside $M_{2n-2}(x)$ as defined
    in the formula $\psi_\phi(x, E, \gamma_1, \dots, \gamma_m)$ above.

    Therefore we have that
    \[ x \in \Q_m^\phi \Leftrightarrow M_{2n-2}(x) \vDash \psi_\phi(x,
    E, \gamma_1, \dots, \gamma_m), \]
    because the premice $L[\bar{E}](N_x|\delta_{N_x})$ as defined
    above and $M_{2n-2}(N_x | \delta_{N_x})$ as in the definition of
    the set $\Q_m^\phi$ coiterate to the same model.

    Thus Corollary \ref{cor:transferthm} implies that $\Q_m^\phi$ is
    determined for any formula $\phi$ and any $m < \omega$. 
  \end{proof}

  The sets $Q^\phi_m$ are Turing invariant, since the premouse $N_x$
  by definition only depends on the Turing degree of $x$.

  Let $Th^{M_{2n-2}(N_x|\delta_{N_x})}$ denote the theory of
  $M_{2n-2}(N_x|\delta_{N_x})$ with indiscernibles (computed in
  $V$). That means
  \begin{align*} Th^{M_{2n-2}(N_x|\delta_{N_x})} = \{ \phi \st
    M_{2n-2}(N_x|&\delta_{N_x}) \vDash \phi(E, \gamma_1, \dots,
    \gamma_m), \\ & \;\;\; m < \omega, \phi \text{ formula}
    \}, \end{align*} where as above $E$ denotes the extender sequence
  of $M_{2n-2}(N_x|\delta_{N_x})$ and $\gamma_1, \dots, \gamma_m$ are
  indiscernibles of $M_{2n-2}(N_x|\delta_{N_x})$. Then we have that
  the theory of $M_{2n-2}(N_x|\delta_{N_x})$ stabilizes on a cone of
  reals $x$ as in the following claim.

  \begin{claim}\label{cl:Hconst}
    There exists a real $x_0 \geq_T x$ such that for all reals
    $y \geq_T x_0$,
    \[ Th^{M_{2n-2}(N_{x_0}|\delta_{N_{x_0}})} =
    Th^{M_{2n-2}(N_y|\delta_{N_y})}. \]
  \end{claim}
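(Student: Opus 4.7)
The plan is to use Martin's theorem on Turing cones together with Claim \ref{cl:QmphiDet}. Recall that by Claim \ref{cl:QmphiDet}, every $Q^\phi_m$ is determined, and by construction $Q^\phi_m$ is Turing invariant because the premouse $N_x$ depends only on the Turing degree of $x$. Martin's cone theorem then yields that for each pair $(m,\phi)$ there is a real $a_{m,\phi}$ such that $Q^\phi_m$ either contains the Turing cone above $a_{m,\phi}$ or is disjoint from it.

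Since there are only countably many pairs $(m,\phi)$ with $m < \omega$ and $\phi$ a formula in the language of premice, we can enumerate them as $(m_k,\phi_k)_{k<\omega}$ and set
\[ x_0 = x \oplus \bigoplus_{k<\omega} a_{m_k,\phi_k}. \]
Then $x_0 \geq_T x$ and $x_0 \geq_T a_{m,\phi}$ for every relevant pair. In particular, $x_0$ lies in the cone above each $a_{m,\phi}$, and so does any $y \geq_T x_0$.

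For such $y$, the truth value of ``$y \in Q^\phi_m$'' coincides with that of ``$x_0 \in Q^\phi_m$'' for every pair $(m,\phi)$, simply because both reals lie in the same cone above $a_{m,\phi}$ on which $Q^\phi_m$ is either constantly true or constantly false. Unraveling the definition of $Q^\phi_m$, this translates to
\[ \phi \in Th^{M_{2n-2}(N_y|\delta_{N_y})} \;\;\Longleftrightarrow\;\; \phi \in Th^{M_{2n-2}(N_{x_0}|\delta_{N_{x_0}})}, \]
for every formula $\phi$ and every choice of $m$ indiscernibles, which is exactly the stabilization statement of the claim.

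The only mildly delicate point is making sure the intersection of countably many Turing cones indeed contains a cone, but this is immediate by taking the join of countably many bases as above. No further iterability or correctness input is needed here beyond what has already been extracted in Claim \ref{cl:QmphiDet}; the content of the present claim is purely a bookkeeping consequence of Martin's cone theorem applied countably many times.
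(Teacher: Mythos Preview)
Your argument is correct and follows essentially the same approach as the paper: both use Claim~\ref{cl:QmphiDet} together with Turing invariance of the sets $Q^\phi_m$ to invoke Martin's cone theorem, and then take the join of the countably many cone bases to obtain $x_0$. The only cosmetic difference is that you explicitly fold $x$ into the join to guarantee $x_0 \geq_T x$, which the paper leaves implicit.
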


  \begin{proof}
    By Claim \ref{cl:QmphiDet} the set $Q^\phi_m$ is determined and as
    argued above it is also Turing invariant for all formulas $\phi$
    and all $m < \omega$. That means the set $Q^\phi_m$ either
    contains a cone of reals or is completely disjoint from a cone of
    reals.

    For each formula $\phi$ and each natural number $m$ let
    $x^\phi_m \in \BS$ be such that either $y \in Q^\phi_m$ for all
    $y \geq_T x^\phi_m$ or else $y \notin Q^\phi_m$ for all
    $y \geq_T x^\phi_m$. Let
    \[ x_0 = \bigoplus \{ x^\phi_m \st \phi \text{ formula}, m <
    \omega \}. \]
    Then we have by construction for all $y \geq_T x_0$ that
    \[ Th^{M_{2n-2}(N_{x_0}|\delta_{N_{x_0}})} =
    Th^{M_{2n-2}(N_y|\delta_{N_y})}, \] as desired.
  \end{proof}

  Let $x_0 \in \BS$ be as in Claim \ref{cl:Hconst}. We want to show
  that the unique theory $T = Th^{M_{2n-2}(N_x|\delta_{N_x})}$ of
  $M_{2n-2}(N_x|\delta_{N_x})$ with indiscernibles as defined above
  for $x \geq_T x_0$ in fact gives a candidate for the theory of the
  premouse $M_{2n-1}^\#$ in $V$ to conclude that $M_{2n-1}^\#$ exists
  and is $\omega_1$-iterable in $V$. By coding a formula $\phi$ by its
  unique Gödel number $\ulcorner \phi \urcorner$ we can code the
  theory $T$ by a real $x_T$.

  Fix a real $z$ such that $z \geq_T x_0 \oplus x_T$. Moreover we can
  pick the real $z$ such that it in addition codes a $(2n-1)$-suitable
  premouse by Lemmas \ref{lem:prensuitable} and \ref{lem:nsuitnew}.

  Using the existence of the premouse $M_{2n-2}^\#(x)$ in $V$ for
  every real $x$, we can as described below close under the operation
  \[ a \mapsto M_{2n-2}^\#(a) \]
  to construct a transitive model $M_z$ from $z$. Moreover we will
  define an order of construction for elements of the model $M_z$
  along the way. 

  The fact that the model $M_z$ will be closed under
  $a \mapsto M_{2n-2}^\#(a)$ directly yields that $M_z$ is
  $\SIGMA^1_{2n}$-correct (see Claim \ref{cl:MzOddProp} below).

  In the construction of the model $M_z$ we aim to construct models by
  expanding the usual definition of Gödels constructible universe $L$
  by adding additional elements at the successor steps of the
  construction. Therefore recall the following definition which is
  used in the definition of Jensens $J$-hierarchy for $L$ (see §1 and
  §2 in \cite{Je72}).

  \begin{definition}\label{def:rud}
    Let $X$ be an arbitrary set. Then $\TC(X)$ denotes the transitive
    closure of $X$ and $rud(X)$ denotes the closure of
    $\TC(X) \cup \{\TC(X)\}$ under rudimentary functions (see for
    example Definition $1.1$ in \cite{SchZe10} for the definition of a
    rudimentary function).
  \end{definition}

  We construct a sequence of models
  $(W_\alpha \st \alpha \leq \omega_1^V)$ and in particular the model
  $M_z = W_{\omega_1^V}$ level-by-level in a construction of length
  $\omega_1^V$, starting from $z$ and taking unions at limit steps of
  the construction. So we let $W_0 = \{ z \}.$

  \textbf{Successor steps:} At a successor level $\alpha+1$ of the
  construction we close the previous model $W_{\alpha}$ under the
  operation $a \mapsto M_{2n-2}^\#(a)$ before closing under
  rudimentary functions. More precisely assume that we already
  constructed $W_\alpha$ and let $a \in W_\alpha$ be arbitrary. Then
  the $a$-premouse $M_{2n-2}^\#(a)$ exists in $V$ because $a$ is
  countable in $V$ and we inductively assume that Theorem
  \ref{cor:newWoodin1} holds for $2n-2$. Let $\M$ be a countable
  $a$-premouse in $V$ with the following properties.
  \begin{enumerate}[$(i)$]
  \item $\M$ is $(2n-1)$-small, but not $(2n-2)$-small,
  \item all proper initial segments of $\M$ are $(2n-2)$-small,
  \item $\M$ is $a$-sound and $\rho_\omega(\M) = a$, and
  \item $\M$ is $\Pi^1_{2n}$-iterable.
  \end{enumerate}
  These properties uniquely determine the $a$-premouse
  $M_{2n-2}^\#(a)$ in $V$. We let $W_{\alpha+1}$ be the model obtained
  by taking the closure under rudimentary functions of $W_\alpha$
  together with all such $a$-premice $\M$ as above for all
  $a \in W_\alpha$, i.e. 
  \begin{eqnarray*}
     W_{\alpha+1} = rud(W_\alpha \cup \{ \M
    \st \exists a \in W_\alpha \text{ such that } \M \text{ is a
      countable } \\ a\text{-premouse satisfying }(i)-(iv)\}).
  \end{eqnarray*}

  \textbf{Order of construction:} For an $a$-premouse $\M_a$ and a
  $b$-premouse $\M_b$ satifying properties $(i)-(iv)$ for
  $a, b \in W_\alpha$, we say that $\M_a$ is defined before $\M_b$ if
  $a$ is defined before $b$ in the order of construction for elements
  of $W_\alpha$, which exists inductively. For elements added by the
  closure under rudimentary functions we define the order of
  construction as in the definition of the order of construction for
  $L$.

  \textbf{Limit steps:} At a limit step of the construction we let
  \[ W_\lambda = \bigcup_{\alpha < \lambda} W_\alpha \]
  for all limit ordinals $\lambda < \omega_1^V$ and we finally let
  \[ M_z = W_{\omega_1^V} = \bigcup_{\alpha < \omega_1^V} W_\alpha. \]

  \textbf{Order of construction:} The order of construction at the
  limit steps is defined as in the definition of the order of
  construction for $L$ (see Lemma $5.26$ in \cite{Sch14}).

  Now we get that $M_z$ is a model of $\ZFC$ from the background
  hypothesis that there is no $\SIGMA^1_{2n+1}$-definable
  $\omega_1$-sequence of pairwise distinct reals as in the following
  claim.

    \setcounter{cl}{2}
  \begin{claim}\label{cl:MzZFCnew}
    $M_z \vDash \ZFC$.
  \end{claim}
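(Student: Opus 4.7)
The plan is to verify the axioms of \ZFC{} one by one for $M_z = W_{\omega_1^V}$. The key input beyond the construction itself is the hypothesis on the non-existence of $\SIGMA^1_{2n+1}$-definable $\omega_1$-sequences of pairwise distinct reals, and this will be used only to obtain the Power Set axiom.

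First, one verifies by induction on $\alpha < \omega_1^V$ that $W_\alpha$ is countable in $V$: at successor stages the rudimentary closure of a countable set together with countably many new premice $M_{2n-2}^\#(a)$ remains countable, and limits are unions of countable chains of countable sets. Consequently $|M_z|=\aleph_1$ in $V$. Extensionality, Foundation, Pairing, Union, Infinity and Separation then follow from the standard analysis of the $J$-style hierarchy: each $W_{\alpha+1}$ is obtained by adjoining countably many new elements to $W_\alpha$ and closing under rudimentary functions, so the usual fine-structural arguments apply. Choice follows from the explicit order of construction, and Replacement from the regularity of $\omega_1^V$ together with the fact that every element of $M_z$ appears in some $W_\alpha$ with $\alpha < \omega_1^V$: the range of any $f \in M_z$ with domain in $M_z$ is contained in some $W_\beta$ with $\beta < \omega_1^V$, hence lies in $W_{\beta+1} \subseteq M_z$.

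The substantive content is the Power Set axiom. The plan is to reduce it to the claim that $\BS \cap M_z$ is countable in $V$. Once this is established, for any $a \in M_z$ the set $(\Pot(a))^{M_z}$ is coded by a countable subset of $\BS \cap M_z$ (since $a$ itself is countable in $V$), hence by Replacement inside $M_z$ it appears at some stage $W_\beta$ with $\beta < \omega_1^V$. To show $\BS \cap M_z$ is countable, one bounds the definability of the construction. By the uniqueness of $M_{2n-2}^\#(a)$ characterised by properties $(i)$--$(iv)$ in the construction, together with Lemma \ref{lem:steel2.2} which shows that $\Pi^1_{2n}$-iterability suffices to pin down $M_{2n-2}^\#(a)$ at this level, the graph of $a \mapsto M_{2n-2}^\#(a)$ is $\Pi^1_{2n}$-definable uniformly in any real coding $a$. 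Unfolding the inductive definition of $W_\alpha$ for countable $\alpha$ (presented by a real code) then shows that the predicate "$r \in M_z$", together with the restriction of the order of construction $<_{M_z}$ to $\BS \cap M_z$, is $\SIGMA^1_{2n+1}(z)$-definable.

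Suppose toward a contradiction that $|\BS \cap M_z| = \aleph_1$. Then $(\BS \cap M_z, <_{M_z})$ is a $\SIGMA^1_{2n+1}$-definable well-ordering of order type $\omega_1$ on a set of pairwise distinct reals, directly contradicting the background hypothesis that no such $\SIGMA^1_{2n+1}$-definable $\omega_1$-sequence exists. Hence $\BS \cap M_z$ is countable and Power Set follows. The principal technical obstacle is the careful accounting of the quantifier complexity in the order of construction: one must verify that the $\Pi^1_{2n}$-definability of $M_{2n-2}^\#$ propagates through $\omega_1^V$ many stages without exceeding the $\SIGMA^1_{2n+1}$ level. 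This is where the odd parity of $2n-1$ is essential, since the $\Pi^1_{2n}$-iterability characterisation of $M_{2n-2}^\#$ given by Lemma \ref{lem:steel2.2} is available precisely at this level.
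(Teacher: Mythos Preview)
Your overall strategy and your definability analysis are both on target and match the paper's: the construction of the $W_\alpha$ is $\Sigma^1_{2n+1}$ in the relevant parameters because the characterisation of $M_{2n-2}^\#(a)$ via properties $(i)$--$(iv)$ is $\Pi^1_{2n}$, and one then contradicts the hypothesis on $\SIGMA^1_{2n+1}$-definable $\omega_1$-sequences. The gap is in your reduction of Power Set to the countability of $\BS\cap M_z$.

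You write that ``for any $a\in M_z$ the set $(\Pot(a))^{M_z}$ is coded by a countable subset of $\BS\cap M_z$ (since $a$ itself is countable in $V$)''. This step needs $a$ to be countable \emph{in $M_z$}, not merely in $V$: only then does a bijection $\omega\to a$ in $M_z$ let you code each $A\subseteq a$ by a real in $M_z$. But once you have shown $\BS\cap M_z$ is countable, $\omega_1^{M_z}$ exists and is strictly below $\omega_1^V$, so there are ordinals $\gamma\in M_z$ which are uncountable in $M_z$; for such $\gamma$ there is no reason the subsets of $\gamma$ in $M_z$ are coded by reals in $M_z$, and no reason $(\Pot(\gamma))^{M_z}$ is countable in $V$. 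The appeal to ``Replacement inside $M_z$'' then has no surjection to apply to.

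The paper's proof avoids this by not going through $\BS\cap M_z$ at all. It assumes Power Set fails, takes the least $\gamma$ with $\Pot(\gamma)\cap M_z\notin M_z$, fixes a real $a\in V$ coding $W_\gamma$ (crucially, $a$ is an \emph{external} parameter, not required to lie in $M_z$), and for each level $\xi$ lets $a_\xi$ be the real coding, relative to $a$, the least new subset of $\gamma$ appearing at stage $\xi+1$. The sequence $(a_\xi)$ is then $\Sigma^1_{2n+1}$ in $z$ and $a$, and has length $\omega_1$, giving the desired contradiction. The essential idea you are missing is this use of an external coding parameter to turn the uncountably many subsets of $\gamma$ into an $\omega_1$-sequence of reals in $V$; your attempt to run the argument internally with reals of $M_z$ does not cover the uncountable-in-$M_z$ case.
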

  \begin{proof}
    Assume not. Then the power set axiom has to fail. So let $\gamma$
    be a countable ordinal such that
    \[ \Pot(\gamma) \cap M_z \notin M_z. \]
    This yields that the set $\Pot(\gamma) \cap M_z$ has size
    $\aleph_1$.

    Let $W_\gamma = M_z | \gamma$ be the $\gamma$-th level in the
    construction of $M_z$. Then we can fix a real $a$ in $V$ which
    codes the countable set $W_\gamma$. That means $a \in \BS$ codes a
    set $E \subset \omega \times \omega$ such that there is an
    isomorphism $\pi: (\omega,E) \rightarrow (W_\gamma, \in)$.

    If it exists, we let $A_\xi$ for $\gamma < \xi < \omega_1^V$ be
    the smallest subset of $\gamma$ in
    \[ M_z | (\xi+1) \, \setminus \, M_z | \xi \]
    according to the order of construction. Moreover we let $X$ be the
    set of all $\xi$ with $\gamma < \xi < \omega_1^V$ such that
    $A_\xi$ exists. Then $X$ is cofinal in $\omega_1^V$.

    Finally we let $a_\xi$ be a real coding the set $A_\xi$ relative
    to the code $a$ for $W_\gamma$. That means $a_\xi \in \BS$ codes
    the real $a$ together with some set $a^\prime \subset \omega$ such
    that $b \in a^\prime$ iff $\pi(b) \in A_\xi$, where $\pi$ is the
    isomorphism given by $a$ as above. For $\xi \in X$ we have that
    $A_\xi \in \Pot(\gamma) \cap M_z$ and thus
    $A_\xi \subseteq W_\gamma$, so the canonical code $a_\xi$ for
    $A_\xi$ relative to $a$ exists.

    Now consider the following $\omega_1^V$-sequence of reals
    \[ A = ( a_\xi \in \BS \st \xi \in X). \]

    We have that a real $y$ codes an element of the model
    $M_z | (\xi+1) \, \setminus \, M_z | \xi$ for some
    $\xi < \omega_1^V$ iff there is a sequence of countable models
    $(W_\beta \st \beta \leq \xi+1)$ and an element $Y \in W_{\xi+1}$
    such that
  \begin{enumerate}[$(1)$]
  \item $W_0 = \{ z \}$,
  \item $W_{\beta+1}$ is constructed from $W_\beta$ as described in
    the construction above for all $\beta \leq \xi$,
  \item $W_\lambda = \bigcup_{\beta < \lambda} W_\beta$ for all limit
    ordinals $\lambda \leq \xi$, 
  \item $y$ does not code an element of $W_\xi$, and
  \item $y$ codes $Y$.
  \end{enumerate}

  This states that the real $y$ codes an element of the model
  $M_z | (\xi+1) \, \setminus \, M_z | \xi$ as the construction of the
  levels of the model $M_z$ is defined in a unique way. Now we have to
  argue that this statement is definable enough for our purposes.

  For the successor levels of the construction we have that the
  properties $(i)-(iv)$ as in the construction are
  $\Pi^1_{2n}$-definable uniformly in any code for the countable
  premouse $\M$.

  So this shows that the statement ``$a_\xi$ codes the smallest subset
  of $\gamma$ in $M_z | (\xi+1) \, \setminus \, M_z | \xi$'' is
  $\Sigma^1_{2n+1}$-definable uniformly in $a$, $z$ and any code for
  $\xi$ and $\gamma$.

  Therefore the $\omega_1$-sequence $A$ as defined above is
  $\Sigma^1_{2n+1}$-definable uniformly in $a$ and $z$ in the sense of
  the remark after the statement of Theorem \ref{thm:newWoodin1}. Thus
  $A$ contradicts the assumption that every
  $\SIGMA^1_{2n+1}$-definable sequence of pairwise distinct reals is
  countable.
  \end{proof}
  
  Moreover we have the following claim.

  \begin{claim}
    \label{cl:MzOddProp}
    The resulting model $M_z$ has the following properties.
    \begin{enumerate}[(1)]
    \item $M_z \cap \OR = \omega_1^V$, $z \in M_z$,
    \item $M_z \prec_{\SIGMA^1_{2n}} V$, 
    \item $M_z$ is closed under the operation
      \[ a \mapsto M_{2n-2}^\#(a), \]
      and moreover $M_{2n-2}^\#(a)$ is $\omega_1$-iterable in $M_z$
      for all $a \in M_z$.
    \end{enumerate}
  \end{claim}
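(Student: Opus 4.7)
The plan is to verify the three properties separately, working directly from the inductive construction of $M_z$ and from the correctness and iterability of $M_{2n-2}^\#(a)$ provided by Lemma \ref{lem:corr} and the inductive hypothesis.

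For (1), I would show by induction on $\alpha < \omega_1^V$ that each $W_\alpha$ is countable in $V$. The base $W_0 = \{z\}$ is countable; at a successor step, $W_\alpha$ has only countably many real (or set) parameters $a$, each $M_{2n-2}^\#(a)$ is countable in $V$, and closure under the finitely many rudimentary functions preserves countability, so $W_{\alpha+1}$ is countable; and limit unions of countably many countable sets indexed below $\omega_1^V$ remain countable. Consequently $W_\alpha \cap \Ord < \omega_1^V$ for every $\alpha < \omega_1^V$. Since each new premouse $M_{2n-2}^\#(a)$ contributes fresh ordinals above the previous height, $(W_\alpha \cap \Ord \st \alpha < \omega_1^V)$ is strictly increasing and continuous at limits, whence $M_z \cap \Ord = \omega_1^V$. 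The containment $z \in M_z$ is immediate from $W_0 \subseteq M_z$.

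For (3), closure under $a \mapsto M_{2n-2}^\#(a)$ is built into the construction: any $a \in M_z$ appears in some $W_\alpha$, and at stage $\alpha+1$ the (unique) $a$-premouse satisfying conditions $(i)$--$(iv)$ of the construction, which by Lemma \ref{lem:corr} together with the inductive hypothesis is exactly $M_{2n-2}^\#(a)$, enters $W_{\alpha+1}$. For $\omega_1$-iterability of $M_{2n-2}^\#(a)$ inside $M_z$, I would argue that the $\Q$-structure iteration strategy $\Sigma$ for $M_{2n-2}^\#(a)$ is definable in $M_z$: by Lemma \ref{lem:iterability}(2) applied inside $M_z$ to the premouse $M_{2n-2}^\#(y)$ for any $y$ coding $a$, the $\Q$-structures for iteration trees $\T \in M_z$ of length $< \omega_1^{M_z}$ on $M_{2n-2}^\#(a)$ lie as initial segments of $M_{2n-3}^\#(\M(\T))$, which is computable inside $M_z$ from the $M_{2n-2}^\#$-closure via an $L[E]$-construction as in Lemma \ref{lem:iterability}(1). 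This yields $\omega_1$-iterability of $M_{2n-2}^\#(a)$ in $M_z$.

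For (2), I would mirror the inductive argument in the proof of Lemma \ref{lem:2n+1suitcorr}, the key input being that since $2n-2$ is even, Lemma \ref{lem:corr}(1) gives $M_{2n-2}^\#(b) \prec_{\SIGMA^1_{2n}} V$ for every $b \in \BS$, and by (3) every such premouse is an element of $M_z$. Induction on $k \leq n$ shows $M_z \prec_{\SIGMA^1_{2k}} V$ for real parameters in $M_z$: the base $k = 1$ is Shoenfield; for the inductive step, given $\varphi(a) \equiv \exists x \forall y\, \psi(x,y,a)$ with $\psi \in \Sigma^1_{2k-2}$ and $a \in M_z$, if $V \vDash \varphi(a)$ then correctness of $M_{2n-2}^\#(a) \in M_z$ produces a witness $x \in M_{2n-2}^\#(a) \subseteq M_z$, and the inductive hypothesis applied to $\psi$ downwards propagates $\forall y\, \psi(x,y,a)$ into $M_z$; conversely a witness $x \in M_z$ for $\varphi(a)$ inside $M_z$ is confirmed in $V$ by running the inductive hypothesis through $M_{2n-2}^\#(a \oplus x) \in M_z$.

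The main obstacle I anticipate is making the iterability argument in (3) genuinely internal to $M_z$: one has to check that the $\Q$-structures needed to execute the $\Q$-structure iteration strategy for $M_{2n-2}^\#(a)$ are not merely present in $V$ but actually isolable inside $M_z$. This is precisely what the closure of $M_z$ under the $M_{2n-2}^\#$ operation, combined with Lemma \ref{lem:iterability}, is designed to give; the correctness statement (2) plus the definability of $\Pi^1_{2n-1}$-iterability then allows the strategy to be recognised in $M_z$.
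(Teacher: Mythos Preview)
Your arguments for (1), (2), and the closure part of (3) are correct and essentially match the paper, with (2) spelled out in more detail via the induction from Lemma~\ref{lem:2n+1suitcorr}; the paper simply invokes Lemma~\ref{lem:corr} directly.

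The iterability part of (3) has a genuine gap. You write ``by Lemma~\ref{lem:iterability}(2) applied inside $M_z$'', but the hypothesis of that lemma at the relevant level is precisely that $M_{2n-2}^\#(x)$ exists and is $\omega_1$-iterable for all reals $x$ --- which, read inside $M_z$, is exactly the statement you are trying to prove. So this invocation is circular. Your alternative reading, applying the lemma in $V$ to get branches inside $M_{2n-2}^\#(y)|\delta_y$, only handles trees that are countable in $M_{2n-2}^\#(y)$, not arbitrary trees in $M_z$ of length below $\omega_1^{M_z}$.

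The paper avoids this by reversing the logical order: it proves (2) \emph{first}, using only the closure part of (3), and then uses (2) to run the iterability argument. Given a tree $\T$ in $M_z$ built according to the $\Q$-structure strategy there, the $\Q$-structures guiding it are $(2n-2)$-small above $\delta(\T\upharpoonright\gamma)$ and $\Pi^1_{2n-1}$-iterable in $M_z$; by (2) they are $\Pi^1_{2n-1}$-iterable in $V$; since $2n-1$ is odd, Lemma~\ref{lem:nofakeeven} says this suffices to identify them as the genuine $\Q$-structures in $V$, so $\T$ is according to $\Sigma$ in $V$; the branch then exists in $V$ and is pulled back to $M_z$ by $\Sigma^1_1$-absoluteness using $\T$ and $\Q(\T)$ as parameters. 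Your final paragraph gestures at this route but omits the key ingredient Lemma~\ref{lem:nofakeeven}, which is what makes $\Pi^1_{2n-1}$-iterability sufficient here.
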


  \begin{proof} Property $(1)$ immediately follows from the construction.

    \medskip \textbf{Proof of $(3)$, first part:} Let $a \in M_z$ be
    arbitrary. Then we have that $a \in W_\alpha$ for some ordinal
    $\alpha < \omega_1^V$. Recall that by our assumptions the
    $\omega_1$-iterable $a$-premouse $M_{2n-2}^\#(a)$ exists in $V$ as
    $a$ is countable in $V$. Moreover it satisfies properties
    $(i)-(iv)$ from the construction, i.e.
  \begin{enumerate}[$(i)$]
  \item $M_{2n-2}^\#(a)$ is $(2n-1)$-small, but not $(2n-2)$-small,
  \item all proper initial segments of $M_{2n-2}^\#(a)$ are $(2n-2)$-small,
  \item $M_{2n-2}^\#(a)$ is $a$-sound and
    $\rho_\omega(M_{2n-2}^\#(a)) = a$, and
  \item $M_{2n-2}^\#(a)$ is $\Pi^1_{2n}$-iterable.
  \end{enumerate}
  Therefore the $a$-premouse $\M = M_{2n-2}^\#(a)$, which is
  $\omega_1$-iterable in $V$, has been added to the model $M_z$ at
  some successor level of the construction and thus $M_z$ is closed
  under the operation $a \mapsto M_{2n-2}^\#(a)$.

  \medskip \textbf{Proof of $(2)$:} The fact that the model $M_z$ is
  closed under the operation $a \mapsto M_{2n-2}^\#(a)$ immediately
  implies property $(2)$, because the $a$-premouse $M_{2n-2}^\#(a)$ is
  $\SIGMA^1_{2n}$-correct in $V$ by Lemma \ref{lem:corr}.

  \medskip \textbf{Proof of $(3)$, second part:} We now want to show
  that the $a$-premouse $\M$ as above, which has been added to the
  model $M_z$ at some successor level of the construction, is
  $\omega_1$-iterable inside $M_z$ via the iteration strategy $\Sigma$
  which is guided by $\Q$-structures (see Definition
  \ref{def:Qstritstr}).

  Let $\T$ be an iteration tree of length $\lambda$ on the
  $a$-premouse $\M$ for some limit ordinal $\lambda < \omega_1$ in
  $M_z$ such that $\T$ is according to the iteration strategy
  $\Sigma$. Then $\T$ is guided by $\Q$-structures which are
  $(2n-2)$-small above $\delta(\T \upharpoonright \gamma)$,
  $\omega_1$-iterable above $\delta(\T \upharpoonright \gamma)$ and
  thus also $\Pi^1_{2n-1}$-iterable above
  $\delta(\T \upharpoonright \gamma)$ in $M_z$ for all limit ordinals
  $\gamma \leq \lambda$.

  By $(2)$ we have that $M_z$ is $\SIGMA^1_{2n}$-correct in $V$ for
  real parameters in $M_z$. Therefore it follows that the
  $\Q$-structures $\Q(\T \upharpoonright \gamma)$ which are
  $(2n-2)$-small above $\delta(\T \upharpoonright \gamma)$ and guiding
  the iteration tree $\T$ in $M_z$ are $\Pi^1_{2n-1}$-iterable above
  $\delta(\T \upharpoonright \gamma)$ in $V$ for all limit ordinals
  $\gamma \leq \lambda$. Since $M_{2n-2}^\#(x)$ exists in $V$ for all
  $x \in \BS$ by our assumptions and in particular $M_{2n-2}^\#(a)$
  exists in $V$, we have by Lemma \ref{lem:nofakeeven} that these
  $\Q$-structures also witness that $\T$ is according to the
  $\Q$-structure iteration strategy $\Sigma$ in $V$. Therefore there
  exists a unique cofinal well-founded branch $b$ through $\T$ in $V$
  such that we have $\Q(b, \T) = \Q(\T)$. By an absoluteness argument
  as given several times before (see for example the proof of Lemma
  \ref{lem:iterability}), it follows that the unique cofinal
  well-founded branch $b$ through $\T$ in $V$ for which
  $\Q(b, \T) = \Q(\T)$ holds, is also contained in $M_z$ since we have
  that $\T, \Q(\T) \in M_z$.

  Therefore $\M=M_{2n-2}^\#(a)$ exists and is $\omega_1$-iterable in
  $M_z$ via the iteration strategy $\Sigma$.
  \end{proof}

  Now we aim to show that a $K^c$-construction in $M_z$ reaches
  $M_{2n-1}^\#$, meaning that the premouse $(K^c)^{M_z}$ is not
  $(2n-1)$-small. Here and in what follows we consider a
  $K^c$-construction in the sense of \cite{MSch04} as this
  construction does not assume any large cardinals in the background
  model.

  \begin{claim}\label{cl:Kc}
    $(K^c)^{M_z}$ is not $(2n-1)$-small.
  \end{claim}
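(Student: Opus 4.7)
The plan is to argue by contradiction, adapting the proof of Theorem \ref{Steel} at level $2n-1$ to the model $M_z$. First I would verify that $M_z$ satisfies the hypotheses of Theorem \ref{Steel}: by Claims \ref{cl:MzZFCnew} and \ref{cl:MzOddProp}, $M_z$ is a model of $\ZFC$, closed under $a \mapsto M_{2n-2}^\#(a)$ with these premice being $\omega_1$-iterable inside $M_z$, and $\SIGMA^1_{2n}$-correct in $V$. The $\SIGMA^1_{2n}$-correctness combined with the assumed boldface $\PI^1_{2n}$ determinacy in $V$ suffices to run Lemma \ref{lem:KS} for reals in $M_z$: the relevant Turing-invariant sets are definable at the right complexity, and the cones of reals on which Lemma \ref{lem:KS} operates contain reals of $M_z$ by closure under $M_{2n-2}^\#$.

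Suppose now for contradiction that $(K^c)^{M_z}$ is $(2n-1)$-small. The main case is Case 1 of the proof of Theorem \ref{Steel}: $(K^c)^{M_z}$ has no Woodin cardinals and is fully iterable in $M_z$ via the $\Q$-structure strategy. Then the core model $K = K^{M_z}$ exists inside $M_z$ and satisfies weak covering by \cite{JS13} (building on \cite{MSchSt97}, \cite{MSch95}). Set $\alpha = \aleph_\omega^{M_z}$ and force with $\Col(\omega, \alpha)$ over $M_{2n-2}^\#(z)$ to obtain a generic real $z'$; following \cite{SchSt09}, reinterpret the extension as $M_{2n-2}^\#(y)$ for $y = z \oplus z'$. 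Lemma \ref{lem:KS} applied to a real above $y$ in the appropriate cone yields $M_{2n-2}(y)|\delta_y \vDash \ODdet$, and Corollary \ref{cor:w1} then gives that $\omega_1^{M_{2n-2}(y)}$ is measurable in $\HOD^{M_{2n-2}(y)|\delta_y}$. Since $K \subseteq \HOD^{M_{2n-2}(y)|\delta_y}$, weak covering forces $\omega_1^{M_{2n-2}(y)} = (\alpha^+)^K = (\alpha^+)^{\HOD}$, contradicting the fact that measurables are inaccessible in any AC model.

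Cases 2 and 3 of Theorem \ref{Steel} (where $(K^c)^{M_z}$ has a Woodin cardinal or fails to be fully $\Q$-structure iterable in $M_z$) are handled by carrying out the corresponding constructions of Theorem \ref{Steel} inside $M_z$. In each such case the construction either yields a proper class inner model of $M_z$ with $2n-1$ Woodin cardinals, which by the universality argument of Claim \ref{cl:coitsucc} applied in $M_z$ forces $(K^c)^{M_z}$ not to be $(2n-1)$-small, or produces a $\Q$-structure for a relevant iteration tree, contradicting the presumed failure of iterability.

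The main obstacle is the careful bookkeeping of the transfer of determinacy and absoluteness from $V$ into $M_z$ and its generic extensions. Lemma \ref{lem:KS} applies in $V$ to reals in the appropriate cone, but we need its conclusions for reals $y$ arising from cutpoint-collapse generic extensions. This transfer succeeds because cutpoint-collapse generic extensions preserve the $M_{n-1}^\#$-operation (see \cite{SchSt09}), the $\SIGMA^1_{2n+2}$-statements at issue are absolute between $V$ and the relevant proper class mice by Lemma \ref{lem:corr}, and the weak covering property for $K^{M_z}$ is a statement inside $M_z$ that survives the collapse.
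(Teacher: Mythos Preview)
Your Case 1 has a genuine gap. You assert ``Since $K \subseteq \HOD^{M_{2n-2}(y)|\delta_y}$,'' but $K = K^{M_z}$ is built inside $M_z$, and you give no link between $M_z$ and $M_{2n-2}(y)|\delta_y$. In Theorem~\ref{Steel} the analogous step succeeds because $K^c$ is constructed inside $M_{n-1}(x)|\delta_x$, one forces over $M_{n-1}(x)$, and the extension \emph{is} $M_{n-1}(y)$; forcing absoluteness of $K$ then gives $K \subseteq \HOD^{M_{n-1}(y)|\delta_y}$. Here $M_z$ is not $M_{2n-2}(w)|\delta_w$ for any $w$: it is a bespoke model of height $\omega_1^V$ obtained by closing under $a \mapsto M_{2n-2}^\#(a)$, and a $\Col(\omega,\alpha)$-extension of it is not an $M_{2n-2}(y)|\delta_y$ either. (Already the phrase ``force with $\Col(\omega,\alpha)$ over $M_{2n-2}^\#(z)$'' is problematic: $M_{2n-2}^\#(z)$ is a countable premouse with no particular relationship to $\alpha = \aleph_\omega^{M_z}$.) So neither Lemma~\ref{lem:KS} nor Corollary~\ref{cor:w1} says anything about $M_z$ or its extensions, the identity $\omega_1^{M_{2n-2}(y)} = (\alpha^+)^{K^{M_z}}$ is unestablished, and the containment $K^{M_z} \subseteq \HOD^{M_{2n-2}(y)|\delta_y}$ is unsupported.

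The paper's argument bypasses all of this. The crucial input you overlook is that $z$ was deliberately chosen to code a $(2n-1)$-suitable premouse $N$; by $\SIGMA^1_{2n}$-correctness of $M_z$, $N$ is $(2n-1)$-suitable inside $M_z$ as well. The proof then runs on universality of $K^c$ (in the sense of \cite{MSch04}) alone. If $(K^c)^{M_z}$ has a largest Woodin $\delta$, compare it above $\delta$ with $M_{2n-2}^\#((K^c)^{M_z}|\delta)$; universality forces the $(K^c)^{M_z}$-side to absorb a non-$(2n-1)$-small iterate. If $(K^c)^{M_z}$ has no Woodin cardinal, one first shows it is fully iterable in $M_z$ (this is Subclaim~\ref{subcl:Kcit}, which is essentially your Case 3 elimination), then compares it with $N$: universality yields $N^* \unlhd R$ for iterates $N^*$ of $N$ and $R$ of $(K^c)^{M_z}$, and a second comparison of $R$ with $M_{2n-2}^\#(N^*|\delta_{N^*})$ forces $R$, hence $(K^c)^{M_z}$, to be non-$(2n-1)$-small. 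No weak covering, no $\ODdet$, no Corollary~\ref{cor:w1}; the suitable premouse inside $M_z$ does all the work.
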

  \begin{proof}
    We work inside the model $M_z$ and distinguish numerous different
    cases. Moreover we assume toward a contradiction that
    $(K^c)^{M_z}$ is $(2n-1)$-small.

    First we show the following subclaim.

\begin{subclaim}\label{subcl:Kcit}
  Let $\delta$ be a cutpoint in $(K^c)^{M_z}$. If $(K^c)^{M_z}$ does
  not have a Woodin cardinal above $\delta$, then $(K^c)^{M_z}$ is
  fully iterable above $\delta$ in $M_z$.
\end{subclaim}

We allow $\delta = \omega$, i.e. the case that $(K^c)^{M_z}$ has no
Woodin cardinals, in the statement of Subclaim \ref{subcl:Kcit}.

\begin{proof}[Proof of Subclaim \ref{subcl:Kcit}]
  $(K^c)^{M_z}$ is iterated via the $\Q$-structure iteration strategy
  $\Sigma$ (see Definition \ref{def:Qstritstr}). That means for an
  iteration tree $\U$ of limit length on $(K^c)^{M_z}$ we let
  \[ \Sigma(\U) = b \text{ iff } \Q(b,\U) = \Q(\U) \text{ and } \Q(\U)
  \unlhd M_{2n-2}^\#(\M(\U)), \]
  since $(K^c)^{M_z}$ is assumed to be $(2n-1)$-small.

  It is enough to show that $(K^c)^{M_z}$ is $\omega_1$-iterable above
  $\delta$ inside $M_z$, because then an absoluteness argument as the
  one we gave in the proof of Lemma \ref{lem:iterability} yields that
  $(K^c)^{M_z}$ is fully iterable above $\delta$ inside $M_z$ as the
  iteration strategy for $(K^c)^{M_z}$ is guided by $\Q$-structures.

  Assume toward a contradiction that $(K^c)^{M_z}$ is not
  $\omega_1$-iterable above $\delta$ in $M_z$. Then there exists an
  iteration tree $\T$ on $(K^c)^{M_z}$ above $\delta$ of limit length
  $<\omega_1$ inside $M_z$ such that there exists no $\Q$-structure
  $\Q(\T)$ for $\T$ with $\Q(\T) \unlhd M_{2n-2}^\#(\M(\T))$ and hence
  \[ M_{2n-2}^\#(\M(\T)) \vDash \text{``} \delta(\T) \text{ is
    Woodin''.} \]
  The premouse $M_{2n-2}^\#(\M(\T))$, constructed in the sense of
  Definition \ref{def:M(N)}, exists in $M_z$ and is not $(2n-2)$-small
  above $\delta(\T)$, because otherwise it would already provide a
  $\Q$-structure for $\T$.

  Let $\bar{M}$ be the Mostowski collapse of a countable substructure
  of $M_z$ containing the iteration tree $\T$. That means we choose a
  large enough natural number $m$ and let $\bar{M}, X$ and $\sigma$ be
  such that
  \[ \bar{M} \overset{\sigma}{\cong} X \prec_{\Sigma_m} M_z, \]
  where \[ \sigma: \bar{M} \rightarrow M_z \]
  denotes the uncollapse map such that we have a model $\bar{K}$ in
  $\bar{M}$ with
  $\sigma(\bar{K}| \gamma) = (K^c)^{M_z}| \sigma(\gamma)$ for every
  ordinal $\gamma < \bar{M} \cap \Ord$, and we have an iteration tree
  $\bar{\T}$ on $\bar{K}$ above an ordinal $\bar{\delta}$ in $\bar{M}$
  with $\sigma(\bar{\T}) = \T$ and $\sigma(\bar{\delta}) =
  \delta$. Moreover we have that
  \[ M_{2n-2}^\#(\M(\bar{\T})) \vDash \text{``} \delta(\bar{\T})
  \text{ is Woodin''.} \]

  By the iterability proof of Chapter $9$ in \cite{St96} (adapted as
  in \cite{MSch04}) applied inside $M_z$, there exists a cofinal
  well-founded branch $b$ through the iteration tree $\bar{\T}$ on
  $\bar{K}$ in $M_z$ such that we have a final model
  $\M_b^{\bar{\T}}$.

  Consider the coiteration of $\M_b^{\bar{\T}}$ with
  $M_{2n-2}^\#(\M(\bar{\T}))$ and note that it takes place above
  $\delta(\bar{\T})$. Since $M_{2n-2}^\#(\M(\bar{\T}))$ is
  $\omega_1$-iterable above $\delta(\bar{\T})$ inside $M_z$ and
  $\M_b^{\bar{\T}}$ is iterable in $M_z$ by the iterability proof of
  Chapter $9$ in \cite{St96} (adapted as in \cite{MSch04}) applied
  inside the model $M_z$, the coiteration is successful using Lemma
  \ref{lem:iterability} inside $M_z$.

  If there is no drop along the branch $b$, then $\M_b^{\bar{\T}}$
  cannot lose the coiteration, because otherwise there exists a
  non-dropping iterate $R^*$ of $\M_b^{\bar{\T}}$ and an iterate
  $\M^*$ of $M_{2n-2}^\#(\M(\bar{\T}))$ such that $R^* \unlhd \M^*$.
  But we have that there is no Woodin cardinal in $\M_b^{\bar{\T}}$
  above $\bar{\delta}$ by elementarity and at the same time we have
  that $\bar{\delta} < \delta(\bar{\T})$ and
\[ M_{2n-2}^\#(\M(\bar{\T})) \vDash \text{``} \delta(\bar{\T}) \text{
  is Woodin''.} \]
If there is a drop along $b$, then $\M_b^{\bar{\T}}$ also has to win
the coiteration, because we have
$\rho_\omega(\M_b^{\bar{\T}}) < \delta(\bar{\T})$ and
$\rho_\omega(M_{2n-2}^\#(\M(\bar{\T}))) = \delta(\bar{\T})$. 

That means in both cases there is an iterate $R^*$ of
$\M_b^{\bar{\T}}$ and a non-dropping iterate $\M^*$ of
$M_{2n-2}^\#(\M(\bar{\T}))$ such that $\M^* \unlhd R^*$. We have that
$\M^*$ is not $(2n-1)$-small, because $M_{2n-2}^\#(\M(\bar{\T}))$ is
not $(2n-1)$-small as argued above and the iteration from
$M_{2n-2}^\#(\M(\bar{\T}))$ to $\M^*$ is non-dropping. Therefore it
follows that $R^*$ is not $(2n-1)$-small and thus $\M_b^{\bar{\T}}$ is
not $(2n-1)$-small. By the iterability proof of Chapter $9$ in
\cite{St96} (adapted as in \cite{MSch04}) applied inside $M_z$ we can
re-embed the model $\M_b^{\bar{\T}}$ into a model of the
$(K^c)^{M_z}$-construction. This yields that $(K^c)^{M_z}$ is not
$(2n-1)$-small, contradicting our assumption that it is
$(2n-1)$-small.
\end{proof}

Now we distinguish the following cases.

\begin{minipage}{\textwidth}
  \bigskip
  \textbf{Case 1.} Assume that
  \[ (K^c)^{M_z} \vDash \text{ ``there is a Woodin cardinal''.} \]
  \medskip
\end{minipage}
Then we can assume that there is a largest Woodin cardinal in
$(K^c)^{M_z}$, because otherwise $(K^c)^{M_z}$ has infinitely many
Woodin cardinals and is therefore not $(2n-1)$-small. So let $\delta$
denote the largest Woodin cardinal in $(K^c)^{M_z}$.
 
Consider the premouse $\M = M_{2n-2}^\#((K^c)^{M_z}| \delta)$ in the
sense of Definition \ref{def:M(N)} and note that $\M$ exists inside
$M_z$ by Claim \ref{cl:MzOddProp}. Now try to coiterate $(K^c)^{M_z}$
with $\M$ inside $M_z$.

Since the comparison takes place above $\delta$ and the premouse
\[ \M = M_{2n-2}^\#((K^c)^{M_z}| \delta) \]
is $\omega_1$-iterable above $\delta$, the coiteration is successful,
using Lemma \ref{lem:iterability} $(2)$ and Subclaim \ref{subcl:Kcit}
since the iteration strategies for the premice $(K^c)^{M_z}$ and
$M_{2n-2}^\#((K^c)^{M_z}|\delta)$ above $\delta$ are guided by
$\Q$-structures which are $(2n-2)$-small above $\delta(\T)$ for any
iteration tree $\T$ of limit length (see proof of Claim
\ref{cl:MzOddProp}).

So there is an iterate $R$ of $(K^c)^{M_z}$ and an iterate $\M^*$ of
$\M$ such that the coiteration terminates with $R \unlhd \M^*$ or
$\M^* \unlhd R$. By universality of the model $(K^c)^{M_z}$ inside
$M_z$ (see Section $3$ in \cite{MSch04}) we have that $\M^* \unlhd R$
and that there is no drop on the $\M$-side of the coiteration. This
implies that the premouse $\M = M_{2n-2}^\#((K^c)^{M_z}| \delta)$ in
the sense of Definition \ref{def:M(N)} is not $(2n-2)$-small above
$\delta$, as otherwise we have that $\M$ is not fully sound and since
$\M^* \unlhd R$ this yields a contradiction because of soundness.

Therefore $\M$ is not $(2n-1)$-small and thus $\M^*$ is also not
$(2n-1)$-small. That means $R$ and thereby $(K^c)^{M_z}$ is not
$(2n-1)$-small, which is the desired contradiction.

This finishes the proof of Claim \ref{cl:Kc} in the
case that there is a Woodin cardinal in $(K^c)^{M_z}$.

\begin{minipage}{\textwidth}
  \bigskip
  \textbf{Case 2.} Assume that
  \[ (K^c)^{M_z} \nvDash \text{ ``there is a Woodin cardinal''.} \]
  \medskip
\end{minipage}

Recall that by our choice of the real $z$, there exists a premouse $N$
(coded by the real $z$) in $M_z$ which is $(2n-1)$-suitable in $V$ and
try to coiterate $(K^c)^{M_z}$ with $N$ inside $M_z$. We again iterate
$(K^c)^{M_z}$ via the $\Q$-structure iteration strategy $\Sigma$ as in
Subclaim \ref{subcl:Kcit}. So using Subclaim \ref{subcl:Kcit} the
coiteration cannot fail on the $(K^c)^{M_z}$-side.

We have that the premouse $N$ is $(2n-1)$-suitable in $V$. Since the
statement ``$N$ is pre-$(2n-1)$-suitable'' is $\Pi^1_{2n}$-definable
uniformly in any code for $N$, it follows that $N$ is
pre-$(2n-1)$-suitable inside $M_z$, because $M_z$ is closed under the
operation $a \mapsto M_{2n-2}^\#(a)$ and hence $\SIGMA^1_{2n}$-correct
in $V$. Therefore Lemma \ref{lem:nsuitnew} implies that $N$ is
$(2n-1)$-suitable inside $M_z$. In particular $N$ has an iteration
strategy which is fullness preserving for non-dropping iteration
trees.

So we have that the coiteration of $(K^c)^{M_z}$ with $N$ is
successful. Let $\T$ and $\U$ be the resulting trees on $(K^c)^{M_z}$
and $N$ of length $\lambda+1$ for some ordinal $\lambda$. It follows
that $(K^c)^{M_z}$ wins the comparison by universality of the premouse
$(K^c)^{M_z}$ inside $M_z$ (see Section $3$ in
\cite{MSch04}). Therefore we have that there exists an iterate
$R = \M_\lambda^\T$ of $(K^c)^{M_z}$ and an iterate
$N^* = \M_\lambda^\U$ of $N$ such that $N^* \unlhd R$.  Moreover there
is no drop on the main branch on the $N$-side of the coiteration.

Since $N$ is $(2n-1)$-suitable we have that
$M_{2n-2}^\#(N^*|\delta_{N^*})$ is a premouse with $2n-1$ Woodin
cardinals, which is $\omega_1$-iterable above $\delta_{N^*}$ and not
$(2n-2)$-small above $\delta_{N^*}$. So we can consider the
coiteration of $R$ with $M_{2n-2}^\#(N^*|\delta_{N^*})$. This
coiteration is successful using Lemma \ref{lem:iterability} $(2)$
because we have
\[ N^*|\delta_{N^*} = R |\delta_{N^*} \]
and $R$ and $M_{2n-2}^\#(N^*|\delta_{N^*})$ are both iterable above
$\delta_{N^*}$, using Subclaim \ref{subcl:Kcit} for the iterate $R$ of
$(K^c)^{M_z}$. If there is no drop on the main branch in $\T$, then
$R = \M_\lambda^\T$ wins the comparison by universality of
$(K^c)^{M_z}$ inside $M_z$ (again see Section $3$ in
\cite{MSch04}). If there is a drop on the main branch in $\T$, then
$R$ also wins the comparison, because in this case we have that
\[ \rho_\omega(R) < \delta_{N^*} \text{ and }
\rho_\omega(M_{2n-2}^\#(N^*|\delta_{N^*})) = \delta_{N^*}. \]
Therefore we have that there exists an iterate $R^*$ of $R$ and an
iterate $M^*$ of $M_{2n-2}^\#(N^*|\delta_{N^*})$ such that we have
$M^* \unlhd R^*$ in both cases and the iteration from
$M_{2n-2}^\#(N^*|\delta_{N^*})$ to $M^*$ is non-dropping. Since
$M_{2n-2}^\#(N^*|\delta_{N^*})$ is not $(2n-1)$-small, we have that
$M^*$ and therefore $R^*$ is not $(2n-1)$-small. Thus $R$ is not
$(2n-1)$-small. But $R$ is an iterate of $(K^c)^{M_z}$ and therefore
this implies that $(K^c)^{M_z}$ is not $(2n-1)$-small, contradicting
our assumption.

This finishes the proof of Claim \ref{cl:Kc}.
\end{proof}

Claim \ref{cl:Kc} now implies that $M_{2n-1}^\#$ exists in $M_z$ as
the minimal $\omega_1$-iterable premouse which is not $(2n-1)$-small.

Work inside the model $M_z$ and let $x \in M_z$ be a real which codes
$x_0$, the theory $T$ and the premouse $(M_{2n-1}^\#)^{M_z}$. Let
\[ N^* = (M_{2n-1}^\# | (\delta_0^+)^{M_{2n-1}^\#})^{M_z} \]
denote the \emph{suitable initial segment} of $(M_{2n-1}^\#)^{M_z}$,
where $\delta_0$ denotes the least Woodin cardinal in
$(M_{2n-1}^\#)^{M_z}$. Then we have in particular that $N^*$ is a
$(2n-1)$-suitable premouse (in $M_z$).

  Recall that by Lemma \ref{lem:nsuitnew} we have that $N^*$ is a
  $(2n-1)$-suitable premouse iff it is pre-$(2n-1)$-suitable, that
  means iff it satisfies properties $(1)-(4)$ listed in the proof of
  Claim \ref{cl:QmphiDet}. Therefore the statement ``$N^*$ is a
  $(2n-1)$-suitable premouse'' is $\Pi^1_{2n}$-definable uniformly in
  any code for $N^*$. Hence by $\SIGMA^1_{2n}$-correctness of $M_z$ in
  $V$ it follows that $N^*$ is also a $(2n-1)$-suitable premouse in
  $V$.

  Recall that $N_x$ denotes the common iterate of all
  $(2n-1)$-suitable premice $N$ which are coded by a real $y_N$
  recursive in $x$. Since $M_z$ is $\SIGMA^1_{2n}$-correct in $V$ it
  follows that the premouse $N_x$ is the same computed in $M_z$ or in
  $V$. This yields by correctness of $M_z$ in $V$ again that the
  premouse $M_{2n-2}^\#(N_x|\delta_{N_x})$ is the same computed in
  $M_z$ or in $V$ by the following argument. The premouse
  $(M_{2n-2}^\#(N_x|\delta_{N_x}))^{M_z}$ is $\Pi^1_{2n}$-iterable
  above $\delta_{N_x}$ in $M_z$ and by $\SIGMA^1_{2n}$-correctness of
  $M_z$ also in $V$. Therefore we can successfully coiterate the
  premice $(M_{2n-2}^\#(N_x|\delta_{N_x}))^{M_z}$ and
  $(M_{2n-2}^\#(N_x|\delta_{N_x}))^V$ inside $V$ by Lemma
  \ref{lem:steel2.2} since the latter premouse is $\omega_1$-iterable
  in $V$ above $\delta_{N_x}$ and the comparison takes place above
  $\delta_{N_x}$. It follows that in fact
  \[ (M_{2n-2}^\#(N_x|\delta_{N_x}))^{M_z} =
  (M_{2n-2}^\#(N_x|\delta_{N_x}))^V. \]

  As mentioned before, we have that
  $N^* = (M_{2n-1}^\# | (\delta_0^+)^{M_{2n-1}^\#})^{M_z}$ is a
  $(2n-1)$-suitable premouse in $M_z$ and in $V$ and since $x$ codes
  the premouse $(M_{2n-1}^\#)^{M_z}$, the premouse $N^*$ is coded by a
  real recursive in $x$.

  Consider the comparison of $N^*$ with $N_x = (N_x)^{M_z}$ inside an
  inner model of $M_z$ of height $\omega_1^{M_z}$ which is closed
  under the operation $a \mapsto M_{2n-2}^\#(a)$. The premouse
  $(M_{2n-1}^\#)^{M_z}$ is $\omega_1$-iterable in $M_z$ and therefore
  it follows that $N^*$ is $\omega_1$-iterable in $M_z$. Thus
  arguments as in the proof of Lemma \ref{lem:iterability} yield that
  $N_x$ is in fact a non-dropping iterate of $N^*$, because $N^*$ is
  one of the models giving rise to $N_x$.

  The same argument shows that $N_x$ does not move in the comparison
  with $(M_{2n-1}^\#)^{M_z}$. So in fact there is a non-dropping
  iterate $M$ of $(M_{2n-1}^\#)^{M_z}$ below
  $(\delta_0^+)^{(M_{2n-1}^\#)^{M_z}}$ such that $N_x \unlhd M$. Since the
  iteration from $(M_{2n-1}^\#)^{M_z}$ to $M$ is fullness preserving
  in the sense of Definition \ref{def:nsuitnew} and it takes place
  below $(\delta_0^+)^{(M_{2n-1}^\#)^{M_z}}$ in $M_z$, we have that in fact
  \[ M = M_{2n-2}^\#(N_x|\delta_{N_x}), \]
  because $(M_{2n-1}^\#)^{M_z} = M_{2n-2}^\#(N^*|\delta_0)$. Therefore
  we have that $M_{2n-2}^\#(N_x | \delta_{N_x})$ is a non-dropping
  iterate of $M_{2n-2}^\#(N^*|\delta_0)$ and hence
  \[ Th^{M_{2n-2}(N_x|\delta_{N_x})} = Th^{M_{2n-2}(N^*|\delta_0)}. \]

  Recall that by Claim \ref{cl:Hconst} we picked the real $x_0$ such
  that $Th^{M_{2n-2}(N_x|\delta_{N_x})}$ and thus
  $Th^{M_{2n-2}(N^*|\delta_0)}$ is constant for all $x \geq_T x_0$.
  This now implies that the theory of $(M_{2n-1}^\#)^{M_z}$ is
  constant for all $z \geq_T x_0 \oplus x_T$, where $x_T$ is as above
  a real coding the theory $T$.

  Thus if we now work in $V$ and let $N = (M_{2n-1}^\#)^{M_z}$ for
  $z \geq_T x_0 \oplus x_T$, then we have
  \[ (M_{2n-1}^\#)^{M_z} = N = (M_{2n-1}^\#)^{M_y} \]
  for all $y \geq_T x_0 \oplus x_T$. We aim to show that
  \[ (M_{2n-1}^\#)^V = N, \] so in particular that $(M_{2n-1}^\#)^V$
  exists. 

  For this reason we inductively show that the premouse $N$ is
  $\omega_1$-iterable in $V$ via the $\Q$-structure iteration strategy
  $\Sigma$ (see Definition \ref{def:Qstritstr}). So assume that $\T$
  is an iteration tree via $\Sigma$ of limit length $< \omega_1$ on
  $N$ (in $V$). So we have that the branch $b$ through the iteration
  tree $\T \upharpoonright \lambda$ is given by $\Q$-structures, i.e.
  $\Q(b, \T \upharpoonright \lambda) = \Q(\T \upharpoonright
  \lambda)$, for every limit ordinal $\lambda < \lh(\T)$.

  Pick $z \in \BS$ with $z \geq_T x_0 \oplus x_T$ such that
  $\T \in M_z$ and $\lh(\T) < \omega_1^{M_z}$.  Since $\T$ is an
  iteration tree on $N = (M_{2n-1}^\#)^{M_z}$ according to the
  iteration strategy $\Sigma$ in $V$, we have that for all limit
  ordinals $\lambda < \lh(\T)$ the $\Q$-structure
  $\Q(\T \upharpoonright \lambda)$ exists in $V$ and is $(2n-1)$-small
  above $\delta(\T \upharpoonright \lambda)$. In fact
  $\Q(\T \upharpoonright \lambda)$ is not more complicated than the
  least active premouse which is not $(2n-2)$-small above
  $\delta(\T \upharpoonright \lambda)$. So in this case we have that
  $\Pi^1_{2n}$-iterability for these $\Q$-structures is enough to
  determine a unique cofinal well-founded branch $b$ through $\T$.
  Since $M_z$ is $\SIGMA^1_{2n}$-correct in $V$ it follows that $\T$
  is also according to the $\Q$-structure iteration strategy $\Sigma$
  inside $M_z$. Moreover recall that
  \[ (M_{2n-1}^\#)^{M_z} = N. \]
  Therefore there exists a cofinal well-founded branch $b$ through
  $\T$ in $M_z$. As above this branch is determined by $\Q$-structures
  $\Q(\T \upharpoonright \lambda)$ which are $\omega_1$-iterable above
  $\delta(\T \upharpoonright \lambda)$ and therefore
  $\Pi^1_{2n}$-iterable above $\delta(\T \upharpoonright \lambda)$ in
  $M_z$ for all limit ordinals $\lambda \leq \lh(\T)$. That means in
  particular that we have $\Q(b, \T) = \Q(\T)$. Moreover $\Q(\T)$ is
  also $\Pi^1_{2n}$-iterable in $V$ and therefore it follows that $b$
  is the unique cofinal well-founded branch determined by these
  $\Q$-structures in $V$ as well. So $N$ is $\omega_1$-iterable in $V$
  via the $\Q$-structure iteration strategy $\Sigma$.

  Thus we now finally have that
  \[ V \vDash \text{``} M_{2n-1}^\# \text{ exists and is } \omega_1
    \text{-iterable''}. \] This finishes the proof of Theorem
  \ref{thm:newWoodinodd}, i.e. Theorem \ref{thm:newWoodin1} for odd
  $n < \omega$.
\end{proof}

%%% Local Variables:
%%% mode: latex
%%% TeX-master: "Main"
%%% End:

\subsection{$M_{2n}^\#(x)$ from Boldface $\Pi^1_{2n+1}$ Determinacy} 
\label{sec:even}

In this section we will finish the proof of Theorem
\ref{cor:bfDetSharps} by proving Theorem \ref{thm:newWoodineven} which
will yield Theorem \ref{thm:newWoodin1} and finally Theorem
\ref{cor:bfDetSharps} for even levels $n$ in the projective hierarchy
(using Lemma \ref{lem:bfDetSeq}). Therefore together with the previous
section we will have Theorem \ref{cor:bfDetSharps} for arbitrary
levels $n$.

\begin{thm}\label{thm:newWoodineven}
  Let $n \geq 1$ and assume that there is no
  $\SIGMA^1_{2n+2}$-definable $\omega_1$-sequence of pairwise distinct
  reals. Moreover assume that $\PI^1_{2n}$ determinacy and
  $\Pi^1_{2n+1}$ determinacy hold. Then $M_{2n}^\#$ exists and is
  $\omega_1$-iterable.
\end{thm}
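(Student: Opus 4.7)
The plan is to adapt the proof strategy of Theorem \ref{thm:newWoodinodd} to the even level, replacing the uniformization/Determinacy Transfer argument (which is unavailable at even levels because of Lemma \ref{lem:oddnotcorr}) with a HOD-reflection argument driven by Corollary \ref{cor:w1} and Theorem \ref{thm:w2inacc}. The inductive hypothesis is now that the odd-level theorem of Section \ref{sec:odd} has been proved, so that $\PI^1_{2n}$ determinacy yields the existence and $\omega_1$-iterability of $M_{2n-1}^\#(x)$ for every real $x$; combined with $\Pi^1_{2n+1}$ determinacy, Lemma \ref{lem:KS} supplies a base $y_0$ with $M_{2n-1}(x)\mid\delta_x\vDash\ODdet$ for all $x\geq_T y_0$, and Theorem \ref{Steel} produces a proper class inner model with $2n$ Woodin cardinals (not yet iterable in any sense).

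First I would fix a real $z\geq_T y_0$ that also codes the relevant side information, and build a transitive model $M_z=W_{\omega_1^V}$ by the same $\omega_1^V$-step construction used in Theorem \ref{thm:newWoodinodd}, but at each successor stage closing the current level under the operation $a\mapsto M_{2n-1}^\#(a)$ (whose characterizing properties are now $\Pi^1_{2n+1}$, by the even analogue of Lemma \ref{lem:defstiodd} applied one level up). The construction is $\Sigma^1_{2n+2}$-definable uniformly in $z$, so an exact copy of Claim \ref{cl:MzZFCnew} shows that the hypothesis ``no $\SIGMA^1_{2n+2}$-definable $\omega_1$-sequence of pairwise distinct reals'' forces $M_z\vDash\ZFC$. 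By construction $M_z$ is closed under $a\mapsto M_{2n-1}^\#(a)$, and by Lemma \ref{lem:corr}(2) this makes $M_z$ provably $\SIGMA^1_{2n+1}$-correct in $V$.

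Next, inside $M_z$ I would perform a $K^c$-construction (in the sense of \cite{MSch04}) and argue that $(K^c)^{M_z}$ is not $2n$-small. The case analysis follows the proof of Claim \ref{cl:Kc}, the only new ingredient being what to do when $(K^c)^{M_z}$ is assumed $2n$-small with no Woodin cardinal above its base. Here the role of the Woodin-vs-suitability clash in the odd case is played by the argument from Case 1 of the proof of Theorem \ref{Steel}: isolate $K=K^{M_z}$, apply weak covering for $\alpha=\aleph_\omega^{M_z}$, pass to a generic extension $M_z[g]$ over a collapse of $\alpha$ so that the resulting real $y$ sits above $y_0$, and use that $M_{2n-1}(y)\mid\delta_y\vDash\ODdet$ to invoke Corollary \ref{cor:w1}, yielding measurability of $\omega_1^{M_{2n-1}(y)}=(\alpha^+)^K$ in $\HOD^{M_{2n-1}(y)\mid\delta_y}\supseteq K$, contradicting $K\vDash\AC$. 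The same method, using Theorem \ref{thm:w2inacc} in place of Corollary \ref{cor:w1}, rules out the case where $(K^c)^{M_z}$ has Woodin cardinals but is still $2n$-small. Therefore $M_{2n}^\#$ exists inside $M_z$, giving a candidate $N=(M_{2n}^\#)^{M_z}$.

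The hard part will be transferring the $\omega_1$-iterability of $N$ from $M_z$ to $V$, since at even levels we cannot deploy the uniformization-based stabilization used in the odd case. The approach I would take is to verify that any iteration tree $\T$ of countable length on $N$ in $V$ is guided by $\Q$-structures $\Q(\T\upharpoonright\lambda)$ that are $(2n-1)$-small above $\delta(\T\upharpoonright\lambda)$; such $\Q$-structures are no more complex than the minimal active non-$(2n-1)$-small premice, so by Lemma \ref{lem:nofakeeven} their $\Pi^1_{2n+1}$-iterability (rather than full iterability) suffices to pin down the branch. Combined with the $\SIGMA^1_{2n+1}$-correctness of $M_z$ in $V$ and a Shoenfield/absoluteness step as in the closing paragraphs of Theorem \ref{thm:newWoodinodd}, this will show that the $\Q$-structure iteration strategy for $N$ inside $M_z$ continues to work in $V$. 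The delicate point, and the main obstacle I anticipate, is to verify that the theory of $(M_{2n}^\#)^{M_z}$ stabilizes on a cone of $z$: in the odd case Corollary \ref{cor:transferthm} did this via a $\game^{(2n-1)}(<\omega^2-\Pi^1_1)$-definable game, but at the even level the analogous set has complexity one notch beyond what $\PI^1_{2n+1}$ determinacy directly gives; the substitute will be to use uniqueness of $N$ that follows from the HOD reflection already invoked, namely that on a cone of $z$ the $K^c$-output in $M_z$ is forced by measurability of $\omega_1^{M_{2n-1}(z)}$ in $\HOD^{M_{2n-1}(z)\mid\delta_z}$ to agree up to iteration, from which the stabilization and $(M_{2n}^\#)^V=N$ both follow.
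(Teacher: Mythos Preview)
There are two genuine gaps in your proposal, and both concern the correctness level of your auxiliary model.

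First, closing $M_z$ under $a\mapsto M_{2n-1}^\#(a)$ does \emph{not} give $\SIGMA^1_{2n+1}$-correctness. By Lemma \ref{lem:corr}(2) with the odd index $2n-1$, the premouse $M_{2n-1}^\#(a)$ is only $\SIGMA^1_{2n}$-correct in $V$; the $\Sigma^1_{2n+1}$ transfer stated there goes through $\Col(\omega,\delta_0)$, not through $M_{2n-1}^\#(a)$ itself. So your $M_z$ is only $\SIGMA^1_{2n}$-correct. The paper remedies this by explicitly closing under a $\Pi^1_{2n+1}$-uniformizing function for a universal $\PI^1_{2n+1}$ set (which exists because $\Pi^1_{2n+1}$ has the scale property under $\PI^1_{2n}$ determinacy), alternating with the $M_{2n-1}^\#$-closure. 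This manually forces $M_x$ to be $\SIGMA^1_{2n+2}$-correct, and that extra level is exactly what makes stabilization work at the end: one transfers $\Pi^1_{2n+2}$-iterability of $(M_{2n}^\#)^{M_x}$ from $M_x$ to $V$ to $M_y$, and then Lemma \ref{lem:steel2.2} gives $(M_{2n}^\#)^{M_x}=(M_{2n}^\#)^{M_y}$. Your ``HOD reflection'' substitute for stabilization is not a workable mechanism; the measurability of $\omega_1$ in $\HOD$ is used to derive a contradiction inside a single model, not to compare the $K^c$-outputs across different $M_z$'s.

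Second, your proposed transplant of Case 1 of Theorem \ref{Steel} into $M_z$ does not go through. That argument is set inside $M_{2n-1}(x)\mid\delta_x$, where a collapse-generic extension is literally another $M_{2n-1}(y)$ and one can invoke $\ODdet$ there. Your $M_z$ is not of this form, and there is no identification of $M_z[g]$ with any $M_{2n-1}(y)$; in particular there is no reason $(\alpha^+)^{K^{M_z}}$ equals $\omega_1^{M_{2n-1}(y)}$. The paper's route is substantially more elaborate: it passes to the lower-part model $\Lpoddn(x,K^{M_x}|(\gamma^+)^{M_x})$ (which \emph{is} an iterate of $M_{2n-1}^\#$ cut at $\omega_1^V$, by Lemma \ref{lem:LpM}), performs a five-step reshaping/almost-disjoint coding to produce a real $z$ with $\omega_2^{\Lpoddn(z)}=(\gamma^+)^{M_x}$ and $K^{M_x}|(\gamma^+)^{M_x}\in\Lpoddn(z)$, verifies $(K^c)^{\Lpoddn(z)}\subset\HOD^{\Lpoddn(z)}$ and that it is $\Q$-structure iterable (via a $\mathcal P$-construction argument), and then compares with $K^{M_x}|\omega_2^{\Lpoddn(z)}$ to show $\omega_2^{\Lpoddn(z)}$ is a successor cardinal in $(K^c)^{\Lpoddn(z)}$, contradicting Theorem \ref{thm:varw2inacc}. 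This machinery --- the lower-part models, the coding, and the use of $\omega_2$-inaccessibility in $\HOD$ rather than $\omega_1$-measurability --- is the missing core of the even-level argument.
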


Recall that Lemma \ref{lem:bfDetSeq} gives that $\PI^1_{2n+1}$
determinacy suffices to prove that every $\SIGMA^1_{2n+2}$-definable
sequence of pairwise distinct reals is countable.

In order to prove Theorem \ref{thm:newWoodineven} we are considering
slightly different premice than before.  The main advantage of these
models is that they only contain partial extenders on their extender
sequence and therefore behave nicer in some arguments to follow. The
premice we want to consider are defined as follows.

\begin{definition}
\label{def:Lp}
Let $A$ be an arbitrary countable transitive swo'd
\footnote{i.e. self-wellordered as for example defined in Definition
  $3.1$ in \cite{SchT}.} set. With $\Lpn(A)$
\index{lowerpartmode@$\Lpn(A)$} we denote the unique model of height
$\omega_1^V$ called \emph{lower part model above $A$} \index{lower
  part model} which is given by the following construction of length
$\omega_1^V$. We start with $N_0 = A$ and construct countable
$A$-premice along the way. Assume that we already constructed
$N_\alpha$. Then we let $N_{\alpha+1}$ be the model theoretic union of
all countable $A$-premice $M \unrhd N_\alpha$ such that
  \begin{enumerate}[$(1)$]
    \item $M$ is $n$-small above $N_\alpha \cap \Ord$,
    \item $\rho_\omega(M) \leq N_\alpha \cap \Ord$,
    \item $M$ is sound above $N_\alpha \cap \Ord$,
    \item $N_\alpha \cap \Ord$ is a cutpoint of $M$, and
    \item $M$ is $\omega_1$-iterable above $N_\alpha \cap \Ord$.
  \end{enumerate}
  For the limit step let $\lambda$ be a limit ordinal and assume that
  we already defined $N_\alpha$ for all $\alpha < \lambda$. Then we
  let $N_\lambda$ be the model theoretic union of all $A$-premice
  $N_\alpha$ for $\alpha < \lambda$.

  We finally let $\Lpn(A) = N_{\omega_1^V}$.
\end{definition}

A special case of this is $\Lpn(x)$ for $x \in \BS$.

\begin{remark}
  We have that every lower part model as defined above does not
  contain total extenders.
\end{remark}

We will state the following lemmas for lower part models constructed
above reals $x$ instead of swo'd sets $A$ as this will be our main
application. But they also hold (with the same proofs) if we replace
$x$ with a countable transitive swo'd set $A$ as in Definition
\ref{def:Lp}.

\begin{lemma}\label{lem:LpWD}
  Let $n \geq 1$ and assume that $\PI^1_{2n}$ determinacy
  holds. Moreover let $x \in \BS$ be arbitrary. Then the lower part
  model above $x$, $\Lpoddn(x)$, is well-defined, that means
  $\Lpoddn(x)$ is an $x$-premouse.
\end{lemma}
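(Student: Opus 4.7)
The plan is to proceed by induction on $\alpha \leq \omega_1^V$, showing that each stage $N_\alpha$ of the construction of $\Lpoddn(x)$ is a well-defined $x$-premouse. The base case $N_0 = x$ is trivial and the limit case is immediate once the successor step is established. The heart of the argument is therefore the successor step: assuming $N_\alpha$ is an $x$-premouse with $\delta := N_\alpha \cap \Ord$, we must show that any two candidate $x$-premice $M, M' \unrhd N_\alpha$ satisfying conditions $(1)$--$(5)$ of Definition \ref{def:Lp} are coherent in the sense that $M \unlhd M'$ or $M' \unlhd M$, so that their model theoretic union $N_{\alpha+1}$ is again an $x$-premouse.

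The first step is to activate the right inductive hypothesis. We are proving Theorem \ref{thm:newWoodin1} at the even level $2n$, so the odd-level case (Theorem \ref{thm:newWoodinodd}) is available. Moreover $\PI^1_{2n}$ determinacy implies via Lemma \ref{lem:bfDetSeq} that there is no $\SIGMA^1_{2n+1}$-definable $\omega_1$-sequence of pairwise distinct reals. Combined with $\PI^1_{2n-1}$ determinacy (which is a trivial consequence of $\PI^1_{2n}$ determinacy), the boldface version of Theorem \ref{thm:newWoodinodd} then delivers the key fact that $M_{2n-1}^\#(y)$ exists and is $\omega_1$-iterable for every real $y$. This is the resource that will drive the comparison.

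For the successor step, fix $M, M'$ as above and let $y$ be a real coding $M \oplus M'$. Both premice are $(2n-1)$-small above $\delta$, are sound above $\delta$ with $\rho_\omega \leq \delta$, have $\delta$ as a cutpoint, are $\omega_1$-iterable above $\delta$, and satisfy $M|\delta = M'|\delta = N_\alpha$. I would carry out the coiteration of $M$ and $M'$ above $\delta$ inside the model $M_{2n-1}^\#(y)$, where both premice are $(\omega_1+1)$-iterable above $\delta$ in the sense of the proof of Lemma \ref{lem:iterability} $(2)$. The iteration strategies are guided by $\Q$-structures which, by $(2n-1)$-smallness of $M$ and $M'$ above $\delta$, are $(2n-2)$-small above $\delta(\T)$ for any arising tree $\T$; such $\Q$-structures are located inside $M_{2n-1}^\#(y)|\delta_y$ via the operation $a \mapsto M_{2n-2}^\#(a)$ furnished by Lemma \ref{lem:iterability} $(1)$. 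A $\Sigma^1_1$-absoluteness argument of the form given in the proof of Lemma \ref{lem:iterability} transfers the chosen branches back to $V$. Soundness of both $M$ and $M'$ above $\delta$ together with $\rho_\omega(M), \rho_\omega(M') \leq \delta$ then forces $M \unlhd M'$ or $M' \unlhd M$, exactly as in the argument for Lemma \ref{lem:iterability} $(4)$.

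The main obstacle I expect is that $M$ or $M'$ may itself carry Woodin cardinals above $\delta$ (up to $2n-2$ of them), so Lemma \ref{lem:iterability} $(4)$ does not apply verbatim, and neither does Lemma \ref{lem:itKS}. I would handle this by adapting the argument of Corollary \ref{cor:itKS}: if the coiteration appears to break down on, say, the $M$-side at an iteration tree $\T$ of limit length because no $\Q$-structure is available inside $M_{2n-1}^\#(y)$, then the $(2n-1)$-smallness of $M$ above $\delta$ together with the $\omega_1$-iterability of $M$ in $V$ forces $\M(\T) = \M_\lambda^\T$ for the limit model $\M_\lambda^\T$ which exists in $V$, so the coiteration has in fact already terminated on that side. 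The same argument applies symmetrically to $M'$, and in this way the possibility of Woodin cardinals above $\delta$ is absorbed into the coiteration process, completing the successor step and hence the lemma.
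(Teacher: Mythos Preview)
Your approach is correct and is essentially the paper's: invoke the inductive hypothesis (Theorem \ref{thm:newWoodinodd}) to get $M_{2n-1}^\#(y)$ for all reals $y$, then apply Lemma \ref{lem:iterability} (4) to any two candidates $M, M'$ extending $N_\alpha$.

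The obstacle you flag in the last paragraph, however, is not real. Condition (2) of Definition \ref{def:Lp} gives $\rho_\omega(M) \leq \delta$, and this already forces $M$ to have no \emph{definable} Woodin cardinals above $\delta$: for any $\delta' > \delta$, take $\gamma = M \cap \Ord$ in Definition \ref{def:notdefWdn} and observe that $\rho_k(M) \leq \delta < \delta'$ for some $k$, so clause (ii) fires. This is exactly the point of the remark following Definition \ref{def:nodefWdns} (e.g.\ $M_1^\#$ has a Woodin cardinal but no definable one, since $\rho_\omega(M_1^\#) = \omega$). Hence the hypotheses of Lemma \ref{lem:iterability} (4) are met verbatim, the paper's proof is a one-line citation of that lemma, and your Corollary \ref{cor:itKS}-style workaround is unnecessary. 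Note also that the workaround as you phrase it would not actually go through: the conclusion $\M(\T) = \M_\lambda^\T$ in the argument for Corollary \ref{cor:itKS} relies on the absence of Woodin cardinals, not merely on $(2n-1)$-smallness and $\omega_1$-iterability.
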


\begin{proof}
  Recall that we assume inductively that Theorem
  \ref{thm:newWoodinodd} holds. That means $\PI^1_{2n}$ determinacy
  implies that $M_{2n-1}^\#(x)$ exists for all $x \in \BS$. Therefore
  we have by Lemma \ref{lem:iterability} that whenever $M$ and
  $M^\prime$ are two $x$-premice extending some $x$-premouse
  $N_\alpha$ (as in Definition \ref{def:Lp}) and satisfying properties
  $(1)$-$(5)$ in Definition \ref{def:Lp} for some $x \in \BS$, then we
  have that in fact $M \unlhd M^\prime$ or $M^\prime \unlhd
  M$. Therefore $\Lpoddn(x)$ is a well-defined $x$-premouse.
\end{proof}

\begin{lemma}\label{lem:LpM}
  Let $n \geq 1$ and assume that $\PI^1_{2n}$ determinacy
  holds. Moreover let $x \in \BS$ be arbitrary. Let $M$ denote the
  $\omega_1^V$-th iterate of $M_{2n-1}^\#(x)$ by its least measure and
  its images. Then \[ M | \omega_1^V = \Lpoddn(x). \]
\end{lemma}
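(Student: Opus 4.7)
The plan is to establish $M | \omega_1^V = \Lpoddn(x)$ by proving both inclusions. Throughout, I will use the inductive hypothesis (from Theorem \ref{thm:newWoodinodd}) that $\PI^1_{2n}$ determinacy implies $M_{2n-1}^\#(y)$ exists and is $\omega_1$-iterable for every real $y$, so that Lemma \ref{lem:iterability} and Lemma \ref{lem:steel2.2} apply to $(2n-1)$-small premice.

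For the direction $\Lpoddn(x) \unlhd M | \omega_1^V$, I proceed by transfinite induction on $\alpha < \omega_1^V$ to show $N_\alpha \unlhd M | \omega_1^V$. The base case $N_0 = x$ is immediate since $x \in M_{2n-1}^\#(x)$ and $M_{2n-1}^\#(x)$ is the starting point of the linear iteration producing $M$. At successor stages, assuming $N_\alpha \unlhd M | \omega_1^V$, take an arbitrary countable $x$-premouse $N \unrhd N_\alpha$ satisfying conditions $(1)$--$(5)$ of Definition~\ref{def:Lp} at level $\alpha$: $(2n-1)$-small above $N_\alpha \cap \Ord$, sound with $\rho_\omega(N) \leq N_\alpha \cap \Ord$, with $N_\alpha \cap \Ord$ a cutpoint, and $\omega_1$-iterable above $N_\alpha \cap \Ord$. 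Coiterate $N$ with the unique initial segment of $M$ extending $N_\alpha$ using Lemma \ref{lem:iterability}(3)--(4): both sides are $(2n-1)$-small above the common cutpoint $N_\alpha \cap \Ord$ (for $M$ this uses that the iteration by the least measure only moves ordinals above the image of that measure, so below any critical point of the iteration the structure is a direct image of a proper initial segment of $M_{2n-1}^\#(x)$, which is $(2n-1)$-small by definition), and both are $\omega_1$-iterable above $N_\alpha \cap \Ord$. Soundness of $N$ and $\rho_\omega(N) \leq N_\alpha \cap \Ord$ force $N \unlhd M$, hence $N \unlhd M | \omega_1^V$ since $N$ has countable height. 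Taking unions yields $N_{\alpha+1} \unlhd M | \omega_1^V$, and limit stages are immediate by continuity.

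For the direction $M | \omega_1^V \unlhd \Lpoddn(x)$, let $(\kappa_\beta \st \beta < \omega_1^V)$ enumerate the critical points of the least measure and its images used in the linear iteration producing $M$; these form a club in $\omega_1^V$. Each $\kappa_\beta$ is a cutpoint of $M | \omega_1^V$. For any $\xi < \omega_1^V$, pick $\beta$ with $\kappa_\beta \leq \xi < \kappa_{\beta+1}$, and consider the initial segment $M || \xi$. Since every proper initial segment of $M_{2n-1}^\#(x)$ is $(2n-1)$-small and this property is preserved elementarily under the linear ultrapower iteration, $M || \xi$ is $(2n-1)$-small above $\kappa_\beta$; soundness is inherited by standard fine-structural arguments (passing to sound initial segments is cofinal); $\omega_1$-iterability above $\kappa_\beta$ is inherited from the $\omega_1$-iteration strategy of $M_{2n-1}^\#(x)$ applied to $M$, using that $\kappa_\beta$ is a cutpoint. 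Hence each such $M || \xi$ meets the conditions of Definition \ref{def:Lp} relative to the stage $N_\alpha$ of $\Lpoddn(x)$ with $N_\alpha \cap \Ord = \kappa_\beta$ (which exists by the previous direction), so $M || \xi \unlhd N_{\alpha+1}$ and consequently $M || \xi \unlhd \Lpoddn(x)$. Since such $M || \xi$ are cofinal in $M | \omega_1^V$, the inclusion follows.

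\textbf{The main obstacle} is the second direction: one must verify that, although $M$ globally contains $2n-1$ Woodin cardinals, the local structure between consecutive iteration critical points $\kappa_\beta < \kappa_{\beta+1}$ is $(2n-1)$-small above $\kappa_\beta$ in the precise sense demanded by the $\Lpoddn$-construction. This rests on two points: first, that iteration by the least measure is linear, so each $\kappa_\beta$ is a genuine cutpoint of the corresponding iterate, and second, that elementarity of the iteration maps propagates the defining property of $M_{2n-1}^\#(x)$ that all proper initial segments are $(2n-1)$-small. Matching these chunks to the correct stages of $\Lpoddn(x)$ (so that the cutpoint structure aligns) is the technical heart of the argument.
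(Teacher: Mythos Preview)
Your first direction has a genuine gap. You appeal to Lemma~\ref{lem:iterability}(3)--(4) to coiterate $N$ with ``the unique initial segment of $M$ extending $N_\alpha$'', but that phrase does not pick out a specific object, and in any case the hypotheses of part~(4) fail: the full $M$ is not countable and certainly does not satisfy $\rho_\omega(M)\leq N_\alpha\cap\Ord$, while for a countable cut $M|\gamma$ you would still need to choose $\gamma$ with $\rho_\omega(M|\gamma)\leq N_\alpha\cap\Ord$ and then argue separately why the outcome is $N\unlhd M|\gamma$ rather than the reverse. The paper instead compares $M$ against the whole stage $N_{\alpha+1}$ and runs a three-case analysis, the key structural input being that below $\omega_1^V$ every extender on the $M$-sequence is \emph{partial} (the least total measure of $M$ has critical point $\omega_1^V$), so any move on either side forces a drop. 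The substantive case is when only the $M$-side moves: the first extender $E_\beta$ used is then total on some $N'\lhd M$ with $\rho_m(N')\leq\crit(E_\beta)$; since $\crit(E_\beta)$ is a cardinal in $N_{\alpha+1}$, this $N'$ already qualifies for the lower-part construction and so was inside $N_{\alpha+1}$ to begin with --- contradicting that $E_\beta$ marked a disagreement. Your line ``soundness of $N$ and $\rho_\omega(N)\leq N_\alpha\cap\Ord$ force $N\unlhd M$'' is exactly where this argument is missing.

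Your second direction is both unnecessary and flawed. Unnecessary, because once $N_\alpha\unlhd M$ for all $\alpha<\omega_1^V$ one has $\Lpoddn(x)\unlhd M$, and since $\Lpoddn(x)\cap\Ord=\omega_1^V$ by construction (the $N_\alpha$ are countable with strictly increasing heights over $\omega_1^V$ many stages), equality with $M|\omega_1^V$ is immediate --- this is precisely how the paper finishes. Flawed, because you assert the existence of a stage $N_\alpha$ with $N_\alpha\cap\Ord=\kappa_\beta$, yet the heights $N_\alpha\cap\Ord$ are determined by stacking \emph{all} suitable mice above the previous stage, and there is no a priori reason these suprema should coincide with the club of iteration critical points. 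Contrary to your own assessment, then, the real work lies in the first direction, not the second.
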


\begin{proof}
  Let $x \in \BS$ and let $(N_\alpha \st \alpha \leq \omega_1^V)$ be
  the sequence of models from the definition of $\Lpoddn(x)$ (see
  Definition \ref{def:Lp}). We aim to show inductively that
  \[ N_\alpha \unlhd M \]
  for all $\alpha < \omega_1^V$, where $M$ denotes the $\omega_1^V$-th
  iterate of $M_{2n-1}^\#(x)$ by its least measure and its images as
  in the statement of Lemma \ref{lem:LpM}. Fix an
  $\alpha < \omega_1^V$ and assume inductively that
  \[ N_\beta \unlhd M \]
  for all $\beta \leq \alpha$. Let $z$ be a real which codes the
  countable premice $N_{\alpha+1}$ and $M_{2n-1}^\#(x)$.

  Since $M_{2n-1}^\#(x)$ is $\omega_1$-iterable in $V$ and has no
  definable Woodin cardinals, we have by Lemma \ref{lem:iterability}
  $(2)$ that it is $(\omega_1 + 1)$-iterable inside the model
  $M_{2n-1}(z)$. Therefore we have in particular that $M$ is
  $(\omega_1+1)$-iterable inside $M_{2n-1}(z)$.

  Recall that $N_{\alpha+1}$ is by definition the model theoretic
  union of all countable $x$-premice $N \unrhd N_\alpha$ such that
  \begin{enumerate}[$(1)$]
    \item $N$ is $(2n-1)$-small above $N_\alpha \cap \Ord$,
    \item $\rho_\omega(N) \leq N_\alpha \cap \Ord$,
    \item $N$ is sound above $N_\alpha \cap \Ord$,
    \item $N_\alpha \cap \Ord$ is a cutpoint of $N$, and
    \item $N$ is $\omega_1$-iterable above $N_\alpha \cap \Ord$.
    \end{enumerate}
    In particular Lemma \ref{lem:iterability} $(2)$ implies that all
    these $x$-premice $N \unrhd N_\alpha$ satisfying properties
    $(1)-(5)$ are $(\omega_1+1)$-iterable above $N_\alpha \cap \Ord$
    inside the model $M_{2n-1}(z)$ since they have no definable Woodin
    cardinals above $N_\alpha \cap \Ord$. In particular $N_{\alpha+1}$
    is well-defined as in Lemma \ref{lem:LpWD} and it follows that
    $N_{\alpha+1}$ is $(\omega_1+1)$-iterable above
    $N_\alpha \cap \Ord$ inside $M_{2n-1}(z)$.

    Hence we can consider the comparison of the $x$-premice $M$ and
    $N_{\alpha+1}$ inside the model $M_{2n-1}(z)$. This comparison is
    successful because by our inductive hypothesis it takes place
    above $N_\alpha \cap \Ord$. Now we distinguish the following
    cases.

    \begin{minipage}{\textwidth}
      \bigskip \textbf{Case 1.} Both sides of the comparison move.
      \bigskip
    \end{minipage}

    In this case both sides of the comparison have to drop since we
    have that $\rho_\omega(N) \leq N_\alpha \cap \Ord$ for all
    $x$-premice $N$ occurring in the definition of $N_{\alpha+1}$,
    $N_{\alpha+1} \cap \Ord < \omega_1^V$ and $M$ only has partial
    extenders on its sequence below $\omega_1^V$. This is a
    contradiction by the proof of the Comparison Lemma (see Theorem
    $3.11$ in \cite{St10}), so only one side of the coiteration can
    move.

    \begin{minipage}{\textwidth}
      \bigskip \textbf{Case 2.} Only the $M$-side of the comparison
      moves.  \bigskip
    \end{minipage}

    As above we have in this case that the $M$-side drops. So there is
    an iterate $M^*$ of $M$ such that $N_{\alpha+1}$ is an initial
    segment of $M^*$. Let $E_\beta$ for some ordinal $\beta$ be the
    first extender used on the $M$-side in the coiteration of $M$ with
    $N_{\alpha+1}$.  In particular $E_\beta$ is an extender indexed on
    the $M$-sequence above $N_\alpha \cap \Ord$ and below
    $\omega_1^V$. Since $E_\beta$ has to be a partial extender, there
    exists a $(2n-1)$-small sound countable $x$-premouse $N \lhd M$
    such that $E_\beta$ is a total extender on the $N$-sequence and
    $\rho_m(N) \leq crit(E_\beta)$ for some natural number
    $m$. Moreover we have that $N$ is $\omega_1$-iterable by the
    iterability of $M$. Furthermore we have that $crit(E_\beta)$ is a
    cardinal in $N_{\alpha+1}$, because the extender $E_\beta$ is used
    in the coiteration and we have that
    \[ N_{\alpha+1} \unlhd M^*. \]
    Therefore the premouse $N$ is contained in $N_{\alpha+1}$ by the
    definition of a lower part model. But this contradicts the
    assumption that $E_\beta$ was used in the coiteration, because
    then we have that there is no disagreement between $M$ and
    $N_{\alpha+1}$ at $E_\beta$.

    \begin{minipage}{\textwidth}
      \bigskip \textbf{Case 3.} Only the $N_{\alpha+1}$-side of the
      comparison moves.  \bigskip
    \end{minipage}

    In this case there is an iterate $N^*$ of $N_{\alpha+1}$ above
    $N_\alpha \cap \Ord$ such that the iteration from $N_{\alpha+1}$
    to $N^*$ drops and we have that \[ M \unlhd N^*. \]
    Recall that $M$ denotes the $\omega_1^V$-th iterate of
    $M_{2n-1}^\#(x)$ by its least measure and its images and is
    therefore in particular not $(2n-1)$-small above $\omega_1^V$. But
    then the same holds for $N^*$ and thus it follows that
    $N_{\alpha+1}$ is not $(2n-1)$-small above $N_\alpha \cap \Ord$,
    which is a contradiction, because $N_{\alpha+1}$ is the model
    theoretic union of premice which are $(2n-1)$-small above
    $N_\alpha \cap \Ord$.

    This proves that \[ N_{\alpha+1} \unlhd M \]
    for all $\alpha < \omega_1^V$ and since
    $\Lpoddn(x) \cap \Ord = N_{\omega_1^V} \cap \Ord = \omega_1^V$ we
    finally have that
    \[ \Lpoddn(x) = M | \omega_1^V. \]
\end{proof}

\begin{remark}
  Lemma \ref{lem:LpM} also implies that the lower part model
  $\Lpoddn(x)$ is closed under the operation
  $a \mapsto M_{2n-2}^\#(a)$ by the proof of Lemma
  \ref{lem:iterability} $(1)$ as this holds for $M|\omega_1^V$, where
  $M$ again denotes the $\omega_1^V$-th iterate of $M_{2n-1}^\#(x)$ by
  its least measure and its images.
\end{remark}

Using this representation of lower part models we can also prove the
following lemma.

\begin{lemma}\label{lem:LpXY}
  Let $n \geq 1$ and assume that $\PI^1_{2n}$ determinacy holds. Let
  $x,y \in \BS$ be such that $x \in \Lpoddn(y)$. Then we have that
  \[ \Lpoddn(x) \subseteq \Lpoddn(y) \]
and in fact $\Lpoddn(x)$ is definable inside $\Lpoddn(y)$ from the parameter
$x$ plus possibly finitely many ordinal parameters.
  If in addition we have that $y \leq_T x$, then
  \[ \Lpoddn(x) = (\Lpoddn(x))^{\Lpoddn(y)}. \]
\end{lemma}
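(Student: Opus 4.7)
The plan is to work directly from the construction in Definition \ref{def:Lp}. Write $(N_\alpha^x \st \alpha \leq \omega_1^V)$ and $(N_\alpha^y \st \alpha \leq \omega_1^V)$ for the two constructing sequences. Since $x \in \Lpoddn(y)$, there is a least stage $\gamma < \omega_1^V$ with $x \in N_\gamma^y$. I would then show by induction on $\alpha$ that each initial segment of $\Lpoddn(x)$ corresponds to an initial segment of some $N_\beta^y$ with $\beta < \omega_1^V$. The idea is that if $N \unrhd N_\alpha^x$ is a countable $x$-premouse satisfying clauses $(1)$--$(5)$ of Definition \ref{def:Lp}, then the extender sequence of $N$ above $x$ can be re-indexed on top of any $y$-premouse $P \unrhd N_\gamma^y$ that already contains (a canonical code for) $N_\alpha^x$; call the result $\hat N$. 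The key fine-structural point will be that $\hat N$ inherits from $N$ the property of being $(2n-1)$-small above $P \cap \Ord$, sound above $P \cap \Ord$, projecting to at most $P \cap \Ord$, having $P \cap \Ord$ as a cutpoint, and being $\omega_1$-iterable above $P \cap \Ord$. Thus $\hat N$ is captured by the next stage of the construction of $\Lpoddn(y)$.

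The main obstacle will be the transfer of $\omega_1$-iterability along this re-coding. I would handle it using Lemma \ref{lem:iterability}: proper initial segments of $\Lpoddn(x)$ are $(2n-1)$-small and have no definable Woodin cardinals, and $\Lpoddn(y)$ is closed under the operation $a \mapsto M_{2n-2}^\#(a)$ by the remark following Lemma \ref{lem:LpM}. Therefore the $\Q$-structure iteration strategy for $N$ (and hence for $\hat N$) can be read off uniformly from this operation, which allows iterability in $V$ to be transferred to iterability inside $\Lpoddn(y)$ and conversely. This will give $\Lpoddn(x) \subseteq \Lpoddn(y)$.

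The definability assertion will follow from running this re-coded construction inside $\Lpoddn(y)$. The recursion has length $\omega_1^V = \Lpoddn(y) \cap \Ord$, which is itself definable in $\Lpoddn(y)$. At each step the next stage is the union of all countable premice whose defining properties are expressible inside $\Lpoddn(y)$ using the parameter $x$ together with the $M_{2n-2}^\#$-operation, yielding a definition of $\Lpoddn(x)$ inside $\Lpoddn(y)$ from $x$ and finitely many ordinal parameters.

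Finally, under the extra hypothesis $y \leq_T x$, the real $y$ is recursive in $x$, so every $x$-premouse already codes $y$, and the re-coding step above becomes trivial. The iterability absoluteness provided by the $M_{2n-2}^\#$-closure of $\Lpoddn(y)$ (Lemma \ref{lem:iterability} again) ensures that the two constructions select exactly the same $x$-premice at each stage. Both constructions run for $\omega_1^V$ steps, so this yields $(\Lpoddn(x))^{\Lpoddn(y)} = \Lpoddn(x)$, completing the proof.
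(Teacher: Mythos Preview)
Your approach has two genuine gaps.

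First, the ``re-indexing'' step is not a well-defined operation. An $x$-premouse $N$ is a $J$-structure built over $x$ with a fine extender sequence satisfying coherence and the initial segment condition relative to that hierarchy; you cannot take the extenders of $N$ and graft them onto a $y$-premouse $P$ to obtain a $y$-premouse $\hat N$. The extenders measure subsets of their critical points as computed in $N$, and the levels of $N$ and $P$ do not line up in any way that would make the transplanted sequence coherent. So the inductive step ``$\hat N$ inherits properties $(1)$--$(5)$'' has no content. The paper avoids this entirely: it uses Lemma~\ref{lem:LpM} to identify $\Lpoddn(x)$ with $M(x)|\omega_1^V$ (the linear iterate of $M_{2n-1}^\#(x)$), then performs an $L[E](x)$-construction inside $M^*(y)$ and compares the result with $M(x)$ to get $\Lpoddn(x) = L[E](x)^{M^*(y)}|\omega_1^V \subseteq \Lpoddn(y)$.

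Second, and more seriously, your final step asserts that $M_{2n-2}^\#$-closure of $\Lpoddn(y)$ makes iterability absolute in both directions, so the two constructions ``select exactly the same $x$-premice.'' One direction is fine: if $N$ is $\omega_1$-iterable in $V$ then the $\Q$-structures live below $M_{2n-2}^\#(\M(\T))$, which $\Lpoddn(y)$ can compute, and the branch is recovered by absoluteness. The converse direction is false. Lemma~\ref{lem:LpInLpFake} in the paper constructs, already at the $n=1$ level, a premouse $M$ and a real $x$ such that $Lp^1(x)$ believes $M$ is $\omega_1$-iterable but $M$ is not $\omega_1$-iterable in $V$. So $(\Lpoddn(x))^{\Lpoddn(y)}$ can genuinely overshoot $\Lpoddn(x)$, and the equality under $y \leq_T x$ requires a different argument. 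The paper proves it by showing $\Lpoddn(x) = \Lpoddn(y)$ outright in that case (via a comparison of $M_{2n-1}(x)$ and $M_{2n-1}(y)$ as $V_\mu$-premice), which squeezes $(\Lpoddn(x))^{\Lpoddn(y)}$ between two equal models. For the definability clause when equality fails, the paper coiterates $\Lpoddn(x)$ with a bad $N$ and observes that the only surviving case is a linear iteration of $N$ using Mitchell-order-$0$ measures, which is definable from $N$ and finitely many ordinals.
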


Here $(\Lpoddn(x))^{\Lpoddn(y)}$ denotes the model of height
$\omega_1^V$ which is constructed as in Definition \ref{def:Lp}, but
with models $M$ which are $\omega_1$-iterable above
$N_\alpha \cap \Ord$ inside $\Lpoddn(y)$ instead of inside $V$.

\begin{proof} Let $x,y \in \BS$ be such that $x \in \Lpoddn(y)$. We
  first prove that
  \[ \Lpoddn(x) \subseteq \Lpoddn(y). \]
  By Lemma \ref{lem:LpM} we have that
  $\Lpoddn(x) = M(x) | \omega_1^V$, where $M(x)$ denotes the
  $\omega_1^V$-th iterate of $M_{2n-1}^\#(x)$ by its least measure and
  its images. Moreover we have that $\Lpoddn(y) = M(y) | \omega_1^V$,
  where $M(y)$ denotes the $\omega_1^V$-th iterate of $M_{2n-1}^\#(y)$
  by its least measure and its images. Let $M^*(y)$ denote the result
  of iterating the top measure of $M(y)$ out of the universe.

  Let $L[E](x)^{M^*(y)}$ be as in Definition \ref{def:L[E]constr}.
  Moreover let $M^\#_x$ denote the model obtained from
  $L[E](x)^{M^*(y)}$ by adding the top measure (intersected with
  $L[E](x)^{M^*(y)}$) of the active premouse $M(y)$ to an initial
  segment of $L[E](x)^{M^*(y)}$ as in Section $2$ of \cite{FNS10}.

  We can successfully compare the active $x$-premice $M^\#_x$ and
  $M(x)$ inside the model $M_{2n-1}(z)$, where $z$ is a real coding
  $M_{2n-1}^\#(y)$ and $M_{2n-1}^\#(x)$, by an argument using Lemma
  \ref{lem:iterability} as in the proof of Lemma
  \ref{lemma1und2}. Therefore it follows that $M^\#_x = M(x)$ and thus
  we have that in fact
  \[ L[E](x)^{M^*(y)}|\omega_1^V = M^\#_x | \omega_1^V = M(x) |
  \omega_1^V. \] Since $L[E](x)^{M^*(y)} \subseteq M^*(y)$ it follows that
  \[ \Lpoddn(x) = M(x) | \omega_1^V = L[E](x)^{M^*(y)} | \omega_1^V
  \subseteq M^*(y) | \omega_1^V = \Lpoddn(y). \]

Now we prove that $\Lpoddn(x)$ is definable inside $\Lpoddn(y)$ from
the parameter $x$ plus possibly finitely many ordinal parameters.  Let
$(N_\alpha \st \alpha \leq \omega_1^V)$ be the sequence of models from
the definition of $\Lpoddn(x)$ in $V$ (see Definition \ref{def:Lp})
and let $(N_\alpha^{\Lpoddn(y)} \st \alpha \leq \omega_1^V)$ denote
the corresponding sequence of models from the definition of
$(\Lpoddn(x))^{\Lpoddn(y)}$. Let $\alpha < \omega_1^V$ be such that
$N_\alpha = N_\alpha^{\Lpoddn(y)}$ and let $N \unrhd N_\alpha$ be an
$x$-premouse which satisfies properties $(1)-(5)$ in Definition
\ref{def:Lp} in $V$. So in particular we have that $N \in \Lpoddn(x)$
and $N$ is $(2n-1)$-small above $N_\alpha \cap \Ord$. As argued above
we have that $N \in \Lpoddn(y)$ and we first want to show that as $N$
is $\omega_1$-iterable above $N_\alpha \cap \Ord$ in $V$ it follows
that $N$ is $\omega_1$-iterable above $N_\alpha \cap \Ord$ inside
$\Lpoddn(y)$.

So assume that $N$ is $\omega_1$-iterable above $N_\alpha \cap \Ord$
in $V$ and recall that the model $\Lpoddn(y)$ is closed under the
operation $a \mapsto M_{2n-2}^\#(a)$. Since $N$ is $(2n-1)$-small
above $N_\alpha \cap \Ord$ and has no definable Woodin cardinal above
$N_\alpha \cap \Ord$, we have that for an iteration tree $\T$ on $N$
of length $< \omega_1$ in $\Lpoddn(y)$ above $N_\alpha \cap \Ord$ the
iteration strategy $\Sigma$ is guided by $\Q$-structures which are
$(2n-2)$-small above $\delta(\T \upharpoonright \lambda)$ for every
limit ordinal $\lambda \leq \lh(\T)$. Therefore the $\Q$-structures
for $\T$ are contained in the model $\Lpoddn(y)$ and we have that $N$
is $\omega_1$-iterable inside $\Lpoddn(y)$ above $N_\alpha \cap \Ord$
if we argue as in the proof of Lemma \ref{lem:iterability} $(2)$.

  Assume now that $\Lpoddn(x) \neq (\Lpoddn(x))^{\Lpoddn(y)}$, as
  otherwise the definability of $\Lpoddn(x)$ inside $\Lpoddn(y)$ is
  trivial.  That means there is an ordinal $\alpha < \omega_1^V$ such
  that $N_\alpha = N_\alpha^{\Lpoddn(y)}$ and there exists a premouse
  $N \rhd N_\alpha$ which satifies properties $(1) - (5)$ in the
  definition of $(\Lpoddn(x))^{\Lpoddn(y)}$, so in particular $N$ is
  $\omega_1$-iterable above $N_\alpha \cap \Ord$ inside $\Lpoddn(y)$,
  but $N$ is not $\omega_1$-iterable above $N_\alpha \cap \Ord$ in
  $V$.

  Recall the model $M^*(y)$ from the first part of this proof. We have
  that $\Lpoddn(x) = L[E](x)^{M^*(y)} | \omega_1^V$ and $N$ are both
  sufficiently iterable in $M^*(y)$, so we can consider the
  coiteration of $\Lpoddn(x)$ and $N$ inside $M^*(y)$ and distinguish
  the following cases.

\begin{minipage}{\textwidth}
      \bigskip \textbf{Case 1.} Both sides of the comparison move.
      \bigskip
    \end{minipage}

    In this case both sides of the comparison have to drop since we
    have that $\rho_\omega(N) \leq N_\alpha \cap \Ord$ and
    $\Lpoddn(x)$ only has partial extenders on its sequence. As in
    Case $1$ in the proof of Lemma \ref{lem:LpM} this yields a
    contradiction.

    \begin{minipage}{\textwidth}
      \bigskip \textbf{Case 2.} Only the $\Lpoddn(x)$-side of the
      comparison moves.  \bigskip
    \end{minipage}

    Then the $\Lpoddn(x)$-side drops and we have that there is an
    iterate $M$ of $\Lpoddn(x)$ such that $N \unlhd M$. But this would
    imply that $N$ is $\omega_1$-iterable in $V$, contradicting our
    choice of $N$.

    \begin{minipage}{\textwidth}
      \bigskip \textbf{Case 3.} Only the $N$-side of the
      comparison moves.  \bigskip
    \end{minipage}
    
    In this case there exists an iterate $N^*$ of $N$ such that
    $\Lpoddn(x) \unlhd N^*$. In fact the iteration from $N$ to $N^*$
    only uses measures of Mitchell order $0$ as the iteration cannot
    leave any total measures behind. Since there are only finitely
    many drops along the main branch of an iteration tree, this
    implies that the whole iteration from $N$ to $N^*$ can be defined
    over $L[N]$ from $N$ and a finite sequence of ordinals as the
    iteration is linear. As $N \lhd (\Lpoddn(x))^{\Lpoddn(y)}$ we
    therefore get that $\Lpoddn(x)$ is definable inside $\Lpoddn(y)$
    from the parameter $x$ plus finitely many ordinal parameters.

    Now we prove the final part of Lemma \ref{lem:LpXY}, so assume
    that we in addition have that $y \leq_T x$. By the argument we
    just gave,

    \[ \Lpoddn(x) \subsetneq (\Lpoddn(x))^{\Lpoddn(y)} \subseteq
    \Lpoddn(y). \]

    This contradicts the following claim.
    
    \begin{claim}
      For reals $x,y$ such that $x \in \Lpoddn(y)$ and $y \leq_T x$ we
      have that
      \[ \Lpoddn(x) = \Lpoddn(y). \]
    \end{claim}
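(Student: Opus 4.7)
The plan is to derive the claim directly from the inclusion already established in the first part of Lemma \ref{lem:LpXY}, applied twice, once in each direction. Concretely, the hypothesis $x \in \Lpoddn(y)$ immediately gives $\Lpoddn(x) \subseteq \Lpoddn(y)$ by what has just been proved. So the only remaining task is to establish the reverse inclusion $\Lpoddn(y) \subseteq \Lpoddn(x)$.

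For the reverse inclusion the key observation is that since $y \leq_T x$, the real $y$ is recursive in $x$, hence $y \in L[x]$; but $L[x]$ is an initial segment of $\Lpoddn(x)$, so trivially $y \in \Lpoddn(x)$. This is exactly the hypothesis of the first part of Lemma \ref{lem:LpXY} with the roles of $x$ and $y$ exchanged, and applying that inclusion then yields $\Lpoddn(y) \subseteq \Lpoddn(x)$. Combining both inclusions gives $\Lpoddn(x) = \Lpoddn(y)$.

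Alternatively, one can see the equality through the representation supplied by Lemma \ref{lem:LpM}. That lemma gives $\Lpoddn(x) = M(x) | \omega_1^V$ and $\Lpoddn(y) = M(y) | \omega_1^V$, where $M(x)$ and $M(y)$ denote the $\omega_1^V$-th linear iterates of $M_{2n-1}^\#(x)$ and $M_{2n-1}^\#(y)$ by their least measures. The argument given in the first part of the proof of Lemma \ref{lem:LpXY} realized $M(x)|\omega_1^V$ as $L[E](x)^{M^*(y)}|\omega_1^V$; running the symmetric construction, using that $y \leq_T x$ lets us realize $y$ inside $M^*(x)$, produces the corresponding realization of $M(y)|\omega_1^V$ inside $\Lpoddn(x)$, again giving the desired equality.

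There is no genuine obstacle here: the content of the claim is really just that the first-part inclusion of Lemma \ref{lem:LpXY} can be applied symmetrically once one notices that $y \leq_T x$ makes the hypothesis $y \in \Lpoddn(x)$ automatic. The only small point to double-check is that this symmetric application is legitimate, i.e. that $y$ really lies in $\Lpoddn(x)$; but this is immediate from $y \leq_T x \subseteq \Lpoddn(x)$. Once the claim is in hand, the strict inclusion $\Lpoddn(x) \subsetneq (\Lpoddn(x))^{\Lpoddn(y)} \subseteq \Lpoddn(y)$ obtained in Case~3 of the preceding argument contradicts $\Lpoddn(x) = \Lpoddn(y)$, completing the proof of the final part of Lemma \ref{lem:LpXY}.
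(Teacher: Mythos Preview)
Your argument is correct and in fact more economical than the paper's. You observe that the inclusion $\Lpoddn(x)\subseteq\Lpoddn(y)$ established in the first paragraph of the proof of Lemma~\ref{lem:LpXY} is symmetric: since $y\leq_T x$ gives $y\in\Lpoddn(x)$ trivially, the same inclusion applied with the roles of $x$ and $y$ reversed yields $\Lpoddn(y)\subseteq\Lpoddn(x)$, and hence equality. The paper instead reproves the equality from scratch by going back to the $L[E]$-constructions inside $M_{2n-1}(y)$: it shows $V_\mu^{M_{2n-1}(x)}=V_\mu^{M_{2n-1}(y)}$ for $\mu$ the minimum of the least measurables, and then coiterates $M_{2n-1}(x)$ and $M_{2n-1}(y)$ as $V_\mu$-premice to a common model to deduce that their linear iterates by the bottom measure agree below $\omega_1^V$. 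Your route avoids this detour entirely; the paper's route, on the other hand, gives the extra structural information that $M_{2n-1}(x)$ and $M_{2n-1}(y)$ literally line up below their least measurables and coiterate to the same model, though that information is not needed for the claim or for the contradiction that follows.
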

    \begin{proof}
      As $x \in \Lpoddn(y)$ we have that $x \in M(y)$ and thus
      $x \in M_{2n-1}^\#(y)$, because $M(y)$ is obtained from
      $M_{2n-1}^\#(y)$ by iterating its least measure and its images.
      
      As in the proof of Lemma \ref{lemma1und2} we can consider the
      premice $L[E](x)^{M_{2n-1}(y)}$ and
      $L[E](y)^{L[E](x)^{M_{2n-1}(y)}}$ and if we let $\kappa$ denote
      the least measurable cardinal in $M_{2n-1}(y)$, then we get as
      in the proof of Lemma \ref{lemma1und2} that
      \[ V_\kappa^{M_{2n-1}(y)} = V_\kappa^{L[E](x)^{M_{2n-1}(y)}}. \]
      Another comparison argument as in Lemma \ref{lemma1und2} yields
      that $V_\nu^{M_{2n-1}(x)} = V_\nu^{L[E](x)^{M_{2n-1}(y)}}$,
      where $\nu$ denotes the minimum of the least measurable cardinal
      in $M_{2n-1}(x)$ and the least measurable cardinal in
      $L[E](x)^{M_{2n-1}(y)}$. Therefore
      \[ V_\mu^{M_{2n-1}(y)} = V_\mu^{L[E](x)^{M_{2n-1}(y)}} =
        V_\mu^{M_{2n-1}(x)}, \] where $\mu = \min(\kappa, \nu)$,
      i.e. $\mu$ is the minimum of the least measurable cardinal in
      $M_{2n-1}(y)$ and the least measurable cardinal in
      $M_{2n-1}(x)$. In particular we can consider $M_{2n-1}(x)$ and
      $M_{2n-1}(y)$ as $V_\mu^{M_{2n-1}(x)}$-premice and as such they
      successfully coiterate to the same model (see again Lemma
      \ref{lemma1und2} for a similar argument). This implies that
      $\Lpoddn(x) = \Lpoddn(y)$ as they can be obtained by iterating
      $M_{2n-1}(x)$ and $M_{2n-1}(y)$ via their least measure and its
      images.
    \end{proof}
\end{proof}

In general it need not be the case that $\Lpoddn(x) =
(\Lpoddn(x))^{\Lpoddn(y)}$ as the following counterexample shows. 

\begin{lemma}
  \label{lem:LpInLpFake}
  Assume that $\PI^1_2$ determinacy holds. Then there exists a real
  $x$ and a premouse $M$ such that $M$ is $\omega_1$-iterable in
  $Lp^1(x)$, but $M$ is not $\omega_1$-iterable in $V$.
\end{lemma}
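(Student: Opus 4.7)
The plan is to construct a real $x$ and a countable premouse $M$, coded by a real in $Lp^1(x)$, such that $Lp^1(x)$ verifies $\omega_1$-iterability of $M$ while $V$ refutes it. The key gap is the following. Under $\PI^1_2$ determinacy, Theorem \ref{thm:newWoodinodd} applied with $n=1$ gives that $M_1^\#(z)$ exists and is $\omega_1$-iterable in $V$ for every real $z$; yet by Lemma \ref{lem:LpM}, $Lp^1(x)$ agrees only with the first $\omega_1^V$ levels of an iterate of $M_1^\#(x)$ via its top measure, so $Lp^1(x)$ cannot see $M_1^\#(x)$ itself. In effect, $Lp^1(x)$ is blind to $\Q$-structures that are not $0$-small above their common part model --- precisely the absoluteness failure exploited in Lemma \ref{lem:oddnotcorr}.

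First, I would fix any real $x$ (taking $x = 0$ suffices) and, inside $Lp^1(x)$, perform a $K^c$-style construction in the spirit of Section \ref{sec:SteelsArgumentProj}; by the remark after Lemma \ref{lem:LpM}, $Lp^1(x)$ is closed under $a \mapsto M_0^\#(a)$, which gives enough background extender material to run such a construction. The output is a countable premouse $M$ which, from the viewpoint of $Lp^1(x)$, has one Woodin cardinal $\delta$. Because every $\Q$-structure needed to guide a short iteration tree on $M$ is $0$-small above the corresponding common part model and therefore belongs to $Lp^1(x)$, relativizing the argument of Lemma \ref{lem:nsuitnew} to $Lp^1(x)$ yields that the $\Q$-structure iteration strategy for $M$ is total and fullness-preserving inside $Lp^1(x)$, witnessing $Lp^1(x) \vDash \text{``}M \text{ is } \omega_1\text{-iterable''}$.

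Second, I would show that $M$ fails to be $\omega_1$-iterable in $V$ by comparing $M$ against the true $M_1^\#(x) \in V \setminus Lp^1(x)$. The comparison produces, in $V$, a countable iteration tree $\T^*$ on $M$ whose true $\Q$-structure $\Q(\T^*)$ carries $M_1^\#$-level information and hence is not $0$-small above $\delta(\T^*)$; in particular, $\Q(\T^*) \notin Lp^1(x)$. The branch that $Lp^1(x)$'s strategy selects on $\T^*$ is then the ``wrong'' branch from $V$'s perspective: the resulting iterate of $M$ violates fullness preservation or has the image of $\delta$ fail to be Woodin in the true sense, witnessing that $M$ is not $\omega_1$-iterable in $V$. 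The main obstacle is ensuring that the $K^c$-style construction inside $Lp^1(x)$ actually reaches a Woodin cardinal, so that $M$ genuinely has a pseudo-Woodin cardinal from $Lp^1(x)$'s perspective; this is analogous to the case analysis in the proof of Theorem \ref{Steel}, but carried out in the weaker ambient model $Lp^1(x)$ in place of $M_{n-1}(x)|\delta_x$.
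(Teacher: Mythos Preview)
Your second step contains a genuine gap: you conflate ``$M$ is not the right mouse from $V$'s point of view'' with ``$M$ is not $\omega_1$-iterable in $V$.'' Showing that the $\Q$-structure for some tree $\T^*$ on $M$ is not $0$-small, or that the branch chosen by $Lp^1(x)$'s strategy leads to an iterate whose image of $\delta$ fails to be Woodin in $V$, does \emph{not} produce a tree on $M$ with no cofinal well-founded branch. If $M$ happens to be $\omega_1$-iterable in $V$ --- and a $K^c$-style model built inside $Lp^1(x)$, which is itself an initial segment of the iterable $M_1(x)$, has every reason to be --- then the comparison of $M$ with $M_1^\#(x)$ simply succeeds, and $M$ iterates into an initial segment of an iterate of $M_1^\#(x)$. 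The strategies computed by $Lp^1(x)$ and by $V$ may differ on maximal trees, but iterability only asks for the \emph{existence} of some winning strategy, not that a particular candidate strategy is correct.

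The paper's construction is entirely different and produces non-iterability for a concrete structural reason. One forces over $M_1$ with the countable stationary tower below the Woodin cardinal $\delta$, obtaining a generic elementary embedding $j\colon M_1 \to N$ with $\crit(j) = \omega_1^{M_1}$. One then takes $M$ to be the least cutpoint initial segment of $N$ with $M_1 \cap \mathbb{R} \subset M$ and $\rho_\omega(M) = \omega$, and lets $x = m$ be a real coding $M$. The reason $M$ is not $\omega_1$-iterable in $V$ is immediate: if it were, then comparison would give $M \lhd M_1$ (as $M$ is $1$-small, sound, and projects to $\omega$), but then $M$ would be an initial segment of $M_1$ of height $> \omega_1^{M_1}$ projecting to $\omega$, contradicting that $\omega_1^{M_1}$ is a cardinal in $M_1$. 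The verification that $Lp^1(m)$ nonetheless believes $M$ to be iterable uses that $N|j(\delta)$ thinks $M$ is iterable (by elementarity, since $M_1|\delta$ thinks all its initial segments are), together with a coiteration of $M_1(m)$ against $N$ exploiting only short-tree iterability of $N$. Your proposal does not supply any analogue of the ``contains $M_1 \cap \mathbb{R}$ yet projects to $\omega$'' obstruction, and without it there is no reason your $M$ should fail to be iterable in $V$.
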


\begin{proof}[Proof sketch]
  Consider the premouse $M_1$ and let $\delta$ denote the Woodin
  cardinal in $M_1$. Force with the countable stationary tower (see
  \cite{La04}) over $M_1$ and let \[ j : M_1 \rightarrow N \] be the
  corresponding generic elementary embedding with
  $\crit(j) = \omega_1^{M_1}$ and $j(\omega_1^{M_1}) =
  \delta$. Moreover let $M$ denote the shortest cutpoint initial
  segment\footnote{i.e. $M \lhd N$ and $M \cap \Ord$ is a cutpoint in
    $N$} of $N$ such that $M_1 \cap \mathbb{R} \subset M$ and
  $\rho_\omega(M) = \omega$. Then we can identify the premouse $M$
  with a canonical real $m$ coding $M$.

$M_1|\delta$ thinks that all of its own initial segments are fully iterable,
so that $N|j(\delta)$ thinks that $M$ is fully iterable. Moreover, 
a short argument shows that $N$ is short tree iterable in $V$,
  i.e. $N$ is iterable in $V$ according to iteration trees $\T$ on $N$
  such that $\Q(\T) \unlhd L[\M(\T)]$. This is proven using the fact
  that $M_1$ and thus $N$ is a model of the statement ``I am short
  tree iterable'' together with an absoluteness argument similar to
  the one given for example in the proof of Lemma \ref{lem:nsuitnew}.

  Now consider the coiteration of $M_1(m)$ and $N$ (construed as an
  $m$-premouse) in $V$ and let $\T$ and $\U$ denote the resulting
  trees on $M_1(m)$ and $N$ respectively, where the cofinal branches
  at limit steps in $\U$ are given by the short tree iteration
  strategy for $N$. So there exists an iterate $M^*$ of $M_1(m)$ and a
  pseudo-normal-iterate (see Definition $3.13$ in \cite{StW16}) $N^*$
  of $N$ such that $M^* = N^*$. Here we either have that $N^*$ is the
  last model of $\U$, or we have that the iteration tree $\U$ is
  maximal and $N^* = L[\M(\U)]$. Moreover there is no drop on the main
  branch in $\T$, which is leading from $M_1(m)$ to $M^*$, and
  therefore the iteration embedding $i: M_1(m) \rightarrow M^*$
  exists. 
As $M$ is fully iterable inside $N|j(\delta)$, $M$ will also be fully iterable
inside $N^*$, cut off at its Woodin cardinal (to conclude this, we don't need
that there is a map from $N$ to $N^*$). Hence $M$ is fully iterable inside $M^*$,
cut off at its Woodin cardinal, 
and then by elementarity it follows
  that $M$ is $\omega_1$-iterable inside $M_1(m)$ and thus
  \[ Lp^1(m) \vDash \text{``}M \text{ is }
    \omega_1\text{-iterable''}. \]

  But the premouse $M$ is not $\omega_1$-iterable in $V$. 
\end{proof}

We also have a version of Lemma \ref{lem:KS} for lower part models
$\Lpoddn(x)$ as follows.

\begin{lemma}\label{lem:versionKS}
  Let $n \geq 1$ and assume that $\PI^1_{2n}$ determinacy and
  $\Pi^1_{2n+1}$ determinacy hold. Then there exists a real $x$ such
  that we have for all reals $y \geq_T x$ that
  \[ \Lpoddn(y) \vDash \ODdet. \]
\end{lemma}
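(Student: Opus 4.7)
The plan is to adapt the proof of Lemma \ref{lem:KS} essentially line-by-line, with $M_{n-1}(x)|\delta_x$ replaced throughout by $\Lpoddn(x)$. Since Theorem \ref{thm:newWoodinodd} (the odd step of the induction, already established) gives that $\PI^1_{2n}$ determinacy implies that $M_{2n-1}^\#(x)$ exists and is $\omega_1$-iterable for all $x$, the lower part model $\Lpoddn(x)$ is well-defined by Lemma \ref{lem:LpWD}, closed under $a \mapsto M_{2n-2}^\#(a)$ by (the remark after) Lemma \ref{lem:LpM}, and behaves well under relativization in view of Lemma \ref{lem:LpXY}. The second hypothesis, $\Pi^1_{2n+1}$ determinacy, provides the Turing determinacy for $\Sigma^1_{2n+1}$-definable sets that is needed to run the Kechris--Solovay style game below.

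First I would assume toward a contradiction that no such real $x$ exists, so that there are cofinally many $y$ for which some $M \unlhd \Lpoddn(y)$ satisfies $M \vDash \neg \ODdet$. I would then copy the four-property setup from the proof of Lemma \ref{lem:KS}: for $x \in \BS$, consider countable $x$-premice $M$ satisfying $(1)$ an appropriate closure condition stating that finite iterates of the operation $M^k \mapsto (\Lpoddn(x_k))^{M^k}$, taken along reals $x_k \in M^k$ with $x_{k+1} \geq_T x_k \geq_T x$, stay $(2n-1)$-small, have no Woodin cardinals, and are $\sim$-equivalent to $M^k$ in the sense of Definition \ref{def:sim}; $(2)$ $M \vDash \neg \ODdet$; $(3)$ minimality of $M$ with respect to $(1)$ and $(2)$; and $(4)$ $\Pi^1_{2n+1}$-iterability of $M$. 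The analog of Claim \ref{lemma4} is that cofinally many $\Lpoddn(x)$ themselves satisfy $(1)$ and $(2)$, where the required closure is delivered by Lemma \ref{lem:LpXY} applied inside $\Lpoddn(x)$; this replaces the role of Lemmas \ref{lemma1und2} and \ref{lemma3} from the original argument. Since the set of reals $x$ admitting such an $M$ is $\Sigma^1_{2n+1}$-definable and Turing invariant, Turing determinacy provides a cone of such reals.

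Next, as in Lemma \ref{lem:KS}, I would compare any $\Pi^1_{2n+1}$-iterable $M$ satisfying $(1)$--$(4)$ with the fully $\omega_1$-iterable $\Lpoddn(x)$, working inside an ambient $M_{2n-1}(z)$ for a suitable real $z$, using Corollary \ref{cor:itKS} together with the minimality condition $(3)$ to show that $M$ cannot win. Hence $M$ is elementarily embedded into an $\omega_1$-iterable model and is therefore itself $\omega_1$-iterable. Then I would play exactly the Kechris--Solovay game $G$ of Lemma \ref{lem:KS}, in which player $\mathrm{I}$ wins iff there is an $(x \oplus y)$-premouse $M$ with properties $(1)$--$(4)$ such that $a \oplus b$ lies in the least $\OD$-set of reals of $M$ that fails to be determined; because property $(4)$ is $\Pi^1_{2n+1}$, the payoff set of $G$ is $\Sigma^1_{2n+1}$ and hence determined. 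Exactly as in the proof of Lemma \ref{lem:KS}, the winning strategy for either player is converted into a winning strategy for the corresponding player in the Gale--Stewart game $G(A_N)$ played inside a suitable $N$, contradicting that $A_N$ was the least undetermined $\OD$-set of reals in $N$; the bridge is once more the identity $A_{N'} = A_N$ for $N' = (\Lpoddn(x \oplus (z \oplus b)))^N$, which follows from condition $(1)$ via $N' \sim N$ combined with the mouse-order uniqueness obtained from $(3)$ together with an auxiliary minimality property $(3^*)$ arranged exactly as in Lemma \ref{lem:KS}.

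The main obstacle will be verifying the $\sim$-invariance under the iterated $\Lpoddn$-operation, i.e.\ the analog of Lemma \ref{lemma1und2} (and Lemma \ref{lemma3}) for lower part models in place of $L[E]$-constructions. Here the representation in Lemma \ref{lem:LpM}, namely $\Lpoddn(x) = M(x)|\omega_1^V$ where $M(x)$ is the linear measure iterate of $M_{2n-1}^\#(x)$, together with the closure of $\Lpoddn(x)$ under $M_{2n-2}^\#$, should allow the coiteration/Mostowski-collapse arguments of Claims \ref{cl:LEnsmall} and \ref{cl:agreebelowleastmeas} to go through essentially verbatim. The absence of total extenders on $\Lpoddn$-sequences in fact simplifies the bookkeeping relative to the original Lemma \ref{lem:KS}, since the comparison issues with overlapping extenders that were present in $M_{n-1}(x)|\delta_x$ do not arise here.
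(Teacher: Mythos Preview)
Your approach takes an unnecessarily long route. The paper's proof is a brief derivation from results already in hand: Lemma \ref{lem:KS}, applied with $2n$ in place of $n$ (its hypotheses follow from $\PI^1_{2n}$ determinacy via the inductive hypothesis, together with $\Pi^1_{2n+1}$ determinacy), gives a cone of $x$ with $M_{2n-1}(x)|\delta_x \vDash \ODdet$. Since $M_{2n-1}(x)|\kappa$ and $M_{2n-1}(x)|\delta_x$ have the same sets of reals (where $\kappa$ is the least measurable of $M_{2n-1}(x)$) and any set of reals that is $\OD$ in the former is $\OD$ in the latter, one gets $M_{2n-1}(x)|\kappa \vDash \ODdet$. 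Iterating the least measure and its images $\omega_1^V$ times yields an elementary embedding sending $\kappa$ to $\omega_1^V$, so the target model cut at $\omega_1^V$ satisfies $\ODdet$; Lemma \ref{lem:LpM} identifies this model with $\Lpoddn(x)$. Thus the Kechris--Solovay game is run exactly once, inside the proof of Lemma \ref{lem:KS}, and the conclusion is simply transferred.

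Your plan to rerun the entire Kechris--Solovay argument directly for lower part models could perhaps be salvaged, but as written it has gaps. First, $\Lpoddn(x)$ has height $\omega_1^V$ and is not countable, so it cannot itself play the role of the countable $x$-premouse $M$ in properties $(1)$--$(4)$; you would have to pass to a countable initial segment, and then the closure condition $(1)$ for that segment under your proposed operation $M^k \mapsto (\Lpoddn(x_k))^{M^k}$ is not delivered by Lemma \ref{lem:LpXY} alone---that lemma concerns the full $\Lpoddn$ of height $\omega_1^V$, and indeed Lemma \ref{lem:LpInLpFake} shows relativized lower part models can pick up spurious mice. Second, there is a level slip: for $(2n-1)$-small premice the correct choice for property $(4)$ is $\Pi^1_{2n}$-iterability (paralleling $\Pi^1_n$-iterability in Lemma \ref{lem:KS}), which makes the payoff of $G$ genuinely $\Sigma^1_{2n+1}$; with your stated $\Pi^1_{2n+1}$-iterability the payoff becomes $\Sigma^1_{2n+2}$, one level beyond the available determinacy.
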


\begin{proof}
  Recall that we assume inductively that $\PI^1_{2n}$ determinacy
  implies that $M_{2n-1}^\#(x)$ exists for all $x \in \BS$. Then Lemma
  \ref{lem:versionKS} follows from Lemma \ref{lem:KS} by the following
  argument. For $x \in \BS$ we have that
  \[ M_{2n-1}(x)|\delta_x \vDash \ODdet \]
  implies that \[ M_{2n-1}(x)|\kappa \vDash \ODdet, \]
  where $\kappa$ is the least measurable cardinal in $M_{2n-1}(x)$,
  because whenever a set of reals $A$ is ordinal definable in
  $M_{2n-1}(x)|\kappa$, then it is also ordinal definable in
  $M_{2n-1}(x)|\delta_x$, since $M_{2n-1}(x)|\kappa$ and
  $M_{2n-1}(x)|\delta_x$ have the same sets of reals. This yields by
  elementarity that \[ M | \omega_1^V \vDash \ODdet, \]
  where $M$ denotes the $\omega_1^V$-th iterate of $M_{2n-1}(x)$ by
  its least measure and its images. By Lemma \ref{lem:LpM} we have
  that $M | \omega_1^V = \Lpoddn(x)$, so it follows that
  \[ \Lpoddn(x) \vDash \ODdet. \]
\end{proof}

This immediately yields that we have the following variant of Theorem
\ref{thm:w2inacc}.

\begin{thm}\label{thm:varw2inacc}
  Let $n \geq 1$ and assume that $\PI^1_{2n}$ determinacy and
  $\Pi^1_{2n+1}$ determinacy hold. Then there exists a real $x$ such
  that we have for all reals $y \geq_T x$ that
  \[ \HOD^{\Lpoddn(y)} \vDash \text{``} \omega_2^{\Lpoddn(y)} \text{
    is inaccessible''.} \]
\end{thm}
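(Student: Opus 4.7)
The plan is to mimic the proof of Theorem \ref{thm:w2inacc} inside the lower part model $\Lpoddn(y)$, using Lemma \ref{lem:versionKS} as the replacement for Lemma \ref{lem:KS}. First I would apply Lemma \ref{lem:versionKS} to fix a real $x$ such that for all $y \geq_T x$ we have $\Lpoddn(y) \vDash \ODdet$. From here on I work inside an arbitrary such $\Lpoddn(y)$ and aim to verify that $\omega_2^{\Lpoddn(y)}$ is strongly inaccessible in $\HOD^{\Lpoddn(y)}$. Regularity of $\omega_2^{\Lpoddn(y)}$ in $\HOD^{\Lpoddn(y)}$ is automatic because $\omega_2$ is a successor cardinal in $\Lpoddn(y)$ and $\HOD^{\Lpoddn(y)} \subseteq \Lpoddn(y)$, so the real content is the strong limit property.

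Next I would establish the analogue of the identity $\omega_2 = \Theta_0$ inside $\Lpoddn(y)$, where as in the proof of Theorem \ref{thm:w2inacc} we set $\Theta_0 = \sup\{\alpha \st \text{there exists an } \OD\text{-surjection } f : \BS \to \alpha\}$. The inequality $\Theta_0 \leq \omega_2$ again follows from $\CH$ in $\Lpoddn(y)$, which holds because $\Lpoddn(y)$ is an iterate of $M_{2n-1}^\#(y)$ cut off at $\omega_1^V$ (Lemma \ref{lem:LpM}) and so satisfies $\ZFC$-like fine structural principles. For the other direction I would use the definability of initial segments of the lower part construction: for any $y \leq_T z$ the segment $\Lpoddn(y)|\omega_2^{\Lpoddn(y)}$ lies in $\HOD_z^{\Lpoddn(y)}$, which is a standard consequence of the uniform definability of $\omega_1$-iterability of sound initial segments of $\Lpoddn(y)$ together with the results of \cite{St95} cited in the original proof. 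Varying $z$ as in the proof of Theorem \ref{thm:w2inacc} then converts these $\OD_z$-surjections into an $\OD$-surjection $f : \BS \times \BS \to \alpha$ for each $\alpha < \omega_2^{\Lpoddn(y)}$, yielding $\omega_2 \leq \Theta_0$.

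Once $\omega_2 = \Theta_0$ has been verified, I would fix an arbitrary $\alpha < \omega_2^{\Lpoddn(y)}$ and repeat the Moschovakis-style argument of Claim \ref{cl:w2inacc} verbatim inside $\Lpoddn(y)$, since it only uses a fixed $\OD$-surjection $f : \BS \to \alpha$, the induced prewellordering $\leq_f$, a universal $\SIGMA^1_1(\leq_f)$-set, Kleene's Recursion Theorem, and determinacy for the $\OD$-game $G$ used in the Coding Lemma — all of which are available inside $\Lpoddn(y)$ under $\ODdet$. This produces a surjection $g : \BS \to \Pot(\alpha) \cap \OD$ with $g \in \OD$, so $|\Pot(\alpha)^{\HOD}|^{\Lpoddn(y)} < \omega_2^{\Lpoddn(y)}$, witnessing the strong limit property.

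The main obstacle is the definability step $\Lpoddn(y)|\omega_2^{\Lpoddn(y)} \subseteq \HOD_z^{\Lpoddn(y)}$: while in the $M_{n-1}(x)$ setting this is a direct application of the results in \cite{St95}, for the lower part model one has to verify that the defining property of $\Lpoddn(y)$ — namely being the union of all sufficiently iterable, sound, $(2n-1)$-small extensions — is itself sufficiently definable in a real parameter, and that the $\omega_1$-iterability used is internally detectable via $\Q$-structures (which is where the closure of $\Lpoddn(y)$ under $a \mapsto M_{2n-2}^\#(a)$, already noted in the remark after Lemma \ref{lem:LpM}, is essential). Once this uniform $\OD_z$-definability is in hand, the rest of the argument is a direct translation of Theorem \ref{thm:w2inacc}.
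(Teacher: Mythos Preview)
Your proposal is correct and matches the paper's approach: the paper gives no detailed proof, stating only that Theorem \ref{thm:varw2inacc} ``immediately'' follows from Lemma \ref{lem:versionKS} (which puts $\ODdet$ inside $\Lpoddn(y)$) together with the argument for Theorem \ref{thm:w2inacc}. Your concern about the definability step $\Lpoddn(y)|\omega_2^{\Lpoddn(y)} \subseteq \HOD_y^{\Lpoddn(y)}$ is already resolved by Lemma \ref{lem:LpM}, which identifies $\Lpoddn(y)$ with an initial segment of an iterate of $M_{2n-1}^\#(y)$, so the same \cite{St95} definability results cited in the proof of Theorem \ref{thm:w2inacc} apply verbatim.
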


Now we can turn to the proof of Theorem \ref{thm:newWoodineven}, which
is going to yield Theorem \ref{thm:newWoodin1} in the
case that $n$ is even.

\begin{proof}[Proof of Theorem \ref{thm:newWoodineven}]
  We start with constructing a $\ZFC$ model $M_x$ of height
  $\omega_1^V$ for some $x \in \BS$ such that $M_x$ is
  $\SIGMA^1_{2n+2}$-correct in $V$ for real parameters in $M_x$ and
  closed under the operation $a \mapsto M_{2n-1}^\#(a)$. To prove that
  this construction yields a model of $\ZFC$, we are as before going
  to use the hypothesis that there is no $\SIGMA^1_{2n+2}$-definable
  $\omega_1$-sequence of pairwise distinct reals. The construction
  will be similar to the construction we gave in Section
  \ref{sec:odd}, but here we have to manually ensure that $M_x$ is
  $\SIGMA^1_{2n+2}$-correct in $V$.

  Fix an arbitrary $x \in \BS$ and construct a sequence of models
  $(W_\alpha \st \alpha \leq \omega_1^V)$. So the model
  $M_x = W_{\omega_1^V}$ is build level-by-level in a construction of
  length $\omega_1^V$. We are starting from $W_0 = \{x\}$ and are
  taking unions at limit steps of the construction. At an odd
  successor level $\alpha + 1$ we will close the model $W_{\alpha}$
  under Skolem functions for $\SIGMA^1_{2n+2}$-formulas. At the same
  time we will use the even successor levels $\alpha +2$ to ensure
  that $M_x$ will be closed under the operation
  $a \mapsto M_{2n-1}^\#(a)$. As before the order of construction for
  elements of the model $M_x$ can be defined along the way. 

Before we are describing this construction in more detail, we fix a
$\Pi^1_{2n+1}$-definable set $U$ which is universal for the pointclass
$\PI^1_{2n+1}$. Pick the set $U$ such that we have
$U_{\ulcorner \varphi \urcorner ^\frown a} = A_{\varphi,a}$ for every
$\Pi^1_{2n+1}$-formula $\varphi$ and every $a \in \BS$, where
$\ulcorner \varphi \urcorner$ denotes the Gödel number of the formula
$\varphi$ and
  \[ A_{\varphi,a} = \{ x \st \varphi(x,a) \}. \] 

  Then the uniformization property (see Theorem $6C.5$ in \cite{Mo09})
  yields that there exists a $\Pi^1_{2n+1}$-definable function $F$
  uniformizing the universal set $U$. So we have for all
  $z \in \dom(F)$ that
  \[ (z,F(z)) \in U, \]
  where $\dom(F) = \{ z \st \exists y \, (z,y) \in U \}$. 

  \textbf{Odd successor steps:} For the odd successor steps of the
  construction assume now that we already constructed the model
  $W_\alpha$ such that $\alpha + 1$ is odd and that there exists a
  $\PI^1_{2n+1}$-formula $\varphi$ with a real parameter $a$ from
  $W_\alpha$ such that $\exists x \varphi(x,a)$ holds in $V$ but not
  in the model $W_\alpha$. In this case we aim to add a real
  $x_{\varphi,a}$ constructed as described below to $W_{\alpha+1}$
  such that $\varphi(x_{\varphi,a},a)$ holds. This real will witness
  that $\exists x \varphi(x,a)$ holds true inside $W_{\alpha+1}$.

  We aim to build these levels of $M_x$ in a
  $\Sigma^1_{2n+2}$-definable way, so we choose reals $x_{\varphi,a}$
  carefully. Therefore we add $F(z)$ for all
  $z \in \dom(F) \cap W_\alpha$ to the current model $W_\alpha$. We
  will see in Claims \ref{cl:MxZFCEven} and \ref{cl:MxEvenProp} that
  this procedure adds reals $x_{\varphi,a}$ as above in a
  $\Pi^1_{2n+1}$-definable way to the model $M_x$.

  So let $\varphi_F$ be a $\Pi^1_{2n+1}$-formula such that for all
  $x,y \in \BS$, \[ F(x) = y \text{ iff } \varphi_F(x,y). \]
  Then we let
  \[ W_{\alpha+1} = rud(W_\alpha \cup \{ y \in \BS \st \exists x \in
  W_\alpha \cap \BS \; \varphi_F(x,y) \}). \]

\textbf{Order of construction:} We inductively define the order of
construction for elements of $W_{\alpha+1}$ as follows. First we say
for $F(x) \neq F(x^\prime)$ with
$x,x^\prime \in \dom(F) \cap W_\alpha$ that $F(x)$ is constructed
before $F(x^\prime)$ iff $x$ is constructed before $x^\prime$ in the
order of construction for elements of $W_\alpha$ where $x$ and
$x^\prime$ are the minimal (according to the order of construction in
$W_\alpha$) reals $y$ and $y^\prime$ in $\dom(F) \cap W_\alpha$ such
that $F(y) = F(x)$ and $F(y^\prime) = F(x^\prime)$. Then we define the
order of construction for elements added by the closure under
rudimentary functions as in the definition of the order of
construction for $L$.

\textbf{Even successor steps:} At an even successor level $\alpha+2$
of the construction we close the previous model $W_{\alpha+1}$ under
the operation $a \mapsto M_{2n-1}^\#(a)$ before closing under
rudimentary functions. Assume that we already constructed
$W_{\alpha+1}$ and let $a \in W_{\alpha+1}$ be arbitrary. The
$a$-premouse $M_{2n-1}^\#(a)$ exists in $V$ because we as usual
inductively assume that Theorem \ref{cor:newWoodin1} holds for
$2n-1$. As in the proof of Theorem \ref{thm:newWoodinodd} let $\M$ be
a countable $a$-premouse in $V$ with the following properties.
  \begin{enumerate}[$(i)$]
  \item $\M$ is $2n$-small, but not $(2n-1)$-small,
  \item all proper initial segments of $\M$ are $(2n-1)$-small,
  \item $\M$ is $a$-sound and $\rho_\omega(\M) = a$, and
  \item $\M$ is $\Pi^1_{2n+1}$-iterable.
  \end{enumerate}
  We have that these properties $(i) - (iv)$ uniquely determine the
  $a$-premouse $M_{2n-1}^\#(a)$ in $V$ and we add such $a$-premice
  $\M$ for all $a \in W_{\alpha+1}$ to the model $W_{\alpha+2}$ before
  closing under rudimentary functions as in the usual construction of
  $L$, i.e. \begin{eqnarray*}
     W_{\alpha+2} = rud(W_{\alpha+1} \cup \{ \M
    \st \exists a \in W_{\alpha+1} \text{ such that } \M \text{ is a
      countable } \\ a\text{-premouse satisfying }(i)-(iv)\}).
  \end{eqnarray*}

  \textbf{Order of construction:} For an $a$-premouse $\M_a$ and a
  $b$-premouse $\M_b$ satifying properties $(i)-(iv)$ for
  $a, b \in W_{\alpha+1}$, we say that $\M_a$ is defined before $\M_b$
  if $a$ is defined before $b$ in the order of construction for
  elements of $W_{\alpha+1}$, which exists inductively. For elements
  added by the closure under rudimentary functions we define the order
  of construction as in the definition of the order of construction
  for $L$.

  \textbf{Limit steps:} Finally we let
  \[ W_\lambda = \bigcup_{\alpha < \lambda} W_\alpha \]
  for limit ordinals $\lambda < \omega_1^V$ and
  \[ M_x = W_{\omega_1^V} = \bigcup_{\alpha < \omega_1^V} W_\alpha. \]

  \textbf{Order of construction:} The order of construction at the
  limit steps is defined as in the definition of the order of
  construction for $L$.

  As in Section \ref{sec:odd} we are now able to show that this model
  $M_x$ satisfies $\ZFC$, using the background hypothesis that every
  $\SIGMA^1_{2n+2}$-definable sequence of pairwise distinct reals is
  countable. 

  \begin{claim}\label{cl:MxZFCEven}
    $M_x \vDash \ZFC.$
  \end{claim}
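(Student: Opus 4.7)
The plan is to model the argument on the proof of Claim \ref{cl:MzZFCnew} in Section \ref{sec:odd}, and the only real work is bookkeeping the definability complexity of the new construction of $M_x$. I would argue by contradiction: since $M_x$ is built by closing under rudimentary functions and two extra successor-step operations, the only $\ZFC$ axiom which could fail is Power Set. So I would fix a countable $\gamma < \omega_1^V$ with $\Pot(\gamma) \cap M_x \notin M_x$, hence $|\Pot(\gamma) \cap M_x| = \aleph_1^V$, and fix a real $a$ coding $W_\gamma$ together with its order of construction.

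Next, for each $\xi$ with $\gamma < \xi < \omega_1^V$ such that some new subset of $\gamma$ first appears in $M_x|(\xi+1) \setminus M_x|\xi$, I let $A_\xi$ be the $<$-least such set (using the order of construction) and let $a_\xi$ be the canonical real coding $A_\xi$ relative to $a$. The set $X$ of such $\xi$ is cofinal in $\omega_1^V$, so $(a_\xi \st \xi \in X)$ is an $\omega_1$-sequence of pairwise distinct reals. To derive a contradiction with the background hypothesis, I will show that this sequence is $\SIGMA^1_{2n+2}$-definable from the real parameters $a$ and $x$ (in the sense of the remark after Theorem \ref{thm:newWoodin1}).

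As in Claim \ref{cl:MzZFCnew}, the statement "$y$ codes an element of $M_x | (\xi+1) \setminus M_x | \xi$" can be written as the existence of a sequence of countable models $(W_\beta \st \beta \leq \xi+1)$ and an element $Y \in W_{\xi+1}$ satisfying $W_0 = \{x\}$, the single-step construction rule at every successor step, the union rule at limits, plus "$y$ does not code an element of $W_\xi$" and "$y$ codes $Y$". This quantifier structure is existential over a code for the sequence, so I only need to verify that the one-step construction rule is $\Pi^1_{2n+1}$-definable. At odd successor steps this is by design, since that step is literally "close under $\varphi_F$", where $\varphi_F$ is the $\Pi^1_{2n+1}$-formula defining the uniformizing function $F$ for the universal $\Pi^1_{2n+1}$-set $U$. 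At even successor steps, the characterizing conditions $(i)$--$(iv)$ for $M_{2n-1}^\#(a)$ are $\Pi^1_{2n+1}$-definable uniformly in any code for $\M$, using the $\Pi^1_{2n+1}$-definability of $\Pi^1_{2n+1}$-iterability from Lemma $1.7$ of \cite{St95} (see the discussion preceding Lemma \ref{lem:steel2.2}).

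The remaining closure-under-rudimentary-functions pieces contribute only $\Delta_1$-complexity, and the limit rule is trivially absolute. Consequently the entire predicate "$y$ codes the least new subset of $\gamma$ appearing at level $\xi$ relative to the code $a$ for $W_\gamma$" is $\SIGMA^1_{2n+2}$ uniformly in $a$, $x$ and any code for $\xi$, giving the desired $\SIGMA^1_{2n+2}$-definable $\omega_1$-sequence of pairwise distinct reals. This contradicts the hypothesis of Theorem \ref{thm:newWoodineven}, completing the proof. The main place to be careful is to ensure that both successor-step rules genuinely stay inside $\Pi^1_{2n+1}$ -- the uniformization step is the subtle one, but it works precisely because we built the odd successor step from the $\Pi^1_{2n+1}$-uniformization of $U$ rather than asking $M_x$ to directly recognize truth of $\SIGMA^1_{2n+2}$-formulas, which would have been too complex.
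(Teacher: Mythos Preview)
Your proposal is correct and follows essentially the same approach as the paper. The paper's proof is quite terse: it simply records that properties $(i)$--$(iv)$ at the even successor steps are $\Pi^1_{2n+1}$-definable and that $F$ is $\Pi^1_{2n+1}$-definable, then refers back to the argument of Claim \ref{cl:MzZFCnew}; you have spelled out that argument in full, including the reduction to Power Set, the construction of the sequence $(a_\xi)$, and the definability bookkeeping, all of which match the paper's intent.
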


  \begin{proof}
    For the even successor levels of the construction we have that
    properties $(i)-(iv)$ as in the construction are
    $\Pi^1_{2n+1}$-definable uniformly in any code for the countable
    premouse $\M$.

    For the odd successor levels recall that $U$ is a
    $\Pi^1_{2n+1}$-definable set and that $F$ is a
    $\Pi^1_{2n+1}$-definable function uniformizing $U$.

    Therefore the argument given in the proof of Claim
    \ref{cl:MzZFCnew} in the proof of Theorem \ref{thm:newWoodinodd}
    yields a contradiction to the background hypothesis that every
    $\SIGMA^1_{2n+2}$-definable sequence of pairwise distinct reals is
    countable.
  \end{proof}

Moreover we can prove the following claim. 

  \begin{claim}
    \label{cl:MxEvenProp}
    The model $M_x$ as constructed above has the following properties.
    \begin{enumerate}[$(1)$]
    \item $M_x \cap \Ord = \omega_1^V$, 
    \item $x \in M_x$,
    \item $M_x$ is $\SIGMA^1_{2n+2}$-correct in $V$ for real
      parameters in $M_x$, that means we have that
      \[ M_x \prec_{\SIGMA^1_{2n+2}} V, \] 
    \item $M_x$ is closed under the operation
      \[ a \mapsto M_{2n-1}^\#(a), \]
      and moreover $M_{2n-1}^\#(a)$ is $\omega_1$-iterable in $M_x$
      for all $a \in M_x$.
    \end{enumerate}
  \end{claim}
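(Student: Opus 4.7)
Parts $(1)$ and $(2)$ are immediate from the construction: the construction has length $\omega_1^V$, is continuous at limits, and each $W_\alpha$ is transitive with $W_\alpha \cap \Ord$ countable, while $x \in W_0 \subseteq M_x$. Part $(4)(i)$ is also essentially built in: given any $a \in M_x$, $a$ appears in some $W_\alpha$, the even successor stage of the construction admits any countable $a$-premouse satisfying properties $(i)$--$(iv)$, and by our inductive assumption $M_{2n-1}^\#(a)$ exists in $V$ and satisfies these four properties (using Lemma~\ref{lem:steel2.2}~$(2)$ for $\Pi^1_{2n+1}$-iterability). Hence $M_{2n-1}^\#(a) \in M_x$.

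For part $(3)$ the approach is to first establish $\SIGMA^1_{2n}$-correctness of $M_x$ and then use the uniformization function $F$ (made available through the odd successor stages) to bump up two quantifier levels. The $\SIGMA^1_{2n}$-correctness is proved by the same inductive argument as in Lemma~\ref{lem:2n+1suitcorr}: for each $k \leq n$, one shows $M_x \prec_{\SIGMA^1_{2k}} V$ by induction on $k$, using Shoenfield absoluteness as the base case and the fact that $M_{2k-1}^\#(a) \in M_x$ is $\SIGMA^1_{2k}$-correct in $V$ (Lemma~\ref{lem:corr}~$(2)$) in the inductive step. Note that $M_{2k-1}^\#(a)$ is an initial segment of $M_{2n-1}^\#(a)$, so closure under $a \mapsto M_{2n-1}^\#(a)$ provides closure under $a \mapsto M_{2k-1}^\#(a)$ for all $k \leq n$.

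To jump from $\SIGMA^1_{2n}$- to $\SIGMA^1_{2n+2}$-correctness, I use the $\Pi^1_{2n+1}$-definable uniformization $F$ of the universal $\Pi^1_{2n+1}$-set $U$. For the upward direction: if $V \vDash \exists x\,\varphi(x,a)$ with $\varphi$ being $\Pi^1_{2n+1}$ and $a \in M_x$, then $x^* = F(\ulcorner \varphi \urcorner ^\frown a)$ is a witness in $V$, and the odd successor stage after $a$ enters the construction forces $x^* \in M_x$. Writing $\varphi(x^*,a) \equiv \forall y\,\psi(x^*,y,a)$ with $\psi$ being $\Sigma^1_{2n}$, the fact that this holds in $V$ and that $M_x$ is $\SIGMA^1_{2n}$-correct yields $M_x \vDash \varphi(x^*,a)$, so $M_x \vDash \exists x\,\varphi(x,a)$. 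For the downward direction, suppose $M_x \vDash \varphi(x^*,a)$ but $V \vDash \exists y\,\neg\psi(x^*,y,a)$. Applying $F$ again to the appropriate code of the $\Pi^1_{2n+1}$-formula $\neg\psi(x^*,\cdot,a)$ produces a witness $y^* \in M_x$, and since $\neg\psi$ is $\Pi^1_{2n}$, $\SIGMA^1_{2n}$-correctness transfers $V \vDash \neg\psi(x^*,y^*,a)$ down to $M_x$, contradicting $M_x \vDash \varphi(x^*,a)$.

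Part $(4)(ii)$ then follows by an argument parallel to the one for Claim~\ref{cl:MzOddProp}~$(3)$ in Section~\ref{sec:odd}: one iterates $M_{2n-1}^\#(a)$ inside $M_x$ via the $\Q$-structure strategy $\Sigma$ of Definition~\ref{def:Qstritstr}. For an iteration tree $\T \in M_x$ on $M_{2n-1}^\#(a)$ of limit length $<\omega_1^{M_x}$, the $\Q$-structures $\Q(\T \upharpoonright \gamma)$ are $(2n-2)$-small above $\delta(\T \upharpoonright \gamma)$, and in $V$ they are $\omega_1$-iterable above $\delta(\T \upharpoonright \gamma)$, hence $\Pi^1_{2n}$-iterable there by Lemma~\ref{lem:steel2.2}~$(2)$. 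Since $\Pi^1_{2n}$-iterability is a $\Pi^1_{2n}$-statement and $M_x \prec_{\SIGMA^1_{2n+2}} V$, this $\Pi^1_{2n}$-iterability holds in $M_x$ as well, and by Lemma~\ref{lem:nofakeeven} it uniquely determines the correct $\Q$-structure-guided branch. A standard $\Sigma^1_1$-absoluteness argument, as in the proof of Lemma~\ref{lem:iterability}~$(2)$, then places the corresponding cofinal well-founded branch inside $M_x$. I expect the main obstacle to lie precisely in Step~B of part~$(3)$: coordinating the periodicity of the pointclass hierarchy, ensuring that the $\Pi^1_{2n+1}$-uniformization $F$ is correctly exploited at the odd successor stages, and handling the downward direction via an auxiliary application of $F$ to negated statements.
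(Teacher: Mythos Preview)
Your proof is correct, and for parts (1), (2), and (4) it aligns with the paper (which for (4) simply refers back to the argument for property (3) in Claim \ref{cl:MzOddProp}). For part (3), however, you take a genuinely different route. The paper proves $\SIGMA^1_{2n+2}$-correctness by a single uniform induction on $m$, going from $\SIGMA^1_m$-correctness to $\SIGMA^1_{m+1}$-correctness at each step and always using the uniformizing function $F$: given a $\Sigma^1_{m+1}$-formula $\exists x\,\varphi(x,a)$ with $\varphi\in\Pi^1_m\subseteq\Pi^1_{2n+1}$, one applies $F$ to $\ulcorner\varphi\urcorner{}^\frown a$ to obtain a witness in $M_x$ and then invokes the inductive hypothesis on the $\Pi^1_m$-formula $\varphi$. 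Your approach instead uses closure under $a\mapsto M_{2k-1}^\#(a)$ together with Lemma \ref{lem:corr} (mirroring Lemma \ref{lem:2n+1suitcorr}) to climb to $\SIGMA^1_{2n}$, and only then brings in $F$ for the final jump to $\SIGMA^1_{2n+2}$. Both work; the paper's route is more economical since it uses a single mechanism throughout and avoids re-running the sharps-based induction.

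One small calibration issue in your sketch of (4): since the $\Q$-structures are $(2n-2)$-small above $\delta(\T\upharpoonright\gamma)$, the relevant instance of Lemma \ref{lem:steel2.2} (2) has $n-1=2n-2$, so it yields $\Pi^1_{2n-1}$-iterability rather than $\Pi^1_{2n}$-iterability, and Lemma \ref{lem:nofakeeven} is then applied at the odd level $2n-1$ (which is what makes that lemma available). The $\SIGMA^1_{2n+2}$-correctness of $M_x$ is of course more than sufficient to transfer $\Pi^1_{2n-1}$-statements, so this does not affect the validity of the argument.
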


  \begin{proof}
    Properties $(1)$ and $(2)$ immediately follow from the
    construction.

    \medskip \textbf{Proof of $(3)$:} The proof is organized as an
    induction on $m < 2n+1$. We have that $M_x$ is
    $\SIGMA^1_2$-correct in $V$ using Shoenfield’s Absoluteness
    Theorem (see for example Theorem 13.15 in \cite{Ka03}). We assume
    inductively that $M_x$ is $\SIGMA^1_{m}$-correct in $V$ and prove
    that $M_x$ is $\SIGMA^1_{m+1}$-correct in $V$. Since the upward
    implication follows easily as in the proof of Lemma
    \ref{lem:corr}, we focus on the proof of the downward implication.

    So let $\psi$ be a $\Sigma^1_{m+1}$-formula and let
    $a \in M_x \cap \BS$ be such that $\psi(a)$ holds in $V$. Say
    \[ \psi(a) \equiv \exists x \varphi(x,a) \]
    for a $\Pi^1_{m}$-formula $\varphi$.

    Let $y = \ulcorner \varphi \urcorner ^\frown a \in \BS$. Then we
    have that $y \in \dom(F)$ since $\psi(a)$ holds in $V$ and
    $m \leq 2n+1$. Therefore $\varphi_F(y,F(y))$ holds and $F(y)$ is
    added to the model $M_x$ at an odd successor level of the
    construction because we have that $y \in M_x$. Recall that $F(y)$
    is choosen such that $(y, F(y)) \in U$, that means we have that
    \[ F(y) \in U_y = U_{\ulcorner \varphi \urcorner ^\frown a} = \{ x
    \st \varphi(x,a) \}, \]
    by our choice of $U$. Now the inductive hypothesis implies that
    \[ M_x \vDash \varphi(F(y), a) \]
    and therefore it follows that $M_x \vDash \psi(a)$, as desired.

    Property $(4)$ now follows by the same argument as the one we gave
    in the proof of property $(3)$ in Claim \ref{cl:MzOddProp} in the
    proof of Theorem \ref{thm:newWoodinodd}.
  \end{proof}

  The following additional property of the model $M_x$ is a key point
  in proving that $M_{2n}^\#$ exists and is $\omega_1$-iterable in
  $V$.

  \begin{claim}\label{cl:M2nexinMx} For all $x \in \BS$ in the cone of
    reals given in Theorem \ref{thm:varw2inacc},
    \[ M_x \vDash \text{``}M_{2n}^\# \text{ exists and is }
    \omega_1\text{-iterable.''} \]
  \end{claim}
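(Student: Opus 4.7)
The plan is to perform a $K^c$-construction inside $M_x$ and to argue that $(K^c)^{M_x}$ cannot be $2n$-small; the shortest initial segment of $(K^c)^{M_x}$ witnessing this failure will then be an $\omega_1$-iterable $M_{2n}^\#$ inside $M_x$. The two main ingredients are, on the one hand, the closure of $M_x$ under $a\mapsto M_{2n-1}^\#(a)$ from Claim \ref{cl:MxEvenProp}, which places all relevant $\Q$-structures inside $M_x$, and, on the other hand, the inaccessibility statement of Theorem \ref{thm:varw2inacc}, which will play the role that Corollary \ref{cor:w1} did at the odd level inside Claim \ref{cl:Kc}.

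First I would verify, via a subclaim analogous to Subclaim \ref{subcl:Kcit}, that under the contrary assumption that $(K^c)^{M_x}$ is $2n$-small, $(K^c)^{M_x}$ is $\omega_1$-iterable in $M_x$ via the $\Q$-structure iteration strategy of Definition \ref{def:Qstritstr}. The point is that $\Q$-structures for iteration trees on $(K^c)^{M_x}$ are then $(2n-1)$-small above the common part model, hence are initial segments of the $M_{2n-1}^\#$ of that common part, and therefore lie inside $M_x$; combining this with the iterability proof of Chapter 9 of \cite{St96} in the version of \cite{MSch04}, carried out inside $M_x$, together with an absoluteness argument as in Lemma \ref{lem:iterability}, yields the iterability.

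Next I would argue, working in $M_x$, that $(K^c)^{M_x}$ is not $2n$-small by splitting into two cases. In Case 1, $(K^c)^{M_x}$ has a Woodin cardinal $\delta$, and I would follow Case 1 of Claim \ref{cl:Kc} verbatim, with $M_{2n-2}^\#$ replaced by $M_{2n-1}^\#$: coiterate $(K^c)^{M_x}$ with $\M = M_{2n-1}^\#((K^c)^{M_x}|\delta)$ in the sense of Definition \ref{def:M(N)} inside $M_x$, use universality of $(K^c)^{M_x}$ from \cite{MSch04} to conclude that $\M$ iterates without dropping into an initial segment of an iterate of $(K^c)^{M_x}$, and note that since $\M$ is not $(2n-1)$-small above $\delta$ this forces $(K^c)^{M_x}$ not to be $2n$-small. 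In Case 2, $(K^c)^{M_x}$ has no Woodin cardinal and I would instead isolate the core model $K=K^{M_x}$ inside $M_x$ and mimic Case 1 of the proof of Theorem \ref{Steel}, but with Corollary \ref{cor:w1} replaced by Theorem \ref{thm:varw2inacc}: pick a real $y \in M_x$ in the cone of Theorem \ref{thm:varw2inacc}, consider the lower part model $\Lpoddn(y)$ as computed inside $M_x$ (correctly, by the closure of $M_x$ under $M_{2n-1}^\#$), fix a singular cardinal $\alpha$ of $M_x$ with $\aleph_\omega^{\Lpoddn(y)} \le \alpha < \omega_2^{\Lpoddn(y)}$, force with $\Col(\omega,\alpha)$ so that $\alpha^+$ becomes $\omega_1$ of the extension, and then use weak covering $(\alpha^+)^K = \alpha^+$ together with $K \subseteq \HOD^{\Lpoddn(y)}$ to contradict the inaccessibility of $\omega_2^{\Lpoddn(y)}$ in $\HOD^{\Lpoddn(y)}$.

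The main obstacle will be Case 2: one has to verify, inside $M_x$, that the determinacy hypotheses underlying Lemma \ref{lem:versionKS} and Theorem \ref{thm:varw2inacc} are available, which amounts to transferring the consequences of $\PI^1_{2n}$ and $\Pi^1_{2n+1}$ determinacy from $V$ into $M_x$ via the closure of $M_x$ under $M_{2n-1}^\#$ and the $\SIGMA^1_{2n+2}$-correctness of $M_x$; moreover, one has to verify the inclusion $K \subseteq \HOD^{\Lpoddn(y)}$ tightly enough that the weak covering argument produces the desired contradiction. Once Case 2 is ruled out, the shortest initial segment of $(K^c)^{M_x}$ that is not $2n$-small is the sought $\omega_1$-iterable $M_{2n}^\#$ inside $M_x$, with iteration strategy induced by the strategy for $(K^c)^{M_x}$ and guided by $\Q$-structures as above.
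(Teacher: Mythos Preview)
Your Case~1 is fine (and indeed could be absorbed into the general argument, as the paper does), but Case~2 contains a genuine gap that cannot be repaired along the lines you suggest.

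The crux is your claimed inclusion $K^{M_x}\subseteq\HOD^{\Lpoddn(y)}$. In Case~1 of Theorem~\ref{Steel} the analogous inclusion works because the ambient model $M_{n-1}(x)|\delta_x$ is itself a canonical inner model, a small forcing extension of it is again of the form $M_{n-1}(y)|\delta_y$, and $K$ is forcing-absolute; hence $K^{M_{n-1}(x)|\delta_x}=K^{M_{n-1}(y)|\delta_y}$ is ordinal definable there. Here, however, $M_x$ is \emph{not} a lower part model: it was built by alternately adjoining Skolem witnesses for $\Pi^1_{2n+1}$-uniformizations and $M_{2n-1}^\#$-closures. A $\Col(\omega,\alpha)$-extension of $M_x$ is not of the form $\Lpoddn(y)$ for any $y$, and conversely the core model of $\Lpoddn(y)$ for $y\in M_x$ has no reason to agree with $K^{M_x}$. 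So neither reading of ``force with $\Col(\omega,\alpha)$'' yields a model in which $K^{M_x}$ is ordinal definable and Theorem~\ref{thm:varw2inacc} applies. Relatedly, your choice of $\alpha$ with $\aleph_\omega^{\Lpoddn(y)}\le\alpha<\omega_2^{\Lpoddn(y)}$ and $\alpha$ singular in $M_x$ mixes cardinal structures of two unrelated models.

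The paper's proof resolves exactly this mismatch, and it is substantially more involved than what you outline. One fixes a singular $\gamma$ in $M_x$ with $(\gamma^+)^{K^{M_x}}=(\gamma^+)^{M_x}$ and then, via a five-step almost-disjoint-coding forcing over the lower part model $\Lpoddn(x,K^{M_x}|(\gamma^+)^{M_x})$, produces a real $z\geq_T x$ with $K^{M_x}|(\gamma^+)^{M_x}\in\Lpoddn(z)$ and $(\gamma^+)^{M_x}=\omega_2^{\Lpoddn(z)}$, all computed in $V$. One does \emph{not} attempt to place $K^{M_x}$ in $\HOD^{\Lpoddn(z)}$; instead one builds a fresh $(K^c)^{\Lpoddn(z)}$ and shows this model lies in $\HOD^{\Lpoddn(z)}$ (since the certification predicate can be made independent of the choice of generating real). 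A separate argument (involving trees witnessing generic $\Pi^1_{2n}$-absoluteness and a $\mathcal{P}$-construction to handle Woodin cardinals of $(K^c)^{\Lpoddn(z)}$) shows that $K^{M_x}|(\gamma^+)^{M_x}$ is fully iterable in $\Lpoddn(z)$ and that $(K^c)^{\Lpoddn(z)}$ is $\Q$-structure iterable there, so the two can be coiterated. Universality of $(K^c)^{\Lpoddn(z)}|\omega_2^{\Lpoddn(z)}$ (a strengthening of \cite{MSch04} to premice of height exactly $\omega_2$) then forces $\omega_2^{\Lpoddn(z)}$ to be a successor cardinal in $(K^c)^{\Lpoddn(z)}\subseteq\HOD^{\Lpoddn(z)}$, contradicting Theorem~\ref{thm:varw2inacc} applied in $V$ to $z$. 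The coiteration is what transfers the successor-cardinal information from $K^{M_x}$ to a model that genuinely sits inside $\HOD^{\Lpoddn(z)}$; this is the missing idea in your plan.
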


  The proof of this claim is now different from the proof of the
  analogous claim in the previous section. The reason for this is that
  at the even levels we cannot assume that we have a $2n$-suitable
  premouse to compare the model $K^c$ with (which at the odd levels
  was given by Lemmas \ref{lem:prensuitable} and
  \ref{lem:nsuitnew}). This is why we have to give a different
  argument here.

  \begin{proof}[Proof of Claim \ref{cl:M2nexinMx}]
    Assume this is not the case. Let $(K^c)^{M_x}$ denote the model
    resulting from a $2n$-small robust-background-extender
    $K^c$-construction as in \cite{Je03} inside $M_x$. Since $M_x$ is
    closed under the operation $a \mapsto M_{2n-1}^\#(a)$,
    $(K^c)^{M_x}$ is fully iterable inside $M_x$ by a generalization
    of Theorem $2.11$ in \cite{St96} (see Corollary $3$ in
    \cite{Je03}).  Therefore we can build the core model $K^{M_x}$
    inside $M_x$ by a generalization of Theorem $1.1$ in \cite{JS13}
    due to Jensen and Steel. The core model $K^{M_x}$ has to be
    $2n$-small, because otherwise we would have that
    \[M_x \vDash \text{``There exists a model which is fully iterable
        and not }2n\text{-small''.}\] This would already imply that
    $M_{2n}^\#$ exists and is fully iterable inside $M_x$, so in this
    case there is nothing left to show.

    \begin{subclaim}\label{subcl:clsharps}
      $K^{M_x}$ is closed under the operation \[ a \mapsto
      M_{2n-1}^\#(a). \]
    \end{subclaim}
    \begin{proof}
      We start with considering sets of the form $a = K^{M_x} | \xi$
      where $\xi < K^{M_x} \cap \Ord$ is not overlapped by an extender
      on the $K^{M_x}$-sequence. That means there is no extender $E$
      on the $K^{M_x}$-sequence such that
      $\crit(E) \leq \xi < \lh(E)$. We aim to prove that in fact
      \[ M_{2n-1}^\#(K^{M_x}|\xi) \lhd K^{M_x}.\]
      We have that the premouse $M_{2n-1}^\#(K^{M_x}|\xi)$ exists
      inside the model $M_x$ since we have that
      $\xi < K^{M_x} \cap \Ord = M_x \cap \Ord$ and $M_x$ is closed
      under the operation
      \[ a \mapsto M_{2n-1}^\#(a) \] by property $(4)$ in Claim
      \ref{cl:MxEvenProp}. Consider the coiteration of the premice
      $K^{M_x}$ and $M_{2n-1}^\#(K^{M_x}|\xi)$ inside $M_x$. This
      coiteration takes place above $\xi$ and thus both premice are
      iterable enough such that the comparison is successful by Lemma
      \ref{lem:iterability} since $M_{2n-1}^\#(K^{M_x}|\xi)$ is
      $\omega_1$-iterable above $\xi$ in $M_x$. By universality of
      $K^{M_x}$ inside $M_x$ (by Theorem $1.1$ in \cite{JS13} and
      Lemma $3.5$ in \cite{St96} applied inside $M_x$), we have that
      there is an iterate $M^*$ of $M_{2n-1}^\#(K^{M_x}|\xi)$ and an
      iterate $K^*$ of $K^{M_x}$ such that $M^* \unlhd K^*$ and the
      iteration from $M_{2n-1}^\#(K^{M_x}|\xi)$ to $M^*$ is
      non-dropping on the main branch. Since
      \[ \rho_\omega(M_{2n-1}^\#(K^{M_x}|\xi)) \leq \xi \]
      and since the coiteration takes place above $\xi$, we have that
      the iterate $M^*$ of $M_{2n-1}^\#(K^{M_x}|\xi)$ is not sound, if
      any extender is used on this side of the coiteration. Therefore
      it follows that in fact
      \[ M_{2n-1}^\#(K^{M_x}|\xi) \lhd K^*. \]
      Assume that the $K^{M_x}$-side moves in the coiteration, that
      means we have that $K^{M_x} \neq K^*$. Let $\alpha$ be an
      ordinal such that $E_\alpha$ is the first extender on the
      $K^{M_x}$-sequence which is used in the coiteration. Then we
      have that $\alpha > \xi$. We have in particular that $\alpha$ is
      a cardinal in $K^*$. But then since we have that
      $\rho_\omega(M_{2n-1}^\#(K^{M_x}|\xi)) \leq \xi < \alpha$ and
      $M_{2n-1}^\#(K^{M_x}|\xi) \lhd K^*$, this already implies that
      \[ \alpha > M_{2n-1}^\#(K^{M_x}|\xi) \cap \Ord. \]
      Therefore there was no need to iterate $K^{M_x}$ at all and we
      have that
      \[ M_{2n-1}^\#(K^{M_x}|\xi) \lhd K^{M_x}.\]
   
      Now let $a \in K^{M_x}$ be arbitrary. Then there exists an
      ordinal $\xi < K^{M_x} \cap \Ord$ such that $a \in K^{M_x}|\xi$
      and $\xi$ is not overlapped by an extender on the
      $K^{M_x}$-sequence. We just proved that
      \[ M_{2n-1}^\#(K^{M_x}|\xi) \lhd K^{M_x}.\]

      As we argued several times before, by considering
      $L[E](a)^{M_{2n-1}(K^{M_x}|\xi)}$ as in Definition
      \ref{def:L[E]constr} and adding the top extender of the active
      premouse $M_{2n-1}^\#(K^{M_x}|\xi)$ (intersected with
      $L[E](a)^{M_{2n-1}(K^{M_x}|\xi)}$) to an initial segment of the
      model $L[E](a)^{M_{2n-1}(K^{M_x}|\xi)}$ as described in Section
      $2$ in \cite{FNS10} we obtain that
      \[M_{2n-1}^\#(a) \in K^{M_x},\] as desired.
    \end{proof}

    Using the Weak Covering Lemma from \cite{JS13} (building on
    \cite{MSchSt97} and \cite{MSch95}) we can pick a cardinal
    $\gamma \in M_x$ such that $\gamma$ is singular in $M_x$ and
    $\gamma^+$ is computed correctly by $K^{M_x}$ inside $M_x$. That
    means we pick $\gamma$ such that we have
    \[ (\gamma^+)^{K^{M_x}} = (\gamma^+)^{M_x}. \]
    For later purposes we want to pick $\gamma$ such that it
    additionally satisfies
    \[ \cf(\gamma)^{M_x} \geq \omega_1^{M_x}. \]

    \begin{subclaim}\label{subcl:almostdisjcoding}
      There exists a real $z \geq_T x$ such that 
      \begin{enumerate}[$(1)$]
      \item $(\gamma^+)^{M_x} = \omega_2^{\Lpoddn(z)},$
        and
      \item $K^{M_x}|(\gamma^+)^{M_x} \in \Lpoddn(z).$
      \end{enumerate}
    \end{subclaim}
    \begin{proof}
      We are going to produce the real $z$ via a five step forcing
      using an almost disjoint coding. For an introduction into this
      kind of forcing see for example \cite{FSS14} for a survey or
      \cite{Sch00} where a similar argument is given. 

      We force over the ground model
      \[\Lpoddn(x, K^{M_x}|(\gamma^+)^{M_x}).\] We have that $\Lpoddn(x,
      K^{M_x}|(\gamma^+)^{M_x})$
      is a definable subset of $M_x$ because we have by property $(4)$
      in Claim \ref{cl:MxEvenProp} that
      \[ M_{2n-1}^\#(x, K^{M_x}|(\gamma^+)^{M_x}) \in M_x \]
      and by Lemma \ref{lem:LpM} the lower part model
      $\Lpoddn(x, K^{M_x}|(\gamma^+)^{M_x})$ is obtained by iterating
      the least measure of $M_{2n-1}^\#(x, K^{M_x}|(\gamma^+)^{M_x})$
      and its images $\omega_1^V$ times and cutting off at
      $\omega_1^V$.

      This implies that in particular
      \[ \cf(\gamma)^{\Lpoddn(x, K^{M_x}|(\gamma^+)^{M_x})} \geq
      \omega_1^{M_x}. \]

      \textbf{Step 1:} Write
      $V_0 = \Lpoddn(x, K^{M_x}|(\gamma^+)^{M_x})$ for the ground
      model. We start with a preparatory forcing that collapses
      everything below $\omega_1^{M_x}$ to $\omega$. Afterwards we
      collapse $\gamma$ to $\omega_1^{M_x}$. 

      So let $G_0 \in V$ be $\Col(\omega, <\omega_1^{M_x})$-generic
      over $V_0$ and let
      \[ V_0^\prime = V_0[G_0]. \]
      Moreover let $G_0^\prime \in V$ be
      $\Col(\omega_1^{M_x}, \gamma)$-generic over $V_0^\prime$ and let
      \[ V_1 = V_0^\prime[G_0^\prime]. \]
      So we have that $\omega_1^{M_x} = \omega_1^{V_1}$ and by our
      choice of $\gamma$, i.e.
      $\cf(\gamma)^{V_0} \geq \omega_1^{M_x}$, we moreover have that
      $(\gamma^+)^{M_x} = (\gamma^+)^{K^{M_x}} = \omega_2^{V_1}$.  We
      write $\omega_1 = \omega_1^{V_1}$ and
      $\omega_2 = \omega_2^{V_1}$.

      Furthermore let $A^\prime$ be a set of ordinals coding $G_0$ and
      $G_0^\prime$, such that if we let $A \subset (\gamma^+)^{M_x}$
      be a code for
      \[ x \oplus (K^{M_x}|(\gamma^+)^{M_x}) \oplus A^\prime, \]
      then we have that $G_0, G_0^\prime \in \Lpoddn(A)$ and
      $K^{M_x}|(\gamma^+)^{M_x} \in \Lpoddn(A)$.

      We can in fact pick the set $A$ such that we have
      $V_1 = \Lpoddn(A)$ by the following argument: Recall that
      \[ \Lpoddn(A) = M(A) | \omega_1^V, \]
      where $M(A)$ denotes the $\omega_1^V$-th iterate of
      $M_{2n-1}^\#(A)$ by its least measure and its images for a set
      $A$ as above. Then we can consider $G_0$ as being generic over
      the model $M(x, K^{M_x} | (\gamma^+)^{M_x})$ and $G_0^\prime$ as
      being generic over the model
      $M(x, K^{M_x} | (\gamma^+)^{M_x})[G_0]$, where
      $M(x, K^{M_x} | (\gamma^+)^{M_x})$ denotes the $\omega_1^V$-th
      iterate of $M_{2n-1}^\#(x, K^{M_x} | (\gamma^+)^{M_x})$ by its
      least measure and its images. Since both forcings in Step $1$
      take place below $(\gamma^+)^{M_x} < \omega_1^V$, it follows as
      in the proof of Theorem \ref{Steel} that
      $M(x,K^{M_x}|(\gamma^+)^{M_x})[G_0][G_0^\prime] = M(A)$ for a
      set $A \subset (\gamma^+)^{M_x}$ coding
      $x, K^{M_x}|(\gamma^+)^{M_x}, G_0$ and $G_0^\prime$ and thus we
      have that $V_1 = M(A) | \omega_1^V$ for this set $A$, as
      desired.

      \textbf{Step 2:} Before we can perform the first coding using
      almost disjoint subsets of $\omega_1 = \omega_1^{V_1}$ we have
      to ``reshape'' the interval between
      $(\gamma^+)^{M_x} = \omega_2^{V_1}$ and $\omega_1$ to ensure
      that the coding we will perform in Step $3$ exists. Moreover we
      have to make sure that the reshaping forcing itself does not
      collapse $\omega_1$ and $(\gamma^+)^{M_x}$. We are going to show
      this by proving that the reshaping forcing is
      $< (\gamma^+)^{M_x}$-distributive.

      We are going to use the following notion of reshaping.
        \begin{definition}\label{def:reshaping}
          Let $\eta$ be a cardinal and let $X \subset \eta^+$. We say
          a function $f$ is \emph{$(X, \eta^+)$-reshaping}
          \index{reshaping@$(X, \eta^+)$-reshaping} iff
          $f: \alpha \rightarrow 2$ for some $\alpha \leq \eta^+$ and
          moreover for all $\xi \leq \alpha$ with $\xi < \eta^+$ we
          have that
          \begin{enumerate}[$(i)$]
          \item
            $L[X \cap \xi, f \upharpoonright \xi] \vDash \card{\xi}
            \leq \eta$, or
          \item there is a model $N$ and a $\Sigma_k$-elementary
            embedding \[ j: N \rightarrow \Lpoddn(X) | \eta^{++} \]
            for some large enough $k < \omega$ such that 
            \begin{enumerate}[$(a)$]
            \item $\crit(j) = \xi$, $j(\xi) = \eta^+$,
            \item $\rho_{k+1}(N) \leq \xi$, $N$ is sound above $\xi$,
              and
            \item definably over $N$ there exists a surjection $g :
              \eta \twoheadrightarrow \xi$. 
            \end{enumerate}
          \end{enumerate}
        \end{definition}

For future purposes notice that if $N$ is as in Clause (ii) above, then
$N \triangleleft \Lpoddn(X \cap \xi)$.

        Now we denote with $P_1$ the forcing that adds an
        $(A, (\gamma^+)^{M_x})$-reshaping function for
        $(\gamma^+)^{M_x} = \omega_2^{V_1}$, defined inside our new
        ground model $V_1 = \Lpoddn(A)$.

        We let $p \in P_1$ iff $p$ is an
        $(A, (\gamma^+)^{M_x})$-reshaping function with
        $\dom(p) < (\gamma^+)^{M_x}$ and we order two conditions $p$
        and $q$ in $P_1$ by reverse inclusion, that means we let
        $p \leq_{P_1} q$ iff $q \subseteq p$.

        First notice that the forcing $P_1$ is extendible, that means
        for every ordinal $\alpha < (\gamma^+)^{M_x}$ the set
        $D^\alpha = \{ p \in P_1 \st \dom(p) \geq \alpha \}$ is open
        and dense in $P_1$. In fact, for each $p \in P_1$ and every
        $\alpha < (\gamma^+)^{M_x}$ there is some $q \leq_{P_1} p$
        such that $\dom(q) \geq \alpha$ and
        $L[A \cap \xi, q \upharpoonright \xi] \vDash \card{\xi} \leq
        \eta$ for all $\xi$ with $\dom(p) < \xi \leq \dom(q)$.

        We now want to show that $P_1$ is
        $<(\gamma^+)^{M_x}$-distributive. For that we fix a condition
        $p \in P_1$ and open dense sets
        $(D_\beta \st \beta < \omega_1)$. We aim to find a condition
        $q \leq_{P_1} p$ such that $q \in D_\beta$ for all
        $\beta < \omega_1$.

        Consider, for some large enough fixed natural number $k$,
        transitive $\Sigma_{k}$-elementary substructures of the model
        $\Lpoddn(A) = V_1$. More precisely we want to pick a
        continuous sequence
        \[ (N_{\alpha}, \pi_{\alpha}, \xi_\alpha \st \alpha \leq
        \omega_1) \]
        of transitive models $N_{\alpha}$ of size
        $\card{\omega_1^{V_1}}$ together with $\Sigma_{k}$-elementary
        embeddings
        \[ \pi_{\alpha} : N_{\alpha} \rightarrow \Lpoddn(A) \]
        and an increasing sequence of ordinals $\xi_\alpha$ such that
        we have $p \in N_{0}$, and for all $\alpha \leq \omega_1$
        \begin{enumerate}[$(1)$]
        \item $\crit(\pi_{\alpha}) = \xi_\alpha$ with
          $\pi_{\alpha} (\xi_\alpha) = (\gamma^+)^{M_x}$,
        \item for all ordinals $\alpha < \omega_1$ we have that
          $\rho_{k+1}(N_{\alpha}) \leq \xi_\alpha$ and $N_{\alpha}$ is
          sound above $\xi_\alpha$, and
        \item
          $\{ p \} \cup \{ D_\beta \st \beta < \omega_1 \} \subset
          \ran(\pi_{\alpha})$.
        \end{enumerate}
        We can obtain $N_{\alpha}$ and $\pi_{\alpha}$ for all
        $\alpha \leq \omega_1$ with these properties inductively as
        follows.  Let $M_0$ be the (uncollapsed) $\Sigma_{k}$-hull of
        \[\gamma \cup \{ p \} \cup \{ D_\beta \st \beta <
        \omega_1 \}\]
        taken inside $\Lpoddn(A)$. Then let $N_{0}$ be the
        Mostowski collapse of $M_0$ and let
        \[ \pi_{0} : N_{0} \rightarrow M_0 \prec_{\Sigma_{k}}
        \Lpoddn(A) \]
        be the inverse of the embedding obtained from the Mostowski
        collapse with critical point $\xi_0$.

        Now assume we already constructed
        $(N_{\alpha}, \pi_{\alpha}, \xi_\alpha)$ and $M_\alpha$ for
        some $\alpha < \omega_1$. Then we let $M_{\alpha +1}$ be the
        (uncollapsed) $\Sigma_{k}$-hull of
        \[\gamma \cup \{ p \} \cup \{ D_\beta \st \beta <
        \omega_1 \} \cup M_{\alpha} \cup \{ M_{\alpha} \} \]
        taken inside $\Lpoddn(A)$. Further let $N_{\alpha + 1}$ be the
        Mostowski collapse of $M_{\alpha + 1}$ and let
        \[ \pi_{\alpha + 1} : N_{\alpha + 1} \rightarrow M_{\alpha +
          1} \prec_{\Sigma_{k}} \Lpoddn(A) \]
        be the inverse of the embedding obtained from the Mostowski
        collapse with critical point $\xi_{\alpha +1}$. Note that we
        have $\xi_{\alpha+1} > \xi_\alpha$.

        Moreover if we assume that
        $(N_{\alpha}, \pi_{\alpha},\xi_\alpha)$ is already constructed
        for all $\alpha < \lambda$ for some limit ordinal
        $\lambda \leq \omega_1$, then we let
        \[ M_{\lambda} = \bigcup_{\alpha < \lambda} M_{\alpha}, \] and
        we let $N_\lambda$ be the Mostowski collapse of $M_\lambda$
        with inverse collapse embedding
        \[ \pi_{\lambda} : N_\lambda \rightarrow M_\lambda, \] with
        critical point $\crit(\pi_\lambda) = \xi_\lambda$.

        Recall that we fixed open dense sets
        $(D_\beta \st \beta < \omega_1)$. We are now going to
        construct a sequence $(p_\alpha \st \alpha \leq \omega_1)$ of
        conditions such that $p_{\alpha +1} \leq_{P_1} p_\alpha$ and
        $p_{\alpha +1} \in D_\alpha$ for all $\alpha < \omega_1$.
        Moreover we are going to construct these conditions such that
        we inductively maintain
        $p_\alpha \in \pi^{-1}_\alpha(P_1) \subset N_{\alpha}$.

        We start with $p_0 = p \in N_{0}$. For the successor step
        suppose that we already defined
        $p_\alpha \in \pi^{-1}_\alpha(P_1) \subset N_{\alpha}$ for
        some $\alpha < \omega_1$. Then we have that
        $\dom(p_\alpha) < \xi_\alpha$ and $p_\alpha \in N_{\alpha +1}$
        by the definition of the models and embeddings
        $(N_{\alpha}, \pi_{\alpha} \st \alpha \leq \omega_1)$.  By
        extendibility of the forcing $\pi_{\alpha +1}^{-1}(P_1)$ and
        the density of the set
        $\pi_{\alpha +1}^{-1}(D_\alpha) \subseteq D_\alpha$, there
        exists a condition $p_{\alpha +1} \leq_{P_1} p_\alpha$ such
        that we have
        $p_{\alpha +1} \in \pi^{-1}_{\alpha+1}(P_1) \subset N_{\alpha
          +1}$,
        $p_{\alpha +1} \in D_{\alpha}$ and
        $\dom(p_{\alpha +1}) \geq \xi_\alpha$ since
        $\pi_{\alpha +1}^{-1}(P_1) \subseteq P_1$.

        For a limit ordinal $\lambda \leq \omega_1$ we simply let
        $p_\lambda = \bigcup_{\alpha < \lambda} p_\alpha$. 
We have that
      the sequence
        $(\xi_\alpha \st \alpha < \lambda)$ of critical points of
        $(\pi_{\alpha} \st \alpha < \lambda)$ is definable over
        $N_{\lambda}$ since for $\alpha < \lambda$ the model
        $N_{\alpha}$ is equal to the transitive collapse of a
        $\Sigma_{k}$-elementary submodel of $N_{\lambda}$ which is
        constructed inside $N_{\lambda}$ exactly as it was constructed
        inside $\Lpoddn(A)$ above. Therefore we have that
        \[ \cf^{N_\lambda}(\xi_\lambda) \leq \lambda
        \leq \omega_1 = \omega_1^{V_1} \]
       which implies
        that
        \[ N_\lambda \vDash \card{\xi_\lambda} \leq
        \omega_1^{V_1}. \] 
As $\dom(p_\lambda)=\xi_\lambda$, this buys us that
$p_\lambda$ is in fact a condition in the forcing $P_1$.
\begin{comment}
 by the
        following argument. As argued above we have for every limit
        ordinal $\lambda \leq \omega_1$ that
        \[ \Lpoddn(A \cap \xi_{\lambda}) \vDash \card{\xi_{\lambda}}
        \leq \omega_1^{V_1}. \]
        Therefore we have for every limit ordinal
        $\lambda \leq \omega_1$ that there exists a model $N$ as in
        $(ii)$ in Definition \ref{def:reshaping} witnessing that
        $p_\lambda$ is $(A, (\gamma^+)^{M_x})$-reshaping and thus
        $p_\lambda \in P_1$.
\end{comment}
        
Now consider the function $q = p_{\omega_1}$. Then 
        $q \in P_1$ and $q \in D_\beta$ for all $\beta<\omega_1$.

        We have shown that the reshaping forcing $P_1$ is
        $< (\gamma^+)^{M_x}$-distributive and therefore does not
        collapse $\omega_1$ and $(\gamma^+)^{M_x} = \omega_2$.

        So let $G_1$ be $P_1$-generic over $V_1$ and let
        $V_2 = V_1[G_1]$. The extendability of the forcing $P_1$
        yields that $\bigcup G_1$ is an
        $(A, (\gamma^+)^{M_x})$-reshaping function with domain
        $(\gamma^+)^{M_x}$. Let $B^\prime$ be a subset of
        $(\gamma^+)^{M_x}$ which codes the function $\bigcup G_1$, for
        example the subset of $(\gamma^+)^{M_x}$ which has
        $\bigcup G_1$ as its characteristic function. Finally let
        $B \subset (\gamma^+)^{M_x}$ be a code for
        $A \oplus B^\prime$.

        As at the end of Step $1$ we can pick this code
        $B \subset (\gamma^+)^{M_x}$ such that the model $V_2$ is of
        the form $\Lpoddn(B)$ by the following argument: Recall that
        \[ \Lpoddn(B) = M(B) | \omega_1^V, \] where as above $M(B)$
        denotes the $\omega_1^V$-th iterate of $M_{2n-1}^\#(B)$ by its
        least measure and its images. Therefore we can consider $G_1$
        as being generic over $M(A)$. This yields as in the argument
        at the end of Step $1$ that we can pick $B$ such that
        $V_2 = M(B) | \omega_1^V$ because the ``reshaping forcing''
        $P_1$ takes place below $(\gamma^+)^{M_x} <
        \omega_1^V$. Therefore we have that
        \[ V_2 = \Lpoddn(B). \]

        \textbf{Step 3:} Now we can perform the first coding using
        almost disjoint subsets of
        $\omega_1 = \omega_1^{V_2} = \omega_1^{V_1}$. Since $B$ is
        ``reshaped'' we can inductively construct a sequence of almost
        disjoint subsets of $\omega_1$,
        \[ (A_\xi \st \xi < (\gamma^+)^{M_x}), \]
        as follows. Let $\xi < (\gamma^+)^{M_x}$ be such that we
        already constructed a sequence $(A_\zeta \st \zeta < \xi)$ of
        almost disjoint subsets of $\omega_1$. 

        \begin{minipage}{\textwidth}
          \bigskip \textbf{Case 1.} $L[B \cap \xi] \vDash \card{\xi}
          \leq \omega_1^{V_2}$. 
          \bigskip
        \end{minipage}

        Then we let $A_\xi$ be the least subset of $\omega_1$ in
        $L[B \cap \xi]$ which is almost disjoint from any
        $A_\zeta$ for $\zeta < \xi$ and which satisfies that
        \[ \card{\omega_1 \setminus \bigcup_{\zeta \leq \xi} A_\zeta}
          = \aleph_1. \] 

        \begin{minipage}{\textwidth}
          \bigskip \textbf{Case 2.} Otherwise.
          \bigskip
        \end{minipage}

        Let $N$ be the least initial segment of
        $\Lpoddn(A \cap \xi)^{\Lpoddn(B)}$ such that
        $\rho_\omega(N) \leq \xi$, $N$ is sound above $\xi$, $\xi$ is
        the largest cardinal in $N$, and definably over $N$ there
        exists a surjection
        $g: \omega_1^{V_2} \twoheadrightarrow \xi$. Now let $A_\xi$ be
        the least subset of $\omega_1^{V_2}$ which is definable over
        $N$, almost disjoint from any $A_\zeta$ for $\zeta < \xi$ and
        which satisfies that
        $\card{\omega_1 \setminus \bigcup_{\zeta \leq \xi} A_\zeta} =
        \aleph_1.$

        That the set $A_\xi$ is well-defined in this case follows from
        the fact that the set $B \subset (\gamma^+)^{M_x}$ is
        ``reshaped'' by the following argument. As $B$ is ``reshaped''
        we have in Case $2$ above that there exists a model $N$ as in
        Definition \ref{def:reshaping}
        $(ii)$. %Then a comparison argument as in
 %       the proof of distributivity in Step $2$ yields that
        We have that $N \lhd \Lpoddn(A \cap \xi)$. In general it need
        not be the case that $\Lpoddn(A \cap \xi)^{\Lpoddn(B)}$ is
        equal to $\Lpoddn(A \cap \xi)$ (see Lemma
        \ref{lem:LpInLpFake}), but as $\xi$ is the largest cardinal in
        $N$, it follows that in fact
        $N \lhd \Lpoddn(A \cap \xi)^{\Lpoddn(B)}$.  Hence any $N$
        witnessing at $\xi$ that $B$ is ``reshaped'' is an initial
        segment of $\Lpoddn(A \cap \xi)^{\Lpoddn(B)}$, so that the set
        $A_\xi$ is indeed well-defined.

     The sequence $(A_\xi \st \xi < (\gamma^+)^{M_x})$ is now
        definable in $V_2 = \Lpoddn(B)$.

        Now let $P_2$ be the forcing for coding $B$ by a subset of
        $\omega_1$ using the almost disjoint sets
        $(A_\xi \st \xi < (\gamma^+)^{M_x})$. That means a condition
        $p \in P_2$ is a pair $(p_l, p_r)$ such that
        $p_l : \alpha \rightarrow 2$ for some
        $\alpha < \omega_1$ and $p_r$ is a countable subset of
        $(\gamma^+)^{M_x}$. We say
        $p = (p_l, p_r) \leq_{P_2} (q_l, q_r) = q$ iff
        $q_l \subseteq p_l$, $q_r \subseteq p_r$, and for all
        $\xi \in q_r$ we have that if $\xi \in B$, then 
        \[ \{ \beta \in \dom(p_l) \setminus \dom(q_l) \st p_l(\beta) =
        1 \} \cap A_\xi = \emptyset. \]

        An easy argument shows that the $(\gamma^+)^{M_x}$-c.c. holds
        true for the forcing $P_2$. Moreover it is $\omega$-closed and
        therefore no cardinals are collapsed.
        
        Let $G_2$ be $P_2$-generic over $V_2$ and let
        \[ C^\prime = \bigcup_{p \in G_2} \{ \beta \in \dom(p_l) \st
        p_l(\beta) = 1 \}. \]
        Then $C^\prime \subset \omega_1$ and we have that for all
        $\xi < (\gamma^+)^{M_x}$,
        \[ \xi \in B \text{ iff } \card{C^\prime \cap A_\xi} \leq
        \aleph_0. \]
        Finally let $V_3 = V_2[G_2]$. By the same argument as we gave
        at the end of Step $2$ we can obtain that
        \[ V_3 = \Lpoddn(C) \] for some set $C \subset \omega_1$
        coding $C^\prime$ and the real $x$, as the model $\Lpoddn(C)$
        can successfully decode the set $B \subset (\gamma^+)^{M_x}$
        by the following argument. We show inductively that for every
        $\xi < (\gamma^+)^{M_x}$,
        $(A_\zeta \st \zeta < \xi) \in \Lpoddn(C)$ and
        $B \cap \xi \in \Lpoddn(C)$. This yields that
        $B \in \Lpoddn(C)$.

        For the inductive step let $\xi < (\gamma^+)^{M_x}$ be an
        ordinal and assume inductively that we have
        \[ (A_\zeta \st \zeta < \xi) \in \Lpoddn(C). \]
        Since for all $\zeta < \xi$,
        \[ \zeta \in B \text{ iff } \card{C^\prime \cap A_\zeta} \leq
          \aleph_0, \] we have that $B \cap \xi \in \Lpoddn(C)$.

        In Case $1$, i.e. if
        $L[B \cap \xi] \vDash \card{\xi} \leq \omega_1^{V_2}$, the set
        $A_\xi$ can easily be identified inside $\Lpoddn(C)$. In Case
        $2$ let $N$ be the least initial segment of
        $\Lpoddn(A \cap \xi)^{\Lpoddn(C)}$ such that
        $\rho_\omega(N) \leq \xi$, $N$ is sound above $\xi$, $\xi$ is
        the largest cardinal in $N$, and definably over $N$ there
        exists a surjection $g: \omega_1 \twoheadrightarrow \xi$.
        Such an $N$ exists due to the fact that $B$ is ``reshaped'':
        there is even some $N \triangleleft \Lpoddn(A \cap \xi)$ such
        that $\rho_\omega(N) \leq \xi$, $N$ is sound above $\xi$,
        $\xi$ is the largest cardinal in $N$, and definably over $N$
        there exists a surjection
        $g: \omega_1 \twoheadrightarrow \xi$; and
        $\Lpoddn(A \cap \xi)^{\Lpoddn(C)} \subseteq \Lpoddn(A \cap
        \xi)$.  Thus the least initial segment $N$ of
        $\Lpoddn(A \cap \xi)^{\Lpoddn(C)}$ with these properties exists
        and it will also be an initial segment of
        $\Lpoddn(A \cap \xi)$, and $N$ will then also be an initial
        segment of $\Lpoddn(A \cap \xi)^{\Lpoddn(B)}$ (for the $B$ as
        above) which was used to identify $A_\xi$.  We have shown that
        in each case $A_\xi$ can be identified inside $\Lpoddn(C)$.

        As the identification of the next $A_\xi$ was according to a
        uniform procedure, we get that in fact
        \[ (A_\zeta \st \zeta \leq \xi ) \in \Lpoddn(C). \]

        \textbf{Step 4:} Before we can ``code down to a real'', that
        means before we can find a real $z$ such that
        $K^{M_x}|(\gamma^+)^{M_x} \in \Lpoddn(z)$, we have to perform
        another ``reshaping'' similar to the one in Step $2$. So let
        $P_3$ be the forcing for adding a $(C, \omega_1)$-reshaping
        function working in $V_3$ as the new ground model, where
        $\omega_1 = \omega_1^{V_3} = \omega_1^{V_2}$. That means we
        let $p \in P_3$ iff $p$ is a $(C, \omega_1)$-reshaping
        function with $\dom(p) < \omega_1$. The order of two
        conditions $p$ and $q$ in $P_3$ is again by reverse inclusion,
        that means $p \leq_{P_3} q$ iff $q \subseteq p$.

        The forcing $P_3$ is extendable and $<\omega_1$-distributive
        by the same arguments as we gave in Step $2$ since we have
        that $V_3 = \Lpoddn(C)$. Therefore $P_3$ does not collapse
        $\omega_1$.

        Let $G_3$ be $P_3$-generic over $V_3$ and let
        $V_4 = V_3[G_3]$. We again have that $\bigcup G_3$ is a
        $(C, \omega_1)$-reshaping function with domain $\omega_1$,
        because $P_3$ is extendable. Let $D^\prime$ be a subset of
        $\omega_1$ which codes $\bigcup G_3$, for example the subset
        of $\omega_1$ which has $\bigcup G_3$ as its characteristic
        function. Finally let $D \subset \omega_1$ code
        $C \oplus D^\prime$.

        By the same argument as the one we gave at the end of Step $2$
        we can obtain that in fact \[ V_4 = \Lpoddn(D). \]

        \textbf{Step 5:} Now we are ready to finally ``code down to a
        real''. Since $D$ is ``reshaped'' we can consider a uniformly
        defined sequence \[ (B_\xi \st \xi < \omega_1) \] of almost
        disjoint subsets of $\omega$ as in Step $3$, where
        $\omega_1 = \omega_1^{V_4} = \omega_1^{V_3}$.

        Now we let $P_4$ be the forcing for coding $D$ by a subset of
        $\omega$ using the almost disjoint sets
        $(B_\xi \st \xi < \omega_1)$. That means a condition
        $p \in P_4$ is a pair $(p_l, p_r)$ such that
        $p_l : \alpha \rightarrow 2$ for some $\alpha < \omega$ and
        $p_r$ is a finite subset of $\omega_1$. We say
        $p = (p_l, p_r) \leq_{P_4} (q_l, q_r) = q$ iff
        $q_l \subseteq p_l$, $q_r \subseteq p_r$, and for all
        $\xi \in q_r$ we have that if $\xi \in D$, then
        \[ \{ \beta \in \dom(p_l) \setminus \dom(q_l) \st p_l(\beta) =
        1 \} \cap B_\xi = \emptyset. \]

        As in Step $3$ above an easy argument shows that the forcing
        $P_4$ has the c.c.c. and therefore no cardinals are collapsed.
        
        Finally let $G_4$ be $P_4$-generic over $V_4$ and let
        \[ E^\prime = \bigcup_{p \in G_4} \{ \beta \in \dom(p_l) \st
        p_l(\beta) = 1 \}. \]
        Then $E^\prime \subset \omega$ and we have that for all
        $\xi < \omega_1$,
        \[ \xi \in D \text{ iff } \card{E^\prime \cap B_\xi} <
        \aleph_0. \]
        Let $V_5 = V_4[G_4]$ and finally let $z$ be a real coding
        $E^\prime$ and the real $x$. Analogous to the arguments given
        at the end of Step $3$ we can pick the real $z \geq_T x$ such
        that we have
        \[ V_5 = \Lpoddn(z) \]
        and the model $\Lpoddn(z)$ is able to successfully decode the
        set $D$ and thereby also the set $A$.

        This ultimately yields that we have a real $z \geq_T x$ such
        that
        \[ (\gamma^+)^{M_x} = \omega_2^{\Lpoddn(z)} \] and
        \[ K^{M_x}|(\gamma^+)^{M_x} \in \Lpoddn(z). \]
    \end{proof}

    \setcounter{subcl}{2}

    \begin{subclaim}\label{subcl:KMxit}
      $K^{M_x} | (\gamma^+)^{M_x}$ is fully iterable inside
      $\Lpoddn(z)$.
    \end{subclaim}
    \begin{proof}
      It is enough to show that the $2n$-small premouse
      $K^{M_x} | (\gamma^+)^{M_x}$ is $\omega_1$-iterable inside
      $\Lpoddn(z)$ because once we showed this, an absoluteness
      argument, as for example similar to the one we already gave in
      the proof of Lemma \ref{lem:nsuitnew}, yields that
      $K^{M_x} | (\gamma^+)^{M_x}$ is in fact fully iterable inside
      $\Lpoddn(z)$ since the iteration strategy for
      $K^{M_x} | (\gamma^+)^{M_x}$ is given by $\Q$-structures
      $\Q(\T)$ for iteration trees $\T$ on
      $K^{M_x} | (\gamma^+)^{M_x}$ which are $(2n-1)$-small above
      $\delta(\T)$ and such $\Q$-structures $\Q(\T)$ are contained in
      every lower part model at the level $2n-1$, as for example
      $\Lpoddn(z)$, by definition of the lower part model.

      In a first step we show that there is a tree $T$ such that
      $p[T]$ is a universal $\Pi^1_{2n}$-set in
      $\Lpoddn(x, K^{M_x} | (\gamma^+)^{M_x})$ and moreover for every
      forcing $\mathbb{P}$ of size at most $(\gamma^{+})^{M_x}$ in
      $\Lpoddn(x, K^{M_x} | (\gamma^+)^{M_x})$ and every
      $\mathbb{P}$-generic $G$ over
      $\Lpoddn(x, K^{M_x} | (\gamma^+)^{M_x})$ we have that
      \[ \Lpoddn(x, K^{M_x} | (\gamma^+)^{M_x})[G] \vDash
      \text{``}p[T] \text{ is a universal }
      \Pi^1_{2n}\text{-set''.} \]
      Let $\varphi$ be a $\Pi^1_{2n}$-formula defining a universal
      $\Pi^1_{2n}$-set, i.e. $\{ y \in \BS \st \varphi(y) \}$ is a
      universal $\Pi^1_{2n}$-set. Then we let
      $T \in \Lpoddn(x, K^{M_x} | (\gamma^+)^{M_x})$ be a tree of
      height $\omega$ searching for $y, H, \M, \sigma, \mathbb{Q}$ and
      $g$ such that
    \begin{enumerate}[$(1)$]
    \item $y \in \BS$,
    \item $\M$ is a countable $(x,H)$-premouse,
    \item
      $\sigma: \M \rightarrow \Lpoddn(x, K^{M_x} | (\gamma^+)^{M_x})$
      is a sufficiently elementary embedding,
    \item $\sigma(H) = K^{M_x} | (\gamma^+)^{M_x}$, and
    \item $\mathbb{Q} \in \M$ is a partial order of size at most
      $H \cap \Ord$ in $\M$ and $g$ is $\mathbb{Q}$-generic over $\M$
      such that
      \[ \M[g] \vDash \varphi(y). \]
    \end{enumerate}
    This tree $T$ has the properties we claimed above by the following
    argument. The model $\Lpoddn(x, K^{M_x} | (\gamma^+)^{M_x})$ is
    closed under the operation $a \mapsto M_{2n-2}^\#(a)$ and is
    therefore $\SIGMA^1_{2n}$-correct in $V$. The same holds for the
    model $\Lpoddn(x, K^{M_x} | (\gamma^+)^{M_x})[G]$, where $G$ is
    $\mathbb{P}$-generic over the model
    $\Lpoddn(x, K^{M_x} | (\gamma^+)^{M_x})$ for some forcing
    $\mathbb{P}$ of size at most $(\gamma^{+})^{M_x}$ in the model
    $\Lpoddn(x, K^{M_x} | (\gamma^+)^{M_x})$. Moreover we have that if
    $\M, H, \sigma, \mathbb{Q}$ and $g$ are as searched by the tree
    $T$, then the forcing $\mathbb{Q} \in \M$ has size at most
    $H \cap \Ord$ in $\M$ and the embedding
    $\sigma: \M \rightarrow \Lpoddn(x, K^{M_x} | (\gamma^+)^{M_x})$ is
    sufficiently elementary, so it follows that $\M[g]$ is
    $\SIGMA^1_{2n}$-correct in $V$. This easily yields that $p[T]$ is
    a universal $\Pi^1_{2n}$-set in
    $\Lpoddn(x, K^{M_x} | (\gamma^+)^{M_x})[G]$ for every generic set
    $G$ as above, in fact
    \begin{align*}
      p[T] \cap \Lpoddn(x, K^{M_x} &| (\gamma^+)^{M_x})[G] = \\ & \{ y \in
    \BS \st \varphi(y) \} \cap \Lpoddn(x, K^{M_x} |
    (\gamma^+)^{M_x})[G]. 
    \end{align*}

    In a second step we now define another tree $U$ whose
    well-foundedness is going to witness that
    $K^{M_x} | (\gamma^+)^{M_x}$ is $\omega_1$-iterable. So we define
    the tree $U \in \Lpoddn(x, K^{M_x} | (\gamma^+)^{M_x})$ such that
    $U$ is searching for $\bar{K}, \pi, \T, \N, \sigma$ and a sequence
    $(\Q_\lambda \st \lambda \leq \lh(\T), \lambda \text{ limit})$
    with the following properties.
    \begin{enumerate}[$(1)$]
    \item $\bar{K}$ is a countable premouse,
    \item $\pi: \bar{K} \rightarrow K^{M_x} | (\gamma^+)^{M_x}$ is an
      elementary embedding,
    \item $\T$ is a countable putative\footnote{Recall that we say
        that a tree $\T$ is a \emph{putative iteration tree} if $\T$
        satisfies all properties of an iteration tree, but we allow
        the last model of $\T$ to be ill-founded, in case $\T$ has a
        last model.} iteration tree on $\bar{K}$ such that for all
      limit ordinals $\lambda < \lh(\T)$,
      \[ \Q_\lambda \unlhd \M_\lambda^\T, \]
    \item for all limit ordinals $\lambda \leq \lh(\T)$, $\Q_\lambda$
      is a $\Pi^1_{2n}$-iterable (above
      $\delta(\T \upharpoonright \lambda)$) $\Q$-structure for
      $\T \upharpoonright \lambda$ and $\Q_\lambda$ is $(2n-1)$-small
      above $\delta(\T \upharpoonright \lambda)$ (where this
      $\Pi^1_{2n}$-statement is witnessed using the tree $T$ defined
      above),
    \item $\N$ is a countable model of $\ZFC^-$ such that either
      \[ \N \vDash \text{``} \T \text{ has a last ill-founded
        model''}, \]
      or else \begin{eqnarray*} \N \vDash & \text{``} \lh(\T) \text{
          is a limit ordinal and there is no cofinal branch }b \\ &
        \text{ through }\T \text{ such that } \Q_{\lh(\T)} \unlhd
        \M_b^\T \text{''}, \end{eqnarray*} and
    \item $\sigma: \N \cap \Ord \hookrightarrow \omega_1$.
    \end{enumerate}
    Here a code for the sequence
    $(\Q_\lambda \st \lambda \leq \lh(\T), \lambda \text{ limit})$ of
    $\Q$-structures satisfying property $(4)$ can be read off from
    $p[T]$.

    Recall that we have that the core model $K^{M_x}$ is $2n$-small
    and thereby fully iterable in the model $M_x$ via an iteration
    strategy which is guided by $\Q$-structures $\Q(\T)$ for iteration
    trees $\T$ on $K^{M_x}$ of limit length which are $(2n-1)$-small
    above $\delta(\T)$. This implies that the premouse
    $K^{M_x} | (\gamma^+)^{M_x}$ is iterable inside the model
    $\Lpoddn(x, K^{M_x} | (\gamma^+)^{M_x})$ (which as argued earlier
    is a definable subset of $M_x$) by an argument as in the proof of
    Lemma \ref{lem:iterability}, because $K^{M_x} | (\gamma^+)^{M_x}$
    is a $2n$-small premouse and the $\omega_1$-iterable
    $\Q$-structures $\Q(\T)$ for iteration trees $\T$ on
    $K^{M_x} | (\gamma^+)^{M_x}$ in
    $\Lpoddn(x,K^{M_x} | (\gamma^+)^{M_x})$ are contained in the model
    $\Lpoddn(x,K^{M_x} | (\gamma^+)^{M_x})$ as they are $(2n-1)$-small
    above $\delta(\T)$.

    We aim to show that the tree $U$ defined above is well-founded
    inside the model $\Lpoddn(x, K^{M_x} | (\gamma^+)^{M_x})$. So
    assume toward a contradiction that $U$ is ill-founded in
    $\Lpoddn(x, K^{M_x} | (\gamma^+)^{M_x})$ and let
    $\bar{K}, \pi, \T, \N, \sigma$ and
    $(\Q_\lambda \st \lambda \leq \lh(\T), \lambda \text{ limit})$ be
    as above satisfying properties $(1)-(6)$ in the definition of the
    tree $U$.

    Assume that the iteration tree $\T$ has limit length since the
    other case is easier. Since as argued above the premouse
    $K^{M_x} | (\gamma^+)^{M_x}$ is countably iterable inside
    $\Lpoddn(x, K^{M_x} | (\gamma^+)^{M_x})$, we have that in
    $\Lpoddn(x, K^{M_x} | (\gamma^+)^{M_x})$ there exists a cofinal
    well-founded branch $b$ through the iteration tree $\T$ on
    $\bar{K}$ and $\omega_1$-iterable $\Q$-structures
    $\Q(\T \upharpoonright \lambda)$ for all limit ordinals
    $\lambda \leq \lh(\T)$ such that we have $\Q(\T) = \Q(b, \T)$ by
    an argument as in Case $1$ in the proof of Lemma
    \ref{lem:nsuitnew}. Moreover every $\Q$-structure
    $\Q(\T \upharpoonright \lambda)$ is $(2n-1)$-small above
    $\delta(\T \upharpoonright \lambda)$.

    Since the model $\Lpoddn(x, K^{M_x} | (\gamma^+)^{M_x})$ is closed
    under the operation $a \mapsto M_{2n-2}^\#(a)$, the proofs of
    Lemmas \ref{lem:iterability} and \ref{lem:steel2.2} imply that we
    can successfully compare $\Q(\T \upharpoonright \lambda)$ and
    $\Q_\lambda$ for all limit ordinals $\lambda \leq \lh(\T)$. This
    implies that in fact
    \[ \Q(\T \upharpoonright \lambda) = \Q_\lambda. \]
    Therefore it follows by an absoluteness argument as the one we
    already gave in the proof of Lemma \ref{lem:iterability} that
    \begin{eqnarray*} \N \vDash & \text{``there exists a cofinal
        branch }b\text{ through }\T \text{ such that } \\ &
      \Q_{\lh(\T)} \unlhd \M_b^\T \text{''}, \end{eqnarray*} because
    we in particular have that
    $\Q_{\lh(\T)} = \Q(b,\T) \unlhd \M_b^\T$.

    Therefore the tree $U$ is well-founded in the model
    $\Lpoddn(x, K^{M_x} | (\gamma^+)^{M_x})$ and by absoluteness of
    well-foundedness this implies that the tree $U$ is also
    well-founded inside $\Lpoddn(z)$.

    We have by construction of the tree $T$ that a code for the
    sequence
    $(\Q_\lambda \st \lambda \leq \lh(\T), \lambda \text{ limit})$ can
    still be read off from $p[T]$ in $\Lpoddn(z)$ because the forcing
    we performed over the ground model
    $\Lpoddn(x, K^{M_x} | (\gamma^+)^{M_x})$ to obtain the real $z$
    has size at most $(\gamma^{+})^{M_x}$ in
    $\Lpoddn(x, K^{M_x} | (\gamma^+)^{M_x})$. Therefore the
    well-foundedness of the tree $U$ in $\Lpoddn(z)$ implies that
    $K^{M_x} | (\gamma^+)^{M_x}$ is $\omega_1$-iterable in
    $\Lpoddn(z)$.
    \end{proof}

    Following \cite{MSch04}, we now aim to make sense of
    \[ (K^c)^{\Lpoddn(z)}. \] The issue is that the definition of \emph{certified} in the construction of the
      model $K^c$ in the sense of \cite{MSch04} makes reference to
      some fixed class of ordinals $A$ such that $V = L[A]$. Therefore the
      model $K^c$ constructed in this sense is in general not
      contained in $\HOD$, because whether an extender is certified in
      the sense of Definition $1.6$ in \cite{MSch04} may depend on the
      choice of $A$. The following claim is supposed to express the
      fact that we may choose $A$ with $\Lpoddn(z) \models$ ``$V=L[A]$''
in such a way that the resulting $K^c$ is a subclass of $\HOD$ from the
point of view of $\Lpoddn(z)$. 

%be the model obtained from a
%    $K^c$-construction as defined in \cite{MSch04} performed inside
%    the model $\Lpoddn(z)$ in the following sense. We aim to construct
%    the premouse $(K^c)^{\Lpoddn(z)}$ such that it does not depend on
 %   the specific choice of the real $z$. Therefore we consider the
 %   $K^c$-construction of \cite{MSch04} inside the model $\Lpoddn(z)$
 %   with $V = L[A_z]$ for $A_z = z {}^\frown E_z \subset \Ord$, where
 %   we identify the real $z$ with a set of ordinals and $E_z$ codes
 %   the extender sequence of $\Lpoddn(z)$.

    \begin{subclaim}\label{subcl:KcHOD}
      $(K^c)^{\Lpoddn(z)} \subset \HOD^{\Lpoddn(z)}$.
    \end{subclaim}

    \begin{proof}
%      The definition of \emph{certified} in the construction of the
 %     model $K^c$ in the sense of \cite{MSch04} makes reference to
%      some class of ordinals $A$ such that $V = L[A]$.  Therefore the
%      model $K^c$ constructed in this sense is in general not
 %     contained in $\HOD$, because whether an extender is certified in
 %     the sense of Definition $1.6$ in \cite{MSch04} may depend on the
 %     choice of $A$. This is why we have to argue that in this
 %     situation the model $(K^c)^{\Lpoddn(z)}$ as defined above does
%      not depend on the choice of the real $z$.
%
      Let $z^\prime \in \Lpoddn(z)$ be an arbitrary real with
      $z \leq_T z^\prime$. In particular,
      $\Lpoddn(z) = \Lpoddn(z^\prime)$ by Lemma \ref{lem:LpXY}.
Let us write
      $A_{z^\prime} = z^\prime {}^\frown E_{z^\prime}$,
      where $E_{z^\prime}$ codes the extender sequence of
      $\Lpoddn(z^\prime)$. We may construe $A_{z^\prime}$ as a set of
ordinals in a canonical fashion.
%Then we need to show the following claim.

      \begin{claim}
Let $z^\prime \in \Lpoddn(z)$ be a real with
      $z \leq_T z^\prime$.
        An extender $E$ is certified with respect to $A_z$ iff $E$
        is certified with respect to $A_{z^\prime}$, in the sense
        of Definition $1.6$ in \cite{MSch04}.
      \end{claim}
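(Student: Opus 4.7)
The plan is to show that certification is invariant because it only depends on the ambient structure $\Lpoddn(z) = \Lpoddn(z')$ (and on the extender sequence constructed so far), not on the particular real used in the coding class $A$. The key observation is that since $z \leq_T z'$ and $z' \in \Lpoddn(z)$, Lemma \ref{lem:LpXY} together with the previous step of the proof gives $\Lpoddn(z) = \Lpoddn(z')$, and moreover $z$ and $z'$ are mutually definable over $\Lpoddn(z)$ from finitely many ordinal parameters. Since $A_z$ and $A_{z'}$ agree on the extender-sequence part $E_z = E_{z'}$ of $\Lpoddn(z) = \Lpoddn(z')$ and differ only in the real coded at the beginning, it follows that $A_z$ and $A_{z'}$ are each definable from the other with finitely many ordinal parameters inside $\Lpoddn(z)$.

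I would then recall Definition $1.6$ of \cite{MSch04}: an extender $E$ is certified with respect to $A$ iff for club-many countable $X \prec L_\kappa[A]$ (for all sufficiently large $\kappa$), the collapse of $E$ to the transitive collapse of $X$ can be realized by an elementary embedding back into the relevant structure, and furthermore $E$ coheres with $A$ in the appropriate sense. Because $A_z$ and $A_{z'}$ are interdefinable with finitely many ordinal parameters, for every sufficiently large $\kappa$ we have $L_\kappa[A_z] = L_\kappa[A_{z'}]$ as inner models, and the respective clubs of countable elementary substructures are equal modulo restricting to a club where the finitely many ordinal parameters are included. Hence any countable $X \prec L_\kappa[A_z]$ that witnesses certification of $E$ with respect to $A_z$ also qualifies as an $X \prec L_\kappa[A_{z'}]$ witnessing certification with respect to $A_{z'}$, and conversely.

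The forward and backward directions are therefore symmetric: I would just check that the realization maps required by the definition transfer between the two codings. The main (and only nontrivial) obstacle is to verify that the extra finitely many ordinal parameters needed to go from $A_z$ to $A_{z'}$ inside $\Lpoddn(z)$ do not destroy the relevant elementarity requirements of Definition $1.6$ of \cite{MSch04}. This is handled by passing to a suitable club of $\kappa$ large enough to absorb those parameters, and by observing that the realization embeddings produced by one coding serve, after composition with the definable translation between $A_z$ and $A_{z'}$, as realization embeddings for the other. This completes the argument and shows that $(K^c)^{\Lpoddn(z)}$ as constructed via $A_z$ is independent of the specific real $z' \geq_T z$ in $\Lpoddn(z)$ used to define it, which is precisely what is needed to conclude $(K^c)^{\Lpoddn(z)} \subseteq \HOD^{\Lpoddn(z)}$.
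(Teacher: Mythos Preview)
Your proposal is essentially correct and follows the same approach as the paper: establish that $A_z$ and $A_{z'}$ are mutually (ordinal-parameter) definable over $\Lpoddn(z)=\Lpoddn(z')$ via Lemma~\ref{lem:LpXY}, and conclude that the certification condition of \cite{MSch04} transfers. One small imprecision: you assert outright that the extender-sequence parts satisfy $E_z = E_{z'}$, but since $\Lpoddn(z)$ is a $z$-premouse and $\Lpoddn(z')$ a $z'$-premouse, their $J$-hierarchies (and hence the indexing of extenders) can differ below $\omega_1^{\Lpoddn(z)}$; the paper is more careful here, observing only that the extender sequences literally agree \emph{above} $\omega_1^{\Lpoddn(z)}$, while below $\omega_1^{\Lpoddn(z)}$ one has mutual computability of $A_z\cap\omega_1$ and $A_{z'}\cap\omega_1$ from Lemma~\ref{lem:LpXY}. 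This does not affect your overall argument, since you ultimately rely only on interdefinability with ordinal parameters, which still holds.
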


      \begin{proof}
        It follows from Lemma \ref{lem:LpXY} that
        \[ \Lpoddn(z^\prime) = (\Lpoddn(z^\prime))^{\Lpoddn(z)} \] and
        we even have that the set
        $A_{z^\prime} \cap \omega_1^{\Lpoddn(z)}$ may be easily computed from
        $A_z \cap \omega_1^{\Lpoddn(z)}$. Symmetrically, we have that the
        set $A_{z} \cap \omega_1^{\Lpoddn(z)}$ may be easily computed from
        $A_{z^\prime} \cap \omega_1^{\Lpoddn(z)}$. Moreover the
        extender sequences of $\Lpoddn(z)$ and $\Lpoddn(z^\prime)$
literally agree above
        $\omega_1^{\Lpoddn(z)}$. Therefore it follows easily that
        an extender $E$ is certified with respect to $A_z$ iff $E$
        is certified with respect to $A_{z^\prime}$.
      \end{proof}

We have shown that inside
      ${\Lpoddn(z)}$ there is a cone of reals $z^\prime$ 
such that the output of the $K^c$ construction performed \`a la \cite{MSch04} and
using $A_{z^\prime}$
is not sensitive to the particular choice of $z^\prime$ from that cone.
Hence if we let \[ (K^c)^{\Lpoddn(z)} \] be this common output, then
     \[ (K^c)^{\Lpoddn(z)} \subset \HOD^{\Lpoddn(z)}, \] as desired.
    \end{proof}

    In what follows we will also need the following notion of
    iterability. 

    \begin{definition}\label{def:itstrLambda}
      Let $N$ be a countable premouse. Then we inductively define an
      iteration strategy $\Lambda$ for $N$ as follows. Assume that
      $\T$ is a normal iteration tree on $N$ of limit length according
      to $\Lambda$ such that there exists a $\Q$-structure
      $\Q \unrhd \M(\T)$ for $\T$ which is fully iterable above
      $\delta(\T)$. Then we define that $\Lambda(\T) = b$ iff $b$ is a
      cofinal branch through $\T$ such that either
      \begin{enumerate}[$(i)$]
      \item $\Q \unlhd \M_b^\T$, or
      \item $b$ does not drop (so in particular the iteration
        embedding $i_b^\T$ exists), there exists an ordinal
        $\delta < N \cap \Ord$ such that $i_b^\T(\delta) = \delta(\T)$
        and \[ N \vDash \text{``} \delta \text{ is Woodin''}, \]
        and there exists a $\tilde{\Q} \unrhd N | \delta$ such that
        \[ \tilde{\Q} \vDash \text{``} \delta \text{ is Woodin''}, \]
        but $\delta$ is not definably Woodin over $\tilde{\Q}$ and if
        we lift the iteration tree $\T$ to $\tilde{\Q}$, call this
        iteration tree $\T^*$, then
        \[ i_b^{\T^*} : \tilde{\Q} \rightarrow \Q. \]
      \end{enumerate}
    \end{definition}

    \begin{definition}\label{def:Qstrit}
      Let $N$ be a countable premouse. Then we say that $N$ is
      \emph{$\Q$-structure iterable} \index{Qstructure
        iterable@$\Q$-structure iterable} iff for every iteration tree
      $\T$ on $N$ which is according to the iteration strategy
      $\Lambda$ from Definition \ref{def:itstrLambda} the following
      holds.
      \begin{enumerate}[$(i)$]
      \item If $\T$ has limit length and there exists a $\Q$-structure
        $\Q \unrhd \M(\T)$ for $\T$ which is fully iterable above
        $\delta(\T)$, then there exists a cofinal well-founded branch
        $b$ through $\T$ such that $\Lambda(\T) = b$.
      \item If $\T$ has a last model, then every putative iteration
        tree $\U$ extending $\T$ such that $\lh(\U) = \lh(\T) + 1$ has
        a well-founded last model.
      \end{enumerate}
    \end{definition} 
   
    The premouse
    \[ (K^c)^{\Lpoddn(z)} \]
    is countably iterable in $\Lpoddn(z)$ by the iterability proof of
    Chapter $9$ in \cite{St96} adapted as in Section $2$ in
    \cite{MSch04}.

    \setcounter{subcl}{4}
    \begin{subclaim}\label{subcl:QstritA}
      In $\Lpoddn(z)$,
      \[ (K^c)^{\Lpoddn(z)} \text{ is } \Q\text{-structure
        iterable.} \]
    \end{subclaim}

    \begin{proof}
      Assume there exists an iteration tree $\T$ on
      $(K^c)^{\Lpoddn(z)}$ which witnesses that $(K^c)^{\Lpoddn(z)}$
      is not $\Q$-structure iterable inside $\Lpoddn(z)$. Since the
      other case is easier assume that $\T$ has limit length. That
      means in particular that there exists a $\Q$-structure
      $\Q(\T) \unrhd \M(\T)$ for $\T$ which is fully iterable above
      $\delta(\T)$, but there is no cofinal well-founded branch $b$
      through $\T$ in $\Lpoddn(z)$ such that $\Lambda(\T) = b$.

      For some large enough natural number $m$ let $H$ be the
      Mostowski collapse of $Hull_m^{\Lpoddn(z)}$ such that $H$ is
      sound and $\rho_\omega(H) = \rho_{m+1}(H)= \omega$. Furthermore
      let
      \[ \pi : H \rightarrow \Lpoddn(z) \]
      be the uncollapse map such that
      $\T, \Q(\T) \in \ran(\pi)$.
      Moreover let $\bar{\T}, \bar{\Q} \in H$ be such that
      $\pi(\bar{\T}) = \T$, $\pi(\bar{\Q}) = \Q(\T)$, and let
      $\bar{K} \subset H$ be such that
      $\pi(\bar{K} | \gamma) = (K^c)^{\Lpoddn(z)} | \pi(\gamma)$ for
      any $\gamma < H \cap \Ord$. That means in particular that
      $\bar{\T}$ is an iteration tree on $\bar{K}$.

      As argued above we have that $(K^c)^{\Lpoddn(z)}$ is
      countably iterable in $\Lpoddn(z)$. Therefore there exists a
      cofinal well-founded branch $\bar{b}$ through $\bar{\T}$ in
      $\Lpoddn(z)$.

      Now we consider two different cases. 

      \begin{minipage}{\textwidth}
        \bigskip \textbf{Case 1.} There is a drop along the branch
        $\bar{b}$. \bigskip
      \end{minipage}

      In this case there exists a $\Q$-structure
      $\Q^* \unlhd \M_{\bar{b}}^{\bar{\T}}$ for $\bar{\T}$. A standard
      comparison argument shows that $\Q^* = \bar{\Q}$ and thus
      $\bar{\Q} \unlhd \M_{\bar{b}}^{\bar{\T}}$ is a $\Q$-structure
      for $\bar{\T}$.

      Now consider the statement
      \begin{eqnarray*}
      \phi(\bar{\T}, \bar{\Q}) \equiv &\text{``there is a cofinal
                                            branch } b \text{ through } \bar{\T} \text{ such
                                            that } \\ 
                                          & \bar{\Q} \unlhd
                                            \M_{b}^{\bar{\T}}\text{''.}   
    \end{eqnarray*}
    This statement $\phi(\bar{\T}, \bar{\Q})$ is
    $\Sigma^1_1$-definable from the parameters $\bar{\T}$ and
    $\bar{\Q}$ and holds in the model $\Lpoddn(z)$ as witnessed by the
    branch $\bar{b}$.

    By $\SIGMA^1_1$-absoluteness the statement
    $\phi(\bar{\T}, \bar{\Q})$ also holds in the model
    $H^{\Col(\omega, \eta)}$ as witnessed by some branch $b$,
    where $\eta < H \cap \Ord$ is a large enough ordinal such that
    $\bar{\T}, \bar{\Q} \in H^{\Col(\omega, \eta)}$ are countable
    inside $H^{\Col(\omega, \eta)}$. 

    Since $b$ is uniquely definable from $\bar{\T}$ and the
    $\Q$-structure $\bar{\Q}$, and $\bar{\T}, \bar{\Q} \in H$, we have
    by homogeneity of the forcing $\Col(\omega, \eta)$ that actually
    $b \in H$.  This contradicts the fact that the iteration tree $\T$
    witnesses that the premouse $(K^c)^{\Lpoddn(z)}$ is not
    $\Q$-structure iterable inside $\Lpoddn(z)$.

    \begin{minipage}{\textwidth}
      \bigskip \textbf{Case 2.} There is no drop along the branch
      $\bar{b}$. \bigskip
    \end{minipage}    

    In this case we consider two subcases as follows.

 \begin{minipage}{\textwidth}
   \bigskip \textbf{Case 2.1.} There is no Woodin cardinal
   $\bar{\delta}$ in $\bar{K}$ such that
   $i_{\bar{b}}^{\bar{\T}}(\bar{\delta}) = \delta(\bar{\T})$. \bigskip
 \end{minipage}

 In this case we have that $\delta(\bar{\T})$ is not Woodin in
 $\M_{\bar{b}}^{\bar{\T}}$. Thus there exists an initial segment
 $\Q^* \unlhd \M_{\bar{b}}^{\bar{\T}}$ such that
 \[ \Q^* \vDash \text{``} \delta(\bar{\T}) \text{ is a Woodin
   cardinal''}, \]
 and definably over $\Q^*$ there exists a witness for the fact that
 $\delta(\bar{\T})$ is not Woodin (in the sense of Definition
 \ref{def:notdefWdn}). As $(K^c)^{\Lpoddn(z)}$ is countably iterable
 in $\Lpoddn(z)$ we can successfully coiterate the premice $\Q^*$ and
 $\bar{\Q}$. Therefore we have that in fact $\Q^* = \bar{\Q}$ and thus
 as in Case $1$ it follows that
 $\bar{\Q} \unlhd \M_{\bar{b}}^{\bar{\T}}$ is a $\Q$-structure for
 $\bar{\T}$ and we can derive a contradiction from that as above.

 \begin{minipage}{\textwidth}
   \bigskip \textbf{Case 2.2.} There is a Woodin cardinal
   $\bar{\delta}$ in $\bar{K}$ such that
   $i_{\bar{b}}^{\bar{\T}}(\bar{\delta}) = \delta(\bar{\T})$. \bigskip
 \end{minipage}

 Let $\delta = \pi(\bar{\delta})$ be the corresponding Woodin cardinal
 in $(K^c)^{\Lpoddn(z)}$. All extenders appearing in the
 $K^c$-construction in the sense of \cite{MSch04} satisfy the axioms
 of the extender algebra by the definition of being \emph{certified}
 (see Definition $1.6$ in \cite{MSch04}), because certified extenders
 are countably complete witnessed by an embedding which is
 $\Sigma_{1+}$-elementary (see Definition $1.3$ in \cite{MSch04} for
 the definition of $\Sigma_{1+}$-formulae). Therefore the real $z$ is
 generic over the model $(K^c)^{\Lpoddn(z)}$ for the extender algebra
 at the Woodin cardinal $\delta$. In fact by the same argument the
 model $\Lpoddn(z)|\delta$ is generic over
 $(K^c)^{\Lpoddn(z)} | (\delta + \omega)$ for the $\delta$-version of
 the extender algebra $\mathbb{Q}_\delta$ (see the proof of Lemma
 $1.3$ in \cite{SchSt09} for a definition of the $\delta$-version of
 the extender algebra).

 Therefore we can perform a maximal $\mathcal{P}$-construction, which
 is defined as in \cite{SchSt09}, inside $\Lpoddn(z)$ over
 $(K^c)^{\Lpoddn(z)} | (\delta + \omega)$ to obtain a model
 $\mathcal{P}$. We have that
 $\mathcal{P} \vDash \text{``} \delta \text{ is Woodin''}$ by the
 definition of a maximal $\mathcal{P}$-construction.

 Let $\bar{\mathcal{P}}$ be the corresponding result of a maximal
 $\mathcal{P}$-construction in $H$ over
 $\bar{K} | (\bar{\delta} + \omega)$. Moreover let $\T^*$ be the
 iteration tree obtained by considering the tree $\bar{\T}$ based on
 $\bar{K} | \bar{\delta}$ as an iteration tree on
 $\bar{\mathcal{P}} \rhd \bar{K} | \bar{\delta}$. Let
 \[ i_{\bar{b}}^{\T^*} : \bar{\mathcal{P}} \rightarrow
 \M_{\bar{b}}^{\T^*} \]
 denote the corresponding iteration embedding, where the branch
 through $\T^*$ we consider is induced by the branch $\bar{b}$ through
 $\bar{\T}$ we fixed above, so we call them both $\bar{b}$. Then we
 have that
 \[ i_{\bar{b}}^{\T^*}(\bar{\delta}) =
 i_{\bar{b}}^{\bar{\T}}(\bar{\delta}) = \delta(\bar{\T}). \]

      \begin{minipage}{\textwidth}
        \bigskip \textbf{Case 2.2.1.} We have that \[ \mathcal{P} \cap \Ord
        < \Lpoddn(z) \cap \Ord. \]  \medskip
      \end{minipage}
      
      Then we have in particular that $\bar{\mathcal{P}} \cap \Ord < H
      \cap \Ord$ and thus $\bar{\mathcal{P}} \in H$. 

      Consider the coiteration of $\M_{\bar{b}}^{\T^*}$ with
      $\bar{\Q}$ inside $\Lpoddn(z)$. By the definition of a maximal
      $\mathcal{P}$-construction (see \cite{SchSt09}) we have that
      $\bar{\delta}$ is not definably Woodin over $\bar{\mathcal{P}}$
      since $\bar{\mathcal{P}} \cap \Ord < H \cap \Ord$. Since
      $\bar{b}$ is non-dropping this implies that
      $i_{\bar{b}}^{\T^*}(\bar{\delta})$ is not definably Woodin over
      $\M_{\bar{b}}^{\T^*}$.  Furthermore we have that
      \[ \M_{\bar{b}}^{\T^*} \vDash \text{``}
      i_{\bar{b}}^{\T^*}(\bar{\delta}) \text{ is Woodin''}. \]
      Concerning the other side of the coiteration we also have that
      $i_{\bar{b}}^{\T^*}(\bar{\delta}) = \delta(\bar{\T})$ is a
      Woodin cardinal in $\bar{\Q}$ but it is not definably Woodin
      over $\bar{\Q}$.

      Since the coiteration of $\M_{\bar{b}}^{\T^*}$ with $\bar{\Q}$
      takes place above
      $i_{\bar{b}}^{\T^*}(\bar{\delta}) = \delta(\bar{\T})$ we have
      that it is successful inside $\Lpoddn(z)$ using that
      $\M_{\bar{b}}^{\T^*}$ inherits the realization strategy for $H$
      above $i_{\bar{b}}^{\T^*}(\bar{\delta})$ and it follows that in
      fact
      \[ \M_{\bar{b}}^{\T^*} = \bar{\Q}. \]

      Consider the statement
      \begin{eqnarray*}
        \psi(\T^*, \bar{\Q}) \equiv &\text{``there is a cofinal
                                      branch } b \text{ through } \T^* \text{ such
                                      that } \\ 
                                    & \bar{\Q} =
                                      \M_{b}^{\T^*}\text{''.}   
      \end{eqnarray*}
      This statement $\psi(\T^*, \bar{\Q})$ is $\Sigma^1_1$-definable
      from the parameters $\T^*$ and $\bar{\Q}$ and holds in the model
      $\Lpoddn(z)$ as witnessed by the branch $\bar{b}$. We have that
      $\bar{\T}, \bar{\mathcal{P}} \in H$ and thus $\T^* \in H$.

    Therefore an absoluteness argument exactly as in Case $1$ yields
    that $\psi(\T^*, \bar{\Q})$ holds in $H^{\Col(\omega, \eta)}$,
    where $\eta < H \cap \Ord$ is an ordinal such that
    $\T^*, \bar{\Q} \in H^{\Col(\omega, \eta)}$ are countable inside
    the model $H^{\Col(\omega, \eta)}$. Thus it follows as before that
    $\bar{b} \in H$, which contradicts the fact that $\bar{\T}$
    witnesses in $H$ that $\bar{K}$ is not $\Q$-structure iterable.

      \begin{minipage}{\textwidth}
        \bigskip \textbf{Case 2.2.2.} We have that
        \[ \mathcal{P} \cap \Ord = \Lpoddn(z) \cap \Ord. \] \medskip
      \end{minipage}

      Then it follows that $\bar{\mathcal{P}} \cap \Ord = H \cap \Ord$
      and Lemma $1.5$ in \cite{SchSt09} applied to the maximal
      $\mathcal{P}$-construction inside $H$ yields that $\bar{\delta}$
      is not definably Woodin over $\bar{\mathcal{P}}$ since we have
      that $\rho_\omega(H) = \omega$. As $\bar{b}$ is non-dropping,
      this implies that $i_{\bar{b}}^{\T^*}(\bar{\delta})$ is not
      definably Woodin over $\M_{\bar{b}}^{\T^*}$.

      So as in Case $2.2.1$ we can successfully coiterate
      $\M_{\bar{b}}^{\T^*}$ and $\bar{\Q}$ inside $\Lpoddn(z)$ and
      obtain again that \[ \M_{\bar{b}}^{\T^*} = \bar{\Q}.\]
      
      Then it follows that
      \[ \bar{\Q} \cap \Ord < H \cap \Ord = \bar{\mathcal{P}} \cap
      \Ord \leq \M_{\bar{b}}^{\T^*} \cap \Ord, \]
      where the first inequality holds true since $\bar{\Q} \in H$.
      This is a contradiction to the fact that
      $\M_{\bar{b}}^{\T^*} = \bar{\Q}$.

      This finishes the proof of Subclaim \ref{subcl:QstritA}.
    \end{proof}

    Working in $\Lpoddn(z)$ we consider the coiteration of
    $K^{M_x}|\omega_2^{\Lpoddn(z)}$ with
    $(K^c|\omega_2)^{\Lpoddn(z)}$. From what we proved so far it
    easily follows that this coiteration is successful as shown in the
    next subclaim.

    \begin{subclaim}\label{subcl:coitsucc}
      The coiteration of $K^{M_x}|\omega_2^{\Lpoddn(z)}$ with
      $(K^c|\omega_2)^{\Lpoddn(z)}$ in $\Lpoddn(z)$ is successful.
    \end{subclaim}

    \begin{proof}
      Write $W = \Lpoddn(z)$. Then we can successfully coiterate the
      premice $K^{M_x}|\omega_2^W$ and $K^c|\omega_2^W$ inside the
      model $W$ since
      $K^{M_x}|\omega_2^W = K^{M_x} | (\gamma^+)^{M_x}$ is fully
      iterable in $W$ by Subclaim \ref{subcl:KMxit} and
      $K^c|\omega_2^W$ is $\Q$-structure iterable in $W$ by Subclaim
      \ref{subcl:QstritA}. In particular the $K^{M_x}|\omega_2^W$-side
      of the coiteration provides $\Q$-structures for the
      $K^c|\omega_2^W$-side and therefore the coiteration is
      successful.
    \end{proof}

    In what follows we want to argue that the
    $(K^c|\omega_2)^{\Lpoddn(z)}$-side cannot lose this
    coiteration. For that we want to use the following lemma, which we
    can prove similar to Theorem $3.8$ in \cite{MSch04}. As the
    version we aim to use is slightly stronger than what is shown in
    \cite{MSch04}, we will sketch a proof of this lemma.

    \begin{lemma}\label{lem:Kcuniv}
      Let $\kappa \geq \omega_2$ be a regular cardinal such that
      $\kappa$ is inaccessible in $K^c$ (constructed in the sense of
      \cite{MSch04}). Then $K^c|\kappa$ is universal with respect to
      every premouse $M$ with $M \cap \Ord = \kappa$ to which it can
      be successfully coiterated.
    \end{lemma}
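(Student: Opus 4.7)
The plan is to adapt the proof of Theorem $3.8$ in \cite{MSch04}, where universality is established for $K^c|(\beta^+)^{K^c}$, to the slightly more general hypothesis here. Suppose toward a contradiction that $K^c|\kappa$ loses the coiteration with some premouse $M$ of height $\kappa$: working inside the universe in which the coiteration is performed, there are iteration trees $\T$ on $K^c|\kappa$ and $\U$ on $M$ of length at most $\kappa+1$, with last models satisfying $\M_\infty^\T \lhd \M_\infty^\U$ and no drop on the main branch $[0,\infty]_T$, so that the iteration map $i_{0\infty}^\T: K^c|\kappa \to \M_\infty^\T$ exists and is fully elementary.

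First I would choose a large enough ordinal $\theta$ and fix an elementary substructure $X \prec H_\theta$ containing $\T$, $\U$, $K^c|\kappa$, $M$, and $\kappa$, with $|X| < \kappa$ and $X \cap \kappa \in \kappa$; such an $X$ exists because $\kappa \geq \omega_2$ is regular in $V$ and inaccessible in $K^c$. Let $\pi \colon \bar{X} \to X$ be the inverse of the Mostowski collapse, and write $\bar{\T}, \bar{\U}, \bar{K}, \bar{M}, \bar{\kappa}$ for the preimages of $\T, \U, K^c|\kappa, M, \kappa$. By elementarity $\bar{\T}, \bar{\U}$ still form a successful coiteration of $\bar{K}$ against $\bar{M}$ in which $\bar{K}$ loses without dropping on its main branch.

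The heart of the argument is to construct, by induction along $\bar{\T}$, a system of realization maps $\sigma_\alpha \colon \M_\alpha^{\bar{\T}} \to K^c$ commuting with $\pi$, culminating in $\sigma_\infty \colon \M_\infty^{\bar{\T}} \to K^c|\kappa$. This uses the fact that every extender appearing on the $K^c$-sequence is \emph{certified} in the sense of Definition $1.6$ of \cite{MSch04}, hence countably complete with respect to $\Sigma_{1+}$-elementary witnesses. At extender-application steps $\alpha + 1$ one applies countable completeness to the extender $\pi(E_\alpha^{\bar{\T}})$, using the thread given by $\sigma_\alpha$ on its generators, to extract a realization $\sigma_{\alpha+1}$ of the ultrapower; at limit steps along the realized branch one takes direct limits. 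Because $|\bar{X}|$ is countable in the relevant hull (or, more carefully, because certification is preserved under the substructures we need), this construction goes through just as in \cite{MSch04}.

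Once $\sigma_\infty$ is in hand, the inclusion $\M_\infty^{\bar{\T}} \lhd \M_\infty^{\bar{\U}}$, combined with $\sigma_\infty$ and $\pi$, yields an embedding of a proper initial segment of $\bar{M}$'s iterate into $K^c|\kappa$; tracing this back along $\bar{\U}$ produces an extender (or a sequence of hulls witnessing one) on $\bar{M}$ whose $\pi$-image is a countably complete extender on $V$ that cohers with the $K^c$-sequence below $\kappa$. By the certification criterion this extender must already have been placed on the $K^c$-sequence, contradicting the fact that it was available for use against $K^c|\kappa$ in the coiteration. The main obstacle will be verifying that the certification/countable-completeness machinery of \cite{MSch04} still applies once $\kappa$ is only assumed to be regular in $V$ and inaccessible in $K^c$ (rather than a successor cardinal of $K^c$); here the point is that inaccessibility in $K^c$ gives enough room inside $K^c|\kappa$ to run the realization construction of \cite[\S 3]{MSch04}, and regularity of $\kappa$ in $V$ ensures that the substructure $X$ with $X \cap \kappa \in \kappa$ can be found, so no essentially new ingredient beyond \cite{MSch04} is required.
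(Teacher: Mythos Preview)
Your outline does not match either the argument in \cite{MSch04} or what is actually needed here. The universality proof in \cite{MSch04} is not a single-hull realization argument of the kind you describe; realizing iterates of $\bar K$ back into $K^c$ via countable completeness is the \emph{iterability} proof, and it does not by itself produce a contradiction with $M$ iterating past $K^c|\kappa$. The actual mechanism in \cite[\S3]{MSch04} (their Lemma~3.5) is to take hulls $X$ with $X\cap\kappa=\alpha$ for a club of $\alpha$, show that $\mathcal P(\alpha)\cap K^c\subset\bar H$, and from this conclude that the extender derived from the iteration maps at stage $\alpha$ is certified and hence should already sit on the $K^c$-sequence --- contradiction. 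Your description of ``tracing back along $\bar\U$'' to find a coherent extender is too vague to serve as a substitute for this.

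More importantly, you have not identified the new content required when $M\cap\Ord=\kappa$ rather than $M\cap\Ord<\kappa$. The paper's proof is a four-case analysis depending on whether the iteration maps on each side stay bounded in $\kappa$ or push some ordinal up to $\kappa$. Only the case where the $K^c$-side map is bounded and the $M$-side map is unbounded reduces to the \cite{MSch04} argument. The case where \emph{both} maps are bounded in $\kappa$ is handled by a direct cardinality argument: $\kappa$ remains inaccessible in the $K^c$-iterate but becomes a successor (or the height of a non-sound model) in the $M$-iterate, which is incompatible with one lying inside the other. The case where \emph{both} maps are unbounded is handled by a reflection argument \`a la the Comparison Lemma, producing compatible extenders used on the two sides and hence a contradiction. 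Your single-hull approach does not cover these cases, and your claim that ``no essentially new ingredient is required'' is correct in spirit but your sketch does not supply the missing case split.
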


    In \cite{MSch04} the universality of the premouse $K^c|\kappa$ is
    only proved with respect to smaller premice, i.e. premice $M$ such
    that $M \cap \Ord < \kappa$, to which $K^c|\kappa$ can be
    successfully coiterated. As shown below, their argument can easily
    be modified to yield Lemma \ref{lem:Kcuniv}.

    \begin{proof}[Proof of Lemma \ref{lem:Kcuniv}]
      Let $\kappa \geq \omega_2$ be a regular cardinal such that
      $\kappa$ is inaccessible in $K^c$ and write $N = K^c|\kappa$.

      Analogous to the notation in \cite{MSch04} we say that \emph{$M$
        iterates past $N$} iff $M$ is a premouse with
      $M \cap \Ord = \kappa$ and there are iteration trees $\T$ on $M$
      and $\U$ on $N$ of length $\lambda+1$ arising from a successful
      comparsion such that there is no drop along $[0,\lambda]_U$ and
      $\M_\lambda^\U \lhd \M_\lambda^\T$ or
      $\M_\lambda^\U = \M_\lambda^\T$ and we in addition have that
      there is a drop along $[0, \lambda]_T$.

      Assume toward a contradiction that there is a premouse $M$ which
      iterates past $N$ and let $\T$ and $\U$ be iteration trees of
      length $\lambda+1$ on $M$ and $N$ respectively witnessing
      this. Let $i_{0\lambda}^\U : N \rightarrow \M_\lambda^\U$ denote
      the corresponding iteration embedding on the $N$-side, which
      exists as there is no drop on the main branch through $\U$. 

      We distinguish the following cases. 

    \begin{minipage}{\textwidth}
      \bigskip \textbf{Case 1.} We have that for some $\xi < \kappa$,
      \[i_{0\lambda}^\U \pwimg \kappa \subset \kappa \text{ and }
      i_{\beta\lambda}^\T(\xi) \geq \kappa \]
      for some ordinal $\beta < \lambda$ such that the iteration
      embedding $i_{\beta\lambda}^\T$ is defined.  \bigskip
    \end{minipage}      

    In this case we can derive a contradiction as in Section $3$ in
    \cite{MSch04} because we can prove that the consequences of Lemma
    $3.5$ in \cite{MSch04} also hold in this setting. 

    We assume that the reader is familiar with the argument for Lemma
    $3.5$ in \cite{MSch04} and we use the notation from there. So we
    let
    \[ X \prec H_{\kappa^+} \]
    be such that $\card{X} < \kappa$, $X \cap \kappa \in \kappa$,
    $\{M,N, \T, \U, \beta, \xi \} \subset X$, and
    $X \cap \kappa \in (0,\kappa)_T \cap (0, \kappa)_U$, as in this
    case $\lambda = \kappa$. We write $\alpha = X \cap \kappa$. Let
    \[ \pi : \bar{H} \cong X \prec H_{\kappa^+} \]
    be such that $\bar{H}$ is transitive. Then we have that
    $\alpha = \crit(\pi)$ and $\pi(\alpha) = \kappa$. Let
    $\bar{\T}, \bar{\U} \in \bar{H}$ be such that
    $\pi(\bar{\T}, \bar{\U}) = (\T, \U)$.

    As in the proof of Lemma $3.5$ in \cite{MSch04} we aim to show
    that $\Pot(\alpha) \cap N \subset \bar{H}$. 

    Exactly as in \cite{MSch04} we get that \[ \M_\alpha^\T ||
    (\alpha^+)^{\M_\alpha^\T} = \M_\alpha^\U ||
    (\alpha^+)^{\M_\alpha^\U}. \] 

    The case assumption that $i_{\beta\kappa}^\T(\xi) \geq \kappa$ for
    ordinals $\beta, \xi < \kappa$ implies that there are ordinals
    $\bar{\beta}, \bar{\xi} < \alpha$ such that
    $i_{\bar{\beta}\alpha}^{\bar{\T}}(\bar{\xi}) \geq \alpha$. As
    $i_{\bar{\beta}\alpha}^{\bar{\T}} \upharpoonright \alpha =
    i_{\bar{\beta}\alpha}^{\T} \upharpoonright \alpha$
    and $\M_\alpha^{\bar{\T}} | \alpha = \M_\alpha^{\T} | \alpha$,
    this yields that
    $\Pot(\alpha) \cap \M_\alpha^\T = \Pot(\alpha) \cap
    \M_\alpha^{\bar{\T}}$. Therefore we have that
    \[ \Pot(\alpha) \cap \M_\alpha^\U = \Pot(\alpha) \cap \M_\alpha^\T
    = \Pot(\alpha) \cap \M_\alpha^{\bar{\T}} \in \bar{H}. \]

    Moreover we are assuming that
    $i_{0\kappa}^\U \pwimg \kappa \subset \kappa$ and we have
    \[ i_{0\alpha}^\U \upharpoonright \alpha = i_{0\alpha}^{\bar{\U}}
    \upharpoonright \alpha \in \bar{H}. \]

    Therefore we can again argue exactly as in the proof of Lemma
    $3.5$ in \cite{MSch04} to get that
    $\Pot(\alpha) \cap N \subset \bar{H}$. Following \cite{MSch04}
    this now yields a contradiction to the assumption that $M$
    iterates past $N$.

    \begin{minipage}{\textwidth}
      \bigskip \textbf{Case 2.} We have that
      \[i_{0\lambda}^\U \pwimg \kappa \subset \kappa \text{ and there
        are no } \xi, \beta \text { such that }
      i_{\beta\lambda}^\T(\xi) \geq \kappa, \]
      where $\xi < \kappa$ and $\beta < \lambda$. \bigskip
    \end{minipage}  

    By assumption we have that $\kappa$ is inaccessible in $N$. Assume
    first that $M$ has a largest cardinal $\eta$. In this case we have
    that there are no cardinals between the image of $\eta$ under the
    iteration embedding and $\kappa$ in $\M_\lambda^\T$ but there are
    cardinals between the image of $\eta$ under the iteration
    embedding and $\kappa$ in $\M_\lambda^\U$. This contradicts the
    fact that $\T$ and $\U$ were obtained by a successful comparison
    of $M$ and $N$ with $\M_\lambda^\U \unlhd \M_\lambda^\T$.

    Now assume that $M \vDash \ZFC$. Then the case assumption implies
    that in particular $\M_\lambda^\T \cap \Ord \leq \kappa$. As we
    assumed that $M$ iterates past $N$, this implies that
    $\M_\lambda^\T = \M_\lambda^\U$ and that there is a drop along
    $[0,\lambda]_T$. But then $\M_\lambda^\T$ is not $\omega$-sound,
    contradicting the soundness of $\M_\lambda^\U$.

    \begin{minipage}{\textwidth}
      \bigskip \textbf{Case 3.} We have that for some $\zeta < \kappa$,
      \[i_{0\lambda}^\U(\zeta) \geq \kappa \text{ and there are no } \xi,
      \beta \text { such that } i_{\beta\lambda}^\T(\xi) \geq \kappa, \]
      where $\xi < \kappa$ and $\beta < \lambda$. \bigskip
    \end{minipage}     

    This again easily contradicts the assumption that $M$ iterates
    past $N$.

    \begin{minipage}{\textwidth}
      \bigskip \textbf{Case 4.} We have that for some $\zeta < \kappa$
      and for some $\xi < \kappa$,
      \[i_{0\lambda}^\U(\zeta) \geq \kappa \text{ and }
      i_{\beta\lambda}^\T(\xi) \geq \kappa \]
      for some ordinal $\beta < \lambda$ such that the iteration
      embedding $i_{\beta\lambda}^\T$ is defined.  \bigskip
    \end{minipage}    

    In this case we have that $\lambda = \kappa$. For a reflection
    argument as in the proof of the Comparison Lemma (see Theorem
    $3.11$ in \cite{St10}) let
    \[ X \prec H_{\eta} \] for some large enough ordinal $\eta$ be
    such that $\card{X} < \kappa$, $\{M,N, \T, \U \} \subset X$, and
    $\zeta \cup \xi \cup \beta \subset X$. Following the notation in
    the proof of Theorem $3.11$ in \cite{St10} let
    \[ \pi : H \cong X \prec H_{\eta} \] be such that $H$ is
    transitive.

    Then an argument as in the proof of Theorem $3.11$ in \cite{St10}
    yields that there is an ordinal $\gamma < \kappa$ such that the
    embeddings $i_{\gamma\kappa}^\T$ and $i_{\gamma\kappa}^\U$
    agree. This implies that there are extenders $E_{\alpha}^\T$ used
    in $\T$ at stage $\alpha$ and $E_{\alpha^\prime}^\U$ used in $\U$
    at stage $\alpha^\prime$ which are compatible. Again as in the
    proof of Theorem $3.11$ in \cite{St10} this yields a
    contradiction.

    This finishes the proof of Lemma \ref{lem:Kcuniv}.
    \end{proof}

    Now we can use Lemma \ref{lem:Kcuniv} to prove the following
    subclaim. 

\setcounter{subcl}{6}
    \begin{subclaim}\label{subcl:o2succ}
      $\omega_2^{\Lpoddn(z)}$ is a successor cardinal in
      $(K^c)^{\Lpoddn(z)}$. 
    \end{subclaim}
    \begin{proof}
      Work in $W = \Lpoddn(z)$ and assume that this does not
      hold. That means we are assuming that $\omega_2^W$ is
      inaccessible in $(K^c)^W$.

      As above consider the successful coiteration of
      $K^{M_x}|\omega_2^{W}$ with $(K^c)^W|\omega_2^{W}$ and let $\T$
      and $\U$ be the resulting trees on $K^{M_x}|\omega_2^W$ and
      $(K^c)^W|\omega_2^W$ respectively of length $\lambda + 1$ for
      some ordinal $\lambda$. Since we assume that $\omega_2^W$ is
      inaccessible in $(K^c)^W$, it follows by Lemma \ref{lem:Kcuniv}
      that $(K^c)^W | \omega_2^W$ is universal in $W$ for the
      coiteration with premice of height $\leq \omega_2^W$. Therefore
      the $(K^c)^W | \omega_2^W$-side has to win the comparison.  That
      means we have that $\M_\lambda^\T \unlhd \M_\lambda^\U$ and
      there is no drop on the main branch through $\M_\lambda^\T$. In
      particular the iteration embedding
      \[ i_{0\lambda}^\T : K^{M_x}|\omega_2^W \rightarrow
      \M_\lambda^\T \] exists. Now we distinguish the following cases.

    \begin{minipage}{\textwidth}
      \bigskip \textbf{Case 1.} We have that
      \[i_{0\lambda}^\T \pwimg \omega_2^W \subset \omega_2^W.\] \medskip
    \end{minipage}      
    This means in particular that
    $(\gamma^+)^{K^{M_x}} = (\gamma^+)^{M_x} = \omega_2^W$ stays a
    successor cardinal in the model $\M_\lambda^\T$. So say that we
    have $(\eta^+)^{\M_\lambda^\T} = \omega_2^W$ for some cardinal
    $\eta < \omega_2^W$ in $\M_\lambda^\T$. In particular this means
    that there are no cardinals between $\eta$ and $\omega_2^W$ in
    $\M_\lambda^\T$. But by assumption we have that $\omega_2^W$ is
    inaccessible in $(K^c)^W$ and thus also in $\M_\lambda^\U$. In
    particular there are cardinals between $\eta$ and $\omega_2^W$ in
    $\M_\lambda^\U$. This contradicts the fact that $\M_\lambda^\T$
    and $\M_\lambda^\U$ were obtained by a successful comparison with
    $\M_\lambda^\T \unlhd \M_\lambda^\U$.

    \begin{minipage}{\textwidth}
      \bigskip \textbf{Case 2.} We have that
      \[i_{0\lambda}^\T \pwimg \omega_2^W \not\subset \omega_2^W.\]
      \medskip
    \end{minipage}     

        In this case we distinguish two subcases as follows.

    \begin{minipage}{\textwidth}
      \bigskip \textbf{Case 2.1.} We have that
      \[ \sup i_{0\lambda}^\T \pwimg \gamma < \omega_2^W. \] \medskip
    \end{minipage}  
    In this case we have that
    \[ (i_{0\lambda}^\T(\gamma)^+)^{\M_\lambda^\T} = \omega_2^W, \]
    because we are also assuming that
    $i_{0\lambda}^\T \pwimg \omega_2^W \not\subset \omega_2^W$.

    So in particular we again have that $\omega_2^W$ is a successor
    cardinal in the model $\M_\lambda^\T$ and so there are no
    cardinals between $i_{0\lambda}^\T(\gamma) < \omega_2^W$ and
    $\omega_2^W$ in $\M_\lambda^\T$. From this we can derive the same
    contradiction as in Case $1$, because $\omega_2^W$ is inaccessible
    in $(K^c)^W$.

    \begin{minipage}{\textwidth}
      \bigskip \textbf{Case 2.2.}  We have that
      \[ \exists \eta < \omega_2^W \text{ such that }
      i_{0\lambda}^\T(\eta) \geq \omega_2^W. \] \medskip
    \end{minipage}  
    Let $\alpha < \lambda$ be the least ordinal such that the
    iteration embedding $i_{\alpha \lambda}^\U$ is defined. That means
    the last drop on the main branch in $\U$ is at stage
    $\alpha$. Since $K^{M_x}|\omega_2^W$ has height $\omega_2^W$ we
    have by universality of $(K^c)^W | \omega_2^W$ in $W$ (see Lemma
    \ref{lem:Kcuniv}) that the case assumption implies that there
    exists an ordinal $\nu < \omega_2^W$ such that
    \[ i_{\alpha \lambda}^\U(\nu) \geq \omega_2^W. \]
    Moreover in this case we have in fact $\lambda = \omega_2^W$.

    Let $X \prec W | \theta$ for some large ordinal $\theta$ be such
    that we have $\card{X} < \omega_2^W$,
    $\{K^{M_x}|\omega_2^W, (K^c)^W | \omega_2^W, \T, \U \} \subset X$,
    and $\eta \cup \nu \cup \alpha \subset X$. Moreover let $H$ be the
    Mostowski collapse of $X$ and let $\pi: H \rightarrow W | \theta$
    be the uncollapse map. Then a reflection argument as in the proof
    of the Comparison Lemma (see Theorem $3.11$ in \cite{St10}) yields
    that there is an ordinal $\xi < \lambda$ such that the embeddings
    $i_{\xi\lambda}^\T$ and $i_{\xi\lambda}^\U$ agree. This implies
    that there are extenders $E_{\beta}^\T$ used in $\T$ at stage
    $\beta$ and $E_{\beta^\prime}^\U$ used in $\U$ at stage
    $\beta^\prime$ which are compatible. Again as in the proof of
    Theorem $3.11$ in \cite{St10} this yields a contradiction.

      This finishes the proof of Subclaim \ref{subcl:o2succ}.
    \end{proof}

    Recall that we have
    \[ \HOD^{\Lpoddn(z)} \vDash \text{``}\omega_2^{\Lpoddn(z)} \text{
      is inaccessible''} \]
    by Theorem \ref{thm:varw2inacc} as $z \geq_T x$. Since we have
    that \[ (K^c)^{\Lpoddn(z)} \subset \HOD^{\Lpoddn(z)} \]
    by Subclaim \ref{subcl:KcHOD}, this contradicts Subclaim
    \ref{subcl:o2succ} and thereby finishes the proof of Claim
    \ref{cl:M2nexinMx}.
  \end{proof}

  Work in $V$ now and let $x \in \BS$ be arbitrary in the cone of
  reals from Theorem \ref{thm:varw2inacc}. Then by Claim
  \ref{cl:M2nexinMx} we have that
  \[ M_x \vDash \text{``}(M_{2n}^\#)^{M_x} \text{ is } \omega_1
  \text{-iterable''}. \] Hence
  \[ M_x \vDash \text{``}(M_{2n}^\#)^{M_x} \text{ is } \Pi^1_{2n+2}
  \text{-iterable''}. \]
  Since $M_x$ is $\SIGMA^1_{2n+2}$-correct in $V$ we have that
  \[ V \vDash \text{``}(M_{2n}^\#)^{M_x} \text{ is } \Pi^1_{2n+2}
  \text{-iterable''}. \]
  By $\SIGMA^1_{2n+2}$-correctness in $V$ again we have for every real
  $y \geq_T x$ such that in particular $(M_{2n}^\#)^{M_x} \in M_y$
  that
  \[ M_y \vDash \text{``}(M_{2n}^\#)^{M_x} \text{ is } \Pi^1_{2n+2}
  \text{-iterable''}. \]
  Consider the comparison of the premice $(M_{2n}^\#)^{M_x}$ and
  $(M_{2n}^\#)^{M_y}$ inside the model $M_y$. This comparison is
  successful by Lemma \ref{lem:steel2.2} for all reals $y \geq_T x$ as
  above, since $(M_{2n}^\#)^{M_y}$ is $\omega_1$-iterable in $M_y$ and
  $(M_{2n}^\#)^{M_x}$ is $\Pi^1_{2n+2}$-iterable in $M_y$. Moreover
  both premice are $\omega$-sound and we have that
  $\rho_\omega((M_{2n}^\#)^{M_x}) = \rho_\omega((M_{2n}^\#)^{M_y}) =
  \omega$.
  Thus the premice $(M_{2n}^\#)^{M_x}$ and $(M_{2n}^\#)^{M_y}$ are in
  fact equal.

  Therefore we have that all premice $(M_{2n}^\#)^{M_x}$ for
  $x \in \BS$ in the cone of reals from Theorem \ref{thm:varw2inacc}
  are equal in $V$. Call this unique premouse $M_{2n}^\#$.

  We now finally show that this premouse $M_{2n}^\#$ is
  $\omega_1$-iterable in $V$ via the $\Q$-structure iteration strategy
  (see Definition \ref{def:Qstritstr}). So let $\T$ be an iteration
  tree on $M_{2n}^\#$ in $V$ of limit length $< \omega_1^V$ according
  to the $\Q$-structure iteration strategy. Pick $z \in \BS$ such that
  $M_{2n}^\#$ and $\T$ are in $M_z$ and $\lh(\T) < \omega_1^{M_z}$.

  Since $M_z$ is $\SIGMA^1_{2n+2}$-correct in $V$ we have that $\T$ is
  according to the $\Q$-structure iteration strategy in $M_z$, because
  $\Q(\T \upharpoonright \lambda)$ is $2n$-small above
  $\delta(\T \upharpoonright \lambda)$ for all limit ordinals
  $\lambda < \lh(\T)$ and therefore $\Pi^1_{2n+1}$-iterability
  above $\delta(\T \upharpoonright \lambda)$ is enough for
  $\Q(\T \upharpoonright \lambda)$ to determine a unique cofinal
  well-founded branch $b$ through $\T$ by Lemma
  \ref{lem:nofakeeven}. Moreover we have that
  $(M_{2n}^\#)^{M_z} = M_{2n}^\#$ and therefore
  \[ M_z \vDash \text{``}M_{2n}^\# \text{ is } \omega_1
  \text{-iterable''.} \]
  So in $M_z$ there exists a cofinal well-founded branch $b$ through
  $\T$, which is determined by $\Q$-structures
  $\Q(\T \upharpoonright \lambda)$ which are $\omega_1$-iterable above
  $\delta(\T \upharpoonright \lambda)$ and therefore also
  $\Pi^1_{2n+1}$-iterable above $\delta(\T \upharpoonright \lambda)$
  in $M_z$ for all limit ordinals $\lambda \leq \lh(\T)$. That means
  we in particular have that $\Q(b,\T) = \Q(\T)$. Since $M_z$ is
  $\SIGMA^1_{2n+2}$-correct in $V$, it follows as above that $b$ is
  also the unique cofinal well-founded branch in $V$ which is
  determined by the same $\Q$-structures as in $M_z$. Therefore
  \[ V \vDash \text{``}M_{2n}^\# \text{ exists and is } \omega_1
  \text{-iterable''} \] and we finished the proof of Theorem
  \ref{thm:newWoodineven}. 
\end{proof}

%%% Local Variables:
%%% mode: latex
%%% TeX-master: "Main"
%%% End:

\section{Conclusion}
\label{ch:conclusion}

By the results proved in Sections \ref{sec:odd} and \ref{sec:even} we
have that the following theorem which is due to Itay Neeman and the
third author and was announced in Section \ref{sec:outline} holds
true.

\begin{namedthm}{Theorem \ref{cor:newWoodin1}}
  Let $n \geq 1$ and assume there is no $\SIGMA^1_{n+2}$-definable
  $\omega_1$-sequence of pairwise distinct reals. Then the following
  are equivalent.
\begin{enumerate}[(1)]
\item $\PI^1_n$ determinacy and $\Pi^1_{n+1}$ determinacy,
\item for all $x \in \BS$, $M_{n-1}^\#(x)$ exists and is
  $\omega_1$-iterable, and $M_n^\#$ exists and is $\omega_1$-iterable,
\item $M_n^\#$ exists and is $\omega_1$-iterable.
\end{enumerate}
\end{namedthm}

\begin{proof}
  This follows from Theorems \ref{thm:newWoodinodd} and
  \ref{thm:newWoodineven} together with Theorem $2.14$ in \cite{Ne02}.
\end{proof}

Moreover Theorems \ref{thm:newWoodinodd} and \ref{thm:newWoodineven}
together with Lemma \ref{lem:bfDetSeq} immediately imply the following
main theorem due to the third author.

\begin{namedthm}{Theorem \ref{cor:bfDetSharps}}
  Let $n \geq 1$ and assume $\PI^1_{n+1}$ determinacy holds. Then $M_n^\#(x)$
  exists and is $\omega_1$-iterable for all $x \in \BS$.
\end{namedthm}

\subsection{Applications}
\label{sec:Applications}

From these results we can now obtain a boldface version of the
Determinacy Transfer Theorem as in Theorem \ref{thm:transferthm} for
all projective levels $\PI^1_{n+1}$ of determinacy. The lightface
version of the Determinacy Transfer Theorem for even levels of
projective determinacy $\Pi^1_{2n}$ (see Theorem
\ref{thm:transferthm}) is used in the proof of Theorem
\ref{thm:newWoodinodd}, and therefore in the proof of Theorem
\ref{cor:bfDetSharps} for odd levels $n$.

\begin{cor}[Determinacy Transfer Theorem]
\label{cor:transferThm}
  Let $n \geq 1$. Then $\PI^1_{n+1}$ determinacy is equivalent to
  \emph{$\game^{(n)}(<\omega^2 - \PI^1_1)$} determinacy.
\end{cor}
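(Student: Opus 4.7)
The plan is to split the equivalence into its two directions and dispatch each via results already available (the old Kechris--Woodin transfer theorem, Neeman's theorem, and the main theorem of this paper, Theorem~\ref{cor:bfDetSharps}).

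First I would handle the easy direction: $\game^{(n)}(<\omega^2 - \PI^1_1)$ determinacy $\Rightarrow$ $\PI^1_{n+1}$ determinacy. The pointclass $\PI^1_{n+1}$ embeds into $\game^{(n)}(<\omega^2 - \PI^1_1)$ (in fact already into $\game^{(n)} \PI^1_1$) by the standard unfolding of alternating real quantifiers as game quantifiers over a $\PI^1_1$ payoff; see §6D of \cite{Mo09}. Since $\PI^1_{n+1}$ is closed under continuous preimages, determinacy for the larger pointclass transfers to $\PI^1_{n+1}$ in the usual way.

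Next I would do the harder direction: $\PI^1_{n+1}$ determinacy $\Rightarrow$ $\game^{(n)}(<\omega^2 - \PI^1_1)$ determinacy, splitting on the parity of $n$. For odd $n$ this is exactly the boldface form of the Determinacy Transfer Theorem of Kechris and Woodin \cite{KW08}, stated (in its lightface form) as Theorem~\ref{thm:transferthm} in Section~\ref{sec:odd}; the boldface version is obtained by relativizing to real parameters, which is legitimate because the inductive hypothesis on which \cite{KW08} rests (namely the existence and $\omega_1$-iterability of $M_{n-1}^\#(x)$ for all reals $x$) is supplied in our setting by Theorem~\ref{cor:bfDetSharps} applied at level $n-1$. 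For even $n$, the key is to feed Theorem~\ref{cor:bfDetSharps} into Neeman's implication: from $\PI^1_{n+1}$ determinacy we obtain via Theorem~\ref{cor:bfDetSharps} that $M_n^\#(x)$ exists and is $\omega_1$-iterable for every $x\in\BS$, and then Theorem~$2.5$ of \cite{Ne95} (the boldface form of Theorem~\ref{thm:Neeman}) yields $\game^{(n)}(<\omega^2-\PI^1_1)$ determinacy.

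The only genuine content sits in the even case of the hard direction, and there the work has already been done: the main novelty supplied by this paper is precisely the step that turns boldface $\PI^1_{n+1}$ determinacy into $\omega_1$-iterability of $M_n^\#(x)$ for all reals $x$, which is exactly what Neeman's transfer result needs as its hypothesis. Thus no further comparison arguments or fine-structural work are required at this stage; the corollary is obtained by assembling the three ingredients (unfolding of $\game$, Kechris--Woodin for odd $n$, and Theorem~\ref{cor:bfDetSharps} combined with Neeman for even $n$). The main ``obstacle'' is therefore purely bookkeeping: one must verify that the boldface/lightface and parameter relativizations of the cited theorems match up cleanly, and check that the inductive hypothesis used to state the odd-$n$ transfer theorem is indeed supplied by our Theorem~\ref{cor:bfDetSharps} at the preceding level.
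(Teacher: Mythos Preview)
Your proposal is correct and close in spirit to the paper's proof, but the paper streamlines the hard direction: rather than splitting on the parity of $n$, it argues uniformly that $\PI^1_{n+1}$ determinacy gives $M_n^\#(x)$ for all $x$ via Theorem~\ref{cor:bfDetSharps}, and then invokes Neeman's Theorem~2.5 in \cite{Ne95} to obtain $\game^{(n)}(<\omega^2-\PI^1_1)$ determinacy for \emph{every} $n$. Your detour through Kechris--Woodin for odd $n$ is valid but unnecessary, since the route you propose for even $n$ already handles all parities; the paper does mention that the odd-$n$ (even projective level) case was previously known from \cite{KW08} by purely descriptive methods, but it presents this as historical context rather than as part of the argument. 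The trade-off is that your split makes explicit which cases were genuinely new, while the paper's uniform treatment is cleaner and avoids having to verify separately that the boldface relativization of Theorem~\ref{thm:transferthm} goes through.
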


\begin{proof}
  By Theorem $1.10$ in \cite{KW08} we have for the even levels that
  \[ \Det(\PI^1_{2n}) \leftrightarrow \Det(\boldsymbol{\Delta}^1_{2n})
  \leftrightarrow \Det(\game^{(2n-1)}(<\omega^2 - \PI^1_1)). \]
  Here the first equivalence is due to Martin (see \cite{Ma73}) and
  proven in Theorem $5.1$ in \cite{KS85}.  The second equivalence due
  to Kechris and the third author can be proven using purely
  descriptive set theoretic methods (see Theorem $1.10$ in
  \cite{KW08}).

  The results in this paper (using this version of the Determinacy
  Transfer Theorem for even levels, see Theorem \ref{thm:transferthm})
  together with results due to Itay Neeman yield the Determinacy
  Transfer Theorem for all levels $n$ as follows.

  By basic facts about the game quantifier ``$\game$'' we have that
  \[\Det(\game^{(n)}(<\omega^2 - \PI^1_1))\]
  implies $\PI^1_{n+1}$ determinacy.

  For the other direction assume that $\PI^1_{n+1}$ determinacy
  holds. Then Theorem \ref{cor:bfDetSharps} yields that the premouse
  $M_n^\#(x)$ exists and is $\omega_1$-iterable for all $x \in
  \BS$.
  This implies that \[ \Det(\game^{(n)}(<\omega^2 - \PI^1_1)) \]
  holds true by Theorem $2.5$ in \cite{Ne95}.
\end{proof}

\subsection{Open problems}
\label{sec:openproblemsproj}

We close this paper with the following open problem, which is the
lightface version of Theorem \ref{cor:bfDetSharps}.

\begin{namedthm}{Conjecture}
  Let $n > 1$ and assume that $\PI^1_{n}$ determinacy and
  $\Pi^1_{n+1}$ determinacy hold. Then $M_n^\#$ exists and is
  $\omega_1$-iterable.
\end{namedthm}

This conjecture holds true for $n=0$ which is due to L. Harrington
(see \cite{Ha78}) and for $n=1$ which is due to the third author (see
Corollary $4.17$ in \cite{StW16}), but it is open for $n>1$.

%%% Local Variables:
%%% mode: latex
%%% TeX-master: "Main"
%%% End:

\bibliographystyle{abstract}
\bibliography{DetSharps} 
\nocite{*}

\end{document}